\def\@cite#1#2{{\m@th\upshape\bfseries%
[{#1\if@tempswa{\m@th\upshape\mdseries, #2}\fi}]}}
\theoremstyle{plain}
\newtheorem{thm}{Theorem}[section]
\newtheorem{cor}[thm]{Corollary}
\newtheorem{prop}[thm]{Proposition}
\newtheorem{lem}[thm]{Lemma}
\theoremstyle{definition}
\newtheorem{defn}[thm]{Definition}
\newtheorem{ex}[thm]{Example}
\theoremstyle{remark}
\newtheorem{rem}[thm]{Remark}
\numberwithin{equation}{subsection}
\renewcommand{\bold}[1]{\medskip \noindent {\bf #1 }\nopagebreak}
\newcommand{\nc}{\newcommand}
\newcommand{\rnc}{\renewcommand}
\newcommand{\bk}{{\mathbf{k}}}
\nc\bA{\mathbb{A}}
\nc\bB{\mathbb{B}}
\nc\bC{\mathbb{C}}
\nc\bD{\mathbb{D}}
\nc\bE{\mathbb{E}}
\nc\bF{\mathbb{F}}
\nc\bG{\mathbb{G}}
\nc\bH{\mathbb{H}}
\nc\bI{\mathbb{I}}
\nc{\bJ}{\mathbb{J}} 
\nc\bK{\mathbb{K}}
\nc\bL{\mathbb{L}}
\nc\bM{\mathbb{M}}
\nc\bN{\mathbb{N}}
\nc\bO{\mathbb{O}}
\nc\bP{\mathbb{P}}
\nc\bQ{\mathbb{Q}}
\nc\bR{\mathbb{R}}
\nc\bS{\mathbb{S}}
\nc\bT{\mathbb{T}}
\nc\bU{\mathbb{U}}
\nc\bV{\mathbb{V}}
\nc\bW{\mathbb{W}}
\nc\bY{\mathbb{Y}}
\nc\bX{\mathbb{X}}
\nc\bZ{\mathbb{Z}}
\nc\cA{\mathcal{A}}
\nc\cB{\mathcal{B}}
\nc\cC{\mathcal{C}}
\rnc\cD{\mathcal{D}}
\nc\cE{\mathcal{E}}
\nc\cF{\mathcal{F}}
\nc\cG{\mathcal{G}}
\rnc\cH{\mathcal{H}}
\nc\cI{\mathcal{I}}
\nc{\cJ}{\mathcal{J}} 
\nc\cK{\mathcal{K}}
\rnc\cL{\mathcal{L}}
\nc\cM{\mathcal{M}}
\nc\cN{\mathcal{N}}
\nc\cO{\mathcal{O}}
\nc\cP{\mathcal{P}}
\nc\cQ{\mathcal{Q}}
\rnc\cR{\mathcal{R}}
\nc\cS{\mathcal{S}}
\nc\cT{\mathcal{T}}
\nc\cU{\mathcal{U}}
\nc\cV{\mathcal{V}}
\nc\cW{\mathcal{W}}
\nc\cY{\mathcal{Y}}
\nc\cX{\mathcal{X}}
\nc\cZ{\mathcal{Z}}
\nc{\dmo}{\DeclareMathOperator}
\dmo{\Tw}{Twist}
\dmo{\CP}{Pres}
\rnc{\Re}{\operatorname{Re}}
\rnc{\Im}{\operatorname{Im}}
\rnc{\span}{\operatorname{span}}
\dmo{\rank}{rank}
\dmo{\End}{End}
\dmo{\Hom}{Hom}
\dmo{\Jac}{Jac}
\dmo{\Id}{Id}
\dmo{\lcm}{lcm}
\dmo{\Area}{Area}
\nc{\Tm}{Teichm\"uller\xspace}
\nc{\odd}{\cH^{\text{odd}}(4)}
\nc{\hyp}{\cH^{\text{hyp}}(4)}
\nc{\prym}{\tilde{\mathcal{Q}}(3,-1^3)}
\nc{\G}{GL^+(2,\bR)}
\nc{\GL}{GL^+}
\title[Translation surfaces and their orbit closures]{Translation surfaces and their orbit closures: \\An introduction for a broad audience}
\author[Wright]{Alex~Wright}
\begin{document}
\maketitle



Translation surfaces can be defined in an elementary way via polygons, and arise naturally in in the study of various basic dynamical systems. They can also be defined as  differentials  on Riemann surfaces, and have moduli spaces called strata that are related to the moduli space of Riemann surfaces.  There is a $GL(2,\bR)$ action on each stratum, and to solve most problems about a translation surface one must first know the closure of its orbit under this action. Furthermore, these orbit closures are of fundamental interest in their own right, and are now known to be algebraic varieties that parameterize translation surfaces with extraordinary algebro-geometric and flat properties. The study of  orbit closures has greatly accelerated in recent years, with an influx of new tools and ideas coming diverse areas of mathematics.  

Many areas of mathematics, from algebraic geometry and number theory, to dynamics and topology, can be brought to bear on this topic, and furthermore known examples of orbit closures are interesting from all these points of view. 

This survey is an invitation for mathematicians from different backgrounds to become familiar with the subject. Little background knowledge, beyond the definition of a Riemann surface and its cotangent bundle, is assumed, and top priority is given to presenting a view of the subject that is at once  accessible and connected to many areas of mathematics. 

\bold{Acknowledgements:} I am especially grateful to Ronen Mukamel for helpful conversations that shaped the presentation of real multiplication in genus 2, and to Curtis McMullen for helpful comments and corrections. I am also very grateful to Preston Wake, Clark Butler, Elise Goujard, Emre Sertoz, Paul Apisa, Ian Frankel, Benjamin Dozier, Zhangchi Chen, and Zhan Jiang for finding typos and making helpful comments on these notes. 

This survey grew out of notes for a five lecture course at the Graduate Workshop on Moduli of Curves, held July 7--18, 2014 at the Simons Center in Stony Brook, NY, organized by Samuel Grushevsky, Robert Lazarsfeld, and Eduard Looijenga. The author is grateful to the organizers and participants. 

The author is partially supported by a Clay Research Fellowship. 

\bold{Other resources.} There are a number of very good surveys on translation surfaces and related topics, for example \cite{Esur,Fsur, HSsur, Masur, MT, MPC, MAff, SWprobs, Vi, Yo1, Yo2, Yo3, Z}. See also the seminal paper \cite{Mc} of McMullen.

\section{Translation surfaces}\label{S:intro}

\subsection{Equivalent definitions} This  subsection has been written in a fairly technical way, so that it may serve as a reference for anyone looking for details on foundational issues. Most readers will want to skip some of the proofs on first reading. Anyone who already knows what a translation surface is can skip this subsection entirely. 

In these notes, all Riemann surfaces will  be assumed to be compact and connected. (A Riemann surface is a manifold of real dimension two with an atlas of charts to $\bC$ whose transition maps are biholomorphic.) Thus the term ``Riemann surface" will be synonymous to ``irreducible smooth projective algebraic curve over $\bC$."

\begin{defn}
An \emph{Abelian differential} $\omega$ on a Riemann surface $X$ is a global section of the cotangent bundle of $X$. 

 A \emph{translation surface} $(X,\omega)$ is a nonzero Abelian differential $\omega$ on a Riemann surface $X$. 
\end{defn}

Thus ``nonzero Abelian differential" and ``translation surface" are synonymous terms, but sometimes the notation is slightly different. Sometimes we might omit the word ``translation," and say ``let $(X,\omega)$ be a surface" when we should say ``let $(X,\omega)$ be a translation surface."


The complex vector space of Abelian differentials on $X$ will be denoted $H^{1,0}(X)$. We assume the following facts are familiar to the reader. 

\begin{thm}
Let $g$ denote the genus of $X$. Then $\dim_\bC H^{1,0}(X) = g$. If $g>0$, each Abelian differential $\omega$ on $X$ has $2g-2$ zeros, counted with multiplicity.

Each nonzero Abelian differential is a 1-form, which is closed but not exact, and hence $H^{1,0}$ is naturally a subspace of the first cohomology group $H^1(X,\bC)$ of $X$.  
\end{thm}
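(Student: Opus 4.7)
The statement collects three separate standard facts, and the plan is to dispatch them roughly in order of difficulty. For the one-form assertions, in any holomorphic chart $z$ one has $\omega = f(z)\,dz$ with $f$ holomorphic, so $d\omega = \partial_{\bar z} f \, d\bar z \wedge dz = 0$ by the Cauchy--Riemann equations, establishing closedness. To rule out exactness, suppose $\omega = dh$ globally for a smooth $h \colon X \to \bC$; writing $dh = \partial h + \bar\partial h$ and comparing with $\omega = f\,dz$, the vanishing of the $d\bar z$ component forces $\bar\partial h = 0$, so $h$ is holomorphic on a compact connected Riemann surface and hence constant, contradicting $\omega \neq 0$. Since a de Rham class vanishes iff it is represented by an exact form, this shows the natural map $H^{1,0}(X) \to H^1(X,\bC)$ is injective.

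For the zero count, I would view $\omega$ as a holomorphic section of the canonical line bundle $K_X$, so that its zeros, counted with multiplicity, total $\deg K_X$. That $\deg K_X = 2g-2$ is where some topological input is required: it follows from the identification of $K_X$ with the dual of the holomorphic tangent bundle together with Poincar\'e--Hopf, which gives $c_1(T_X) = \chi(X) = 2-2g$; more algebraically, it can be obtained from Riemann--Hurwitz applied to any non-constant map $X \to \bP^1$.

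The dimension count is the part demanding the most serious machinery. I would argue via Hodge decomposition: every compact Riemann surface is K\"ahler, so each de Rham class has a unique harmonic representative, and the space of harmonic $1$-forms splits as $H^{1,0} \oplus H^{0,1}$ with complex conjugation interchanging the summands. Combined with the topological identity $\dim_{\bC} H^1(X,\bC) = 2g$, this yields $\dim_\bC H^{1,0} = g$. Alternatively, Riemann--Roch applied to the trivial divisor gives $1 - h^0(K_X) = 1 - g$, so $h^0(K_X) = g$.

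The main obstacle is unambiguously this last formula: it is where the genuine analytic or algebraic content lives, whether packaged as Hodge theory, as Riemann--Roch, or as Hodge's existence theorem for harmonic forms on Riemann surfaces. The other two assertions then reduce, respectively, to a local Cauchy--Riemann computation combined with the maximum principle on a compact surface, and to the topological/algebraic computation of $\deg K_X$.
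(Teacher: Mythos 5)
Your proposal is correct. Note, however, that the paper offers no proof of this theorem at all: it is prefaced by ``We assume the following facts are familiar to the reader,'' so these are stated as standard background rather than established in the text. Your derivation is the standard one and each step is sound: the local computation $d(f\,dz)=\partial_{\bar z}f\,d\bar z\wedge dz=0$ gives closedness; the observation that $\omega=dh$ forces $\bar\partial h=0$, hence $h$ constant by compactness, gives non-exactness and the injection $H^{1,0}(X)\hookrightarrow H^1(X,\bC)$; the zero count is $\deg K_X=2g-2$ via Poincar\'e--Hopf or Riemann--Hurwitz; and $\dim_\bC H^{1,0}=g$ follows from the Hodge decomposition $H^1(X,\bC)=H^{1,0}\oplus H^{0,1}$ with conjugation exchanging the summands, or equivalently from Riemann--Roch for the trivial divisor. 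You are also right to flag the dimension formula as the one place where nontrivial analytic or algebraic input (Hodge's theorem or Riemann--Roch) is genuinely required; the other two claims are elementary by comparison.
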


The following result is key to how most people think about translation surfaces. 

\begin{prop}\label{P:standard}
Let $(X,\omega)$ be a translation surface. At any point where $\omega$ is not zero, there is a local coordinate $z$ for $X$ in which $\omega=dz$. At any point where $\omega$ has a zero of order $k$, there is a local coordinate $z$ in which $\omega=z^kdz$. 
\end{prop}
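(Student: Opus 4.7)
The plan is to construct $z$ by integrating $\omega$ locally, adjusting by a root at zeros of $\omega$. Fix a point $p \in X$ and any holomorphic coordinate $w$ centered at $p$ (so $w(p)=0$), and write $\omega = f(w)\,dw$ with $f$ holomorphic near $0$. Both cases will be handled by producing a $z$ via an integral and appealing to the holomorphic inverse function theorem.

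For the first assertion, assume $\omega(p) \neq 0$, equivalently $f(0) \neq 0$. Set $z(w) = \int_0^w f(\zeta)\,d\zeta$. Then $z$ is holomorphic near $0$ with $\frac{dz}{dw}(0) = f(0) \neq 0$, so by the holomorphic inverse function theorem $z$ defines a biholomorphism from a neighborhood of $p$ onto a neighborhood of $0 \in \bC$, hence a legitimate local coordinate on $X$. Moreover $dz = f(w)\,dw = \omega$ by construction.

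For the second assertion, assume $\omega$ has a zero of order $k \geq 1$ at $p$, so $f(w) = w^k g(w)$ with $g(0) \neq 0$. Integrating term by term yields
\[
F(w) := \int_0^w f(\zeta)\,d\zeta = \frac{w^{k+1}}{k+1}\,h(w),
\]
where $h$ is holomorphic with $h(0) = g(0) \neq 0$. Since $h$ is nonvanishing on a small disk around $0$, choose a holomorphic branch of $h^{1/(k+1)}$ there and define
\[
z(w) = w \cdot h(w)^{1/(k+1)}.
\]
Then $\frac{dz}{dw}(0) = h(0)^{1/(k+1)} \neq 0$, so $z$ is again a valid local coordinate. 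By construction $z^{k+1} = (k+1)F(w)$, so differentiating gives $(k+1)z^k\,dz = (k+1)f(w)\,dw$, i.e.\ $\omega = z^k\,dz$, as desired.

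The only subtle step is the existence of a holomorphic $(k+1)$-th root of $h$ near $p$; this works precisely because $h(0) \neq 0$ allows us to define $\log h$ holomorphically on a simply connected neighborhood of $p$, and then set $h^{1/(k+1)} = \exp\!\bigl(\tfrac{1}{k+1}\log h\bigr)$. Everything else reduces to the inverse function theorem and a routine power series manipulation, so I expect no further obstacles.
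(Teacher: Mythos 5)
Your proposal is correct and follows essentially the same route as the paper: integrate $\omega$ in a local coordinate, observe the primitive vanishes to order $k+1$, and extract a $(k+1)$-st root to obtain $z$ with $z^k\,dz=\omega$. You are somewhat more explicit than the paper in verifying via the inverse function theorem that $z$ is a genuine local coordinate and in constructing the root of the nonvanishing factor, but the underlying argument is identical.
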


\begin{proof}
We will work in a local coordinate $w$, and suppose that $\omega$ vanishes to order $k$ at $w=0$. Thus we can write $\omega=w^k g(w)$, where $g$ is some holomorphic function with $g(0)\neq 0$. Note that 
$$\int_{0}^{w} g(t)t^k dt$$
vanishes to order $k+1$ at $0$, and thus admits a $(k+1)$-st root. Define $z$ by 
$$z^{k+1}=(k+1)\int_{0}^{w} g(t)t^k dt.$$
By taking $d$ of each side, we find $z^k dz = \omega$ as desired. 
\end{proof}

Let $\Sigma\subset X$ denote the finite set of zeros of $\omega$. At each point $p_0$ of $X\setminus \Sigma$, we may pick a local coordinate $z$ as above. This choice is unique if we require $z(p_0)=0$, and otherwise it is unique up to translations. This is because if $f(z)$ is any holomorphic function with $df=dz$, then $f(z)=z+C$ for some constant $C$. This leads to the following.

\begin{prop}
$X\setminus \Sigma$ admits an atlas of charts to $\bC$ whose transition maps are translations. 
\end{prop}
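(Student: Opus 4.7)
The plan is to use Proposition \ref{P:standard} to produce the atlas directly, and then use the uniqueness argument already sketched in the paragraph preceding the proposition to verify that the transition maps are translations.

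First, I would cover $X\setminus\Sigma$ by open sets $U_\alpha$, each equipped with a local coordinate $z_\alpha\colon U_\alpha\to\bC$ given by Proposition \ref{P:standard}, so that $\omega|_{U_\alpha}=dz_\alpha$. Shrinking the $U_\alpha$ if necessary, I may assume each $U_\alpha$ is connected and that each $z_\alpha$ is a biholomorphism onto its image. This produces an atlas of charts in the sense that the $U_\alpha$ cover $X\setminus\Sigma$ and each $z_\alpha$ is a biholomorphism to an open subset of $\bC$.

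For the transition maps, suppose $U_\alpha\cap U_\beta$ is nonempty, and let $V$ be a connected component of this intersection. The transition map is $\varphi=z_\beta\circ z_\alpha^{-1}$, defined on $z_\alpha(V)\subset\bC$. By construction, $dz_\alpha=\omega=dz_\beta$ on $V$. Pulling back via $z_\alpha^{-1}$, this gives $d\varphi = dw$ on $z_\alpha(V)$, where $w$ is the standard coordinate on $\bC$. Since $z_\alpha(V)$ is connected, integrating yields $\varphi(w)=w+C$ for some constant $C$, exactly as in the remark before the proposition. Hence $\varphi$ is a translation, completing the proof.

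The only mild subtlety, and hence the place to be a little careful, is the connectedness of the overlap: a priori, $U_\alpha\cap U_\beta$ could have several components, on each of which the transition is translation by a possibly different constant. This is harmless, since one can either refine the cover so that all pairwise intersections are connected, or simply observe that the definition of ``atlas with translation transition maps'' is verified component-by-component. There is no serious obstacle; the content of the statement is entirely packaged into Proposition \ref{P:standard} together with the one-line observation that a holomorphic primitive of $dz$ on a connected open set is $z+C$.
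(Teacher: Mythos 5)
Your proof is correct and follows the same route as the paper: take the charts $z$ with $\omega=dz$ furnished by Proposition \ref{P:standard} and observe that any two such coordinates differ by a constant on each connected overlap, so the transitions are translations. The paper compresses this to one line, and your remark about possibly disconnected overlaps is a harmless but reasonable point of care.
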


\begin{proof}
The atlas consists of all local coordinates $z$ with the property that $\omega=dz$. 
\end{proof}

In particular, this gives $X\setminus \Sigma$ the structure of a flat manifold, since translations preserve the standard flat (Euclidean) metric on $\bC$. However, there is even more structure: for example, at every point there is a choice of ``north" (the positive imaginary direction). 

We will see that the flat metric on $X\setminus \Sigma$ does not extend to a flat metric on $X$. This should be reassuring, since for us typically $X$ will have genus at least 2, and the Gauss-Bonnet Theorem implies that such surfaces do not admit (nonsingular) flat metrics.

The points of $\Sigma$ are thus  considered to be singularities of the flat metric. From now on the term ``singularity" of $(X,\omega)$ will be synonymous with ``zero of $\omega$." The singularity is said to be order $k$ if $\omega$ vanishes to order $k$. 

We are now left with the task of determining what the flat metric looks like in a neighbourhood of a singularly $p_0$ of order $k$. We may use a local coordinate $z$ where $z(p_0)=0$ and $\omega=(k+1)z^kdz$ (this is a scalar multiple of the local coordinate constructed above). The 1-form $(k+1)z^kdz$ is the pull back of the form $dz$ via the branched covering map $z\mapsto z^{k+1}$, since $d(z^{k+1})=(k+1)z^kdz$. Near every point near but not equal to $p_0$,  $w=z^{k+1}$ is a local coordinate in which $\omega=dw$. Thus the flat metric near these point is the pull back of the flat metric on $\bC$ under this map $z\mapsto z^{k+1}$.

This pull back metric may be thought of very concretely: take $(k+1)$ copies of the upper half plane with the usual flat metric, and $(k+1)$ copies of the lower half plane, and glue them along the half infinite rays $[0,\infty)$ and $(-\infty,0]$ in cyclic order as in figure \ref{F:zero1}. 
\begin{figure}[h!]
\centering
\includegraphics[scale=0.22]{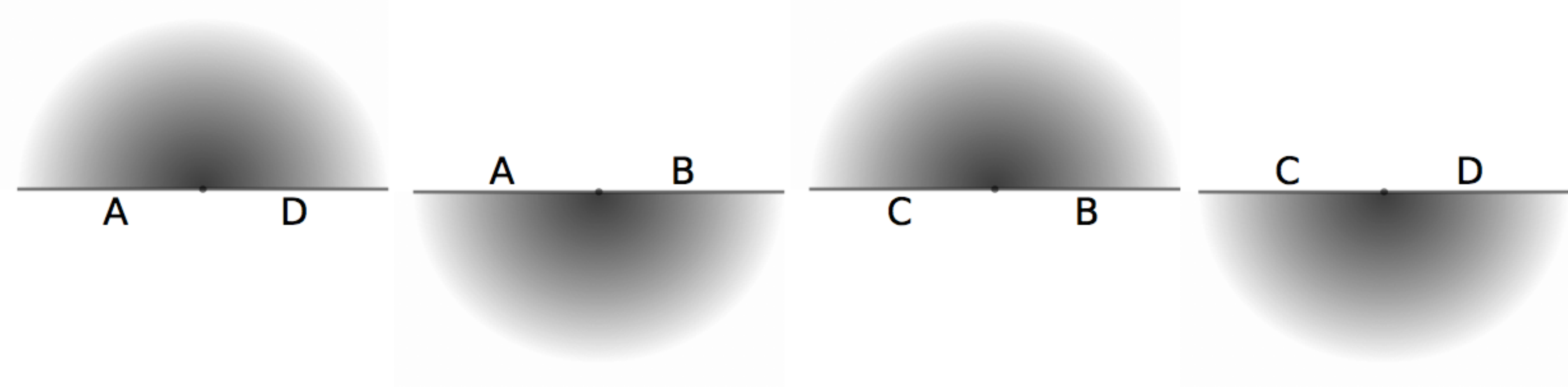}
\caption{Four half planes glued in cyclic order. A neighbourhood of any singularity of order 1 is isometric to a neighbourhood of 0 in the picture. }
\label{F:zero1}
\end{figure}

\begin{defn}[Second definition of translation surface]
A \emph{translation surface} is a closed topological surface $X$, together with a finite set of points $\Sigma$ and an atlas of charts to $\bC$ on $X\setminus \Sigma$ whose transition maps are translations, such that at each point $p_0$ of $\Sigma$ there is some $k>0$ and a homeomorphism of a neighborhood of  $p_0$  to a neighbourhood of the origin in the $2k+2$ half plane construction that is an isometry away from $p_0$. 
\end{defn}

The singularity at $p_0$ is said to have cone angle $2\pi(k+1)$, since it can be obtained by gluing $2k+2$ half planes, each with an angle of $\pi$ at the origin. The term ``cone point" is another synonym for ``singularity."

\begin{prop}
The first and second definition of translation surface are equivalent. 
\end{prop}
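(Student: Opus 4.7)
The strategy is to exhibit natural constructions in each direction, using the local picture near the singularities that was already worked out above.

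For the first-definition-implies-second direction, I would take $\Sigma$ to be the zero set of $\omega$ and use the earlier proposition to obtain the translation atlas on $X\setminus\Sigma$. The analysis preceding the second definition shows that in the local coordinate of Proposition \ref{P:standard}, a neighbourhood of a zero of order $k$ is the pullback of a disk via $z\mapsto z^{k+1}$, which is precisely the $2k+2$ half-plane construction. This furnishes the required isometric identification at each singular point.

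For the converse, the plan is to upgrade the translation atlas on $X\setminus\Sigma$ to a holomorphic atlas (translations are biholomorphisms of $\bC$) and to patch in a holomorphic chart at each singular point. The local $1$-forms $dz$ from the translation charts are translation-invariant and so assemble into a nowhere-zero holomorphic $1$-form on $X\setminus\Sigma$. To extend both structures across a singular point $p_0$, I would use the isometry with the $2k+2$ half-plane model provided by the definition: that model carries a natural complex disk coordinate $w$, characterized by $w(0)=0$ and by the fact that locally, away from the origin, the branches of $w^{k+1}$ are translation charts. Declaring $w$ a holomorphic chart at $p_0$ extends the complex structure across $\Sigma$, and since on overlaps the transition with the surrounding translation atlas is $w\mapsto w^{k+1}$ post-composed with a translation, the extended atlas is genuinely holomorphic. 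The $1$-form then extends as $(k+1)w^k dw$ over $p_0$, yielding an Abelian differential with a zero of order $k$ there.

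The only real subtlety, and the step I expect to require the most care, is checking that the chart $w$ at $p_0$ is well defined up to holomorphic change of coordinate, independently of the homeomorphism chosen in the second definition. Any two such homeomorphisms differ by a self-isometry of the $2k+2$ half-plane model fixing the origin, and such an isometry is necessarily a rotation by $2\pi j/(k+1)$ for some integer $j$; this multiplies $w$ by a $(k+1)$-th root of unity, a holomorphic change of coordinate that preserves $(k+1)w^k dw$. Hence both the complex structure and the resulting Abelian differential are intrinsic to the data of the second definition. Finally, the two constructions are visibly inverse to one another, because in both directions the local model at a zero of order $k$ is the $2k+2$ half-plane configuration.
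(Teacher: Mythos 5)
Your proposal is correct and follows essentially the same route as the paper: translations are biholomorphisms, so the translation atlas gives a complex structure and a $1$-form $dz$ off $\Sigma$, and at each cone point one installs the distinguished coordinate coming from the $2k+2$ half-plane model (in which $\omega$ becomes a constant times $w^k\,dw$) and checks the transition maps are holomorphic. Your extra discussion of well-definedness up to $(k+1)$-th roots of unity is a harmless refinement of the paper's assertion that the singular coordinate is essentially unique.
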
  

We have already shown that the structure in the first definition leads to the structure in the second definition, so it now suffices to show the converse. 

\begin{proof}
Given such a flat structure on a surface $X$ as in the second definition, we get an atlas of charts to $\bC$ away from the singularities, whose transition functions are translations. Since translations are biholomorphisms, this provides $X\setminus \Sigma$ with a complex structure, where $\Sigma$ is the set of singularities. Furthermore, we get an Abelian differential on $X\setminus \Sigma$, by setting $\omega=dz$ for any such local coordinate $z$. At each singularity $p_0$ of order $k$ of the flat metric, we can find a unique coordinate $z$ such that $z(p_0)=0$ and the  covering map $z^{k+1}/(k+1)$ is a local isometry (except at the point $p_0$) to a neighbourhood of $0$ in $\bC\setminus \{0\}$. In this coordinate $z$, the calculations above show that $\omega= z^k dz$ on a neighbourhood of $p_0$ minus $p_0$ itself ($\omega$ has not yet been defined at $p_0$). 

As soon as we check that the remaining transition maps are biholomorphic, we will conclude that this  atlas of charts (given by the coordinates $z$ as above) on $X$ gives $X$ a complex structure. Setting $\omega=z^k dz$ at each singularity, in the local coordinate above, completes the definition of the Abelian differential $\omega$. 

The transition maps can be explicitly computed. Suppose $z$ and $w$ are local coordinates, such that $\omega=z^k dz$ in a neighborhood of a singularity of a singularity $p_0$, and $\omega=dw$ in a smaller open subset not containing the singularity. Then there is some constant $C$ such that 
$$w(z)=C+\int_0^z \eta^k d\eta.$$ 
This is evidently a local biholomorphism away from $z=0$. 
\end{proof}

The third definition is the most concrete, and is how translation surfaces are usually given.

\begin{defn}[Third definition of translation surface]
A \emph{translation surface} is an equivalence class of polygons in the plane with edge identification: Each translation surface is a finite union of polygons  in $\bC$, together with a choice of pairing of parallel sides of equal length that are on ``opposite sides." (So for example two horizontal edges of the same length can be identified only if one is on the top of a polygon, and one is on the bottom. Each edge must be paired with exactly one other edge. These conditions are exactly what is required so that the result of identifying pairs of edges via translations is a closed surface.) Two such collections of polygons are considered to define the same translation surface if one can be cut into pieces along straight lines and these pieces can be translated and re-glued to form the other collection of polygons. When a polygon is cut in two, the two new boundary components must be paired, and two polygons can be glued together along a pair of edges only if these edges are paired. 
\end{defn}

\begin{figure}[h!]
\centering
\includegraphics[scale=0.8]{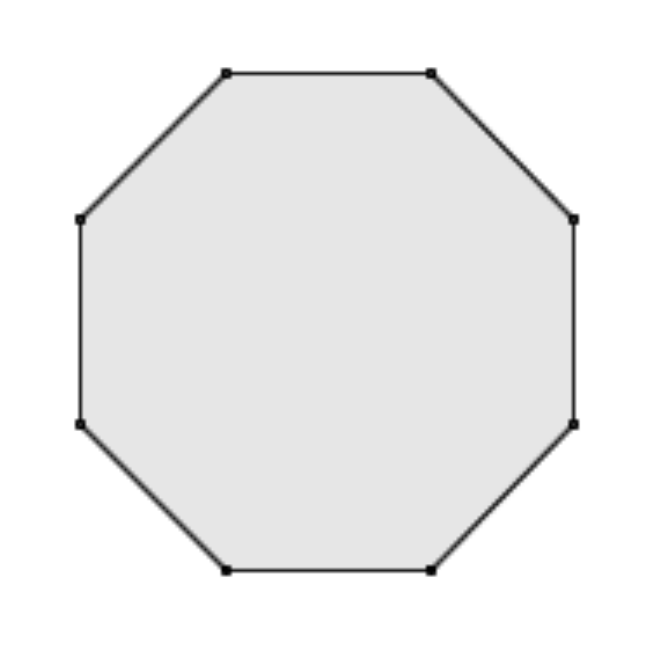}
\caption{When opposite edges of a regular octagon are identified, the result is a translation surface with one cone point of angle $6\pi$. (Generally the identifications are not drawn when opposite edges are identified--this situation is so common that it is the default.) A Euler characteristic computation shows that this has genus 2. ($2-2g=V-E+F$, where $g$ is the genus, $V$ is the number of vertices, $E$ is the number of edges, and $F$ is the number of faces. In this example, after identification of the edges there is 1 vertex, 4 edges, and 1 face, so $2-2g=1-4+1=-2$.)}
\end{figure}

\begin{figure}[h!]
\centering
\includegraphics[scale=0.4]{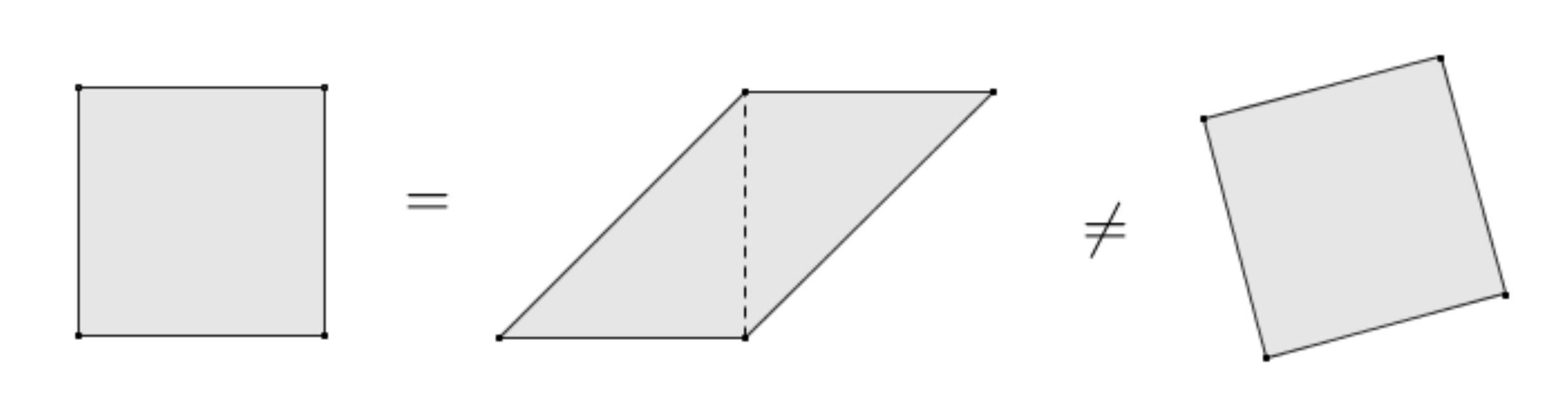}
\caption{In each of these three polygons, opposite edges are identified to give a genus one translation surface.  The first two are the same surface, since the second polygon can be cut (along the dotted line) and re-glued to give the first. However, the third translation surface is not equal to the first two, even though it is flat isometric. There is no flat isometry between them that sends ``north" (the positive imaginary direction) to ``north."}
\label{F:Tori}
\end{figure}

\begin{prop}\label{P:23}
The third definition of translation surface is equivalent to the second. 
\end{prop}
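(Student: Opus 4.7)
The plan is to verify three things: (a) every polygon presentation yields a flat structure satisfying the second definition, (b) every translation surface in the second sense admits such a polygon presentation, and (c) the cut-and-paste equivalence on polygon data matches equality of the resulting flat structures.

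For (a), let $X$ be the closed topological surface obtained by identifying paired edges of a finite family of polygons in $\bC$ by translations. Away from the images of the polygon vertices (call this finite set $\Sigma$), the inclusion of each polygon interior into $\bC$, extended across paired edges via the identifying translations, gives an atlas whose transition maps are translations. At $p_0 \in \Sigma$, the total angle $\theta$ is the sum of the interior polygon angles meeting at $p_0$; since translations preserve the upward vertical direction, circling $p_0$ preserves ``north,'' forcing $\theta = 2\pi(k+1)$ for some integer $k \geq 0$. The case $k=0$ gives a regular point (so $p_0$ may be removed from $\Sigma$); the case $k \geq 1$ gives, by construction, the $2k+2$ half-plane model required by the second definition.

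For (b), I would produce a geodesic triangulation of $X$ whose vertices are the cone points and whose edges are saddle connections: straight-line geodesics in the flat metric joining points of $\Sigma$ with no interior cone point. Saddle connections exist in abundance, because for any regular $p$ the supremum of radii of flat disks around $p$ embedded in $X \setminus \Sigma$ is finite (by compactness and finite total area), and the closure of such a maximal disk must touch $\Sigma$. Take a maximal collection of pairwise disjoint saddle connections; cutting $X$ along them yields finitely many flat surfaces with polygonal boundary, no interior cone points, and geodesic edges---by maximality each such component must in fact be a Euclidean triangle, since otherwise one could insert a further disjoint saddle connection as a diagonal. Developing each triangle into $\bC$ and recording the pairings of boundary edges gives a polygon presentation. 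This triangulation step is the main obstacle; a fully rigorous argument typically appeals to a Delaunay-type construction to rule out degenerate configurations and justify that the cut pieces are genuinely triangles.

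For (c), any cut along a straight segment joining two points of a polygon boundary (equivalently, along a saddle connection in $X$) gives a new polygon presentation realizing the same identification space, hence the same flat structure; the reverse gluing operation is equally tautological at the level of flat structures. Conversely, given two polygon presentations of a common flat surface, I would superimpose on $X$ all edges of both presentations (together with their translates under the identifications) to obtain a common geodesic refinement, which relates the two presentations by a finite sequence of elementary cuts and re-gluings.
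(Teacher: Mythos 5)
Your proposal is correct and follows essentially the same route as the paper's sketch: polygons give a translation atlas away from the vertices with the cone angles forced to be multiples of $2\pi$ by the existence of a consistent ``north,'' and conversely a maximal disjoint collection of saddle connections triangulates the surface, whose pieces are the desired polygons. You go somewhat further than the paper by also addressing the cut-and-paste equivalence and by flagging (correctly) that the triangulation lemma is the step requiring the most care.
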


We will sketch the proof, but first some definitions are required. 

\begin{defn}
A \emph{saddle connection} on a translation surface is a straight line segment (i.e., a geodesic for the flat metric) going from a singularity to a singularity, without any singularities in the interior of the segment. (The two endpoints can be the same singularity or different.) The \emph{complex length} (also known as the \emph{holonomy}) of a saddle connection is the associated vector in $\bC$, which is only defined up to multiplication by $\pm1$. 

A \emph{triangulation} of a translation surface is a collection of saddle connections whose interiors are disjoint, and such that any component of the complement is a triangle. 
\end{defn}

\begin{rem}
We will not discuss general geodesics for the singular flat metric, except to remark in passing that saddle connections are examples, and a general geodesic is a sequence of saddle connections such that each forms an angle of at least $\pi$ with next, or a isometrically embedded circle (i.e., the core curve of a cylinder). In a certain definite sense,  most flat geodesics contain more than one saddle connection. 
\end{rem}

\begin{lem}
Every translation surface (using the second definition) can be triangulated. 
\end{lem}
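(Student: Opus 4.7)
The plan is to produce the triangulation by first generating a rich enough stock of saddle connections, then taking a maximal non-crossing subcollection and showing its complement must already be triangular.

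First, I would establish that saddle connections exist in abundance. Fix a singularity $p_0$. For any $R>0$, the set of points reachable from $p_0$ by a geodesic of length at most $R$ whose interior avoids $\Sigma$ is the image of an immersion of a $2k+2$-sheeted sector in $\bC$ (by Proposition \ref{P:standard}). Since $X$ is compact with finite area, this immersion cannot remain injective for all $R$, so for some finite $R$ either two geodesics from $p_0$ in different directions first meet, or a geodesic hits another singularity. In the former case one can shorten a loop in its homotopy class (rel $p_0$) to produce a locally length-minimizing arc from $p_0$ to some point of $\Sigma$; such an arc is a saddle connection. Hence saddle connections exist, and moreover a similar argument shows any singularity on the boundary of a simply connected flat region must be joined to another boundary singularity by a saddle connection in that region.

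Next, I would assemble a maximal collection. Let $\mathcal{T}$ be a collection of saddle connections whose interiors are pairwise disjoint, maximal with respect to inclusion. Existence follows either from Zorn's lemma applied to the poset of such collections, or more concretely from the fact that the number of saddle connections of length at most $L$ is finite (a standard packing estimate: disjoint half-disks of small radius around the midpoints of short saddle connections have total area bounded by the area of $X$). One then builds $\mathcal{T}$ greedily, adding saddle connections of minimal length at each stage.

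Finally, I would show that each connected component $U$ of $X\setminus \bigcup \mathcal{T}$ is an open triangle. First $U$ must be simply connected: any essential simple closed curve in $U$ can be shortened to either a closed geodesic or a geodesic loop at a boundary singularity, and then a standard argument produces a new saddle connection disjoint in its interior from $\mathcal{T}$, contradicting maximality. Because $U$ is simply connected and contains no points of $\Sigma$ in its interior, the translation charts patch together to give a developing map $\widetilde{U}\to\bC$ whose image is a polygon $P$ whose vertices are preimages of singularities and whose edges are preimages of elements of $\mathcal{T}$. If $P$ has more than three vertices, then $P$ admits a straight chord between two non-adjacent vertices lying in its interior; pushing this chord back to $X$ yields a saddle connection whose interior is disjoint from every element of $\mathcal{T}$, again contradicting maximality. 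Hence $P$, and therefore $U$, is a triangle.

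The main obstacle is the last step, specifically the claim that $U$ is simply connected and that the developing map produces a genuine embedded polygon rather than some self-overlapping immersed region; it is here that maximality of $\mathcal{T}$ must be used carefully, ruling out both nontrivial loops in $U$ and the possibility that the boundary of $U$ revisits the same saddle connection from both sides without bounding a disk. Once $U$ is shown to be an embedded polygon, the convexity-style chord argument closes the proof cleanly.
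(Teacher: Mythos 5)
Your proposal is correct and follows exactly the same strategy as the paper, whose entire proof is the one-line sketch that any maximal collection of saddle connections with pairwise disjoint interiors must be a triangulation. You have simply fleshed out that sketch (existence of saddle connections, existence of a maximal collection, and the diagonal argument showing complementary components are triangles), and you correctly identify the genuinely delicate point—that the complementary regions are immersed rather than embedded polygons—which any complete write-up would need to handle.
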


\begin{proof}[Sketch of proof]
In fact, any maximal collection of saddle connections whose interiors are disjoint must be a triangulation. 
\end{proof}

\begin{proof}[{Sketch of proof of Proposition \ref{P:23}}]
In one direction, the lemma says that every surface as in the second definition can be triangulated. Cutting each saddle connection in the triangulation gives a collection of polygons (triangles) with edge identification. Two edges are identified if they were the same saddle connection before cutting. 

In the other direction, given a collection of polygons as in the third definition, the paired edges may be identified via translations. At each point on the interior of a polygon, the natural coordinate $z$ of $\bC$ can be used. (The polygon sits in the complex plane $\bC$.) At any point on the interior of an edge, the two polygons can be glued together, giving locally a coordinate. The structure at the singularities can be verified in an elementary way. 

Indeed, after identifying pairs of edges, some of the vertices will become singular points of the flat metric. The main point is that the total angle around these singularities is an integer multiple of $2\pi$. If the total angle were anything else, there would be no well defined choice of ``north." See figure \ref{F:Pacman}.
\end{proof}

\begin{figure}[h!]
\centering
\includegraphics[scale=0.2]{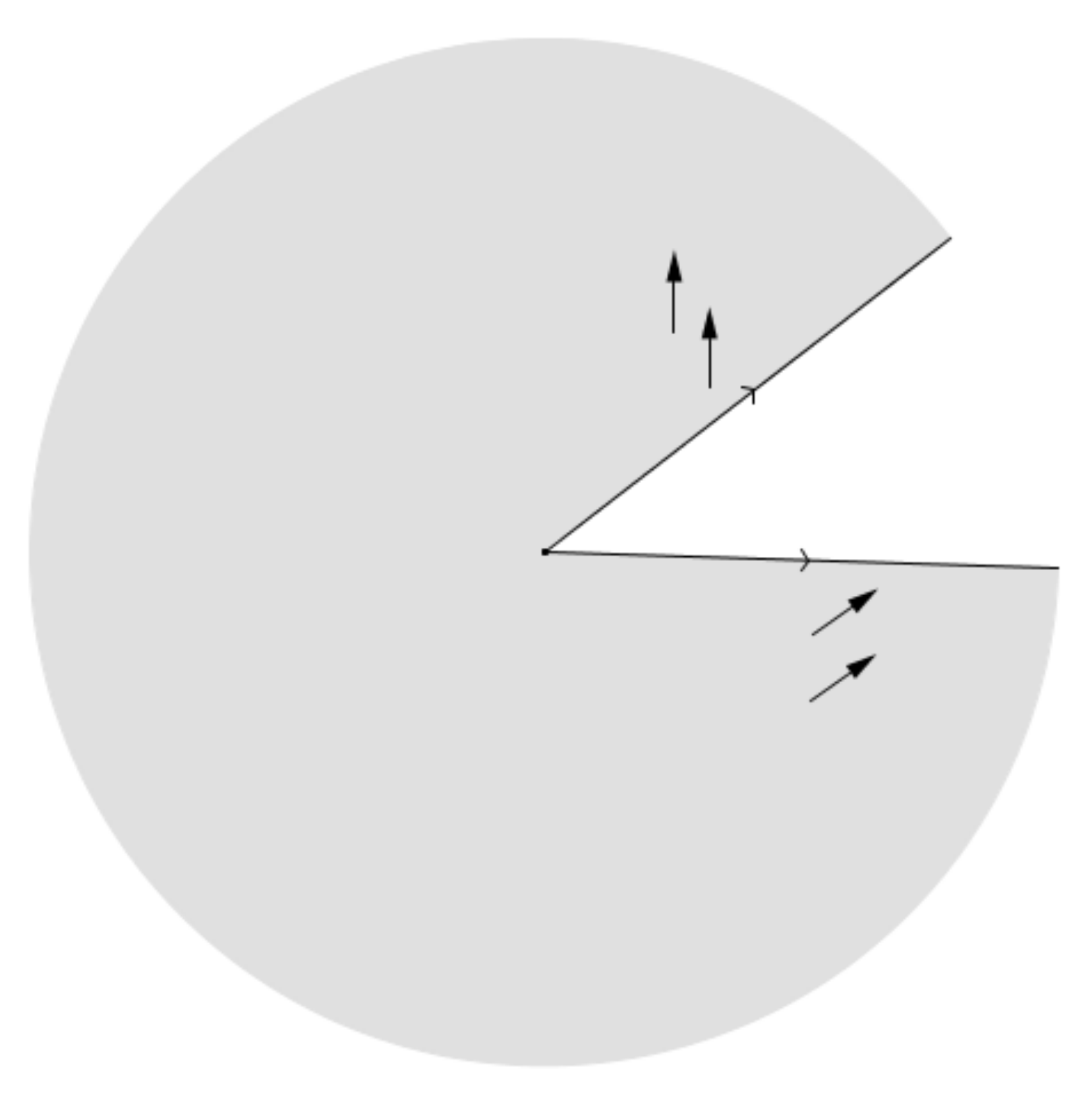}
\caption{Here is a model of a cone angle that is not an integer multiple of $2\pi$ (here it is less than $2\pi$), and hence cannot occur on a translation surface. It is obtained by identifying the two radial segments via rotation. On this picture, there is not a consistent choice of north: if a northward pointing vector is dragged across the radial segment, it no longer points north. }
\label{F:Pacman}
\end{figure}

\subsection{Examples} The flat torus $(\bC/\bZ[i], dz)$ is a translation surface.  This is pictured in figure \ref{F:Tori}.

\begin{defn}
A translation covering  $f:(X,\omega)\to(X', \omega')$ between translation surfaces is a branched covering of Riemann surfaces $f:X\to X'$ such that $f^*(\omega')=\omega$. 
\end{defn}

Translation coverings are, in particular, local isometries away from the ramification points. (By definition, the ramification points are the preimages of the branch points.) They must also preserve directions: for example, ``north" must map to ``north."  The ramification points must all be singularities. An unramified point is a singularity if and only if its image under the translation covering is. Branch points may or may not be singularities. 

The fact that translation coverings are local isometries away from ramification points is especially clear if one notes that the flat length of a tangent vector $v$ to the translation surface is $|\omega(v)|$.

\begin{defn}
A translation covering of $(\bC/\bZ[i], dz)$ branched over a single point is called a \emph{square-tiled surface}. 
\end{defn}

\begin{figure}[h!]
\centering
\includegraphics[scale=0.4]{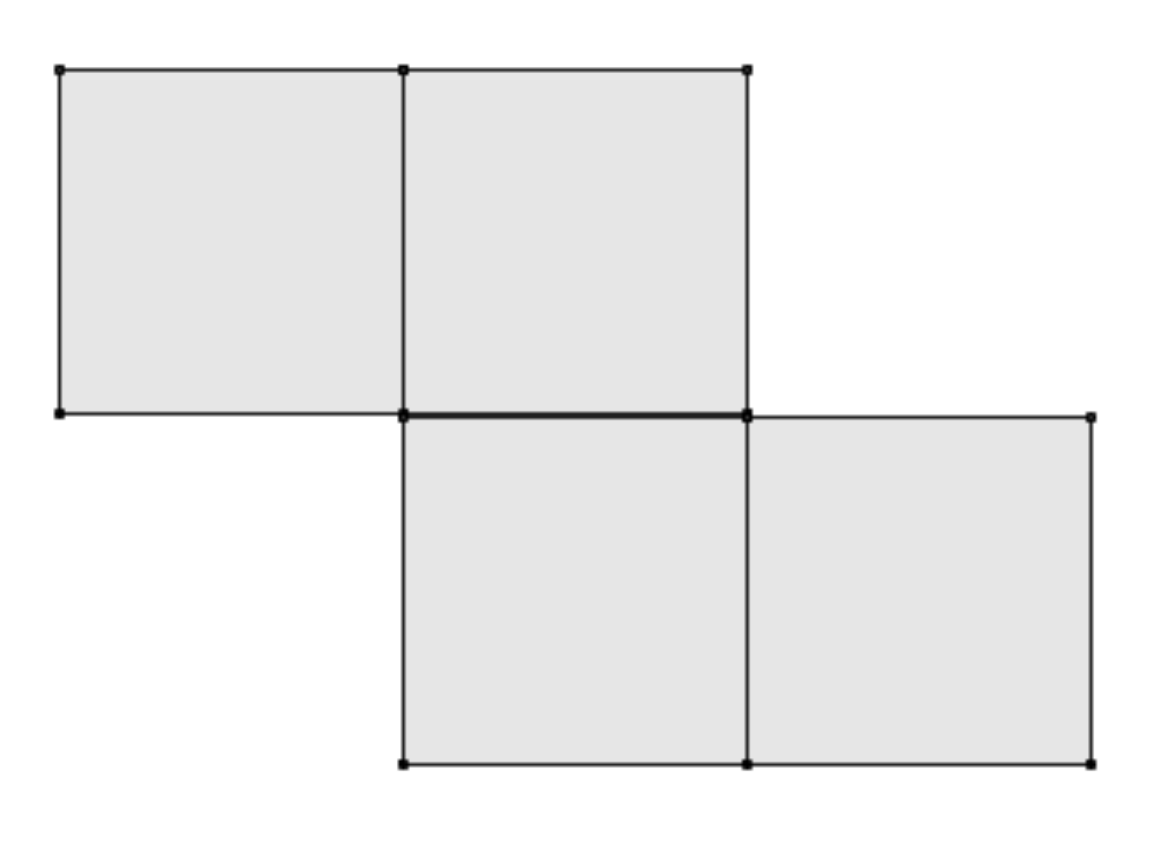}
\caption{An example of a square-tiled surface. Opposite edges are identified. This translation surface is a degree 4 cover of $(\bC/\bZ[i], dz)$ branched over 1 point. It is genus 2, and has two singularities, each of cone angle $4\pi$.}
\label{F:STgenus2}
\end{figure}

Indeed, $(\bC/\bZ[i], dz)$ is a square with opposite sides identified, and the branch point can be assumed to be the corners of the square. The square-tiled surface will be tiled by $d$ lifted copies of this square, where $d$ is the degree. 

\bold{The slit torus construction.} In this construction, one starts with two genus one translation surfaces, and picks a parallel embedded straight line segment on each of them, of the same length. The two segments are cut open, and the resulting tori with boundary are glued together. 
\begin{figure}[h!]
\centering
\includegraphics[scale=0.4]{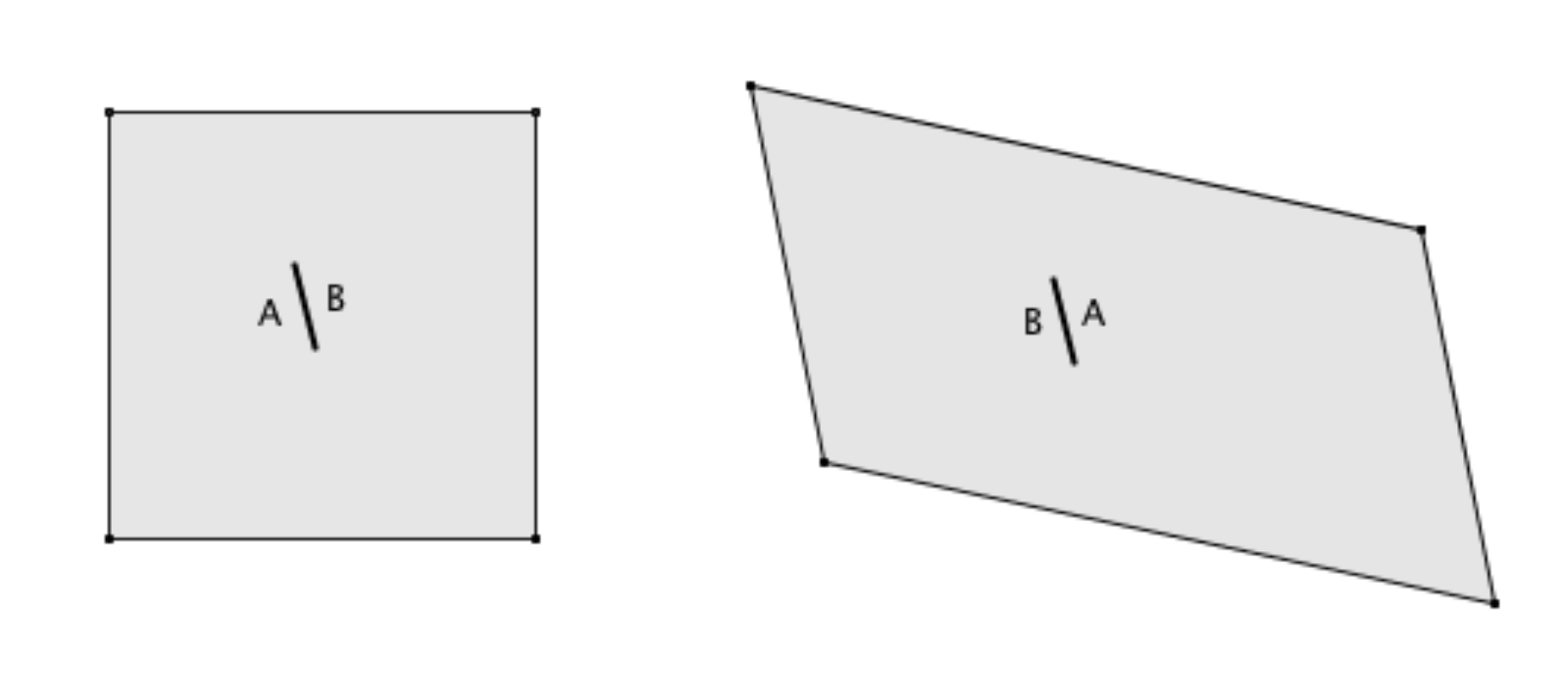}
\caption{An example of the slit tori construction. Opposite edges are identified, giving two tori. In each, a slit is made, so that each torus has boundary consisting of two line segments, labelled A and B here. These are glued together to give a translation surface of genus two with two singularities of cone angle $4\pi$, one at each end of the slit.}
\label{F:SlitTori}
\end{figure}

\bold{Unfolding rational billiards.} Perhaps the original motivation for translation surfaces came from the study of rational billiards. This will be explained in section 4. For now, begin with a polygon $P$ in $\bC$, all of whose angles are rational multiples of $\pi$. This restriction is equivalent to saying that the subgroup $H$ of $O(2)$ generated by the derivatives of reflections in the sides of the polygon is a finite group. (It is a dihedral group.)  

For each $h\in H$, we consider $hP$. We translate these if necessary so that the collection $\{hP: h\in H\}$ is a finite collection of disjoint polygons.  We identify the edges in pairs: if $h'P$ is the reflection of $hP$ in an edge of $hP$, this edge and the corresponding edge of $h'P$ are identified. 
\begin{figure}[h!]
\centering
\includegraphics[scale=0.25]{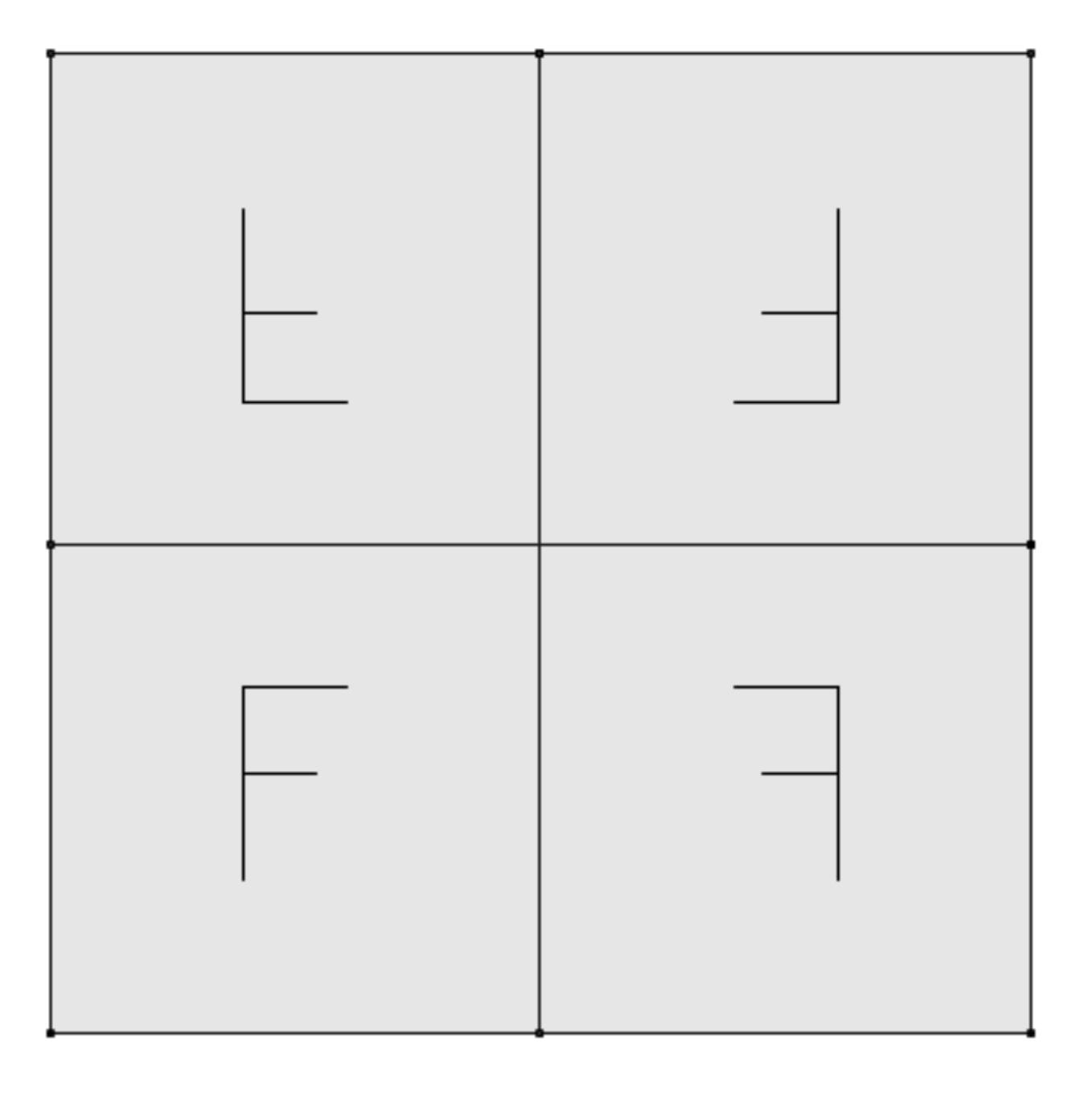}
\caption{The unit square unfolds to four squares, glued together to give the flat torus $(\bC/2\bZ[i], dz)$. Here each square has been decorated by the letter F, to illustrate which squares are reflections of other squares.}
\end{figure}
\begin{figure}[h!]
\centering
\includegraphics[scale=0.2]{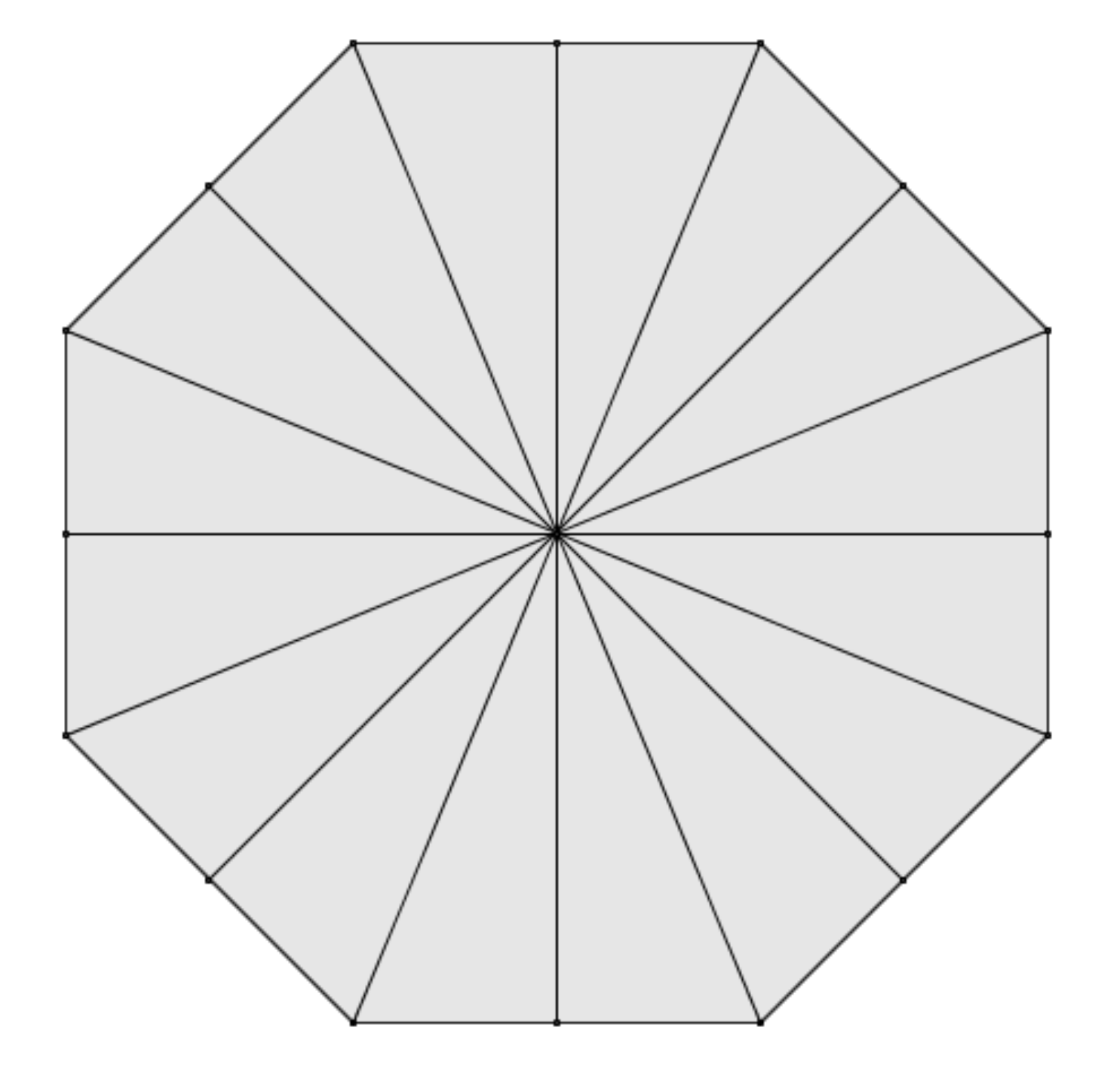}
\caption{Unfolding the right angled triangle with smallest angle $\pi/8$ gives the regular octagon with opposite sides identified.}
\end{figure}

\begin{prop}\label{P:pqr}
Suppose $\gcd(p,q,r)=1$ and $p+q+r=n$. The triangle with angles $\frac{p}{n}\pi, \frac{q}{n}\pi, \frac{r}{n}\pi$ unfolds to the Abelian differential $\frac{ydx}{x(x-1)}$ on the normalization of the algebraic curve 
$$y^n=x^p(x-1)^q.$$
\end{prop}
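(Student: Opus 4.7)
The plan is to realize both the unfolded surface and the algebraic curve as the same $n$-fold cyclic branched cover of $\bP^1$, and then to match their Abelian differentials using the Schwarz--Christoffel formula.

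First, I analyze the covering structure on the unfolding side. The dihedral group $H \subset O(2)$ has order $2n$, with rotation subgroup $R \cong \bZ/n\bZ$ generated by a rotation $\sigma$ of order $n$; the hypothesis $\gcd(p,q,r)=1$ is exactly what forces the subgroup generated by the rotations $s_i s_j$ (of orders $n/\gcd(n,p),\, n/\gcd(n,q),\, n/\gcd(n,r)$) to be the full $\bZ/n\bZ$ rather than a proper subgroup. Since $R$ acts on the unfolding $X$ by holomorphic automorphisms, the quotient $S := X/R$ is a Riemann surface; topologically $S$ is the ``pillow'' obtained by doubling the triangle across its boundary, i.e.\ a sphere with three cone points of angles $2p\pi/n, 2q\pi/n, 2r\pi/n$. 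As a Riemann surface $S \cong \bP^1$, and I fix the uniformization sending these three cone points to $0, 1, \infty$. This produces a degree-$n$ cyclic branched cover $x : X \to \bP^1$ ramified over $\{0,1,\infty\}$. Its monodromies in $R$ around these branch points are $\sigma^p, \sigma^q, \sigma^r$, as one verifies flat-geometrically: a small flat loop around a cone point of angle $2k\pi/n$ in $S$ lifts to a path in $X$ sweeping flat angle $2k\pi/n$, whose endpoints differ by the rotation $\sigma^k$.

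Next, I match the differentials via Schwarz--Christoffel. Choose the triangle $T_0 \subset X$ projecting conformally onto the upper half-plane. Its uniformization reads
\[
z(x) \;=\; C \int_0^x t^{p/n - 1}(t-1)^{q/n - 1}\, dt,
\]
so on $T_0$ the flat differential $\omega = dz$ equals $C\, x^{p/n - 1}(x-1)^{q/n - 1}\, dx$. Setting $y := x^{p/n}(x-1)^{q/n}$ with a holomorphic branch on $\bH$, this becomes $\omega = C\, y\, dx/(x(x-1))$. The function $y$ extends $R$-equivariantly (via $\sigma^* y = \zeta_n y$) to a single-valued function on $X$, because the monodromies of $(x^p(x-1)^q)^{1/n}$ around $0,1,\infty$ are precisely $\zeta_n^p, \zeta_n^q, \zeta_n^r$ (using $p+q+r=n$), matching the deck transformations identified above. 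Hence $y^n = x^p(x-1)^q$ globally on $X$, so the pair $(x,y)$ defines a morphism from $X$ to the normalization of the algebraic curve. Both sides being connected $n$-fold cyclic covers of $\bP^1$ with the same monodromy (connectedness again using $\gcd(p,q,r)=1$), this morphism is an isomorphism, and the local identity $\omega = C\, y\, dx/(x(x-1))$ extends to all of $X$ by analytic continuation, with the constant $C$ reflecting the overall scale of the original triangle.

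The main obstacle is the flat-geometric bookkeeping in the first step: pinning down the order of $R$, identifying $X/R$ with the pillow, and correctly matching the deck transformations at each branch point to powers of $\sigma$. Once that structural picture is in place, the Schwarz--Christoffel step is essentially a symbolic computation, with the elegant factorization $x^{p/n - 1}(x-1)^{q/n - 1} = y/(x(x-1))$ doing the real work of producing the stated form.
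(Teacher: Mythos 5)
Your proof is correct and follows exactly the route the paper indicates: the paper omits the argument, deferring to a reference and noting only that it is a short application of classical Schwarz--Christoffel mappings, which is precisely the mechanism you use after setting up the unfolding as the cyclic cover $X \to X/R \cong \bP^1$ with monodromies $\sigma^p, \sigma^q, \sigma^r$. Your treatment of the constant $C$ is also appropriate, since the statement is only meant up to similarity of the triangle.
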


This curve is a cyclic cover of $\bC P^1$ via the map $(y,x)\mapsto x$. 

The proof of this proposition is short and uses only classical complex analysis (Schwarz-Christoffel mappings), see for example \cite{DT}.

\subsection{Moduli spaces} Consider the problem of deforming the regular octagon. We may specify four of the edges as vectors in $\bC$, and thus guess (correctly!) that the moduli space is locally $\bC^4$. 

\begin{figure}[h!]
\centering
\includegraphics[scale=0.4]{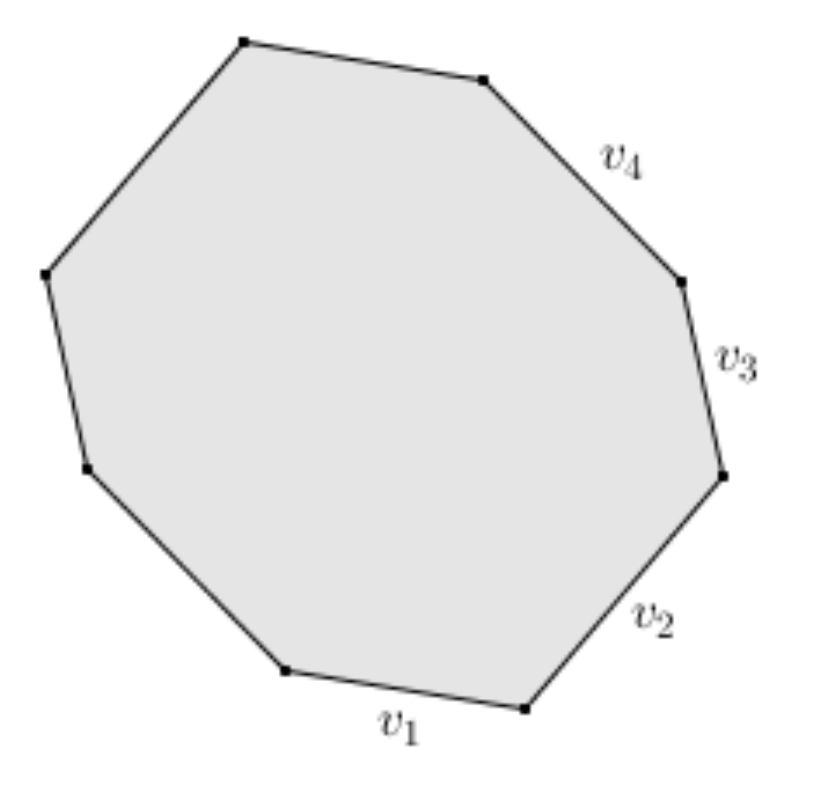}
\caption{An octagon with opposite edges parallel may be specified by four complex numbers $v_1, v_2, v_3, v_4\in \bC$. (Not all choices give a valid octagon without self crossings, but there is an open set of valid choices.)}
\end{figure}

An Abelian differential in genus two can have either a double zero, or two  zeros of order one. The octagon has a double zero, and deformations as above will always  have a double zero. Figures \ref{F:STgenus2} and \ref{F:SlitTori} illustrate genus two translation surfaces with two zeros of order one.

The collection of all Abelian differentials of genus $g$ is of course a vector bundle over the moduli space of Riemann surfaces. However, this space is stratified according to the number and multiplicity of the zeros of the Abelian differentials.

Let $g>1$ and let $\kappa$ denote a partition of $2g-2$, i.e. a nonincreasing list of positive integers whose sum is $2g-2$. So if $g=2$, the partitions are $(2)$ and $(1,1)$, and if $g=3$ the partitions are $(4), (3,1), (2,2), (2,1,1), (1,1,1,1)$. 

Define the stratum $\cH(\kappa)$ as the collection of genus $g$ translations surfaces $(X,\omega)$, where the multiplicity of the zeros of $\omega$ are given by $\kappa$. So $\cH(2)$ denotes the collection of genus two translation surfaces with a double zero, and $\cH(1,1)$ denotes the collection of all genus two translation surfaces with two simple zeros. 

\begin{prop}
Each stratum $\cH(\kappa)$ is a complex orbifold of dimension $n=2g+s-1$, where $s=|\kappa|$ denotes the number of distinct zeros of Abelian differentials in the stratum. Away from orbifold points (or on an appropriate  cover without orbifold points) each stratum has an atlas of charts to $\bC^n$ with transition functions in $GL(n,\bZ)$. 
\end{prop}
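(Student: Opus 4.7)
The plan is to construct local charts via \emph{period coordinates}. Fix $(X_0,\omega_0)\in\cH(\kappa)$ with zero set $\Sigma_0\subset X_0$, and consider the relative cohomology $H^1(X_0,\Sigma_0;\bC)$. Using the long exact sequence of the pair, together with $H^0(X_0)=\bC$, $H^0(\Sigma_0)=\bC^s$, and $\dim H^1(X_0;\bC)=2g$, one computes
\[
\dim_\bC H^1(X_0,\Sigma_0;\bC) = 2g + s - 1 = n.
\]
Dually, $H_1(X_0,\Sigma_0;\bZ)$ is a free abelian group of rank $n$. The differential $\omega_0$ defines a class in $H^1(X_0,\Sigma_0;\bC)$ by integration of $\omega$ over arcs with endpoints in $\Sigma_0$ and over absolute cycles; this is the value we want the period map to record.

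The main step is to build, for a small neighborhood $U$ of $(X_0,\omega_0)$, a canonical identification of $H_1(X,\Sigma;\bZ)$ with $H_1(X_0,\Sigma_0;\bZ)$ and show that the resulting period map
\[
\Phi \colon U \longrightarrow \bC^n, \qquad (X,\omega) \longmapsto \left(\int_{\gamma_1}\omega,\ldots,\int_{\gamma_n}\omega\right),
\]
for a fixed $\bZ$-basis $\gamma_1,\ldots,\gamma_n$, is a local homeomorphism. The concrete way I would do this is via the third (polygonal) definition: by the triangulation lemma, one can represent $(X_0,\omega_0)$ as a finite collection of triangles with paired edges, and one can use the edges as a generating set for $H_1(X_0,\Sigma_0;\bZ)$ after identifying identified edges. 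A small deformation of the edge vectors in $\bC$, subject only to the constraints that paired edges remain parallel and of equal length, yields a nearby translation surface in the same stratum; conversely, any sufficiently nearby surface admits a triangulation by saddle connections with the same combinatorics (this is the continuity statement that needs a small argument), and its triangles are determined by their edge vectors, which are exactly the relative periods. So $\Phi$ is both injective and surjective onto an open neighborhood of $\Phi(X_0,\omega_0)$, giving a local homeomorphism. Complex analyticity then follows because integration of a holomorphic family of differentials over locally constant cycles depends holomorphically on the base point.

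The transition functions are handled as follows. If two charts use bases $\{\gamma_i\}$ and $\{\gamma_i'\}$ of $H_1(X_0,\Sigma_0;\bZ)$, the change-of-basis matrix lies in $GL(n,\bZ)$, and since periods transform linearly under change of basis, the induced transition on $\bC^n$ is precisely multiplication by that matrix. For the orbifold structure, the issue is that $(X,\omega)$ may carry nontrivial biholomorphic automorphisms fixing $\omega$ (hence permuting $\Sigma$); such automorphisms act on the space of marked translation surfaces above a neighborhood of $(X_0,\omega_0)$, and $\cH(\kappa)$ is the quotient. The finite automorphism group of $(X_0,\omega_0)$ acts on the chart $U$, and one obtains an orbifold chart in the standard way; for a generic $(X_0,\omega_0)$ the automorphism group is trivial and one has an honest manifold chart, while passing to a cover that rigidifies (e.g.\ marks an integral homology basis) eliminates the orbifold loci.

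The step I expect to be the main obstacle is the continuity/stability of the triangulation, i.e.\ the assertion that for any nearby $(X,\omega)$ the same combinatorial triangulation can be realized by saddle connections whose holonomies are small perturbations of the original ones. The delicate point is that saddle connections can degenerate and new short ones can appear; one has to argue that this does not happen in a sufficiently small neighborhood, using compactness of the original triangulation and the openness condition that no edge vector crosses through zero. Once this local rigidity of the triangulation is established, the rest of the argument is essentially linear algebra and the long exact sequence computation.
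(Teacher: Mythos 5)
Your proposal is correct and follows essentially the same route as the paper: period coordinates valued in $H^1(X,\Sigma;\bC)\cong\bC^n$, local injectivity/surjectivity justified by the fact that the periods determine the edge vectors of a polygonal (triangulated) presentation, transition maps given by change of basis in $GL(n,\bZ)$, and the orbifold structure coming from the finite automorphism groups (the paper packages this last point as the quotient of a marked stratum by the mapping class group, which is the same thing). The stability-of-triangulation issue you flag is exactly the point the paper's sketch also elides.
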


Thus each stratum looks locally like $\bC^n$, and has a natural affine structure. 

\begin{proof}[Sketch of formal proof]
Let $S$ be fixed topological surface of genus $g$, with a set $\Sigma$ of  $s$  distinct marked points. Let us begin with the space $\tilde{\cH}(\kappa)$ of translations surfaces $(X,\omega)$ equipped with an equivalence class of homeomorphisms $f:S\to X$ that send the marked points to the zeros of $\omega$. The equivalence relation is isotopy rel marked points. 

We will see that the map from $\tilde{\cH}(\kappa)$ to $\cH(\kappa)$ that forgets $f$ is an infinite degree branched covering. 

Fix a basis for the relative homology group $H_1(S, \Sigma, \bZ)$. If $\Sigma=\{p_1, \ldots, p_n\}$, this is typically done by picking a (symplectic) basis $$\gamma_1, \ldots, \gamma_{2g}$$ for absolute homology $H_1(S, \bZ)$, and then picking a curve $\gamma_{2g+i}$ from $p_i$ to $p_n$ for each $i=1, \ldots, n-1$. The map
$$\tilde{\cH}(\kappa)\to \bC^n, \quad\quad (X,\omega, [f]) \mapsto \left( \int_{f_*\gamma_i}\omega\right)_{i=1}^{2g+s-1}$$
is locally one-to-one and is onto an open subset of $\bC^n$. The easiest way to see this is via flat geometry: These integrals determine the integrals of every relative homology class, and in particular the complex lengths of the edges in any polygon decomposition for $(X,\omega)$. The edges in this polygon decomposition of course determine $(X,\omega)$. 

The mapping class group of $S$ (with $s$ distinct unlabeled marked points) acts on $\tilde{\cH}(\kappa)$ by precomposition of the marking. The induced action on these $\bC^n$ coordinates is via $GL(n,\bZ)$ (change of basis for relative homology). The quotient is $\cH(\kappa)$.
\end{proof}

In this course we will ignore orbifold issues, and just pretend strata are complex manifolds rather than complex orbifolds. Given a translation surface $(X,\omega)$, and a basis $\gamma_i$ of the relative homology group $H_1(X,\Sigma, \bZ)$, we will refer to 
$$\left( \int_{\gamma_i}\omega\right)_{i=1}^{2g+s-1}\in \bC^n$$
as \emph{period coordinates} near $(X,\omega)$. Implicit in this is that the basis can be canonically transported to nearby surfaces in $\cH(\kappa)$, thus giving  a map from a neighbourhood of $(X,\omega)$ to a neighbourhood in $\bC^n$. However, when $X$ has automorphisms preserving $\omega$, this is not the case canonically. This is precisely the issue we are ignoring when we pretend that $\cH(\kappa)$ is a manifold instead of an orbifold. 

It is precisely the period coordinates that provide strata an atlas of charts to $\bC^n$ with transition functions in $GL(n,\bZ)$. The transition functions are change of basis matrices for relative homology $H_1(X,\Sigma, \bZ)$. 

\bold{Compactness criterion.} Strata are never compact. Even the subset of unit area translation surface is never compact. (Here and throughout these notes we refer to the analytic topology, which is the weakest topology for which period coordinates are continuous.)

Masur's compactness criterion gives that a closed subset of the set of unit area surfaces in a stratum is compact if and only if there is some positive lower bound for the length of all saddle connections on all translation surfaces in the subset. 

Let us also remark that the map $(X,\omega)\mapsto X$ is not proper, even when restricting to unit area surfaces. For example, it is possible to have a sequence of translation surfaces $(X_n,\omega_n)$ of area 1 in $\cH(1,1)$ converge to $(X,\omega)\in \cH(2)$ (so two zeros coalesce) in the bundle of Abelian differentials over the moduli space of Riemann surfaces. However, such a sequence $(X_n,\omega_n)$ will diverge in $\cH(1,1)$: there will be shorter and shorter saddle connections joining the two zeros. 

\bold{Every Abelian differential in genus two.} We now give flat geometry pictures of all Abelian differentials in genus 2. The discussion includes only sketches of proofs. 

\begin{prop}
Every translation surface in $\cH(2)$ can be obtained by gluing a cylinder into a slit torus, as in figure \ref{F:AllH2}. Every translation surface in $\cH(1,1)$ is obtained from the slit torus construction, as in figure \ref{F:AllH11}
\end{prop}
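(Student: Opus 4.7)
The plan is to identify, in each case, a flat-geometric decomposition of $(X,\omega)$ that exhibits the desired construction, using a shortest saddle connection as the seed.

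\textbf{Case $\cH(2)$.} The goal is to produce a horizontal cylinder $C$ whose boundary is a saddle connection and whose complement is a slit torus. I would let $\sigma$ be a shortest saddle connection on $(X,\omega)$, which exists by Masur's compactness criterion; since there is a unique zero $P$ of cone angle $6\pi$, $\sigma$ is a loop at $P$. After rotating $\omega$ so that $\sigma$ is horizontal of length $\ell$, I would flow vertically off $\sigma$ and track the maximal horizontal strip $R$ that $\sigma$ bounds. The heart of the argument is to show $R$ closes up into a cylinder $C$: any obstruction would come from a cone point in the interior or top of the extending rectangle, and the resulting straight line from that cone point to one of the endpoints of $\sigma$ would furnish a horizontal or oblique saddle connection strictly shorter than $\ell$, contradicting minimality. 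Once $C$ is obtained, I would set $W=X\setminus C$ and compute $\chi(W)=\chi(X)-\chi(C)=-2$, with $W$ carrying the remaining $6\pi-2\pi=4\pi$ of cone angle at $P$; this forces $W$ to be a once-punctured torus with a single slit, whence $(X,\omega)$ is exactly a cylinder glued into a slit torus.

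\textbf{Case $\cH(1,1)$.} Here I would find two parallel saddle connections $\sigma,\sigma'$ of equal length joining the two zeros $P,Q$ such that cutting along $\sigma\cup\sigma'$ separates $(X,\omega)$ into two slit tori (which, when re-glued along their slits, constitute the slit torus construction). Starting from a shortest saddle connection $\sigma$ that joins $P$ to $Q$ (easy casework rules out the possibility that every shortest saddle connection is a loop), rotate to horizontal of length $\ell$. Since each of $P$ and $Q$ has cone angle $4\pi$, each carries two copies of the horizontal direction at it, and $\sigma$ uses one copy at each. I would launch a parallel horizontal trajectory from the unused horizontal copy at $P$; minimality of $\sigma$ forces this trajectory to terminate at $Q$ in the unused copy after length exactly $\ell$, yielding $\sigma'$. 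An Euler characteristic count then identifies each component of the complement as a torus with a single slit.

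\textbf{Main obstacle.} The crux in both cases is excluding pathological behavior in the geometric flow step: in $\cH(2)$, that an interior cone point blocks the horizontal strip from closing into a cylinder; and in $\cH(1,1)$, that the parallel trajectory loops back to $P$ rather than reaching $Q$. Both failures would in principle be compatible with the surface being compact, and the only tool preventing them is the tightness of the minimality of $\sigma$ combined with the scarcity of cone angle available in genus $2$ (only $6\pi$ at one point, or $4\pi$ at each of two points). Making this tightness rigorous — essentially, a careful enumeration of how a shortest saddle connection can interact with the few horizontal copies at each cone point — is where the real technical work sits, and is what I would spend most of the effort on.
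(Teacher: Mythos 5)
There is a genuine gap, and it sits exactly where you located the ``main obstacle'': minimality of the shortest saddle connection $\sigma$ does not deliver the closing-up you need, in either case. In $\cH(2)$, when the upward strip over a horizontal $\sigma$ of length $\ell$ first meets a cone point on its top edge, that point sits at some height $h$ and horizontal offset $x\in[0,\ell]$, and the two diagonals you propose have lengths $\sqrt{x^2+h^2}$ and $\sqrt{(\ell-x)^2+h^2}$. If $h\geq \ell$ (or even $h\geq \ell\sqrt{3}/2$ with $x$ near $\ell/2$), both of these are at least $\ell$, so no shorter saddle connection appears and the strip is simply obstructed; and even when no cone point intervenes, the strip does not form a cylinder unless its two vertical sides happen to be identified. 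In $\cH(1,1)$, the horizontal separatrix launched from the unused prong at $P$ need not terminate at all: by the Katok--Zemljakov dichotomy the horizontal direction can be minimal on the complement of $\sigma$, in which case that separatrix is dense and never reaches a singularity. Minimality of $\sigma$ only forbids its terminating \emph{before} length $\ell$; it cannot force termination at $Q$ at length exactly $\ell$. A warning sign is that your key step uses nothing about genus $2$: if it were correct, it would show that every shortest saddle connection in every stratum is adjacent to a cylinder in its own direction, which is false.

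The missing idea is the hyperelliptic involution $\rho$, which is what the paper's proof runs on. Every genus-$2$ translation surface is hyperelliptic with $\rho^*\omega=-\omega$, so for a saddle connection $c$ not fixed by $\rho$ (such $c$ exist by a triangulation argument), the image $\rho(c)$ is precisely the parallel partner of equal length that you are trying to manufacture by flowing; in $\cH(1,1)$ one checks $\rho$ swaps the two zeros, so if $c$ joins them then so does $\rho(c)$. Since $\rho_*=-\Id$ on $H_1(X,\bZ)$, the cycle formed by $c$ and $\rho(c)$ is null-homologous, so cutting along both separates the surface; a genus count identifies the pieces as a genus-$0$ piece (a flat cylinder, by Gauss--Bonnet) and a slit torus in the $\cH(2)$ case, and as two slit tori in the $\cH(1,1)$ case. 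This also repairs a secondary gap in your write-up: even granting a cylinder $C$ adjacent to $\sigma$, your Euler characteristic count alone does not force $X\setminus C$ to be a connected slit torus (the regular octagon, for instance, decomposes into two horizontal cylinders).
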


\begin{lem}
For any translation surface in genus 2, not every saddle connection is fixed by the hyperelliptic involution $\rho$. 
\end{lem}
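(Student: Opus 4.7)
The plan is to proceed by contradiction, exploiting the six fixed points of the hyperelliptic involution. Recall the basic facts about hyperellipticity in genus $2$: every Riemann surface $X$ of genus $2$ is hyperelliptic, the hyperelliptic involution $\rho$ has exactly six fixed points (the Weierstrass points), and $\rho^*\omega = -\omega$ for every Abelian differential $\omega$ on $X$.

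The first step is to locate the Weierstrass points relative to the zeros of $\omega$. In a coordinate $z$ at a zero of order $k$ with $\omega = z^k\,dz$ (from Proposition \ref{P:standard}), if $\rho$ fixes this zero it must act locally as $z \mapsto \zeta z$ with $\zeta^2 = 1$ and $\zeta^{k+1} = -1$; the only solution is $\zeta = -1$ with $k$ even. Hence in $\cH(2)$ the unique double zero is a Weierstrass point, whereas in $\cH(1,1)$ the two simple zeros cannot be fixed by $\rho$ and must therefore be swapped. In either stratum, at least five of the six Weierstrass points are not zeros of $\omega$.

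Next I would show that any saddle connection $\gamma$ fixed setwise by $\rho$ carries a non-singular Weierstrass point at its midpoint. Since $\rho$ is an isometric involution that permutes the endpoints of $\gamma$, its restriction to $\gamma$ (parameterized by arc length on $[0,L]$) is either the identity or the orientation-reversing map $t \mapsto L - t$. The identity is impossible because $\rho$ has only finitely many fixed points on $X$ while $\gamma$ has infinitely many points, so the midpoint of $\gamma$ must be a fixed point of $\rho$. Being in the interior of a saddle connection, this midpoint is not a zero of $\omega$, so it is a non-singular Weierstrass point.

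The contradiction now comes from a triangulation $T$ of $(X,\omega)$, which exists by the preceding lemma. If every saddle connection were fixed by $\rho$, then each edge of $T$ would carry a non-singular Weierstrass point as its midpoint, and distinct edges of $T$ would give distinct midpoints because their interiors are disjoint. An Euler characteristic computation with $V$ singularities ($V = 1$ or $2$), $F = 2E/3$ faces, and $V - E + F = -2$ gives $E = 9$ for $\cH(2)$ and $E = 12$ for $\cH(1,1)$, both strictly greater than the supply of $5$ or $6$ non-singular Weierstrass points. The main obstacle is the local analysis of $\rho$ at a zero of $\omega$, which pins down exactly how $\rho$ permutes the singularities; the rest is a short combinatorial count.
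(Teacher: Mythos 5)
Your proof is correct, but it takes a genuinely different route from the paper's. The paper also starts from a triangulation, but then argues in one line: if every edge were fixed setwise by $\rho$, each triangle would be mapped to itself, and since $\rho$ acts as $z\mapsto -z+c$ in flat coordinates this would force a Euclidean triangle to have a rotation-by-$\pi$ symmetry, which is impossible. You instead quantify the fixed-point set: $\rho$ has exactly six fixed points (the Weierstrass points), a fixed saddle connection must be reversed and hence carries a fixed point at its midpoint, distinct edges have disjoint interiors and so contribute distinct non-singular fixed points, and the Euler characteristic count ($E=9$ in $\cH(2)$ with at most $5$ available Weierstrass points, $E=12$ in $\cH(1,1)$ with at most $6$) gives the contradiction. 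Your argument is longer but arguably more robust: it sidesteps the paper's assertion that each triangle maps to itself (which, as stated, needs a word of justification, since a priori $\rho$ could send a triangle to a different triangle bounded by the same three edges), and along the way it establishes the useful facts that the double zero in $\cH(2)$ is a Weierstrass point, that the two simple zeros in $\cH(1,1)$ are swapped, and that Weierstrass points sit at midpoints of $\rho$-invariant saddle connections. What the paper's argument buys is brevity and independence from the classical count of six fixed points; what yours buys is an explicit, checkable bookkeeping of where the symmetry lives on the flat surface. Both proofs are valid.
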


\begin{proof}
Triangulate the surface. If every saddle connection in this triangulation was fixed by the hyperelliptic involution, then each triangle would be mapped to itself. However, no triangle has rotation by $\pi$ symmetry, so this is impossible.  
\end{proof}

\begin{proof}[Sketch of proof of proposition.] Now fix a translation surface in $\cH(2)$, and a saddle connection $c$ not fixed by $\rho$. Since $\rho_*$ acts on homology by $-1$, $c$ and $\rho(c)$ are homologous curves. Cutting  $c$ and $\rho(c)$ decomposes the surface into two subsurfaces with boundary, one of genus one and one of genus zero. The genus zero component is a cylinder  and the genus one part must be a slit torus. 

\begin{figure}[h!]
\centering
\includegraphics[scale=0.5]{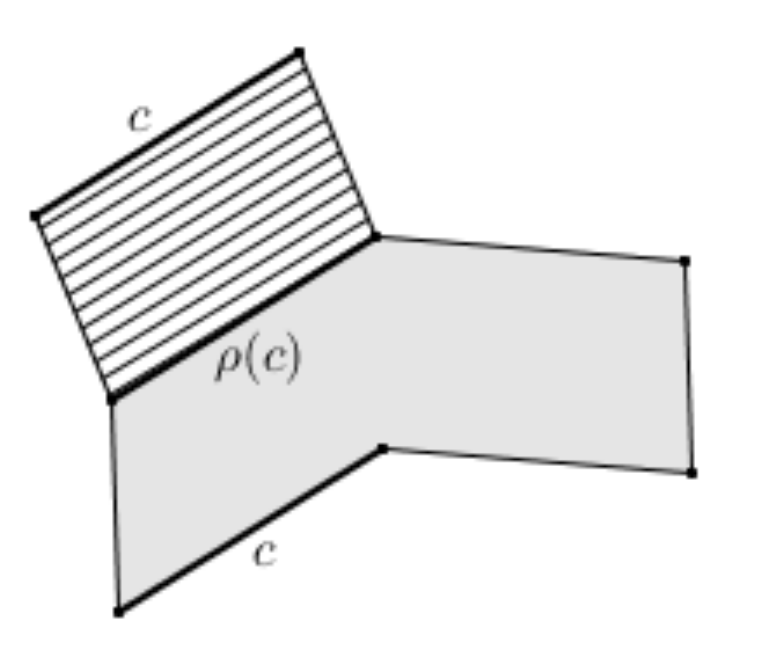}
\caption{Every translation surface in $\cH(2)$ admits a deposition into a cylinder and a slit torus, and hence can be drawn as in this picture. Opposite sides are identified.}
\label{F:AllH2}
\end{figure}

Similarly for a translation surface in $\cH(1,1)$, consider a triangulation. This must contain at least one triangle with one vertex at one of the singularities, and the other two vertices at the other singularity (i.e., for at least one of the triangles, not all corners are at the same singularity). At most one of the edges of this triangle is fixed by the hyperelliptic involution, so the triangle must have at least one edge $c$ that is not fixed by the hyperelliptic involution and goes between the two zeros. 

Cutting along $c$ and $\rho(c)$ decomposes the surface into two tori. In other words, every translation surface in $\cH(1,1)$ may be obtained from the slit torus construction. 

\begin{figure}[h!]
\centering
\includegraphics[scale=0.4]{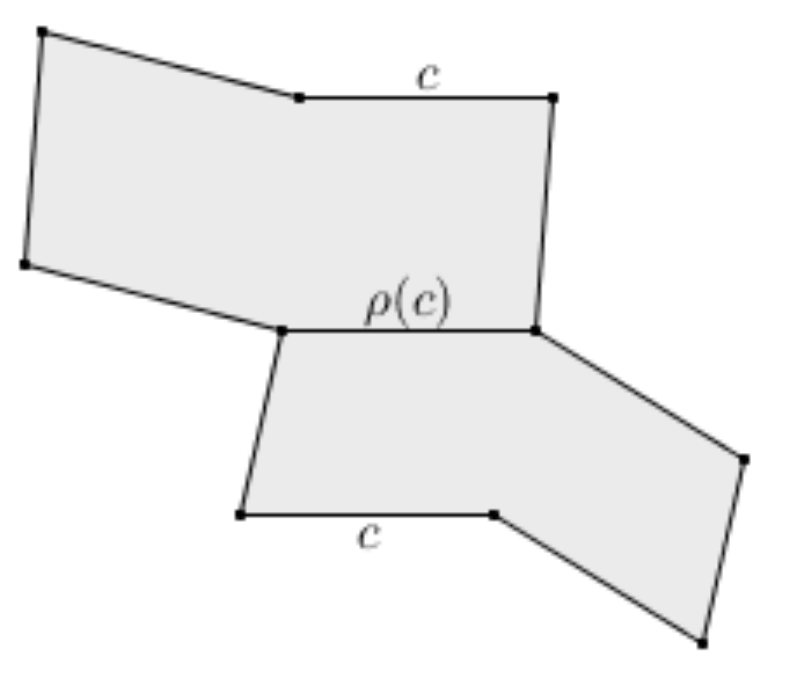}
\caption{Every translation surface in $\cH(1,1)$ comes from the slit torus construction, and thus can be drawn as in this picture.}
\label{F:AllH11}
\end{figure}

\end{proof}

\bold{Philosophical conclusions.} Polygon decompositions and flat geometry provide a fundamentally different way to think about Abelian differentials on complex algebraic curves. In this perspective an Abelian differential is extremely easy to write down (as a collection of polygons), and it is easy to visualize deformations of this Abelian differential (the edges of the polygons change). However, some things become more difficult in this perspective. For example, given a plane algebraic curve it is often an easy exercise to write down a basis of Abelian differentials. However, given a translation surface given in terms of polygons, this is typically impossible to do. And it is typically a transcendentally difficult problem to write down the equations for the algebraic curve given by some polygon decomposition. 

The study of translation surfaces and algebraic curves is  enriched by the transcendental connections between the two perspectives. 

\section{Affine invariant submanifolds}

\subsection{Definitions and first examples}  Fix a stratum, and a translation surface $(X,\omega)$ in the stratum. As always, let $\Sigma\subset X$ be the zeros of $\omega$. 

There is a linear injection $H^{1,0}(X) \to H_1(X,\Sigma, \bC)^*=H^1(X,\Sigma, \bC)$. Given $\omega\in H^{1,0}(X)$, the linear functional it determines in $H_1(X,\Sigma, \bC)^*$ is simply integration of $\omega$ over relative homology classes. Given a basis $\gamma_1, \ldots, \gamma_n$ of $H_1(X,\Sigma, \bZ)$, we get an isomorphism 
$$H^1(X,\Sigma, \bC)\to \bC^n, \quad\quad \phi \mapsto \left(\phi(\gamma_i)\right)_{i=1}^n.$$
The period coordinates of the previous section are just the composition of the map sending $(X,\omega)$ to $\omega$ considered as a relative cohomology class, followed by this isomorphism to $\bC^n$. About half the time it is helpful to forget this coordinatization, and just consider period coordinates as a map to $H^1(X,\Sigma, \bC)$ sending $(X,\omega)$ to the relative cohomology class of $\omega$. 

In the next definition we fix a stratum $\cH=\cH(\kappa)$, where $\kappa$ is a partition of $2g-2$ and $g$ is the genus. 

\begin{defn}
An \emph{affine invariant submanifold} is the image of a proper immersion $f$ of an open connected manifold $\cM$ to a stratum $\cH$, such that each point $p$ of $\cM$ has a neighbourhood $U$ such that in a neighbourhood of $f(p)$, $f(U)$ is determined by  linear equations in period coordinates with coefficients in $\bR$ and constant term $0$. 
\end{defn}

The difference between immersion and embedding is a minor technical detail, so for notational simplicity we will typically consider an affine invariant submanifold $\cM$ to be a subset of a stratum. The definition says that locally in period coordinates $\cM$ is a linear subspace of $\bC^n$. The requirement that the linear equations have coefficients in $\bR$ is equivalent to requiring that the linear subspace of $\bC^n$ is the complexification of a real subspace of $\bR^n$ (or, in coordinate free terms, of $H^1(X,\Sigma, \bR)$). 

The word ``affine" does not refer to affine varieties; it refers to the linear structure on $\bC^n$. (Perhaps ``linear" would be a better term than ``affine", but this is the terminology in use.)

\begin{figure}[h]
\centering
\includegraphics[scale=0.35]{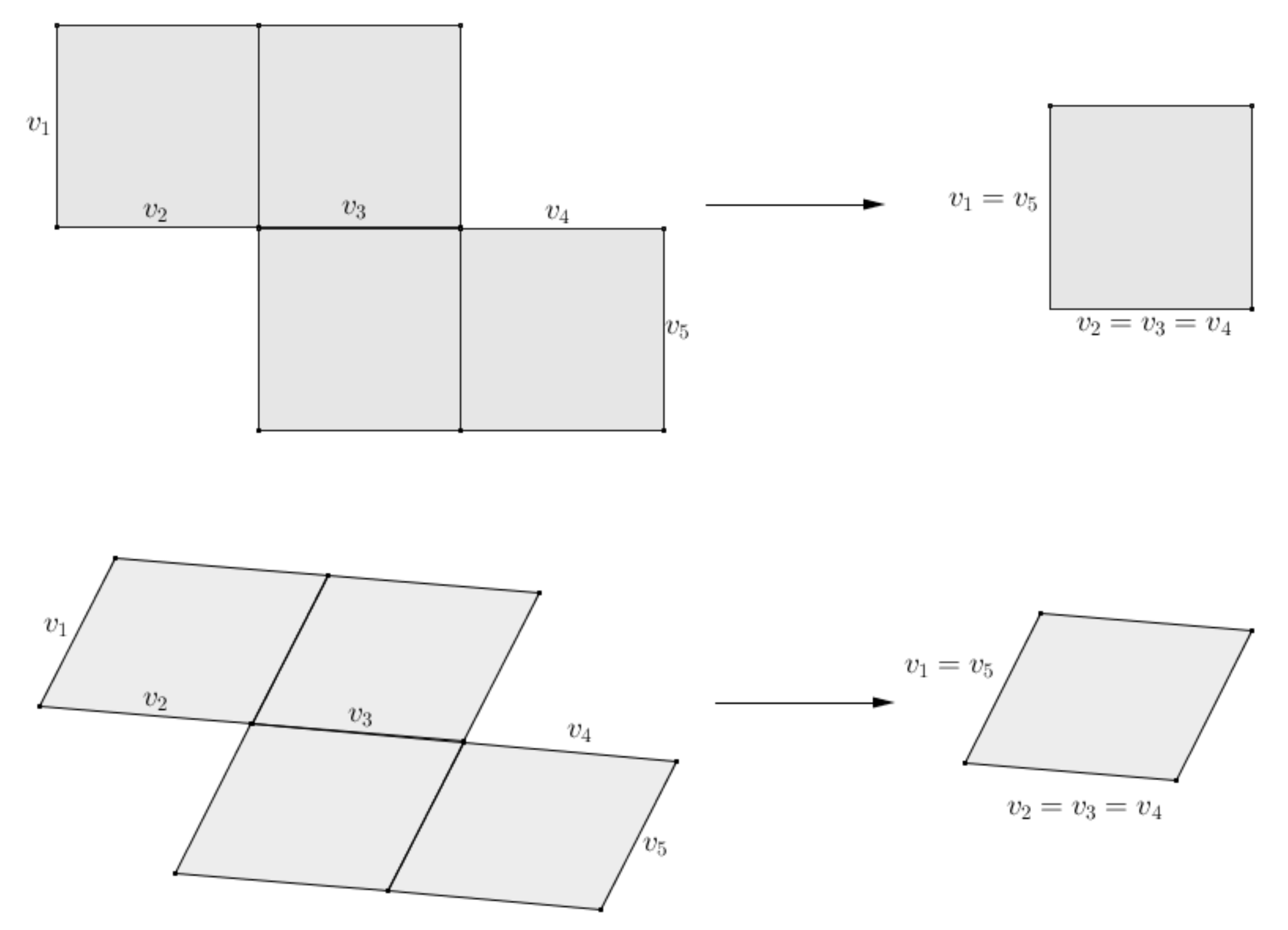}
\caption{Period coordinates $(v_1, v_2, v_3, v_4, v_5)\in \bC^5$ have been indicated on two surfaces in $\cH(1,1)$ (upper and lower left) that are degree four translation covers of tori (upper and lower right). The linear equations indicated locally cut out the locus of surfaces in $\cH(1,1)$ that are degree four translation covers of tori branched over 1 point.}
\label{F:EllBasis}
\end{figure}

In these notes, when we refer to the dimension of complex manifolds or vector spaces, we will mean their complex dimension.

Note that affine invariant submanifolds must have dimension at least $2$. This is because if $(X,\omega)$ is a point on an affine invariant submanifold, then the linear subspace must contain the real and imaginary parts the period coordinates, and these cannot be collinear. That they cannot be collinear follows from the fact that $\Re(\omega)$ and $\Im(\omega)$ cannot be collinear real cohomology classes, because 
$$\int_X \Re(\omega) \Im(\omega)$$
gives the area of the translation surface. (Locally $\omega=dz=dx+idy$, so $\Re(\omega) \Im(\omega)=dxdy$.) 

\begin{ex}\label{E:cover}
Let $\cM$ be a connected component of the space of degree $d$ translation covers of a genus one translation surface, which is allowed to vary, branched over $k$ distinct points. Then $\cM$ is an affine invariant submanifold of whichever stratum it lies in. 

Indeed, $\cM$ is a $(k+1)$-dimensional immersed manifold: the moduli space of genus one translation surfaces is 2-dimensional, and after one branched point is fixed at 0, the other $k-1$ are allowed to vary on the torus. So it suffices to see that locally in period coordinates, $\cM$ is contained in an $(k+1)$-dimensional linear subspace. 

Let $(X,\omega)\in \cM$, and say $f:(X,\omega)\to (X', \omega')$ is the translation covering, branched over $k$ points. Let $\Sigma\subset X$ be the set of zeros of $\omega$, and $\Sigma'\subset X'$ be the set of branch points of $f$, so $f(\Sigma)=\Sigma'$.

Let $\gamma\in \ker(f_*)\subset H_1(X,\Sigma, \bZ)$, and note that 
\begin{eqnarray*}
\int_\gamma \omega  
&=&
\int_\gamma f^*(\omega') 
\\&=&
\int_{f_*\gamma} \omega' =0.
\end{eqnarray*}
Thus since $\ker(f_*)$ has dimension $\dim_\bC H_1(X,\Sigma, \bC) - \dim_\bC H_1(X',\Sigma', \bC)$, we see that locally $\cM$ lies in an $\dim_\bC H_1(X',\Sigma', \bC)=(k+1)$-dimensional linear subspace in period coordinates, as desired. 

To rephrase the discussion in coordinate free terms, the relative cohomology class of $\omega$ lies in $f^*(H^1(X', \Sigma', \bC))$. 
\end{ex}

\begin{ex}
Fix a stratum $\cH$ and numbers $k$ and $d$, and let $\cM$ be a connected component of the space of all degree $d$ translation covers of surfaces in $\cH$ branched over $k\geq 0$ distinct points. Then similarly $\cM$ is an affine invariant submanifold. 

Similarly, if $\cM$ is an affine invariant submanifold, and $\cM'$ is a connected component of the space of all degree $d$ translation covers of surfaces in $\cM$ branched over $k\geq 0$ distinct points, then $\cM'$ is also an affine invariant submanifold. 
\end{ex}

\begin{ex}
Fix a stratum $\cH$ and a number $k$, and let $\cQ$ be a connected component of the locus of $(X,\omega)\in \cH$ where $X$ admits an involution $\iota$ with $\iota^*(\omega)=-\omega$ and $k$ fixed points. Then $\cQ$ is an affine invariant submanifold of $\cH$ (although it may be empty). It is locally defined by the equations 
$$\int_{\iota_* \gamma} \omega + \int_{\gamma} \omega =0,$$
for $\gamma\in H_1(X,\Sigma, \bZ)$. 

A special case of this example is when $\iota$ is the hyperelliptic involution. In some strata, there is a whole connected component of hyperelliptic surfaces; in some, there are none; and in some, the hyperelliptic surfaces form a proper affine invariant submanifold. 
\end{ex}

\subsection{Real multiplication in genus 2} We will begin with an elementary construction of some affine invariant submanifolds in genus 2. After we have constructed them, we will show that they parameterize Riemann surfaces $X$ whose Jacobian admits real multiplication with $\omega$ as an eigenform.

These affine invariant submanifolds were independently discovered by McMullen and Calta \cite{Mc, Ca} from very different perspectives. Our presentation is variant of McMullen's, and we suggest that the interested reader also consult \cite{Mc}.

For the next proposition, recall that $H^{1,0}(X)$ can be considered as a subspace of $H^1(X,\bC)$, and that there is a natural symplectic pairing $\langle \cdot, \cdot \rangle$ on $H^1(X,\bZ)$. An endomorphism $M$ of $H^1(X,\bZ)$ is called self-adjoint (with respect to this symplectic form) if $\langle M v, w \rangle=\langle v, M w \rangle$ for all $v,w\in H^1(X,\bZ)$. Note that different eigenspaces for a self-adjoint endomorphism must be symplectically orthogonal, since if $Mv=\lambda v$ and $Mw=\mu w$, then $$\lambda \langle v, w\rangle = \langle M v, w \rangle=\langle v, M w \rangle=\mu \langle v, w\rangle.$$

\begin{prop}\label{P:RMwoRM}
Fix an integer $D>0$ not a square. Consider the locus of $(X,\omega)$ in $\cH(1,1)$ or $\cH(2)$ for which there is a self-adjoint endomorphism $M:H^1(X, \bZ)\to  H^1(X, \bZ)$ whose extension to $H^1(X, \bC)$ satisfies $$M\omega=\sqrt{D}\omega.$$ In $\cH(1,1)$ this locus is a finite union of 3-dimensional affine invariant submanifolds, and in $\cH(2)$ it is a finite union of 2-dimensional affine invariant submanifolds. 
\end{prop}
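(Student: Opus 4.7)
The plan is to fix $(X_0,\omega_0)$ in the locus with a witness $M_0$ and analyze the locus locally in period coordinates by linear algebra; the global finiteness statement will require a separate argument, which I expect to be the main obstacle.

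\textbf{Local extension.} Nearby translation surfaces in the stratum share a canonical identification of their first integer cohomology via the Gauss--Manin (topological) local system that underlies period coordinates, so $M_0$ extends uniquely to a self-adjoint endomorphism $M$ of $H^1(X,\bZ)$ in a neighborhood $U$ of $(X_0,\omega_0)$. The condition $M\omega=\sqrt{D}\,\omega$, read on the absolute cohomology class of $\omega$, is then a linear constraint on $\omega\in H^1(X,\bC)$ that makes sense throughout $U$.

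\textbf{Eigenspace dimension.} The key algebraic step is to show that $V:=\ker(M_0-\sqrt{D}\,\Id)\subset H^1(X_0,\bC)$ has complex dimension exactly $2$. Since $M_0$ is defined over $\bZ$, $V$ is stable under complex conjugation. If $\dim_\bC V=1$, then $V$ would be the complexification of a real line $\bR u\subset H^1(X_0,\bR)$, which would force $\Re(\omega_0)$ and $\Im(\omega_0)$ to be proportional real classes, contradicting
$$\Area(X_0,\omega_0)=\int_{X_0}\Re(\omega_0)\wedge\Im(\omega_0)>0.$$
Hence $\dim_\bC V\geq 2$. For the opposite inequality, $M_0$ has integer characteristic polynomial and $D$ is not a square, so the minimal polynomial $x^2-D$ of $\sqrt{D}$ over $\bQ$ divides the characteristic polynomial; thus $-\sqrt{D}$ is also an eigenvalue with the same algebraic multiplicity as $\sqrt{D}$. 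Since $\dim_\bC H^1(X_0,\bC)=4$, this common multiplicity is at most $2$, so $\dim_\bC V=2$. Being conjugation-stable, $V$ is the complexification of a real $2$-plane $V^{\bR}\subset H^1(X_0,\bR)$, and its defining equations have real (indeed integer) coefficients in any integer basis of $H^1(X_0,\bZ)$.

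\textbf{Passage to relative cohomology.} The long exact sequence of the pair $(X,\Sigma)$ gives a natural map $H^1(X,\Sigma,\bC)\to H^1(X,\bC)$ which is an isomorphism in $\cH(2)$ (where $|\Sigma|=1$) and a surjection with $1$-dimensional kernel in $\cH(1,1)$ (where $|\Sigma|=2$). Pulling back the complex-codimension-$2$ condition on absolute periods produces a local linear subvariety of period coordinates of dimension $2$ in $\cH(2)$ and of dimension $3$ in $\cH(1,1)$, defined by equations with real coefficients and zero constant term.

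\textbf{Main obstacle: finiteness.} The hardest point is bounding the number of components. Each component is indexed by a mapping-class-group conjugacy class of self-adjoint integer endomorphism $M$ of $H^1(X,\bZ)$ admitting $\sqrt{D}$ as an eigenvalue with real $2$-dimensional eigenspace. I would argue that $\bZ[M]$ is forced to be an order in $\bQ(\sqrt{D})$ whose conductor is bounded in terms of $D$ by exploiting how $M$ interacts with the unimodular symplectic form on $H^1(X,\bZ)$ (in particular, the restriction of the symplectic form to $V^{\bR}$ together with the action of $\sqrt{D}$ produces a quadratic form over $\bZ[\sqrt{D}]$ of bounded discriminant). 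The classification of rank-$2$ modules over such an order, combined with symplectic compatibility, would then leave only finitely many integral isomorphism types and hence finitely many components.
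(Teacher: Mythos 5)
Your local linear algebra (the count showing the $\sqrt{D}$-eigenspace is exactly $2$-dimensional, and the passage from absolute to relative cohomology giving dimensions $2$ and $3$) is correct and matches the corresponding step of the paper's proof. But there is a genuine gap at the first step, and it is exactly where the hypotheses of self-adjointness and integrality do their work. You fix one witness $M_0$ at $(X_0,\omega_0)$, transport it by Gauss--Manin, and show that $\{(X,\omega): M_0\omega=\sqrt{D}\,\omega\}$ is locally linear of the right dimension. That is not the locus in the statement: the locus consists of all $(X,\omega)$ for which \emph{some} self-adjoint integral $M$ with $M\omega=\sqrt{D}\,\omega$ exists, i.e.\ the union over all admissible $M$ of the sets you analyze, and a priori such a union could fail to be locally linear or even closed (pieces attached to different $M$ could accumulate). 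The missing observation is that the witness is unique and locally constant: by your dimension count the $\sqrt{D}$-eigenspace is exactly $\span(\Re(\omega),\Im(\omega))$, and self-adjointness forces the $-\sqrt{D}$-eigenspace to be its symplectic perp, which is a genuine complement because $\int_X \Re(\omega)\Im(\omega)>0$. Hence $M$ is completely determined by $\omega$, so it varies continuously along the locus, and since $\End(H^1(X,\bZ))$ is discrete it is locally constant. This single argument is what proves the locus is closed (needed so the immersion defining an affine invariant submanifold is proper) and what reduces the locus, locally, to the one linear piece you studied. The fact that you never invoke self-adjointness is a symptom of the omission; and the paper's later remark that the analogous statement fails in higher genus precisely because the $M_n$ ``might diverge'' shows this is the substantive point, not a technicality.

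On finiteness of the number of components: your sketch is indeed incomplete, but you have also somewhat misplaced the difficulty. The paper's own proof concentrates on closedness and local constancy of $M$ and does not dwell on counting components; the heart of the proposition is the closedness/uniqueness argument above, not the finiteness.
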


In the proof, it is important to remember that because $\int_X \Re(\omega)\Im(\omega)>0$, the restriction of the symplectic form to the subspace $\span(\Re(\omega), \Im(\omega))$ is symplectic. 

\begin{proof}
First we will check that this locus is closed. (Recall that our default topology is the the analytic topology, which we are referring to here.) Suppose that $(X_n,\omega_n)$ is a sequence of surfaces in this locus, converging to $(X,\omega)$. We will first show that necessarily $(X,\omega)$ is in the locus, and hence the locus is closed. 

Let $M_n$ denote the endomorphism for $(X_n,\omega_n)$. This endomorphism $M_n$ has two $\sqrt{D}$-eigenvectors, $\Re(\omega_n)$ and $\Im(\omega_n)$. Since $M_n$ is an integer matrix, its $-\sqrt{D}$-eigenspace is the Galois conjugate of its $\sqrt{D}$-eigenspace, and hence must also be 2-dimensional. Since $M_n$ is self-adjoint, the $\sqrt{D}$ and $-\sqrt{D}$-eigenspaces are symplectically orthogonal. In particular, the $-\sqrt{D}$-eigenspace is the symplectic perp of the $\sqrt{D}$-eigenspace.

Note that $\omega_n$ converges to $\omega$ as cohomology classes. (The topology on strata is the topology on period coordinates, and period coordinates exactly determine the relative cohomology class of $\omega$.) Hence $M_n$ converges to an endomorphism $M$ of $H^1(X, \bZ)$, which acts by $\sqrt{D}$ on $\span(\Re(\omega), \Im(\omega))$ and by $-\sqrt{D}$ on the symplectic perp. Thus $(X,\omega)$ is in the locus also, and we have shown the locus is closed.

We must now show linearity. For this, it is important to note that above, since $\End(H^1(X, \bZ))$ is a discrete set, we see that $M_n$ is in fact eventually constant. So any $(X',\omega')$ in the locus close to $(X,\omega)$ has the same endomorphism.

Suppose $(X,\omega)$ is in the locus, with endomorphism $M$. Consider the set of $(X',\omega')$ sufficiently close to $(X,\omega)$ for which $M\omega'=\sqrt{D}\omega$. Since this equation is linear, we see that locally the locus is  linear. 

In other words, $\omega'$ must lie in the 2-dimensional $\sqrt{D}$-eigenspace in $H^1(X,\bC)$. In the 4-dimensional $\cH(2)$ the set of such $\omega'$ is 2-dimensional, however in the 5-dimensional $\cH(1,1)$, this set is 3-dimensional. (Period coordinates for $\cH(2)$ may be considered as a map to $H^1(X,\bC)$, and for $\cH(1,1)$ they may be considered as a map to $H^1(X, \Sigma, \bC)$. There is a natural map from $H^1(X, \Sigma, \bC)$ to $H^1(X,\bC)$, and we are requiring that the image of $\omega$ lies in a codimension 2 subspace there.)
\end{proof}

We can be more explicit about the linear equations that define the locus in period coordinates. Indeed, suppose that $\gamma_1, \gamma_2\in H_1(X,\bZ)$ are such that $\gamma_1, \gamma_2, M^*\gamma_1, M^*\gamma_2$ are a basis, where $M^*$ denotes the dual linear endomorphism of $H_1(X,\bZ)$. Then the linear equations are 
$$\int_{M^*\gamma_i}\omega = \sqrt{D}\int_{\gamma_i}\omega, \quad\text{for}\quad i=1,2.$$

\bold{Examples.} Let us now show the these loci are nonempty in $\cH(2)$, by giving explicit examples. A similar argument could show that these loci are nonempty in $\cH(1,1)$. (But in $\cH(1,1)$ there is a softer argument as well, which we will not present.)

\begin{prop}
The surfaces indicated in figure \ref{F:H2eigenform} are in the loci described in Proposition \ref{P:RMwoRM}. (But the $D$ in the proposition might not be the same as the $D$ in the figure.)
\begin{figure}[h]
\centering
\includegraphics[scale=0.45]{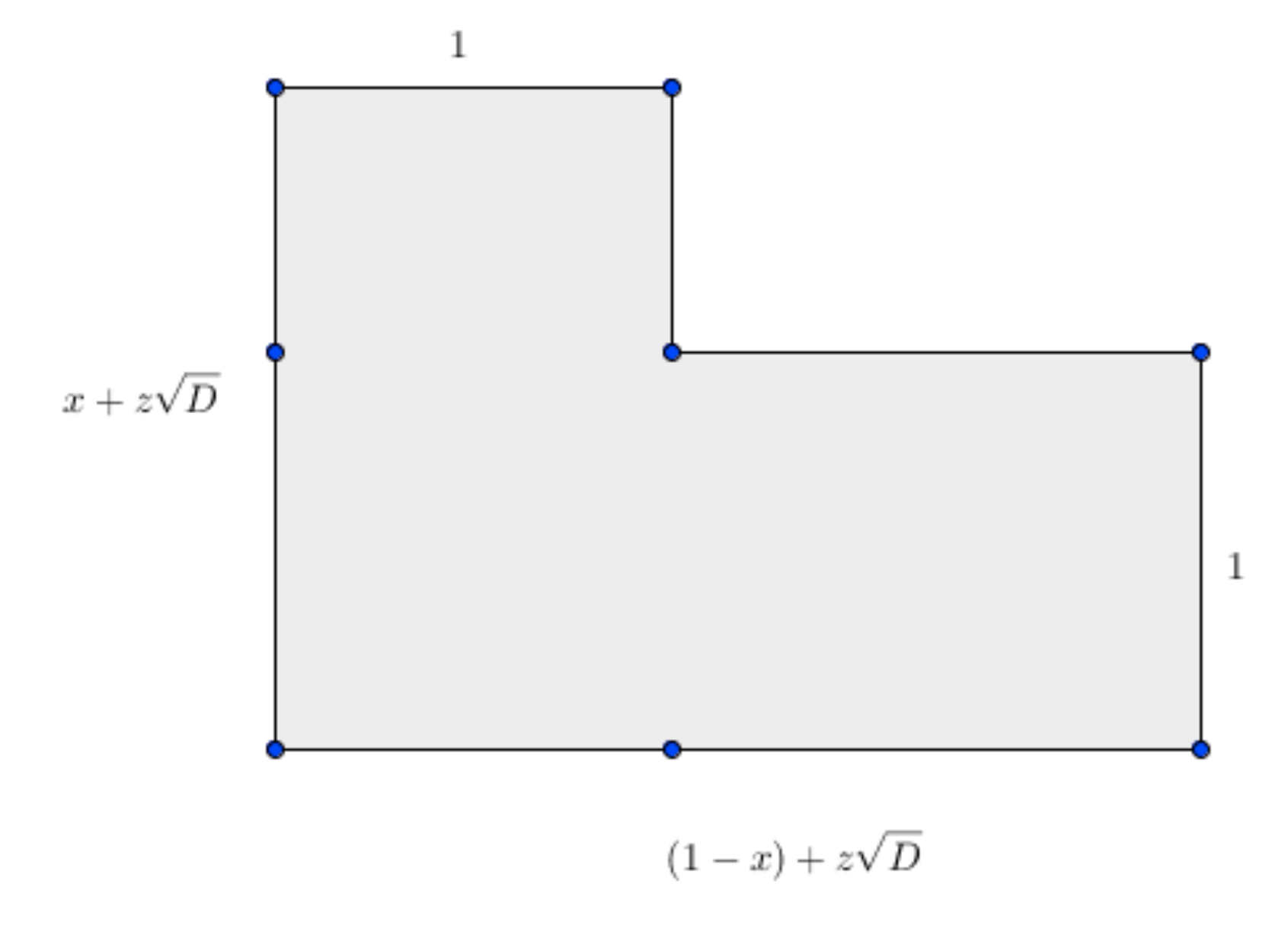}
\caption{Opposite sides are identified, and all edges are vertical or horizontal. The numbers indicate signed real length in the direction indicated, and the parameters $x,z$ are rational.}
\label{F:H2eigenform}
\end{figure}
\end{prop}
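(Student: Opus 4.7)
My approach is to exhibit the endomorphism $M$ directly, using the horizontal cylinder decomposition of the surfaces in the figure. Because all edges are vertical or horizontal, each surface decomposes into two horizontal cylinders. I would first choose a symplectic basis of $H_1(X, \bZ)$ adapted to this decomposition: let $\alpha_1, \alpha_2$ be the core curves of the two horizontal cylinders, and let $\beta_1, \beta_2$ be dual cross curves, so that $\langle \alpha_i, \beta_j \rangle = \delta_{ij}$ and $\langle \alpha_i, \alpha_j\rangle = \langle \beta_i, \beta_j\rangle = 0$. In this basis the integrals $\int_{\alpha_i}\omega$ are real and the integrals $\int_{\beta_i}\omega$ have real and imaginary parts which are explicit rational combinations of $1, x, z$ and whatever dimension labels appear in the figure.

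Next I would write down an explicit candidate $4 \times 4$ integer matrix $M$. Using the linear equations $\int_{M^*\gamma_i}\omega = \sqrt{D}\int_{\gamma_i}\omega$ derived after the proof of Proposition \ref{P:RMwoRM}, the period vector $(P_1,\dots,P_4) := \bigl(\int_{\alpha_1}\omega, \int_{\alpha_2}\omega, \int_{\beta_1}\omega, \int_{\beta_2}\omega\bigr)$ must be a $\sqrt{D}$-eigenvector of $M^T$. Two constraints force the shape of $M$: self-adjointness under the symplectic pairing requires the upper-right and lower-left blocks to be transposes of each other (and the diagonal blocks to be self-adjoint), and the cylinder-exchange heuristic suggests taking the diagonal blocks to be zero and the off-diagonal blocks to have entries comparable to the cylinder dimensions. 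The natural candidate is essentially the ``multiplication by $\sqrt{D}$'' map on the rank-two $\bZ$-module generated inside $H^1$ by the real and imaginary parts of $\omega$, rescaled to clear denominators.

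The verification reduces to three routine checks: first, that $M$ has integer entries, which is where the rationality of $x, z$ is used (one chooses the denominator of the scaling to be the common denominator of the rational parameters); second, that $M$ is self-adjoint, which is automatic once the block structure is right; and third, that $M^T$ applied to $(P_1, \dots, P_4)$ multiplies each coordinate by the same scalar $\sqrt{D}$. The third condition reduces to one or two algebraic identities among the figure's labels, from which $D$ can be read off as a rational multiple of the product of the two horizontal cylinder areas.

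The main obstacle, and really the only subtle point, is arranging the scaling so that $M$ is simultaneously integral and has the correct eigenvector. The rationality hypothesis on $x, z$ is what makes this possible; without it, no scalar multiple of the cylinder-exchange matrix would be integral. Non-squareness of $D$ is a separate check carried out directly on the specific values in the figure. Once $M$ is written down explicitly, all three verifications are short linear algebra computations, and the conclusion of the proposition follows from Proposition \ref{P:RMwoRM}.
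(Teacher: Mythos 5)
Your plan is, at its core, the same construction the paper uses: take the rational endomorphism that acts as multiplication by $\sqrt{D}$ on the $\bR$-span of $\Re(\omega)$ and $\Im(\omega)$ (and by $-\sqrt{D}$ on the Galois-conjugate subspace, which is what you need to make it rational on all of $H^1(X,\bQ)$ --- a point your proposal leaves implicit), then clear denominators at the cost of replacing $D$ by a square multiple. So the approach is sound. The issue is where you locate the substance. You assert that self-adjointness is ``automatic once the block structure is right'' and push everything into the eigenvector check; the paper does the dual bookkeeping, building the eigenvector condition into the definition of $M_0$ and isolating self-adjointness as the real condition. Either organization works, but in both cases the same nontrivial fact must be verified, and your proposal never does: the subspace $\span(\Re(\omega),\Im(\omega))$ must be symplectically orthogonal to its Galois conjugate $\span(\Re(\omega'),\Im(\omega'))$. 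This is \emph{not} automatic --- it is exactly the constraint the figure's parameters are engineered to satisfy. The paper proves it as a separate lemma by computing the pairing $\langle \Im(\omega),\Re(\omega')\rangle$ in an explicit symplectic basis $\alpha_1,\beta_1,\alpha_2,\beta_2-\beta_1$, finding it vanishes precisely when (in its normalization $A_1=x+z\sqrt{D}$, $B_2=y+w\sqrt{D}$) one has $x-y=1$ and $w=-z$; the other pairings vanish for free or by Galois conjugation. Your ``one or two algebraic identities among the figure's labels'' is a correct prediction of what remains, but until you write down the matrix and derive those identities, the proof is not complete, and your guesses about the shape of $M$ (zero diagonal blocks, $D$ a rational multiple of the product of cylinder areas) are unverified and not needed. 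Also note a small normalization point: rescaling $M_0$ by $m$ changes the eigenvalue to $m\sqrt{D}=\sqrt{m^2D}$, which is why the proposition warns that the $D$ of Proposition \ref{P:RMwoRM} may differ from the $D$ in the figure; your non-squareness check is then immediate, since $m^2D$ is a square only if $D$ is.
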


\begin{lem}
For the translation surface $(X,\omega)$ in figure \ref{F:H2eigenform}, $$\span(\Re(\omega), \Im(\omega))$$ is symplectically orthogonal to the Galois conjugate subspace of $H^1(X,\omega)$. 
\end{lem}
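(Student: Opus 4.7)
The plan is to compute the four relevant pairings directly in a symplectic basis of $H_1(X,\bZ)$ adapted to the horizontal cylinder decomposition visible in Figure~\ref{F:H2eigenform}. The key observation is that because all edges of the polygon are horizontal or vertical, the real and imaginary parts of $\omega$ will have disjoint ``support'' in the dual basis: $\omega_r:=\Re(\omega)$ is nonzero only on horizontal core curves, while $\omega_i:=\Im(\omega)$ is nonzero only on vertical crossing curves. This support pattern, combined with the fact that the Galois involution $\sigma\colon\sqrt{D}\mapsto-\sqrt{D}$ preserves it, will immediately kill two of the four pairings and reduce the other two to a single $\bQ(\sqrt{D})$-identity.

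In detail, I would let $\alpha_1,\alpha_2$ be the core curves of the two horizontal cylinders and pick closed loops $\beta_1,\beta_2$ made of vertical segments (possibly after a routine integer change of basis) so that $\{\alpha_1,\beta_1,\alpha_2,\beta_2\}$ is a symplectic basis with $\alpha_i\cdot\beta_j=\delta_{ij}$ and $\alpha_i\cdot\alpha_j=\beta_i\cdot\beta_j=0$. Reading off the labels in Figure~\ref{F:H2eigenform} then writes $\omega_r(\alpha_i)=w_i$ and $\omega_i(\beta_i)=h_i$, with $w_i,h_i\in\bQ(\sqrt{D})$ explicit linear expressions in $x,z$ and $\sqrt{D}$, and $\omega_r(\beta_i)=\omega_i(\alpha_i)=0$. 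Viewing $\omega_r,\omega_i\in H^1(X,\bQ(\sqrt{D}))$, the involution $\sigma$ acts on the $\bQ(\sqrt{D})$-coefficients only, and so $\sigma(\omega_r)$ still vanishes on the $\beta_i$ and $\sigma(\omega_i)$ still vanishes on the $\alpha_i$.

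Next I would invoke the standard cup product formula in a symplectic basis,
$$\langle\eta,\xi\rangle=\sum_{i=1}^{2}\bigl(\eta(\alpha_i)\xi(\beta_i)-\eta(\beta_i)\xi(\alpha_i)\bigr),$$
and apply it to the four pairings. The support pattern makes $\langle\omega_r,\sigma\omega_r\rangle=\langle\omega_i,\sigma\omega_i\rangle=0$ automatic, and collapses the cross pairings to $\langle\omega_r,\sigma\omega_i\rangle=\sum_i w_i\sigma(h_i)$ and $\langle\omega_i,\sigma\omega_r\rangle=-\sigma\bigl(\sum_i w_i\sigma(h_i)\bigr)$. Hence $V=\operatorname{span}(\omega_r,\omega_i)$ is symplectically orthogonal to its Galois conjugate iff the single identity
$$\sum_{i=1}^{2} w_i\,\sigma(h_i)=0 \quad \text{in } \bQ(\sqrt{D})$$
holds.

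The main and only obstacle is verifying this one identity from the specific labels in Figure~\ref{F:H2eigenform}. Splitting it into rational and $\sqrt{D}$-parts yields two $\bQ$-linear relations in $x,z,D$ and the labeled constants; these are precisely the relations that the parameterization is designed to enforce (they are the cylinder-width/cylinder-height matching that characterizes eigenforms, as in McMullen's construction). I would finish by substituting the labeled values and checking these two rational identities directly, which completes the proof.
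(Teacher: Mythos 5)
Your proposal follows essentially the same route as the paper: the paper also works in a symplectic basis (namely $\alpha_1,\beta_1,\alpha_2,\beta_2-\beta_1$, adapted to the figure) in which $\Re(\omega)$ and $\Im(\omega)$ have disjoint support, observes that the two same-type pairings $\langle\Re(\omega),\Re(\omega')\rangle$ and $\langle\Im(\omega),\Im(\omega')\rangle$ vanish automatically and that the two cross pairings are Galois conjugates up to sign, and then reduces orthogonality to a single identity in $\bQ(\sqrt{D})$ (which becomes $x-y=1$ and $w=-z$ in the figure's labels, enforced by the parameterization). The argument is correct and matches the paper's proof in both strategy and level of detail.
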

\begin{proof}
Since the periods of $\omega$ are in $\bQ[\sqrt{D}, i]$, we can define a cohomology class $\omega'$ in $H^1(X, \bQ[\sqrt{D}, i])$ that is Galois conjugate to $\omega$. Note that $\omega'$ is not expected to be represented by a holomorphic 1-form. It can be described more concretely via the isomorphism 
$$H^1(X, \bQ[\sqrt{D}, i])=\Hom(H_1(X,\bZ), \bQ[\sqrt{D}, i]).$$
From this point of view, $\omega'$ is the composition of $\omega$ with the field endomorphism of $\bQ[\sqrt{D}, i]$ that fixes $i$ and sends $\sqrt{D}$ to $-\sqrt{D}$. 

\begin{figure}[h]
\centering
\includegraphics[scale=0.35]{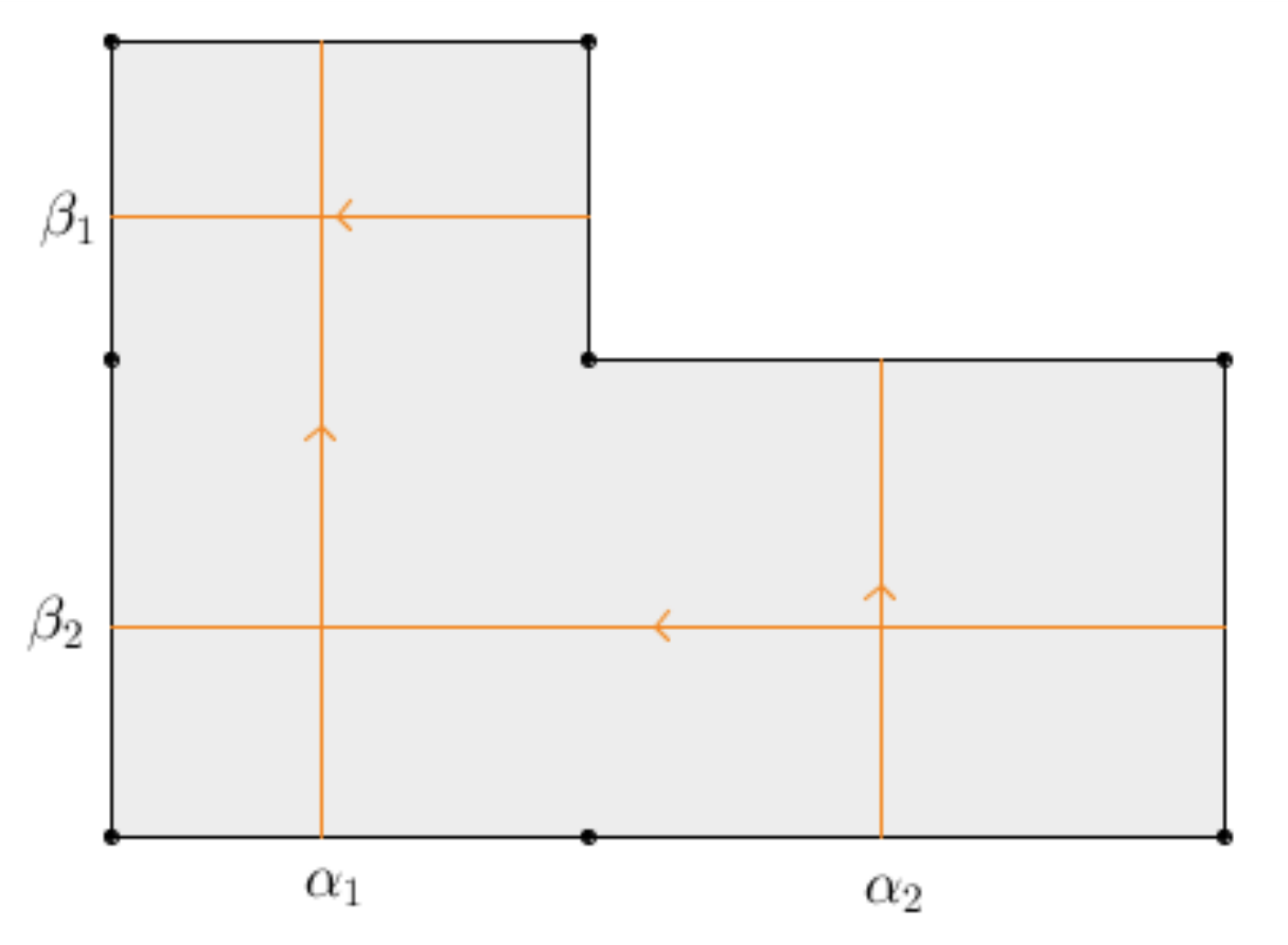}
\caption{A basis of homology is indicated. }
\label{F:EllBasis}
\end{figure}
In figure \ref{F:EllBasis}, note that $\alpha_1, \beta_1, \alpha_2, \beta_2-\beta_1$ is symplectic basis of homology. So, if $\phi,\psi \in H^1(X, \bR)=\Hom(H_1(X,\bR), \bR)$, we can compute their symplectic pairing as 
\begin{eqnarray*}
\langle \phi, \psi \rangle &=& \phi(\alpha_1)\psi(\beta_1)-\phi(\beta_1)\psi(\alpha_1)+
\\&&\phi(\alpha_2)\psi(\beta_2-\beta_1)-\phi(\beta_2-\beta_1)\psi(\alpha_2).
\end{eqnarray*}
Suppose that we set $A_j=\frac1i\int_{\alpha_j}\omega$ and $B_j=\int_{\beta_j} \omega$. We are assuming that $B_1=-1$ and $A_2=1$. Then we may compute
\begin{eqnarray*}
\langle \Im(\omega), \Re(\omega') \rangle &=& A_1B_1'+A_2(B_2'-B_1')  
\\&=&
-(A_1-B_2'-1).
\end{eqnarray*}
If $A_1=x+z\sqrt{D}$ and $B_2=y+w\sqrt{D}$, this quantity is zero if and only if  $x-y=1$ and $w=-z$. 

Both  
$\langle \Re(\omega), \Re(\omega') \rangle$ and $\langle \Im(\omega), \Im(\omega') \rangle$ are easily seen to be automatically zero, and 
$\langle \Re(\omega), \Im(\omega') \rangle $ is the Galois conjugate of $-\langle \Im(\omega), \Re(\omega') \rangle$. Hence these conditions $x-y=1$ and $w=-z$ are equivalent to the orthogonally of $\span(\Re(\omega), \Im(\omega))$ and $\span(\Re(\omega'), \Im(\omega'))$. 
\end{proof}

\begin{proof}[Proof of proposition.]
We define $M_0$ to be the endomorphism of $H^1(X, \bR)$ that acts by $\sqrt{D}$ on $\span(\Re(\omega), \Im(\omega))$ and by $-\sqrt{D}$ on the Galois conjugate subspace $\span(\Re(\omega'), \Im(\omega'))$. This matrix is rational, and it is self-adjoint if and only if $\span(\Re(\omega), \Im(\omega))$ and $\span(\Re(\omega'), \Im(\omega'))$ are symplectically orthogonal. 

Since $M_0$ is a rational matrix, for some $m>0$ we get that $M=m^2M_0$ is integral. Thus $(X,\omega)$ is in the locus constructed in the Proposition \ref{P:RMwoRM}, but the $D$ value in the proposition is $m^2D$ here.  
\end{proof}

\bold{Connection to real multiplication.} Recall that the complex vector space $H^{1,0}(X)$ is canonically isomorphic as a real vector to $H^1(X,\bR)$, via the map $\omega\mapsto\Re(\omega)$. The natural symplectic form on $H^1(X,\bR)$ pulls back to the pairing
$$\langle \omega_1, \omega_2\rangle =\frac12\Re\left( \int\omega_1 \overline{\omega_2}\right)$$ 
on $H^{1,0}(X)$. This symplectic form on $H^{1,0}(X)$ is compatible with the complex structure on $H^{1,0}(X)$, in that $\langle i\omega_1, i\omega_2\rangle=\langle \omega_1, \omega_2\rangle$ and $-\langle \omega, i\omega\rangle >0$ for all $\omega\neq 0$.

We will let $H^{1,0}(X)^*$ denote the dual of the complex vector space $H^{1,0}(X)$. As a real vector space, $H^{1,0}(X)^*$ is canonically isomorphic to $H_1(X,\bR)$, via the usual integration pairing between homology classes and Abelian differentials.  The space $H^{1,0}(X)^*$ inherits a dual compatible symplectic pairing, which we will also denote $\langle \cdot, \cdot\rangle$. 

\begin{defn}
The \emph{Jacobian} $\Jac(X)$ of a Riemann surface $X$ is the complex torus $H^{1,0}(X)^*/ H_1(X, \bZ)$, together with the data of the symplectic pairing $\langle \cdot, \cdot\rangle$ on its  tangent space to the zero. (This tangent space is $H^{1,0}(X)^*$.) An \emph{endomorphism} of $\Jac(X)$ is an endomorphism of the complex torus. An endomorphism is called self-adjoint if the induced endomorphism on the tangent space to the identity is self-adjoint with respect to the symplectic form. 
\end{defn}

Thus, endomorphisms of $\Jac(X)$ can be thought of either as a complex linear endomorphisms of $H^{1,0}(X)^*$ that preserves $H_1(X, \bZ)$, or as a linear endomorphisms of $H_1(X, \bZ)$ whose real linear extension to $H^{1,0}(X)^*$ happens to be complex linear. From our point of view, the second perspective is more enlightening, and the requirement of complex linearity is the deepest part of the definition. This is because the complex structure on $H^{1,0}(X)^*$ is determined by how $H^{1,0}(X)$ sits in $H^1(X,\bC)$, i.e., the Hodge decomposition. The Hodge decomposition varies as the complex structure on $X$ varies, in a somewhat mysterious way. The complex linearity restriction is the only part of the data of an endomorphism of $\Jac(X)$ that depends on the complex structure on $X$ (the rest could be defined for a topological surface instead of a Riemann surface).

Recall that a \emph{totally real field} is a finite field extension of $\bQ$, all of whose field embeddings into $\bC$ have image in $\bR$. Every real quadratic field is totally real, but there are cubic real fields that are not totally real, for example $\bQ[2^{\frac13}]$. An \emph{order} in a number field is just a finite index subring of the ring of integers. The key example to keep in mind is that $\bZ[\sqrt{D}]$ is an order in $\bQ[\sqrt{D}]$.  

\begin{defn}\label{D:RM}
The Jacobian $\Jac(X)$ of a Riemann surface of genus $g$ is said to have \emph{real multiplication} by a totally real number field $\bk$ of degree $g$ if there is some order $\cO\subset \bk$  that acts on $\Jac(X)$ by self-adjoint endomorphisms. An Abelian differential $\omega\in H^{1,0}(X)$ is said to be an \emph{eigenform} for this action if it is an eigenvector for the induced action of $\cO$ on the cotangent space of $\Jac(X)$ at 0. (This cotangent space is $H^{1,0}(X)$.)
\end{defn}

Genus two is special for real multiplication because of the following. 

\begin{lem}
Fix a compatible symplectic structure on $\bC^2$, and let $M:\bC^2\to \bC^2$ be a self-adjoint real linear endomorphism. If $M$ preserves a complex line $L\in \bC^2$, then $M$ is complex linear.  
\end{lem}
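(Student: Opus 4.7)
The plan is to reduce to a purely two-dimensional statement on each summand of a direct sum decomposition $\bC^2 = L \oplus L^\perp$, where $L^\perp$ is the symplectic orthogonal complement of $L$. The key intermediate fact I will aim for is that every symplectically self-adjoint real linear endomorphism of a two-dimensional real symplectic vector space is a real scalar multiple of the identity; once that is established, complex linearity follows on each summand and hence on $\bC^2$.

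First I would verify that $L$ is a symplectic subspace: the compatibility axiom $-\langle v, Jv\rangle > 0$ for $v\neq 0$, together with $Jv \in L$, shows that the restriction of the symplectic form to $L$ is non-degenerate, so $\bC^2 = L \oplus L^\perp$ as real vector spaces and $L^\perp$ is two real-dimensional. Next I would check that $L^\perp$ is itself a complex line. From $\langle Jv, Jw\rangle = \langle v, w\rangle$ and $J^2 = -\mathrm{Id}$ one derives the identity $\langle Jv, w\rangle = -\langle v, Jw\rangle$; so if $w \in L^\perp$ and $v \in L$, then $\langle v, Jw\rangle = -\langle Jv, w\rangle = 0$ because $Jv \in L$, proving $Jw \in L^\perp$.

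Now I would use self-adjointness of $M$ to show $L^\perp$ is $M$-invariant: for $w \in L^\perp$ and $v \in L$, one has $\langle v, Mw\rangle = \langle Mv, w\rangle = 0$ since $Mv \in L$. Hence $M$ splits as $M|_L \oplus M|_{L^\perp}$, with each summand a symplectically self-adjoint real endomorphism of a two-dimensional real symplectic space.

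Finally I would prove the two-dimensional lemma. Antisymmetry of the symplectic form $\omega$ gives $\omega(v, Mv) = -\omega(Mv, v)$, and self-adjointness $\omega(Mv, v) = \omega(v, Mv)$ then forces $\omega(Mv, v) = 0$ for every $v$. In a two-dimensional symplectic space, $\omega(Mv, v) = 0$ means $Mv$ is a real scalar multiple of $v$; a linear map sending every line to itself must be a real scalar multiple of the identity. Applying this to $M|_L$ and $M|_{L^\perp}$ and noting that real scalars commute with $J$, we conclude that $M$ is complex linear. The only subtle point, and the one I expect to be the main obstacle, is unpacking the compatibility axiom correctly to see that $L^\perp$ is $J$-invariant; once the decomposition into two symplectic complex lines is in place, the rest is mechanical.
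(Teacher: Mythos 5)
Your proof is correct and takes essentially the same route as the paper: decompose $\bC^2 = L \oplus L^\perp$, use compatibility to see $L^\perp$ is a complex line, and reduce to the fact that a self-adjoint endomorphism of a two-dimensional symplectic space is a real scalar. The only difference is cosmetic — the paper checks that last fact in coordinates, while you derive it intrinsically from $\langle Mv, v\rangle = 0$, and you spell out the $M$-invariance of $L^\perp$ that the paper leaves implicit.
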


\begin{proof}
First note, that the only self-adjoint real linear endomorphisms of $\bR^2$ are scalars times identity. (This can be verified in coordinates, for 2 by 2 matrix and the standard symplectic form on $\bR^2$.) 

Since the symplectic form is compatible, $L^\perp$ is a complex line. $M$ leaves invariant the two complex lines $L$ and $L^\perp$, and acts as a scalar on each, so it must be complex linear. 
\end{proof}

It follows from this that 

\begin{thm}[McMullen]
The  loci in $\cH(2)$ and $\cH(1,1)$ defined in Proposition  \ref{P:RMwoRM} in fact parameterize $(X,\omega)$ where $\Jac(X)$ admits real multiplication  by $\bQ[\sqrt{D}]$ with $\omega$ as an eigenform. 
\end{thm}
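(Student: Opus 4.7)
The plan is to prove both implications, with the forward direction being the substantive one and the preceding Lemma doing the central work.

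\textbf{Forward direction.} Given $(X,\omega)$ with the integer self-adjoint endomorphism $M$ of $H^1(X,\bZ)$ satisfying $M\omega = \sqrt{D}\omega$, I would first show that the subring $\bZ[M] \subset \End(H^1(X,\bZ))$ is isomorphic to $\bZ[\sqrt{D}]$, hence an order in $\bQ[\sqrt{D}]$. The $\sqrt{D}$-eigenspace of $M$ on $H^1(X,\bR)$ contains the real $2$-plane $\span(\Re\omega,\Im\omega)$; its symplectic complement is $M$-invariant by self-adjointness, and integrality of $\mathrm{tr}(M)$ forces $M$ to act there by $-\sqrt{D}$. Hence $M^2 = D \cdot \Id$ as an endomorphism of $H^1(X,\bZ)$, and $\bZ[M] \cong \bZ[\sqrt{D}]$.

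The essential step is to upgrade $\bZ[M]$ from a ring of endomorphisms of $H^1(X,\bZ)$ to a ring of endomorphisms of $\Jac(X) = H^{1,0}(X)^*/H_1(X,\bZ)$. Poincar\'e duality converts $M$ to an integer endomorphism $M^*$ of $H_1(X,\bZ)$; what must be checked is that its real-linear extension to $H_1(X,\bR) = H^{1,0}(X)^*$ is complex linear. Equivalently, transporting $M$ across the real isomorphism $\varphi : H^1(X,\bR) \to H^{1,0}(X)$, $\alpha \mapsto \alpha^{1,0}$, the resulting real linear operator $\tilde M$ on $H^{1,0}(X) \cong \bC^2$ must be complex linear. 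Under $\varphi$, the real plane $\span(\Re\omega,\Im\omega)$ is sent onto the complex line $\bC\omega$, on which $\tilde M$ acts as the scalar $\sqrt{D}$; and self-adjointness of $M$ with respect to the symplectic form on $H^1(X,\bR)$ transports to self-adjointness of $\tilde M$ with respect to the compatible symplectic form on $H^{1,0}(X)$ described earlier. The Lemma now applies verbatim and gives complex linearity of $\tilde M$, hence of $M^*$ on $H^{1,0}(X)^*$. Therefore $\bZ[M]$ descends to an action on $\Jac(X)$ by self-adjoint complex-linear endomorphisms, and since $\omega$ spans an eigenline in $H^{1,0}(X)$ with eigenvalue $\sqrt{D}$, it is an eigenform in the sense of Definition \ref{D:RM}.

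\textbf{Reverse direction.} Suppose conversely that $\Jac(X)$ admits real multiplication by $\bQ[\sqrt{D}]$ with eigenform $\omega$. Any order in $\bQ[\sqrt{D}]$ contains an element of the form $f\sqrt{D}$ for some positive integer $f$ (orders in real quadratic fields always have the shape $\bZ + f\cO_K$). This element acts integrally and self-adjointly on $H_1(X,\bZ)$ and, dually, on $H^1(X,\bZ)$, with $\omega$-eigenvalue $\pm f\sqrt{D} = \pm\sqrt{f^2 D}$. After possibly negating, $(X,\omega)$ lies in the locus of Proposition \ref{P:RMwoRM} for the value $D' = f^2 D$, and $\bQ[\sqrt{D'}] = \bQ[\sqrt{D}]$.

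\textbf{Main obstacle.} The only conceptually nontrivial point is the complex linearity of $\tilde M$ on $H^{1,0}(X)$, which is exactly what the preceding Lemma was engineered to deliver; the rest is bookkeeping involving Poincar\'e duality, the Hodge decomposition, and the structure of orders in a real quadratic field. Genus two is essential here precisely because the Lemma requires $H^{1,0}(X)$ to be two-dimensional.
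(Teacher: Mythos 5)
Your proof is correct and follows essentially the same route as the paper: the whole content is that the self-adjoint integral endomorphism, transported to a real-linear operator on the two-dimensional complex space $H^{1,0}(X)$ (the paper works dually with the annihilator of $\span(\Re(\omega),\Im(\omega))$ in $H^{1,0}(X)^*$), preserves a complex line and is therefore complex linear by the preceding Lemma. You additionally verify that $\bZ[M]$ is an order and supply the converse direction (with the correct caveat that $D$ may change by a square factor), both of which the paper leaves implicit.
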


\begin{proof}
From the definition of the eigenform loci we get a self-adjoint action of $\bZ[\sqrt{D}]$ on $H^1(X,\bZ)$, where $\sqrt{D}$ acts by $M$. This gives a dual action on $H_1(X,\bR)$, which preserves $H_1(X,\bZ)$. It suffices to show that this action is complex linear. This follows from the lemma, and the observation that the complex line consisting of the annihilator of $\span(\Re(\omega), \Im(\omega))$ is preserved by the action.
\end{proof}

\begin{rem}
For fixed $D$, the locus of $(X,\omega)$ where $\Jac(X)$ admits real multiplication  by $\bQ[\sqrt{D}]$ with $\omega$ as an eigenform in fact has infinitely many connected components. This is related to the fact that there are infinitely many orders in $\bQ[\sqrt{D}]$. If one fixes the maximal order that acts on $\Jac(X)$ the situation is greatly ameliorated, for example the locus in $\cH(1,1)$ is connected and is closely related to a Hilbert modular surface. McMullen's original treatment \cite{Mc} keep track of the maximal order, because it is necessary to study connected components and give the relation to Hilbert modular surfaces. See also \cite{KM}.
\end{rem}

\begin{rem}
In McMullen's original work \cite{Mc}, the study of real multiplication arose naturally from flat geometry and low dimensional topology in the following way. As we will discuss later in these notes, every $(X,\omega)$ in a two complex dimensional affine invariant submanifold must possess many affine symmetries. McMullen showed that in genus 2, certain affine symmetries naturally give rise to real multiplication on $\Jac(X)$, and that $\omega$ is an eigenform. 
\end{rem}

\begin{rem}
It is important to note  the proof of Proposition \ref{P:RMwoRM} doesn't work in higher genus. This is because the $M_n$ are not guaranteed to converge (they might diverge). (Here we fix a generator for an order in a totally real number field, and $M_n$ is the action of this generator.)

Given that the $M_n$ are automatically complex linear in genus 2, it is natural to impose this condition in higher genus, and try to see if loci of eigenforms as defined in definition \ref{D:RM} give affine invariant submanifolds. With the complex linearity, it turns out the $M_n$ must converge, which at least lets you show the locus is closed. 


However, even with complex linearity the end result isn't true, because even if $M$ is complex linear at $(X,\omega)$, there is no reason for this to be true at $(X',\omega')$. So in higher genera eigenforms are locally contained in nice linear spaces, but the real multiplication can vanish as you move from an eigenform $(X,\omega)$ to a nearby translation surface $(X',\omega')$ whose periods satisfy the same linear equations. 
\end{rem}

\subsection{Torsion, real multiplication, and algebraicity} There is however a still a very strong connection between endomorphisms of Jacobians and affine invariant submanifolds in higher genus, as discovered by M\"oller in the case that the affine invariant submanifold has  complex dimension 2. 

\begin{defn}
Given $(X,\omega)$, let $V\subset H^1(X,\bQ)$ be the smallest subspace such that $V\otimes \bC$ contains $\omega$, and such that $V\otimes \bC=V^{1,0}\oplus V^{0,1}$, where $$V^{1,0}=(V\otimes \bC)\cap H^{1,0}\quad\text{and}\quad V^{0,1}=\overline{V^{1,0}}.$$  Set $V^*_\bZ=\{\phi\in V^*: \phi(V\cap H^1(X,\bZ))\subset \bZ\}$. Define $$\Jac(X,\omega)=(V^{1,0})^*/V^*_\bZ,$$ together with the data of the symplectic form on  $V^*$. (This object $\Jac(X,\omega)$ does not have a standard name, and this notation is not standard.)
\end{defn}

$V$ inherits a symplectic form by restriction from $H^1(X,\bQ)$. The restriction is automatically symplectic (i.e., nondegenerate) because of the condition $V\otimes \bC=V^{1,0}\oplus V^{0,1}$. The symplectic form on $V^*$ is the dual symplectic form. 

$\Jac(X,\omega)$ is a factor of $\Jac(X)$ up to isogeny, and it the ``smallest factor containing $\omega$". 

\begin{defn}\label{D:torsion}
Let $p, q$ be two points of $(X,\omega)$. We say that $p-q$ is \emph{torsion} in $\Jac(X,\omega)$ if, for any relative homology class $\gamma_{p,q}$ of a curve from $p$ to $q$, there is a $\gamma\in H_1(X,\bQ)$ such that for all $\omega'\in V^{1,0}$ (including the most important one $\omega'=\omega$), 
$$\int_{\gamma_{p,q}}\omega' = \int_\gamma \omega'.$$
\end{defn}

If $\Jac(X,\omega)=\Jac(X)$, this is equivalent to $p-q$ being torsion in the group $\Jac(X)$. For every affine invariant submanifold $\cM$ of complex dimension two, there is a natural algebraic extension of $\bQ$ called the trace field, which will be defined in the final section. 
The definition of real multiplication on $\Jac(X,\omega)$ is exactly analogous to that for $\Jac(X)$, except the degree of the field is required to be equal to the complex dimension of $\Jac(X,\omega)$, which is sometimes less than $g$, and the order is required to act on $\Jac(X,\omega)$ (instead of $\Jac(X)$). 

\begin{thm}[M\"oller \cite{M, M2}]
For every affine invariant submanifold $\cM$ of complex dimension two, and every $(X,\omega)\in \cM$, $\Jac(X,\omega)$ has real multiplication by an order in the trace field with $\omega$ as an eigenform, and furthermore if $p$ and $q$ are zeros of $\omega$, then $p-q$ is torsion in $\Jac(X,\omega)$.
\end{thm}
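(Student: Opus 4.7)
The plan rests on the fact that a complex $2$-dimensional affine invariant submanifold must be a single closed $\G$-orbit. Since $\Re(\omega)$ and $\Im(\omega)$ always span the tangent direction of the $\G$-action and the full tangent space $T_{(X,\omega)}\cM$ is only $2$-complex-dimensional, the $\G$-orbit of any point is already open in $\cM$. This identifies $\cM/\G$ with the \Tm curve of $(X,\omega)$. By Smillie--Weiss (in the spirit of Veech's dichotomy), the affine automorphism group of $(X,\omega)$ modulo translations is a lattice $SL(X,\omega)$ in $SL(2,\bR)$, and the trace field $\bk$ is the number field generated over $\bQ$ by traces of hyperbolic elements of $SL(X,\omega)$; its degree over $\bQ$ will turn out to be $\dim_\bC V^{1,0}$.

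For the real multiplication I would follow the logical structure of McMullen's genus two argument, reinterpreted in general. The \Tm curve carries a polarized variation of Hodge structure on $H^1$, whose monodromy is a finite-index subgroup of $SL(X,\omega)$ acting via its natural pullback on cohomology. This $\bQ$-local system is reducible, and its decomposition is indexed by the Galois conjugate embeddings $\bk \hookrightarrow \bR$; the distinguished summand whose Hodge $(1,0)$-filtration fiberwise contains $\omega$ is precisely $V$. By construction $V\otimes\bC$ is a sub-Hodge structure, i.e.\ splits as $V^{1,0}\oplus V^{0,1}$. Define $\cO$ to act on each Galois-conjugate summand by the corresponding embedding. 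Self-adjointness follows from the symplecticity of the monodromy together with the nondegeneracy of the polarization on each summand; complex linearity of the induced action on the cotangent space to $\Jac(X,\omega)$ is automatic, since the action is defined to respect the Hodge splitting; and the eigenform property is built into the choice of $V$.

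The torsion statement is the main obstacle. The linear-algebra input is easy: $T_{(X,\omega)}\cM \subset H^1(X,\Sigma,\bC)$ is $2$-complex-dimensional and already contains the linearly independent absolute classes $\omega$ and $\bar\omega$, so the forgetful map $\pi\colon H^1(X,\Sigma,\bC) \to H^1(X,\bC)$ is injective on $T_{(X,\omega)}\cM$ with image equal to $V\otimes\bC$. Consequently the linear functional ``integration over $\gamma_{p,q}$'' on $T_{(X,\omega)}\cM$ factors through $V\otimes\bC$, agreeing there with integration against \emph{some} class $\xi\in V^*$. The real work is to upgrade $\xi$ to an honest rational absolute homology class $\gamma\in H_1(X,\bQ)$. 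This requires showing that the linear equations defining $\cM$ in period coordinates, when restricted to the relative-to-absolute projection, can be chosen with rational coefficients---a field-of-definition statement whose proof in M\"oller's original treatment uses a modular embedding of the \Tm curve into a Hilbert modular variety (and is consistent with the Galois decomposition constructed above). Once this is in hand, the resulting $\gamma$ satisfies $\int_{\gamma_{p,q}}\omega' = \int_\gamma \omega'$ for all $\omega'\in V^{1,0}$, which is precisely the torsion condition in Definition \ref{D:torsion}.
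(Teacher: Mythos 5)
You should first note that the paper does not prove this theorem: it is quoted from M\"oller's work \cite{M, M2}, and the text explicitly calls it ``two deep theorems,'' so there is no internal proof to compare your argument against. Judged on its own, your proposal correctly identifies the architecture of M\"oller's approach (the \Tm curve, the Galois decomposition of the $\bQ$-local system $H^1$, the distinguished summand containing $\omega$), but it defers or begs every step that carries the actual content.

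The central gap is the claim that complex linearity of the $\cO$-action ``is automatic, since the action is defined to respect the Hodge splitting.'' This is circular: declaring that $\cO$ acts by the embedding $\rho$ on the summand $\bV_\rho$ only produces a complex-linear endomorphism of $\Jac(X,\omega)$ if each $\bV_\rho$ is itself a sub-variation of Hodge structure, i.e.\ splits as the direct sum of its $(1,0)$ and $(0,1)$ parts. That is precisely the M\"oller--Filip theorem quoted in Section 5 of the paper, whose proof ``uses dynamics''; it is not a formal consequence of reducibility of the local system. The remark following the proof of Proposition \ref{P:RMwoRM} makes exactly this point: in higher genus, complex linearity at one $(X,\omega)$ does not propagate to nearby points, and this is the difficulty your argument waves away. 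You also assert without proof that the local system decomposes with simple summands indexed by the embeddings of the trace field; this semisimplicity-plus-Galois statement (Theorem \ref{T:galois}, due to M\"oller for \Tm curves) is itself one of the deep inputs, not a starting point. Finally, a smaller but genuine error: $p(T_{(X,\omega)}\cM)$ is the two-dimensional span of $\Re(\omega)$ and $\Im(\omega)$, i.e.\ the single summand $\bV_{\Id}$, not $V\otimes\bC$ (which has dimension $2\deg_\bQ\bk(\cM)$). So your reduction of the torsion statement at best controls the relative periods of $\omega$ itself, not of every $\omega'\in V^{1,0}$ as Definition \ref{D:torsion} requires, and you then explicitly defer the rationality upgrade to M\"oller's modular-embedding argument. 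As written, the proposal is a plausible reading guide to \cite{M, M2}, not a proof.
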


This result is actually two deep theorems, and their importance for the field is very great. One might expect that, given that an affine invariant submanifold is defined in terms of $(X,\omega)$, the other holomorphic 1-forms on $X$ would not be special, but this result says that they are (since the definition of torsion involves all or many $\omega'$, and real multiplication produces other eigenforms for the Galois conjugate eigenvalues). 

The converse of M\"oller's result is true, and is much easier than the result itself. 

\begin{prop}[Wright]\label{P:Mconverse}
Let $\cM$ be a 2-dimensional submanifold of a stratum (not assumed to be linear), and suppose that for every $(X,\omega)\in \cM$, $\Jac(X,\omega)$ has real multiplication by the trace field with $\omega$ as an eigenform, and furthermore if $p$ and $q$ are zeros of $\omega$, then $p-q$ is torsion in $\Jac(X,\omega)$. Then $\cM$ is an affine invariant submanifold.
\end{prop}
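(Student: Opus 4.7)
The plan is to show that near any $(X_0,\omega_0)\in\cM$, the hypotheses produce a collection of real-linear equations on period coordinates whose common zero locus $L$ has complex dimension exactly $2$, and then to conclude $\cM = L$ locally by dimension.

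First I would pass to a small neighborhood $U$ of $(X_0,\omega_0)$ in the ambient stratum and trivialize the local system of relative cohomology via the Gauss--Manin connection, identifying $H^1(X,\Sigma,\bZ)$ and $H^1(X,\bQ)$ with the corresponding objects at $(X_0,\omega_0)$. Under this identification the algebraic data attached to points of $\cM\cap U$ by the hypotheses are locally constant on account of discreteness. By Definition~\ref{D:RM} one has $\dim_\bQ V(X,\omega)=2[\bk:\bQ]$ constant, so $V(X,\omega)$ takes values in the countable set of $\bQ$-subspaces of $H^1(X_0,\bQ)$ of this fixed dimension and must be locally constant by continuity of $\omega$; the integer matrix $M$ representing the action of a fixed generator of $\cO\subset\bk$ lies in the discrete set $\End(H^1(X_0,\bZ))$; and each rational witness $\gamma^{(i,j)}_{\mathrm{abs}}\in H_1(X_0,\bQ)$ to the torsion of $p_i-p_j$ lies in a rational lattice, so is locally constant up to the annihilator of $V^{1,0}$. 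Fix such $V_0$, $M_0$, and $\gamma^{(i,j)}_{\mathrm{abs}}$ valid throughout $\cM\cap U$.

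Second I would convert the hypotheses into real-linear equations with zero constant term. Extending $M_0$ by zero on a $\bQ$-linear complement of $V_0$, the eigenform identity $M_0\omega=\lambda\omega$ becomes, for each $\gamma\in H_1(X_0,\bZ)$,
\[
\int_{M_0^*\gamma}\omega \;-\; \lambda\int_\gamma\omega \;=\;0,
\]
with real coefficient $\lambda$ since $\bk$ is totally real. The torsion hypothesis becomes, for each pair of zeros $p_i,p_j$,
\[
\int_{\gamma_{p_i,p_j}}\omega \;-\; \int_{\gamma^{(i,j)}_{\mathrm{abs}}}\omega \;=\;0,
\]
with rational coefficients. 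Let $L$ be the common vanishing locus of these equations; by hypothesis $\cM\cap U\subseteq L$. The eigenform equations force the absolute period class $[\omega]\in H^1(X_0,\bC)$ to vanish on a complement of $V_0\otimes\bC$ (since $M_0$ acts by $0$ there while $\lambda\neq 0$) and to lie in the $\lambda$-eigenspace of $M_0$ inside $V_0\otimes\bC$. That eigenspace is $V^{1,0}_\lambda\oplus V^{0,1}_\lambda$, a complex $2$-plane, because the totally real field $\bk$ acts on the $[\bk\!:\!\bQ]$-dimensional space $V^{1,0}$ through its $[\bk\!:\!\bQ]$ distinct real embeddings, cutting $V^{1,0}$ into $[\bk\!:\!\bQ]$ complex lines. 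The torsion equations then express each of the $s-1$ independent relative periods as a rational combination of absolute periods, contributing no further dimension. Hence $\dim_\bC L=2=\dim_\bC\cM$, so $\cM\cap U=L\cap U$, and $\cM$ is an affine invariant submanifold.

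The main obstacle is the local constancy step for $V(X,\omega)$: although its rational dimension is pinned down by the degree of the trace field, a priori $V$ is defined via the varying Hodge decomposition and could jump discontinuously. I would handle this by observing that $V$ is characterized at each point purely by the $\cO$-action (it is the smallest $\cO$-stable rational subspace whose complexification contains $\omega$), and this characterization places $V(X,\omega)$ in a countable discrete set in which it must be continuous, hence constant. Once this is granted, the proof is just the linear-algebra dimension count above.
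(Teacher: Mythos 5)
Your proposal is correct and follows essentially the same route as the paper's proof: local constancy of the field, the $\cO$-action, and the torsion witnesses via countability/discreteness, then the observation that the eigenform condition pins the absolute period class to a fixed complex $2$-plane while the torsion condition determines the relative periods from the absolute ones, yielding the defining linear equations (which you write down in the same form the paper does). The only presentational difference is that you phrase the conclusion as a codimension count on the cut-out locus $L$ rather than as local constancy of $\span(\Re(\omega),\Im(\omega))$ in $H^1(X,\Sigma,\bC)$, and you treat the case $\Jac(X,\omega)\neq\Jac(X)$ a bit more explicitly than the paper, which assumes $\Jac(X,\omega)=\Jac(X)$ for notational simplicity.
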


\begin{proof}
We must show that $\cM$ is locally linear. For notational simplicity, we will assume $\Jac(X,\omega)=\Jac(X)$. 

There are only countably many totally real number fields of degree $g$, and only countably many actions of each on $H^1(X,\bQ)$. Since the locus of eigenforms for each is closed, we may assume that the totally real number field is constant, and the action on $H^1(X,\bQ)$ is locally constant. (Again, this is a simple connectivity argument: The connected space $\cM$ is covered by disjoint closed sets, so there can only be one.) 

Similarly, we can assume that for each pair of zeros $p,q$ of $\omega$, the rational homology class $\gamma$ in definition \ref{D:torsion} is locally constant. 

We will show that the span of $\Re(\omega)$ and $\Im(\omega)$ does not change in $H^1(X,\Sigma, \bC)$. This span gives the linear subspace that locally defines $\cM$. 

The real multiplication condition gives that the image of this span is locally constant in absolute homology (it is an eigenspace), and the torsion condition gives that each relative period $\int_{\gamma_{p,q}}\omega$ is equal to an absolute period $\int_\gamma \omega$, and hence the relative periods are determined by absolute periods. 

That concludes the proof, but in closing we will write down the linear equations explicitly at $(X,\omega)\in \cM$, in the case where $\Jac(X,\omega)=\Jac(X)$. Say the number field is $\bk$ and has $\bQ$-basis $r_1, \ldots, r_g$. Pick two absolute homology classes $\gamma_1, \gamma_2$ so that  $$\rho(r_i) \gamma_j,\quad i=1, \ldots, g,\quad j=1,2,$$ are a basis for $H_1(X,\bQ)$. Say the zeros of $\omega$ are $p_1, \ldots, p_s$, and let $\alpha_i$ be a relative cycle from $p_i$ to $p_s$, for $i=1, \ldots, s-1$. For each $i$, let $\alpha_i'\in H_1(X,\bQ)$ be given from $\alpha_i$ by definition \ref{D:torsion}. Then the equations are
$$\int_{\rho(r_i)\gamma_j}\omega = r_i \int_{\gamma_j}\omega, \quad i=1, \ldots, g,\quad j=1,2, \quad$$and$$\int_{\alpha_i}\omega = \int_{\alpha_i'} \omega,\quad i=1, \ldots, s-1.$$
\end{proof}

Recently, Simion Filip has generalized M\"oller's result to  affine invariant submanifolds of any dimension. As in the previous proposition, this gives enough algebro-geometric conditions to characterize affine invariant submanifolds, and so Filip is able to conclude the following \cite{Fi1, Fi2}. 

\begin{thm}[Filip]
All affine invariant submanifolds are quasi-projective varieties. 
\end{thm}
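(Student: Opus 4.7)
The plan is to follow the two-step strategy that Proposition \ref{P:Mconverse} exemplifies in complex dimension two: first, generalize M\"oller's theorem to every affine invariant submanifold $\cM$, showing that $\cM$ is characterized by algebro-geometric constraints on $(X,\omega)$; second, argue that the locus defined by such constraints is quasi-projective inside the bundle of Abelian differentials over the (quasi-projective) moduli space $\cM_g$.

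For the first step, I would study the variation of Hodge structure carried by $H^1(X,\bC)$ as $(X,\omega)$ ranges over $\cM$. Let $T\cM\subset H^1(X,\Sigma,\bC)$ denote the tangent space in period coordinates and let $W\subset H^1(X,\bC)$ be its image under the natural map to absolute cohomology. The crucial input, available thanks to the invariance of $\cM$ under $GL^+(2,\bR)$ and the dynamics of the Teichm\"uller flow, is that $W$ is the complexification of a $\bQ$-subspace and underlies a sub-variation of Hodge structure of the ambient VHS on $\cM$. Once this is known, Deligne's semisimplicity theorem decomposes the sub-VHS into isotypic pieces, and the piece containing $\omega$ must be of a very restrictive shape. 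One then produces, on the corresponding factor $\Jac(X,\omega)$, a real multiplication action by an order in a totally real number field for which $\omega$ is an eigenform, together with torsion relations between the zeros of $\omega$ as in Definition \ref{D:torsion}. This is the higher-dimensional analogue of M\"oller's theorem.

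For the second step, one notes that each of the structures produced above is algebraic in families: the locus of abelian varieties admitting a specified endomorphism type (real multiplication by a fixed order, with a prescribed action on the rational Hodge structure) is a quasi-projective subvariety of $\cA_g$, of classical Shimura / PEL type; the eigenform condition is a linear condition on $H^{1,0}$ and hence algebraic; and torsion relations between marked points cut out algebraic subvarieties of the relative Jacobian. Pulling back these conditions from $\cA_g$ and the universal family to the bundle of Abelian differentials over $\cM_g$, and intersecting with the stratum $\cH(\kappa)$, exhibits $\cM$ (at least scheme-theoretically, and certainly set-theoretically) as a closed algebraic subvariety of a quasi-projective variety, which is what we want. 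The argument here is essentially that of Proposition \ref{P:Mconverse}, upgraded from the single linear equation characterizing dimension-two examples to the full system determined by the VHS decomposition.

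The main obstacle is establishing, in the first step, that $W$ is defined over $\bQ$ and Hodge-compatible, and that the resulting real multiplication is complex linear rather than merely real linear. The remark following Proposition \ref{P:RMwoRM} warns that in higher genus, naive analogues of the $g=2$ argument fail: the candidate endomorphism $M$ need not even converge, and complex linearity is not preserved under deformation. The new idea is to replace pointwise arguments by global ones: the $SL(2,\bR)$-invariance of $\cM$ forces the Kontsevich--Zorich cocycle on $W$ to have an algebraic hull whose Hodge-theoretic analysis, combined with Oseledets theory and the semisimplicity of the cocycle's monodromy, forces the required rationality and complex linearity. This Hodge-theoretic upgrade of measure-theoretic invariance is the heart of Filip's proof, and everything downstream of it is a comparatively formal translation of VHS data into algebraic equations.
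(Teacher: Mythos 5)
Your outline matches the strategy the paper itself attributes to Filip: generalize M\"oller's theorem (real multiplication on $\Jac(X,\omega)$ with $\omega$ as an eigenform, plus torsion of differences of zeros of $\omega$) to affine invariant submanifolds of arbitrary dimension, and then invoke the algebro-geometric characterization in the spirit of Proposition \ref{P:Mconverse} to realize $\cM$ as a component of an algebraically defined, hence quasi-projective, locus. The paper does not reproduce a proof --- it defers to \cite{Fi1, Fi2} and notes only that the argument crucially uses dynamics --- so your sketch is consistent with everything stated here, with the understanding that the hard core (rationality and Hodge-compatibility of $p(T(\cM))$ via the Kontsevich--Zorich cocycle, and bounding the order of the torsion so that the torsion condition cuts out a variety rather than a countable union of them) is named but not carried out.
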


Filip's proof crucially uses dynamics, and no other proof is known. 

\section{The action of $\G $}

This section  will begin to set the stage for the following driving theme in the study of translation surfaces.  
\vspace{0.10in}
{\quote\emph{The behavior of certain dynamical systems is fundamentally linked to the structure of affine invariant submanifolds.}}

\vspace{0.10in}
The dynamical systems involved are the action of $\G$ on each stratum, which is the topic of this section, and the straight line flow on each individual translation surface, which is the topic of the next section. It is very important to note that the connections go in both directions: Dynamical information often powers structural results on affine invariant submanifolds, and in the opposite direction results about the structure of linear manifolds are crucial in the study of the dynamical problems.

\subsection{Definitions and basic properties}
In this section, it is most helpful to think of a translation surface using the third definition (polygons). Let $\G$ be the group of two by two matrices with positive determinant. 

There is an action of $\G $ on each stratum $\cH$ induced from the linear action of $\G$ on $\bR^2$. If $g\in \G $, and $(X,\omega)$ is a translation surface given as a collection of polygons, then $g(X,\omega)$ is the translation surface given by the collection of polygons obtained by acting linearly by $g$ on the polygons defining $(X,\omega)$. 

\begin{figure}[h!]
\centering
\includegraphics[scale=0.41]{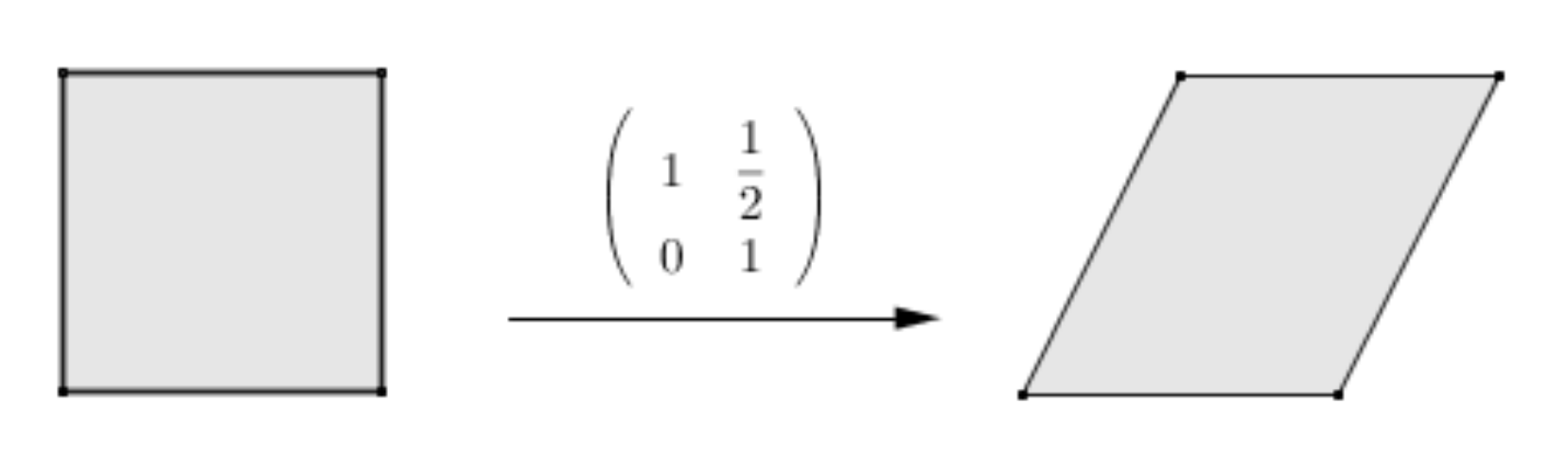}
\end{figure}

Naively, one might thing that $g(X,\omega)$ is always very different from $(X,\omega)$ when $g$ is a large matrix, because $g$ distorts any polygon a large amount. But because of the cut and paste equivalence, this is not the case. For example,

\begin{prop}
The stabilizer of $(\bC/\bZ[i], dz)$ is $\GL(2,\bZ)=SL(2,\bZ)$. The stabilizer of any square-tiled surface is a finite index subgroup of $\GL(2,\bZ)$.
\end{prop}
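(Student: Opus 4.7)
The plan is to handle the torus case first, then use the covering structure to locate the stabilizer of a square-tiled surface inside $\GL(2,\bZ)$, and finally invoke a combinatorial finiteness to get finite index.

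First, realize $(E,\omega_0):=(\bC/\bZ[i], dz)$ by the unit square with opposite sides identified. For $g\in\G$, the polygon $g([0,1]^2)$ represents $\bC/g(\bZ[i])$ equipped with $dz$, and two such quotient tori coincide as translation surfaces iff their lattices agree. So $g$ stabilizes $(E,\omega_0)$ iff $g(\bZ[i])=\bZ[i]$, i.e., iff both $g$ and $g^{-1}$ have integer entries. Combined with $\det g>0$, this forces $\det g=1$ and yields $\mathrm{Stab}(E,\omega_0)=SL(2,\bZ)=\GL(2,\bZ)$.

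Next, let $f:(X,\omega)\to(E,\omega_0)$ be a degree $d$ translation cover branched over $0$, so $\omega=f^*\omega_0$. For any cycle $\gamma$ on $X$ (absolute or relative with endpoints in the zero set $\Sigma$), the integral $\int_\gamma\omega=\int_{f\gamma}\omega_0$ lies in $\bZ[i]$; in particular every saddle connection holonomy sits in $\bZ[i]$. One further produces relative cycles with holonomies $\pm 1$ and $\pm i$ by lifting the two standard generators of $H_1(E,\bZ)$ and iterating each lift until it closes up at a chosen zero, so the full period set generates the lattice $\bZ[i]$. Any element of the stabilizer permutes saddle connections and preserves this set of periods, hence preserves $\bZ[i]$, and by the torus computation belongs to $\GL(2,\bZ)$.

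For finite index, note that every $g\in\GL(2,\bZ)$ stabilizes $(E,\omega_0)$, so $g\cdot(X,\omega)$ is again a degree $d$ translation cover of $(E,\omega_0)$ branched over $0$, i.e., another square-tiled surface of degree $d$. Thus $\GL(2,\bZ)$ acts on the set of isomorphism classes of degree $d$ square-tiled surfaces, which is finite: each cover is classified up to simultaneous conjugation by a pair of permutations $(\sigma,\tau)\in S_d\times S_d$ generating a transitive subgroup (the monodromy around the two standard generators of $\pi_1(E\setminus\{0\})$). The $\GL(2,\bZ)$-orbit of $(X,\omega)$ is therefore finite and $\mathrm{Stab}(X,\omega)$ has finite index in $\GL(2,\bZ)$.

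The main obstacle is the containment step: one must verify that the periods of $\omega$ really generate $\bZ[i]$ (not merely a proper sublattice), in order to conclude the stabilizer sits inside $\GL(2,\bZ)$ rather than some commensurable group. Once containment is established, the finite-index statement follows painlessly from the combinatorial monodromy classification of branched covers.
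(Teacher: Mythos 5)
Your overall route is the same as the paper's: compute the stabilizer of the torus as the lattice-preserving matrices, deduce that the stabilizer of a square-tiled surface preserves the period lattice $\bZ[i]$ and hence lies in $SL(2,\bZ)$, and then get finite index from the finiteness of the set of degree $d$ square-tiled surfaces (which you justify, somewhat more explicitly than the paper does, via the monodromy pair $(\sigma,\tau)$ up to simultaneous conjugation). The torus computation and the finiteness step are both correct.

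The step that does not work as written is exactly the one you single out as the crux: that the periods generate all of $\bZ[i]$. Lifting a generator of $H_1(E,\bZ)$ from a chosen zero and ``iterating until it closes up'' produces a cycle of holonomy $n$ (resp.\ $ni$), where $n$ is the length of the relevant cycle of the monodromy permutation --- not $\pm 1$ or $\pm i$; and the unit horizontal segment leaving a zero may terminate at an unramified preimage of $0$, which is a regular point, so it is not even a relative cycle between singularities. Worse, the claim is genuinely false for the definition of square-tiled surface used in the paper: if $(X,\omega)\to \bC/\Lambda \to \bC/\bZ[i]$, where $\Lambda$ is a proper sublattice of $\bZ[i]$, the second map is the induced isogeny, and the first is a genuinely branched translation cover, then the composite exhibits $(X,\omega)$ as a translation cover of $(\bC/\bZ[i],dz)$ branched over one point, yet every period of $\omega$, absolute or relative, lies in $\Lambda$. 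The stabilizer of such a surface is a conjugate, by the diagonal matrix carrying $\bZ[i]$ to $\Lambda$, of a finite-index subgroup of $SL(2,\bZ)$, and such a conjugate need not be contained in $SL(2,\bZ)$ at all --- it is only commensurable with it. So the containment step requires the square-tiled surface to be \emph{reduced}, meaning its periods do generate $\bZ[i]$. In fairness, the paper's own proof simply asserts ``the periods are $\bZ[i]$'' with no further justification, so you are not worse off than the source; but your proposed repair of that assertion does not close the gap, and the clean statement is either to restrict to reduced square-tiled surfaces or to weaken the conclusion to commensurability with $SL(2,\bZ)$.
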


\begin{proof}
Note that $g(\bC/\bZ[i], dz) = (\bC/g(\bZ[i]), dz)$. This gives that the stabilizer of $(\bC/\bZ[i], dz)$ is exactly the matrices $g\in \G$ preserving $\bZ[i]\subset \bC$. Hence the stabilizer of $(\bC/\bZ[i], dz)$ is $SL(2,\bZ)$.

In general, the stabilizer preserves the periods of a surface. For a square-tiled surface, the periods are $\bZ[i]$, so for any square-tiled surface the stabilizer is a subgroup of $SL(2,\bZ)$. 

Now, suppose $(X,\omega)$ is a square-tiled surface. For any $g\in SL(2,\bZ)$, $g(X,\omega)$  is a square-tiled surface with the same number of squares. Hence, the stabilizer of $(X,\omega)$ is finite index in $SL(2,\bZ)$. 
\end{proof}

More basic properties are

\begin{prop}
The $SL(2,\bR)$-orbit of every translation surface is unbounded. The stabilizer of every translation surface is discrete but never cocompact in $SL(2,\bR)$.
\end{prop}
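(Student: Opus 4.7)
I will prove the three claims in sequence, each building on the previous step.

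\textbf{Step 1: Unboundedness.} Since every element of $SL(2,\bR)$ has determinant $1$, the $SL(2,\bR)$-action preserves area, so the orbit of $(X,\omega)$ lies inside the closed sublocus of surfaces of area $A=\Area(X,\omega)$. It therefore suffices to exhibit a sequence in the orbit that violates Masur's compactness criterion, i.e.\ a sequence on which the shortest saddle connection tends to zero. To do this, pick any saddle connection $\sigma$ on $(X,\omega)$ with holonomy $v\in\bC\setminus\{0\}$. Choose a rotation $r_\theta\in SO(2)$ sending $v$ to the positive real axis, so that on $r_\theta\cdot(X,\omega)$ the corresponding saddle connection has holonomy $(|v|,0)$. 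Applying the Teichm\"uller geodesic flow $g_t=\mathrm{diag}(e^{-t},e^{t})$, this holonomy becomes $(e^{-t}|v|,0)$, whose length tends to $0$ as $t\to+\infty$. Hence the orbit leaves every compact subset of $\cH$.

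\textbf{Step 2: Discreteness of the stabilizer.} Write $\Gamma\subset SL(2,\bR)$ for the stabilizer of $(X,\omega)$ and let $V\subset\bR^2$ be the set of holonomy vectors of saddle connections on $(X,\omega)$. On the compact surface $X$ there are only finitely many saddle connections of length at most $L$ for each $L>0$, so $V$ is discrete. Every $g\in\Gamma$ permutes the saddle connections and acts on $V$ by the linear map $v\mapsto gv$. Choose two saddle connections whose holonomies $v_1,v_2$ are $\bR$-linearly independent (such a pair exists, e.g.\ from any triangulation). If $g_n\to g$ in $\Gamma$, then for $i=1,2$ the points $g_n v_i\in V$ converge to $gv_i\in V$, and by discreteness of $V$ we must have $g_n v_i=gv_i$ eventually. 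Linear independence of $v_1,v_2$ then forces $g_n=g$ eventually, so $\Gamma$ has no accumulation point and is discrete.

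\textbf{Step 3: Non-cocompactness.} The orbit map $SL(2,\bR)\to\cH$ sending $g\mapsto g\cdot(X,\omega)$ has fibers equal to left cosets of $\Gamma$, and therefore descends to a continuous bijection from $SL(2,\bR)/\Gamma$ onto the $SL(2,\bR)$-orbit of $(X,\omega)$, which is a homeomorphism since $SL(2,\bR)$ acts continuously and properly on itself. If $\Gamma$ were cocompact, $SL(2,\bR)/\Gamma$ would be compact, and so the orbit would be compact in $\cH$, contradicting Step 1.

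\textbf{Expected main obstacle.} The only nontrivial ingredient is Masur's compactness criterion, used in Step 1: one must trust that shrinking a single saddle connection really does force divergence in the stratum. I will use this as a black box (as it is stated in the text). Steps 2 and 3 are essentially formal once the Veech group is seen to preserve a discrete subset of $\bR^2$ containing two linearly independent vectors.
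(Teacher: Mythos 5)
Your proof is correct. Steps 1 and 3 follow the paper's argument almost verbatim: rotate a saddle connection into an eigendirection of $g_t$, flow to shrink it, invoke the (easy direction of) Masur's compactness criterion to get unboundedness, and then observe that a cocompact stabilizer would force the orbit, as a continuous image of $SL(2,\bR)/\Gamma$, to be compact. Where you genuinely diverge is Step 2: the paper proves discreteness by working in period coordinates --- a nontrivial $g$ close to the identity acts linearly on the coordinates in a fixed chart and therefore moves the point --- whereas you use the fact that the holonomy vectors of saddle connections form a discrete subset of $\bR^2$ that the Veech group permutes, and pin down $g$ by its action on two linearly independent holonomy vectors. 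Both are standard; the paper's version is shorter given that period coordinates are already set up, while yours is more self-contained flat geometry and generalizes to settings where one only has the saddle-connection spectrum. One small caution in your Step 3: the assertion that $SL(2,\bR)/\Gamma\to\cH$ is a homeomorphism onto the orbit does not follow from $SL(2,\bR)$ acting properly on itself (this is exactly the ``technical'' part of Smillie's theorem quoted later in the paper, and it requires the orbit to be closed); fortunately you never use it --- continuity of the induced map, which is automatic, is all that is needed to conclude that a compact quotient has compact image.
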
 

\begin{proof}
If $g\in \G $ is close enough to the identity, then $g(X,\omega)$ and $(X,\omega)$ are in the same coordinate chart. The coordinates of $g(X,\omega)$ are obtained from those of $(X,\omega)$ by acting linearly on $\bC=\bR^2$. This shows that for $g$ sufficiently small enough and not the identity, $g(X,\omega)\neq (X,\omega)$, because they have different coordinates. Hence the stabilizer is discrete. 

Set 
$$r_\theta=\left(\begin{array}{cc} \cos(\theta)&-\sin(\theta)\\\sin(\theta)&\cos(\theta)\end{array}\right)\quad\text{and}\quad
g_t=\left(\begin{array}{cc} e^t&0\\0&e^{-t}\end{array}\right).$$
For every $(X,\omega)$, there is some $\theta$ such that $r_\theta(X,\omega)$ has a vertical saddle connection. (Pick any saddle connection, and rotate so that it becomes vertical.) On $g_tr_\theta(X,\omega)$ this saddle connection is $e^{-t}$ times as long, so as $t\to\infty$, we see that  $g_tr_\theta(X,\omega)$ has shorter and shorter saddle connections and hence diverges to infinity in the stratum. 

Let $\Gamma$ be the stabilizer of $(X,\omega)$, and suppose $(X,\omega)$ lies in the stratum $\cH$. Consider the natural orbit map $$SL(2,\bR) /\Gamma\to \cH, \quad\quad [g]\mapsto g(X,\omega).$$ By definition, $\Gamma$ is cocompact if $SL(2,\bR) /\Gamma$ is compact. If $SL(2,\bR) /\Gamma$ were compact, then its image under this map would be compact also. However, the image is just the $SL(2,\bR)$-orbit of $(X,\omega)$, which must be unbounded and hence cannot be compact. 
\end{proof}

\begin{defn}
A \emph{cylinder} on a translation surface is an isometrically embedded copy of a Euclidean cylinder $(\bR/c\bZ)\times (0,h)$ whose boundary is a union of saddle connections. The number $c$ is the \emph{circumference}, and the number $h$ is the \emph{height} of the cylinder. The ratio $h/c$ is the \emph{modulus} of the cylinder. The \emph{direction} of the cylinder is the direction of its boundary saddle connections (so directions are formally elements of $\bR P^1$). A translation surface $(X,\omega)$ is called \emph{periodic} in some direction $(X,\omega)$ is the union of the cylinders in that direction together with their boundaries. 
\end{defn}

\begin{figure}[h!]
\centering
\includegraphics[scale=0.30]{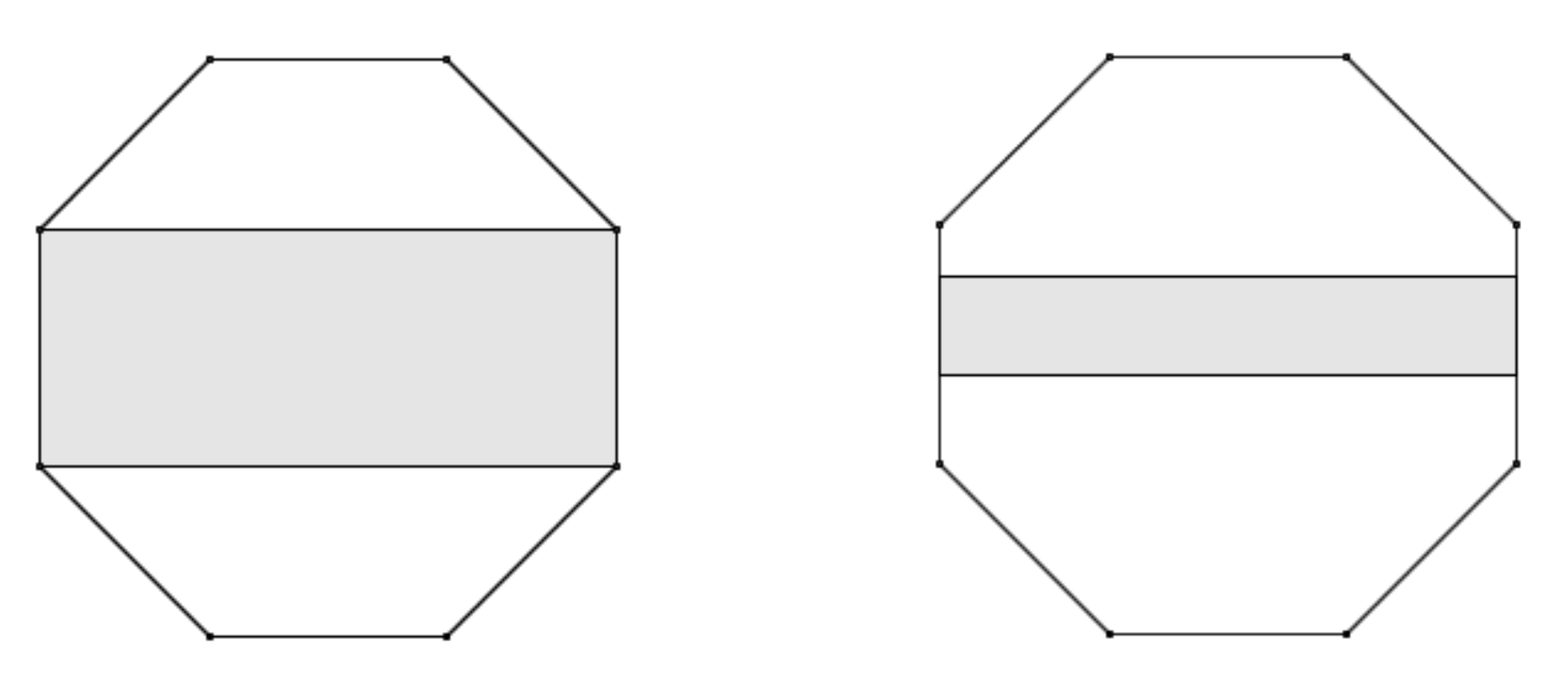}
\caption{On the left octagon (with opposite sides identified), both the shaded part and its complement are horizontal cylinders. On the right octagon, the shaded rectangle is \emph{not} a cylinder according to our definition, because its boundary is not a union of saddle connections. The effect of requiring the boundary to consist of saddle connections is equivalent to requiring the cylinders to be ``maximal", unlike this example on the right, whose height could be increased. The regular octagon is horizontally periodic, since it is the union of two horizontal cylinders (and their boundaries).}
\end{figure}

\begin{prop}\label{P:para}
A translation surface contains a matrix of the form 
$$\left(\begin{array}{cc}1&t\\0&1\end{array}\right)$$
in its stabilizer if and only if the surface is horizontally periodic, and all reciprocals of moduli of horizontal cylinders are integer multiples of $t$. 
\end{prop}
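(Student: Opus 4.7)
The plan is to translate the matrix condition into the geometric statement that $(X,\omega)$ admits an affine self-homeomorphism $\phi$ with derivative $g_t := \begin{pmatrix} 1 & t \\ 0 & 1 \end{pmatrix}$, and then to exploit the fact that $g_t$ fixes the horizontal direction pointwise.

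For the sufficiency (``if'') direction, assume $(X,\omega)$ is horizontally periodic with cylinders $C_i$ of circumferences $c_i$ and heights $h_i$ satisfying the stated arithmetic condition on reciprocal moduli. I would build $\phi$ cylinder-by-cylinder: on each $C_i$, viewed in its natural coordinates $[0,c_i]\times[0,h_i]$, set $\phi(x,y)=(x+ty,y)$. An elementary Dehn twist of $C_i$ admits an affine realization as a unipotent shear in these coordinates, and the stated arithmetic relation between $t$ and the pair $(c_i,h_i)$ encodes exactly the integrality requirement letting these per-cylinder shears assemble consistently along their shared horizontal saddle-connection boundaries, yielding a global self-homeomorphism of $X$ with derivative $g_t$ and hence the desired element of the stabilizer.

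For the necessity (``only if'') direction, suppose $g_t$ lies in the stabilizer of $(X,\omega)$; this produces an affine self-homeomorphism $\phi$ with derivative $g_t$. Because $g_t$ fixes horizontal vectors pointwise, $\phi$ preserves the horizontal foliation along with its transverse measure $|dy|$. The substantive first step is to deduce horizontal periodicity from the mere existence of such a parabolic $\phi$. This is essentially Veech's observation that a parabolic affine automorphism of a translation surface forces its unique fixed direction in $\bR P^1$ to be completely periodic. One concrete route is to note that the orbit $\{g_s\cdot (X,\omega) : s\in\bR\}$ is a compact circle in the stratum, and then argue that a non-closed horizontal leaf would, upon iteration of $\phi$ on a transverse saddle connection, yield an infinite family of saddle connections of bounded vertical and unbounded horizontal holonomy, contradicting discreteness of the saddle-connection holonomy set together with finite area.

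Once periodicity is in hand, $\phi$ must permute the horizontal cylinders while preserving both their circumferences and heights (since $g_t$ preserves horizontal length and $|dy|$ globally), so on passing to an iterate we may assume each $C_i$ is fixed setwise. Within a given $C_i$ the restriction of $\phi$ is an affine shear with derivative $g_t$, and the requirement that this restriction be a well-defined self-homeomorphism of the cylinder with its identification $(0,y)\sim(c_i,y)$ matches $g_t$ with an integer power of the elementary Dehn twist of $C_i$, producing the arithmetic compatibility between $t$ and $(c_i,h_i)$ asserted in the proposition. The main obstacle throughout is the periodicity step; once the cylinder decomposition is available, the reciprocal-moduli condition drops out of a one-line calculation inside a single cylinder.
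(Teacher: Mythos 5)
Your sufficiency argument is the same as the paper's: each horizontal cylinder of modulus $m$ is stabilized by the shear realizing its Dehn twist, namely $\left(\begin{smallmatrix}1&1/m\\0&1\end{smallmatrix}\right)$, and the arithmetic hypothesis is what lets the per-cylinder shears glue along the boundary saddle connections. One caveat: the integrality your construction actually needs is that the shear by $t$ restricted to each cylinder be an \emph{integer power} of that cylinder's Dehn twist, i.e.\ $t\,m_i\in\bZ$, equivalently $t$ is an integer multiple of each reciprocal modulus $c_i/h_i$ --- which is the reverse of the divisibility in the literal wording of the statement. You assert that the hypothesis ``encodes exactly the integrality requirement'' without checking which way the divisibility goes; note that the square torus has reciprocal modulus $1$, which \emph{is} an integer multiple of $t=1/2$, yet the shear by $1/2$ is not in $SL(2,\bZ)$. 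The paper's own two-line proof has the same feature, so this is as much a comment on the statement as on your write-up, but a careful proof must pin it down.

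The necessity direction is where the real issue lies; the paper does not prove it at all (it cites \cite{MT}), so you are attempting strictly more, and your outline is the standard one (Veech), but the key step fails as stated. An infinite family of saddle connections with bounded vertical and unbounded horizontal holonomy does \emph{not} contradict discreteness of the holonomy set or finite area: the square torus already carries saddle connections with holonomy $(n,1)$ for all $n\in\bZ$. To get a genuine contradiction you need holonomy vectors accumulating in a bounded region: if the horizontal direction were not periodic, Proposition \ref{P:minimal} gives a minimal component, hence saddle connections with holonomy $(x_k,y_k)$ where $y_k\to 0$, $y_k\neq 0$, and $x_k$ bounded; applying the $n_k$-th power of the shear with $n_k$ of size roughly $1/(t\,y_k)$ then places infinitely many distinct holonomy vectors in a bounded set, which does violate discreteness. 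Alternatively, your ``compact circle'' observation finishes immediately via the Minsky--Weiss/Smillie--Weiss theorem quoted later in the paper: the horocycle orbit is compact, hence closed, hence equal to its closure, which contains a horizontally periodic surface; since the horocycle preserves horizontal periodicity, $(X,\omega)$ itself is horizontally periodic. Finally, in your last step, passing to an iterate of $\phi$ that fixes each cylinder setwise replaces $t$ by $Nt$, so the per-cylinder computation only constrains $Nt$; since an affine automorphism with unipotent derivative may genuinely permute the cylinders or rotate their boundary circles, the clean conclusion that the shear by $t$ acts on each cylinder as an integer power of its Dehn twist does not follow for $t$ itself, and the honest conclusion of this argument is only a rationality (commensurability) statement.
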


\begin{proof}
\begin{figure}[h!]
\centering
\includegraphics[scale=0.5]{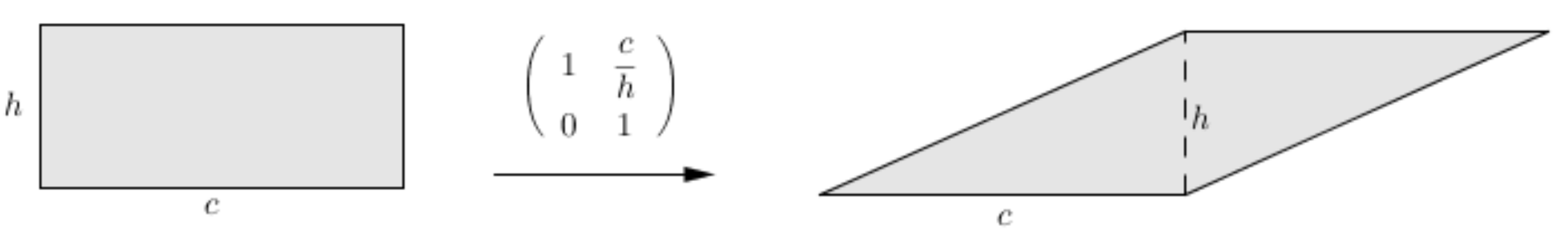}
\caption{Each individual cylinder is stabilized by a parabolic matrix. In this picture, opposite edges are identified, except for the horizontal edges, which are the upper and lower boundary of the cylinder. The cylinder on the right can be cut along the dotted line and re-glued to give the cylinder on the left.}
\label{F:Parabolic}
\end{figure}

Assume the surface is horizontally periodic, and all reciprocals of moduli of horizontal cylinders are integer multiples of $t$. As is illustrated in Figure \ref{F:Parabolic}, each horizontal cylinder of modulus $m$ is stabilized by the matrix 
$$\left(\begin{array}{cc}1&m^{-1}\\0&1\end{array}\right).$$
The result follows. 

For the converse, see \cite{MT}. 
\end{proof}

Note that the previous  proposition applies to any direction (not just horizontal) by first rotating the surface. It is stated for the horizontal direction only for notational simplicity.

\subsection{Closed orbits and orbit closures}\label{S:closed}

\begin{prop}
Affine invariant submanifolds are $\G $ invariant.
\end{prop}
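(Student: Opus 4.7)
The plan is to first understand the action of $GL^+(2,\bR)$ on period coordinates, then verify that the real-coefficient linear equations defining $\cM$ are preserved by this action locally, and finally upgrade local invariance to global invariance using a connectedness argument.

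First, I would unpack the action on period coordinates. Identify $H^1(X,\Sigma,\bC) = H^1(X,\Sigma,\bR) \otimes_\bR \bC$, and write any class as $\omega = \Re(\omega) + i\Im(\omega)$ with $\Re(\omega), \Im(\omega) \in H^1(X,\Sigma,\bR)$. Under the identification $\bC = \bR^2$ used to define the $GL^+(2,\bR)$ action on polygons, a matrix $g = \bigl(\begin{smallmatrix} a & b \\ c & d \end{smallmatrix}\bigr)$ sends $\omega$ to the class with $\Re(g\omega) = a\Re(\omega) + b\Im(\omega)$ and $\Im(g\omega) = c\Re(\omega) + d\Im(\omega)$. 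This should be checked directly from the polygon-level definition, using that period coordinates record edge vectors regarded as elements of $\bC = \bR^2$.

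Second, I would prove local invariance. Suppose $(X,\omega) \in \cM$, and that in a period coordinate chart around $(X,\omega)$, the submanifold $\cM$ is cut out by equations $L(\cdot) = 0$ where each $L: H^1(X,\Sigma,\bC) \to \bC$ is the complexification of a real functional $L_\bR$. Then $L(\omega) = L_\bR(\Re(\omega)) + iL_\bR(\Im(\omega))$, so $L(\omega) = 0$ is equivalent to the pair of real equations $L_\bR(\Re(\omega)) = L_\bR(\Im(\omega)) = 0$. Applying the formulas from the first step,
\[ L(g\omega) = \bigl(aL_\bR(\Re(\omega)) + bL_\bR(\Im(\omega))\bigr) + i\bigl(cL_\bR(\Re(\omega)) + dL_\bR(\Im(\omega))\bigr) = 0. \]
Hence, for $g$ sufficiently close to the identity that $g(X,\omega)$ lies in the same period coordinate chart, $g(X,\omega)$ still satisfies the defining equations, so $g(X,\omega) \in \cM$.

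Finally, I would upgrade to global invariance by connectedness. Fix $(X,\omega) \in \cM$ and set $U = \{g \in GL^+(2,\bR) : g(X,\omega) \in \cM\}$. This set contains the identity. The local analysis, applied at $g(X,\omega)$ rather than $(X,\omega)$, shows that $U$ is open. On the other hand, since $\cM$ is the image of a proper immersion it is closed in the stratum, and continuity of the $GL^+(2,\bR)$ action then forces $U$ to be closed in $GL^+(2,\bR)$. Because $GL^+(2,\bR)$ is connected, $U = GL^+(2,\bR)$, which is the desired invariance.

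The key conceptual point (and the only place anything nontrivial happens) is the use of real coefficients in the defining equations: had the coefficients been allowed to be arbitrary complex numbers, the step $L(g\omega) = 0$ would fail, since $g$ mixes $\Re(\omega)$ and $\Im(\omega)$. Thus the real-subspace condition in the definition of an affine invariant submanifold is exactly what encodes $GL^+(2,\bR)$-invariance.
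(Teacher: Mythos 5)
Your proof is correct and follows essentially the same route as the paper's: identify the local action of $g$ as a real-linear mixing of $\Re(\omega)$ and $\Im(\omega)$ in period coordinates, observe that the real-coefficient homogeneous equations are therefore preserved, and conclude by an open-closed-connected argument using that $\cM$ is closed and $\G$ is connected. Your closing remark correctly isolates the role of the real-coefficient hypothesis, which is exactly the point the paper emphasizes as well.
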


\begin{proof}
It suffices to show that for each $(X,\omega)$  in the affine invariant submanifold $\cM$ there is a small neighbourhood $U$ of the identity in $\G $ such that for all $g\in U$ we have $ g(X,\omega)\in\cM$. This gives that the set of $g$ for which $g(X,\omega)\in \cM$ is open in $\G $. Since $\cM$ is closed and the action is continuous, this set is also closed. Since $\G $ is connected,  any nonempty subset that is both open and closed must be everything.

If $g$ is small enough, both $g(X,\omega)$ and $(X,\omega)$ are in the same coordinate chart. The coordinates of $g(X,\omega)$ are obtained by letting $g$ act linearly on the real and imaginary parts of each coordinate of $(X,\omega)$, using the isomorphism $\bC\cong \bR^2$. 

For example, $g_t$ scales the real part of the coordinates by $e^t$ and the imaginary part of the coordinates by $e^{-t}$. And $u_t$ adds $t$ times the imaginary part of the coordinates to the real coordinates.

Since $\cM$ is defined by linear equations with \emph{real} coefficients and constant term 0, both the real and imaginary parts of the coordinates also satisfy the linear equations, as well as any complex linear combination of them. 
\end{proof}

That a converse is true is a recently established very deep fact, due to Eskin-Mirzakhani-Mohammadi \cite{EM, EMM}. The proof is vastly beyond the scope of these notes. When we say ``closed" below, we continue to refer to the analytic topology on a stratum. 

\begin{thm}[Eskin-Mirzakhani-Mohammadi]
Any closed  $\G $ invariant set is a finite union of affine invariant submanifolds. In particular, every orbit closure is an affine invariant submanifold. 
\end{thm}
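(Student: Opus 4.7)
The plan is to deduce the topological statement from a measure-theoretic classification, following the Ratner--Margulis--Benoist--Quint paradigm adapted to the non-homogeneous setting of strata. First I would isolate the key intermediate goal: every $P$-invariant, $P$-ergodic probability measure on a stratum $\cH$ is \emph{affine}, meaning it is supported on an affine invariant submanifold $\cM$ and equals the natural Lebesgue-class measure on $\cM$. Here $P \subset SL(2,\bR)$ denotes the upper triangular subgroup, generated by the geodesic flow $g_t$ and the unipotent $u_s$ from the previous section. Because $P$ is amenable, averaging $\frac{1}{T}\int_0^T \delta_{p_t x}\,dt$ along a F\o lner sequence and passing to weak-$*$ limits produces $P$-invariant measures, provided one controls escape of mass.

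The first genuine obstacle is quantitative non-divergence of $P$-orbits. I would appeal to Eskin--Masur style recurrence estimates, built from Siegel--Veech formulas and SL(2,$\bR$)-equivariant ``height functions'' (sums of inverse lengths of short saddle connections) that satisfy a log-submean inequality under the $P$-averaging operator. This ensures the limit measures are genuine probability measures rather than losing mass into the cusp where saddle connections shrink, and it is what makes the whole averaging scheme tenable.

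The core of the argument, and by far the hardest step, is the exponential drift / Ratner-style classification of the $P$-ergodic measures. Given such a $\mu$, the goal is to show that in period coordinates its conditional measures along some unstable foliation are Lebesgue on a real-linear subspace. I would compare two nearby generic points, flow them apart by $g_t$ to exponentially increase their separation while staying in a Lusin set where $\mu$ is nearly uniform, and then project via a carefully chosen conditional to manufacture ``extra invariance'' of $\mu$ in a new direction inside $H^1(X,\Sigma,\bR)$. Iterating enlarges the invariance subspace until its linear span is maximal, at which point $\mu$ is affine. The hard part is the non-homogeneity: there is no ambient Lie group acting transitively on $\cH$, so the ``new invariance'' must be extracted directly from the flat linear structure of period coordinates together with semisimplicity of the Kontsevich--Zorich cocycle (itself a deep input, due to Forni and Avila--Viana), which plays the role that semisimplicity of the ambient group plays in Ratner's theorem.

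Finally, once the measure classification is in hand, I would pass from invariant measures to orbit closures in the standard way. Given a closed $\G$-invariant set $K$ and a point $x_0 \in K$, apply non-divergence to the $P$-orbit of $x_0$ to produce a $P$-invariant probability measure $\mu$ with $x_0 \in \mathrm{supp}(\mu) \subseteq K$; the classification gives $\mathrm{supp}(\mu) = \cM$ for some affine invariant submanifold $\cM$ with $x_0 \in \cM \subseteq K$. Upgrading $\cM$ from $P$-invariant to $\G$-invariant is automatic, since the linear equations defining $\cM$ have real coefficients and hence are preserved by the full $\G$ action, as shown in the proposition opening this subsection. The finiteness claim in a fixed closed $K$ then follows by a noetherian argument: affine invariant submanifolds of a stratum have bounded dimension, are locally cut out by real-linear equations, and (by Filip's algebraicity theorem stated earlier) are quasi-projective subvarieties, so any strictly descending chain in $K$ must terminate and only finitely many maximal ones can appear.
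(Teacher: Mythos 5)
The paper does not prove this theorem: it is quoted as a deep result of Eskin--Mirzakhani--Mohammadi whose proof is explicitly declared ``vastly beyond the scope of these notes,'' so there is no in-paper argument to compare against. Your proposal is a reasonable high-level roadmap of the actual published strategy (non-divergence via Eskin--Masur type height functions, Benoist--Quint exponential drift to classify $P$-invariant ergodic measures as affine, then passage to orbit closures), but as written it is an outline of several hundred pages of work, not a proof, and two of the steps you present as routine contain genuine gaps.

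First, the passage from the measure classification to orbit closures is not ``the standard way.'' A weak-$*$ limit of $P$-averages of $\delta_{x_0}$ is supported in the orbit closure, but there is no reason for $x_0$ to lie in the support of the limit measure; the limit could well be the affine measure on a smaller invariant submanifold that the orbit merely accumulates on. Identifying the orbit closure of $x_0$ with the \emph{smallest} affine invariant submanifold containing $x_0$ is precisely the content of the Eskin--Mirzakhani--Mohammadi equidistribution theorem, whose proof requires their isolation/avoidance machinery (Eskin--Margulis--Mozes style recurrence functions adapted to each affine submanifold) run inductively on dimension. Second, the finiteness of the union cannot be obtained by a noetherian descending-chain argument: bounded dimension and local linearity do not preclude an infinite antichain of maximal affine invariant submanifolds accumulating inside the closed set $K$; ruling this out is again the isolation property. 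Moreover, invoking Filip's algebraicity theorem here is circular in the logical development, since Filip's proof takes the Eskin--Mirzakhani--Mohammadi results as input. So while your outline points at the right literature, the two steps you treat as soft are exactly where substantial new arguments are needed.
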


This theorem is false if $\G $ is replaced with the diagonal subgroup: there are closed sets invariant under the diagonal subgroup that are locally homeomorphic to a Cantor set cross $\bR$. Determining to what extent the theorem holds for the unipotent subgroup 
$$\left(\begin{array}{cc} 1&t\\0&1 \end{array}\right)$$
is a major open problem. 

The context for these theorems comes from homogenous space dynamics, where Ratner's Theorems give that orbit closures of a unipotent flow on a homogenous space must be sub-homogenous spaces. 

Because of the work of Eskin-Mirzakhani-Mohammadi (and a converse that will we discuss below), the term ``affine invariant submanifold" is synonymous with ``$\G$-orbit closure", usually abbreviated ``orbit closure". Sometimes we will consider orbit closures for actions of subgroups of $\G$, but in these cases we will make this clear by specifying the subgroup. The default is $\G$ (or, for some other people, $SL(2,\bR)$).

\bold{Closed orbits.}
Let us now turn to the case of $2$-dimensional affine invariant submanifolds $\cM$. In this case  $\G$ acts transitively on $\cM$. (Indeed, the real dimension of $\G$ is equal to that of $\cM$, and it is easily checked that the $\G$-orbit of any point is open in $\cM$. The different $\G$-orbits in $\cM$ are open disjoint sets, so since $\cM$ is connected there must only be one. Hence $\cM$ is the $\G$-orbit of any translation surface in $\cM$.)

Thus ``closed $\G$-orbit" is synonymous with ``2-dimensional affine invariant submanifold." 

\begin{rem}
We have already given a number of examples of 2-dimensional affine invariant submanifolds, namely the eigenform loci in $\cH(2)$, and spaces of branched covers of genus 1 translation surfaces branched over exactly 1 point. 
\end{rem}

The following result has more than one proof \cite{V5, SW2}, 
but all known proofs follow the same basic outline and use dynamics in a nontrivial way. 

\begin{thm}[Smillie]
Suppose that $(X,\omega)\in \cH$ has closed orbit and stabilizer $\Gamma$. Then the orbit map 
 $$\G /\Gamma\to \cH, \quad\quad [g]\mapsto g(X,\omega)$$
 is a homeomorphism, and $\Gamma$ is a lattice in $SL(2,\bR)$.  
\end{thm}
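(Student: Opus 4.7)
The statement has two parts, the homeomorphism claim and the lattice claim, and I address them separately.

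For the homeomorphism claim, the orbit map is manifestly continuous and bijective onto its image, and is a local diffeomorphism because $\Gamma$ is discrete. What needs argument is that the subspace topology on $\G \cdot (X,\omega) \subset \cH$ agrees with the quotient topology on $\G/\Gamma$. This is a general topological fact: a locally closed orbit of a Lie group acting continuously on a Hausdorff space is homeomorphic, via the orbit map, to the quotient of the group by the stabilizer. A Baire-category proof suffices: writing $\G$ as a countable union of compact sets $\overline{U_n}$, the images $\overline{U_n} \cdot (X,\omega)$ cover the orbit; because the orbit is closed in the locally compact space $\cH$, it is Baire in its subspace topology, so one such image has nonempty relative interior, and $\G$-translation propagates this openness to every orbit point.

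For the lattice claim, rescale so that $(X,\omega)$ has unit area, and work with the closed orbit $Y = SL(2,\bR) \cdot (X,\omega)$ inside the unit-area stratum; the stabilizer automatically lives in $SL(2,\bR)$ since no other element of $\G$ preserves area. I would show the pushforward to $Y$ of Haar measure on $SL(2,\bR)/\Gamma$ is finite; by uniqueness of Haar measure this forces $\Gamma$ to have finite covolume. The plan is to combine Masur's compactness criterion (stated in the text: precompactness in the unit-area stratum is equivalent to a uniform positive lower bound on saddle connection lengths) with Masur's quantitative nondivergence estimate for the $g_t$-flow: there exist $C, \alpha > 0$ such that for every $(Z,\eta)$ in the unit-area stratum, every $T > 0$, and every sufficiently small $\e$, the proportion of $t \in [0,T]$ for which $g_t(Z,\eta)$ carries a saddle connection shorter than $\e$ is at most $C \e^\alpha$. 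Averaging this estimate over the rotation subgroup $SO(2)$ bounds the $SL(2,\bR)$-invariant measure of the $\e$-thin part of $Y$ by a constant multiple of $\e^\alpha$, which sums over dyadic $\e$ to a finite total mass.

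The main obstacle is a bootstrapping issue: the standard tools for converting time averages of $g_t$ into space averages (Birkhoff, Howe--Moore) presuppose a finite invariant measure, which is exactly what we are trying to establish. The workaround, implicit in both Veech's and Smillie--Weiss's approaches, is to run the estimates along a compact exhaustion of $Y$ using only individual $g_t r_\theta$-trajectories, exploiting the fact that $Y$ is the $SO(2)$-saturation of any single $g_t$-orbit, and to verify uniformity of the nondivergence bound with respect to the exhaustion. Equivalently, one must rule out ``funnels'' in $SL(2,\bR)/\Gamma$: directions $\theta$ for which $g_t r_\theta(X,\omega)$ diverges in the stratum without the short-cylinder degeneration that Proposition~\ref{P:para} attaches to parabolic elements of the stabilizer. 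Nondivergence forbids this behaviour, converting every end of $Y$ into a finite-volume cusp supported by a parabolic subgroup of $\Gamma$, and yielding the lattice property.
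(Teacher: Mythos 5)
First, a point of reference: the survey does not prove this theorem at all --- it cites \cite{V5, SW2} and explicitly says the known proofs use dynamics in a way beyond the scope of the notes --- so your attempt has to be judged against the literature rather than against the text. Your first half is fine and standard: the orbit map is continuous and injective by definition of $\Gamma$, and the Baire-category argument (a closed orbit in the locally compact stratum is a Baire space in the subspace topology, so some compact piece $\overline{U_n}\cdot(X,\omega)$ has nonempty relative interior, and translating by the group spreads this openness to every point) is exactly the usual proof that a closed orbit of a second-countable locally compact group is homeomorphic to the quotient by its stabilizer.

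The lattice half has a genuine gap, in two places. (i) The nondivergence estimate you quote is false as stated: if $(Z,\eta)$ has a vertical saddle connection of length $\ell$, then $g_t(Z,\eta)$ has a saddle connection of length $e^{-t}\ell<\e$ for all $t\ge\log(\ell/\e)$, so the proportion of time spent $\e$-thin tends to $1$, not to $0$ with $\e$. The correct quantitative statements (Kerckhoff--Masur--Smillie, Eskin--Masur, Minsky--Weiss) bound the measure of \emph{directions} $\theta$ for which $g_Tr_\theta(Z,\eta)$ is thin, or concern the horocycle flow; and converting any such time- or angle-average into a bound on the Haar measure of the thin part of $SL(2,\bR)/\Gamma$ runs into the $KAK$ Jacobian $\sinh(2t)$, which grows exponentially --- precisely the bootstrapping problem you flag, and your proposed workaround does not resolve it. (ii) More seriously, your closing implication --- recurrence of almost every $g_tr_\theta$-trajectory forces every end of $\bH/\Gamma$ to be a finite-volume cusp --- is not valid for Fuchsian groups in general: there exist discrete non-lattices of divergence type (for instance normal subgroups of cocompact lattices with quotient $\bZ$ or $\bZ^2$) for which almost every geodesic on the infinite-area surface $\bH/\Gamma$ is recurrent. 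So ``no funnels in the measure-theoretic sense'' does not yield the lattice property. The actual proofs supply exactly the ingredient you are missing: Veech's route analyzes the thin part directly, showing that on a closed orbit a sufficiently short saddle connection forces the surface to be periodic in that direction with commensurable moduli (Proposition \ref{P:para}), so each component of the thin part is an honest horoball neighborhood of a cusp stabilized by a parabolic in $\Gamma$, finitely many up to conjugacy, each of finite volume; Smillie--Weiss instead combine nondivergence of the \emph{horocycle} flow on strata with Dani's theorem that a discrete non-lattice in $SL(2,\bR)$ always carries a divergent, non-periodic horocycle orbit. Either of these would close your argument; as written, it does not close.
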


The first conclusion is entirely expected and is standard (but  technical to prove), and the second is deep and fundamentally important. 

\begin{rem}
All square tiled surfaces are part of 2-dimensional affine invariant submanifolds and hence have closed orbit. The stabilizer of a square-tiled surface is a finite index subgroup of $SL(2,\bZ)$.  

The eigenforms illustrated in figure \ref{F:H2eigenform} must, by Smillie's theorem, have lattice stabilizer. However, it is very hard to write down this stabilizer, even in specific examples. The orbifold type of the stabilizer was computed in  \cite{McM:spin, Ba, Mu:orb}, and an algorithm for computing the stabilizer was given in \cite{Mu:alg}.
\end{rem}

Finally, we note that if $\cM$ is a 2-dimensional orbit closure, then the projection to the moduli space of Riemann surfaces (via $(X,\omega)\mapsto X$) is 1-dimensional. One complex dimension is lost, since $(X,\omega)$ and $(X,r \omega)$ map to the same point, for any $r\in\bC$. A corollary of Smillie's theorem (and also Filip's theorem, obtained much later) is that this projection of $\cM$ is in fact an algebraic curve. 

\begin{prop}
The projection of closed orbit to the moduli space of Riemann surfaces is an algebraic curve, which is isometrically immersed with respect to the Teichm\"uller metric. 
\end{prop}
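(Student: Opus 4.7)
The plan is to combine Smillie's theorem with the classical fact that a Teichm\"uller disk is isometric to the upper half plane. Fix $(X_0,\omega_0)\in\cM$ with stabilizer $\Gamma$. By Smillie's theorem $\Gamma$ is a lattice in $SL(2,\bR)$, and the orbit map gives a homeomorphism $\G/\Gamma\to\cM$. The $\mathbb{C}^*$-action $(X,\omega)\mapsto(X,r\omega)$ coincides with the action of the subgroup $\bR^{+}\!\cdot SO(2)\subset \G$ (writing $r=|r|e^{i\theta}$), and this subgroup has image $\bZ$ero in any map to $M_g$ since it does not change $X$. So the projection $\cM\to M_g$ factors through the double coset space
\[
(\bR^{+}\!\cdot SO(2))\big\backslash \G / \Gamma \;\cong\; SO(2)\backslash SL(2,\bR)/\Gamma \;\cong\; \bH/\Gamma.
\]
Because $\Gamma$ is a lattice in $SL(2,\bR)$, the quotient $\bH/\Gamma$ is a hyperbolic orbifold of finite area.

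Next I would show that the induced holomorphic map $\varphi:\bH/\Gamma\to M_g$ has algebraic image. As $\Gamma$ is a finitely generated Fuchsian group of the first kind, $\bH/\Gamma$ is a Riemann surface of finite type, and so admits a canonical compactification $\overline{\bH/\Gamma}$ obtained by adjoining finitely many cusp points. At a cusp, $\Gamma$ contains a parabolic element; by Proposition \ref{P:para} the corresponding surfaces are horizontally periodic, so as one approaches the cusp along the geodesic the heights of the horizontal cylinders tend to infinity while their moduli go to $\infty$, which in terms of the underlying Riemann surface corresponds to pinching the core curves of these cylinders. This identifies the cusp limit with a stable nodal curve, giving a holomorphic extension $\overline{\varphi}:\overline{\bH/\Gamma}\to \overline{M_g}$ to the Deligne--Mumford compactification. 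Since $\overline{\bH/\Gamma}$ is a compact Riemann surface and $\overline{M_g}$ is projective, Chow's theorem (applied to the graph of $\overline{\varphi}$) gives that the image is a projective curve; intersecting with $M_g$ yields a quasi-projective curve, which is the image of the original $\cM$.

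Finally, for the isometric statement I would invoke the classical theorem of Teichm\"uller--Royden that every $SL(2,\bR)$-orbit in Teichm\"uller space $\mathcal{T}_g$, i.e.\ every \emph{Teichm\"uller disk}, is the image of a holomorphic isometric embedding of $\bH$ (with its hyperbolic metric, suitably normalized) into $(\mathcal{T}_g,d_{\mathrm{Teich}})$. Pulling this statement back along the covering $\mathcal{T}_g\to M_g$, the local map $\bH\to\mathcal{T}_g\to M_g$ is an isometric immersion, hence so is the quotient $\varphi:\bH/\Gamma\to M_g$.

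The main obstacle is the algebraicity half: one has to check that the holomorphic map $\varphi$ really does extend across the cusps to the Deligne--Mumford compactification in such a way that Chow's theorem can be applied. Once the extension is in hand the rest is soft, and the isometric immersion property is automatic from the classical identification of Teichm\"uller disks. (Alternatively, one could short-circuit the algebraicity argument by directly quoting Filip's theorem cited above, which guarantees that $\cM$, and hence its image under the algebraic projection $\cH\to M_g$, is quasi-projective.)
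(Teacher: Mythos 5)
Your plan is correct and follows exactly the route the paper has in mind: the paper states this proposition as a corollary of Smillie's theorem without giving details, and your argument (factor the orbit through $SO(2)\backslash SL(2,\bR)/\Gamma\cong\bH/\Gamma$ with $\Gamma$ a lattice, extend over the cusps to the Deligne--Mumford compactification via the parabolic/cylinder-pinching picture, apply Chow, and get the isometry from the classical Teichm\"uller disk fact) is the standard way to flesh that out. The alternative shortcut via Filip's algebraicity theorem that you mention is also the one the paper itself notes.
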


Isometrically immersed curves in the moduli space of Riemann surfaces are called \emph{Teichm\"uller curves}. Up to a ``double covering" issue relating quadratic differentials to Abelian differentials, all Teichm\"uller curves are projections of closed orbits. 

Royden has shown that the Kobayashi metric on the moduli space of Riemann surfaces is equal to the Teichm\"uller metric. Using this, McMullen has shown that Teichm\"uller curves are rigid \cite{Mc3}. The study of Teichm\"uller curves is a fascinating area at the intersection of dynamics and algebraic geometry. 

See \cite{W2} for a list of known Teichm\"uller curves, and see \cite{Mc4, M3, BaM, MW} for some finiteness results.

\bold{Stable and unstable manifolds for $g_t$.} We will now give a flavor of the dynamics of the $g_t$ action. We will not return to this explicitly in these notes, but the ideas we present now underlie most of the proofs that have been omitted from these notes. 

Say the period coordinates of $(X,\omega)$ are $v_j=x_j+i y_j$ for $j=1, \ldots, n$. We can then think of $(X,\omega)$ in coordinates as a $2$ by $n$ matrix whose first row gives the real parts of period coordinates, and whose second row gives the imaginary parts.
$$\left(\begin{array}{cccc} x_1&x_2&\cdots &x_n\\y_1&y_2&\cdots &y_n \end{array}\right)$$
The advantage of writing the coordinates this way is that any $g\in \G$ close to the identity (so $g(X,\omega)$ stays in the same chart) acts by left multiplication. In particular, for small $t$ we have that $g_t(X,\omega)$ is the matrix product
$$\left(\begin{array}{cc} e^t&0\\0&e^{-t}\end{array}\right) \left(\begin{array}{cccc} x_1&x_2&\cdots &x_n\\y_1&y_2&\cdots &y_n \end{array}\right).$$
However, for large $t$, we expect $g_t(X,\omega)$ will have left the coordinate chart. When it enters a different coordinate chart, a change of basis matrix can be used to compute the new coordinates from the old. This matrix is a $n$ by $n$ invertible integer matrix, which we will call $A_t(X,\omega)$. (This is an imprecise definition of $A_t(X,\omega)$, which of course depends on  choices of coordinates, etc.)

We get that the coordinates of $g_t(X,\omega)$ are 
$$\left(\begin{array}{cc} e^t&0\\0&e^{-t}\end{array}\right) \left(\begin{array}{cccc} x_1&x_2&\cdots &x_n\\y_1&y_2&\cdots &y_n \end{array}\right)A_t(X,\omega).$$
The matrix $A_t(X,\omega)$ is called the \emph{Kontsevich-Zorich cocycle}. It is a cocycle in the dynamical systems sense, which simply means 
$$A_{t+s}(X,\omega) = A_t(g_s(X,\omega)) A_s(X,\omega).$$
The Kontsevich-Zorich cocycle is the complicated part of the dynamics of $g_t$. However, its effect is usually beat out by the effect of the $(e^t)$'s on the left. In particular, 

\begin{thm}[Masur, Veech, Forni \cite{Ma2,V3, Fex1}]
Fix an affine invariant submanifold $\cM$. 
For almost every $(X,\omega)\in \cM$, and every  $(X', \omega')$ in the same coordinate chart with the same real parts of period coordinates,  the distance between $g_t(X,\omega)$ and $g_t(X', \omega')$ goes to zero as $t\to\infty$. 
\end{thm}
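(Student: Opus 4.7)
The plan is to reduce the claim to a growth estimate on the Kontsevich--Zorich cocycle $A_t$ and then combine Oseledec's multiplicative ergodic theorem with Forni's spectral gap theorem. If $(X,\omega)$ and $(X',\omega')$ share a chart and have equal real parts of period coordinates, the coordinate formula displayed just above the theorem shows that after applying $g_t$, their difference is $(0;\, e^{-t} w\, A_t(X,\omega))$, where $w \in H^1(X,\Sigma,\bR)$ is the ``imaginary row'' of the original difference (restricted to the tangent space of $\cM$). So the theorem reduces to
$$\lim_{t\to\infty} e^{-t} \|w\, A_t(X,\omega)\| = 0 \quad \text{for a.e.\ } (X,\omega)\in\cM.$$

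Granting a $\G$-invariant ergodic probability measure on $\cM$, I would apply Oseledec's theorem to the cocycle $A_t$ on the bundle whose fibre at $(X,\omega)$ is $H^1(X,\Sigma,\bR)$. On the absolute quotient $H^1(X,\bR)$ this produces Lyapunov exponents $1=\lambda_1 \geq \lambda_2 \geq \cdots \geq \lambda_{2g}=-1$, symmetric by the symplectic pairing, with $\lambda_1$ tautologically attained on the line $\bR[\Re\omega]$, while the extra exponents coming from the relative kernel are all zero. The deep input is Forni's theorem that $\lambda_2<1$. Given this, any $w$ outside the top Lyapunov line satisfies $\|w\,A_t\|\leq C_\epsilon e^{(\lambda_2+\epsilon)t}$ for $\epsilon$ small enough that $\lambda_2+\epsilon<1$, and the required decay follows. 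A direction $w$ parallel to $[\Re\omega]$ corresponds to perturbing $(X,\omega)$ inside its own $\G$-orbit, since $i[\Re\omega]$ lies in the span of the infinitesimal generators of the $u_s$- and $r_\theta$-actions; in that case the two surfaces lie on a common orbit and the statement must be interpreted modulo this tangential direction.

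The main obstacle is Forni's inequality $\lambda_2<1$; this is not a formal consequence of the Oseledec framework, but rather of a subtle variational formula for the second $t$-derivative of the Hodge norm along $g_t$-orbits, which shows strict positivity of the associated quadratic form on the symplectic complement of the tautological subbundle. The earlier work of Masur and Veech---establishing ergodicity of the natural measure on $\cM$ and simplicity of the top Lyapunov exponent---supplies the hypotheses needed to apply Oseledec but is much softer than Forni's step. Once the spectral gap is in place, converting the Lyapunov estimate into the claimed decay is bookkeeping; the entire content of the theorem is hidden in the inequality $\lambda_2<1$.
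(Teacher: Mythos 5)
The paper offers no proof of this theorem --- it is cited to Masur, Veech and Forni --- so your sketch must stand on its own. Its architecture (reduce to the growth of the Kontsevich--Zorich cocycle, then apply Oseledec plus the spectral gap $\lambda_2<1$) is indeed how the cited works proceed, but there is a genuine gap at the central step. You assert that any $w$ outside the top Lyapunov \emph{line} satisfies $\|wA_t\|\le C_\varepsilon e^{(\lambda_2+\varepsilon)t}$. Oseledec gives this only for $w$ lying \emph{inside} the sum $\bigoplus_{\lambda<\lambda_1}E_\lambda$ of the lower Oseledec subspaces, which is a codimension-one hyperplane, not the complement of a line: any $w$ with nonzero component in $E_{\lambda_1}$ grows like $e^{\lambda_1 t}=e^{t}$, and for such $w$ the quantity $e^{-t}\|wA_t\|$ does \emph{not} tend to $0$. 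So you must identify that hyperplane. Since the cocycle is symplectic and (by Forni) the top exponent is simple, $E_\lambda$ pairs trivially with $E_\mu$ unless $\lambda+\mu=0$, whence $\bigoplus_{\lambda<1}E_\lambda=\Re(\omega)^{\perp}$; adding the (zero-exponent) relative kernel, the condition on $w=\Im(\omega')-\Im(\omega)$ is exactly $\int_X \Re(\omega)\wedge\bigl(\Im(\omega')-\Im(\omega)\bigr)=0$, i.e.\ that the two surfaces have equal area. Your argument therefore proves the statement for equal-area (e.g.\ unit-area) perturbations, which is the intended reading; it cannot prove more, because the conclusion genuinely fails for $(X',\omega')=\mathrm{diag}(1,\lambda)(X,\omega)$: this lies in $\cM$, has the same real parts, and since $\mathrm{diag}(1,\lambda)$ commutes with $g_t$ the two orbits remain at a fixed positive distance along recurrence times.

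Relatedly, you have the tautological exponents reversed: under forward flow $\Re(\omega)=e^{-t}\Re(\omega_t)$ while $\|\Re(\omega_t)\|$ stays constant, so $[\Re(\omega)]$ carries the exponent $-1$ and $[\Im(\omega)]$ carries $+1$. This matters: the direction you flag as exceptional, $w$ proportional to $[\Re(\omega)]$, is the stable horocycle direction and in fact contracts like $e^{-2t}$, whereas the genuinely exceptional direction is $w$ proportional to $[\Im(\omega)]$, i.e.\ the $\mathrm{diag}(1,e^{s})$ direction described above, which your hedge about ``interpreting modulo the tangential direction'' does not repair. Two smaller points: the reduction to a single cocycle $A_t$ and a fixed chart norm should really be run in the Hodge norm at $g_t(X,\omega)$ together with recurrence of the orbit (this is where ``almost every'' is used beyond Oseledec); and on attribution, Veech had already proved $\lambda_2<1$ for the Masur--Veech measure --- what you need from Forni is that the gap holds for \emph{every} $\G$-invariant ergodic probability measure, hence for the affine measure on an arbitrary $\cM$.
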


Without the interference of the Kontsevich-Zorich cocycle, this would simply be the obvious fact that 
$$\left(\begin{array}{cc} e^t&0\\0&e^{-t}\end{array}\right) \left(\begin{array}{cccc} 0&0&\cdots &0\\y_1-y_1'&y_2-y_2'&\cdots &y_n-y_n' \end{array}\right)\to 0.$$
The theorem says that even with the Kontsevich-Zorich cocycle added in on the left  this still holds. 

Overall, $g_t$ expands the real parts of period coordinates exponentially, and contracts the imaginary parts exponentially. But the reader should be warned that there are technical complications arising from the fact that strata are not compact. 

Flows or transformations that expand and contract complimentary directions exponentially are called \emph{hyperbolic}. Using the hyperbolic dynamics of $g_t$, one can show that every affine invariant submanifold is a $g_t$-orbit closure. In particular, 

\begin{thm}
Every affine invariant submanifold is a $\G$-orbit closure. 
\end{thm}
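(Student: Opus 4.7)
The plan is to exhibit a point $(X,\omega) \in \cM$ whose $\G$-orbit is dense in $\cM$; given the Eskin--Mirzakhani--Mohammadi theorem this immediately shows $\cM$ is a $\G$-orbit closure.

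First I would invoke two deep inputs from the background. The first is the Eskin--Mirzakhani--Mohammadi theorem itself, which says that for every $(X,\omega)$ the closure $\overline{\G\cdot(X,\omega)}$ is an affine invariant submanifold. The second is the fact, also due to Eskin--Mirzakhani--Mohammadi (and alternatively a consequence of Filip's algebraicity theorem stated just above, since a fixed quasi-projective variety contains at most countably many irreducible algebraic subvarieties), that a given stratum contains only countably many affine invariant submanifolds. In particular, the proper affine invariant submanifolds of $\cM$ form a countable list $\cN_1,\cN_2,\ldots$.

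Next I would observe that each $\cN_i$ is nowhere dense in $\cM$. In period coordinates near any smooth point $p \in \cN_i$, both $\cM$ and $\cN_i$ are cut out by real-linear subspaces $W_i(p) \subseteq W(p)$ of $H^1(X,\Sigma;\bC)$; if these subspaces agreed in dimension at some $p$ then $W_i(p) = W(p)$ and so $\cN_i$ would contain an open neighborhood of $p$ in $\cM$. Then $\cN_i$ would be both open and closed in the connected set $\cM$, forcing $\cN_i = \cM$ and contradicting properness. Hence $\dim \cN_i < \dim \cM$, so $\cN_i$ is a closed nowhere dense subset of the locally compact metrizable, and hence Baire, space $\cM$. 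The Baire category theorem produces a point $(X,\omega) \in \cM \setminus \bigcup_i \cN_i$. By the first input $\overline{\G\cdot(X,\omega)}$ is an affine invariant submanifold contained in $\cM$; by the choice of $(X,\omega)$ it is not proper; so it equals $\cM$, as desired.

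The hard part here is not this concluding deduction, which is short and essentially formal, but the two black-box inputs: the EMM classification theorem, and the countability of affine invariant submanifolds. An alternative route, hinted at in the preceding paragraphs, would proceed through the hyperbolic dynamics of $g_t$: one first upgrades the stable manifold theorem stated above to show that every $g_t$-orbit closure on $\cM$ is itself an affine invariant submanifold, and then uses ergodicity of $g_t$ with respect to the natural affine (Masur--Veech-type) measure supported on $\cM$ to conclude that a generic $g_t$-orbit, and hence the corresponding $\G$-orbit, is dense in $\cM$.
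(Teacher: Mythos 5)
Your argument is correct, but it is not the route the paper has in mind. The paper's (sketched) proof is the one you relegate to your final paragraph: the hyperbolic dynamics of $g_t$ — via a Hopf-type argument using the stable/unstable foliations described just above the theorem, together with the finite $g_t$-ergodic affine measure of Lebesgue class supported on all of $\cM$ — shows that almost every $g_t$-orbit in $\cM$ is dense in $\cM$, so $\cM$ is already a $g_t$-orbit closure and a fortiori a $\G$-orbit closure. Your primary route instead combines the Eskin--Mirzakhani--Mohammadi classification of orbit closures with the countability of affine invariant submanifolds and a Baire category argument; the nowhere-density step (a proper affine invariant submanifold of $\cM$ has strictly smaller dimension by linearity in period coordinates plus connectedness of $\cM$) and the concluding deduction are both sound. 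What your route buys is a soft, purely topological argument with no measure theory; what it costs is reliance on two theorems (the EMM classification and countability) that are far deeper than the statement being proved, and that historically came much later — the ergodicity route produces a full-measure set of dense orbits without classifying anything.

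One genuine error in a side remark: it is not true that a quasi-projective variety contains at most countably many irreducible algebraic subvarieties (the plane already contains uncountably many lines), so Filip's algebraicity theorem does not by itself yield countability of affine invariant submanifolds. Countability is a separate theorem of Eskin--Mirzakhani--Mohammadi (it can also be extracted from the arithmetic rigidity results of Filip and of Wright on fields of definition, torsion, and real multiplication, but not from algebraicity alone). Since you also cite EMM for countability, the main argument survives; just drop the parenthetical.
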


\section{The straight line flow}

For much of this section, a good reference is the survey \cite{MT}.

\subsection{Definition and basic properties} 
Fix a unit length vector $v\in \bC$. The straight line flow on a translation surface $(X,\omega)$ sends each point to the point obtained by starting at that point and moving in the direction $v$ at unit speed for time $t$. This gives for each $\bR$ a map $\phi^v_t:(X,\omega)\setminus B_t\to (X,\omega)$, where $B_t$ is the set of ``bad points" whose straight line flow hits a singularity in time at most $t$. $B_t$ consists of a finite union of line segments.

\begin{figure}[h!]
\centering
\includegraphics[scale=0.35]{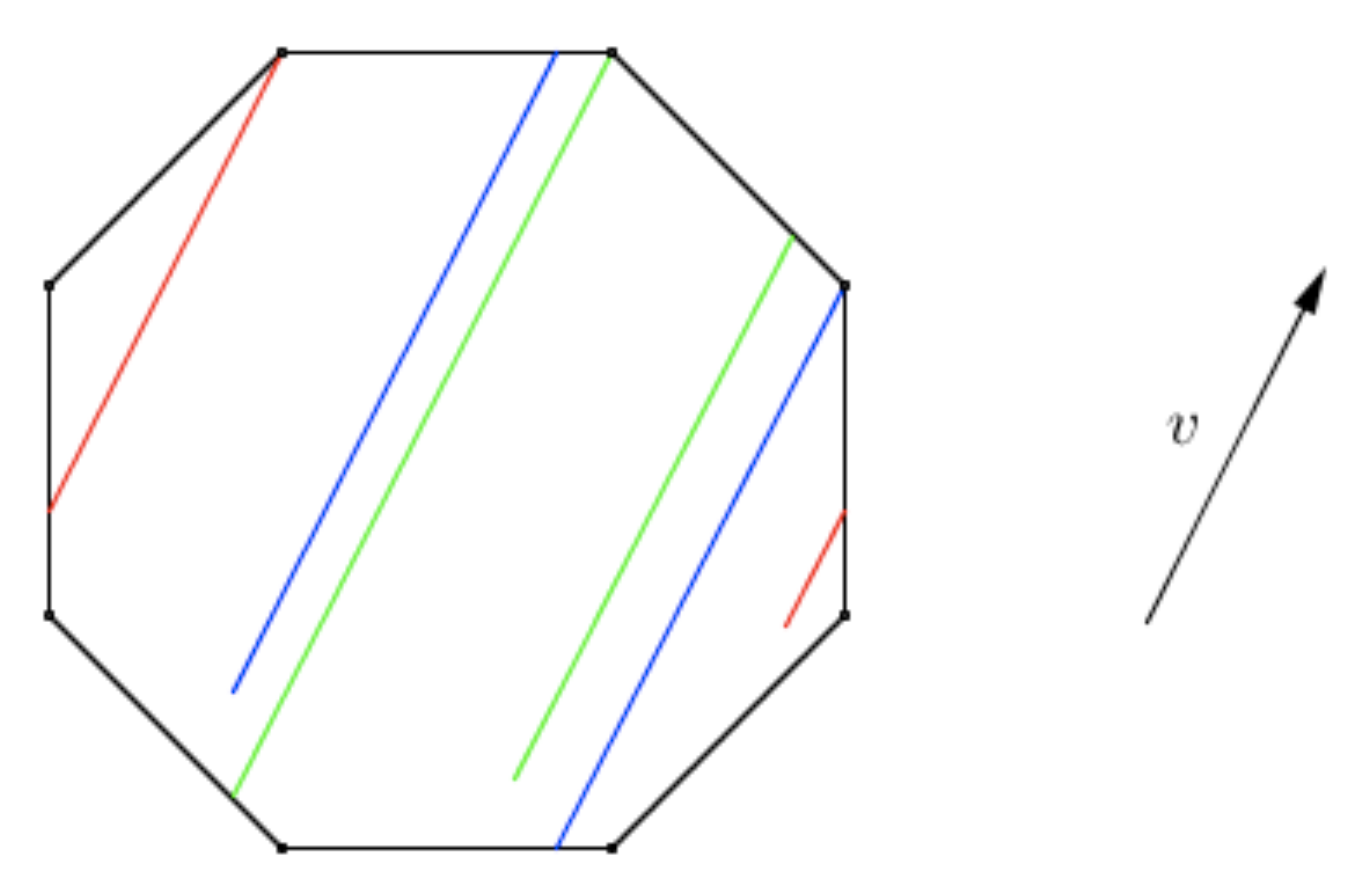}
\caption{The colored line segments consist of (some of the) points whose orbit under straight line flow in direction $v$ hits the singularity in finite time.}
\end{figure}

Let $B=\cup_t B_t$. Although this set might be dense, it has measure zero and hence it should be considered to be of negligible size. The straight line flow is defined on $(X,\omega)\setminus B$ for all time $t$.

One of the reasons straight line flow is important is because billiard trajectories in rational polygons ``unfold" to orbits of straight line flow. This was the original motivating unfolding rational polygons to translation surfaces; instead of bounding off the edges of the polygon, the trajectory can continue straight into a reflected copy of the polygon. 

\begin{figure}[h!]
\centering
\includegraphics[scale=0.35]{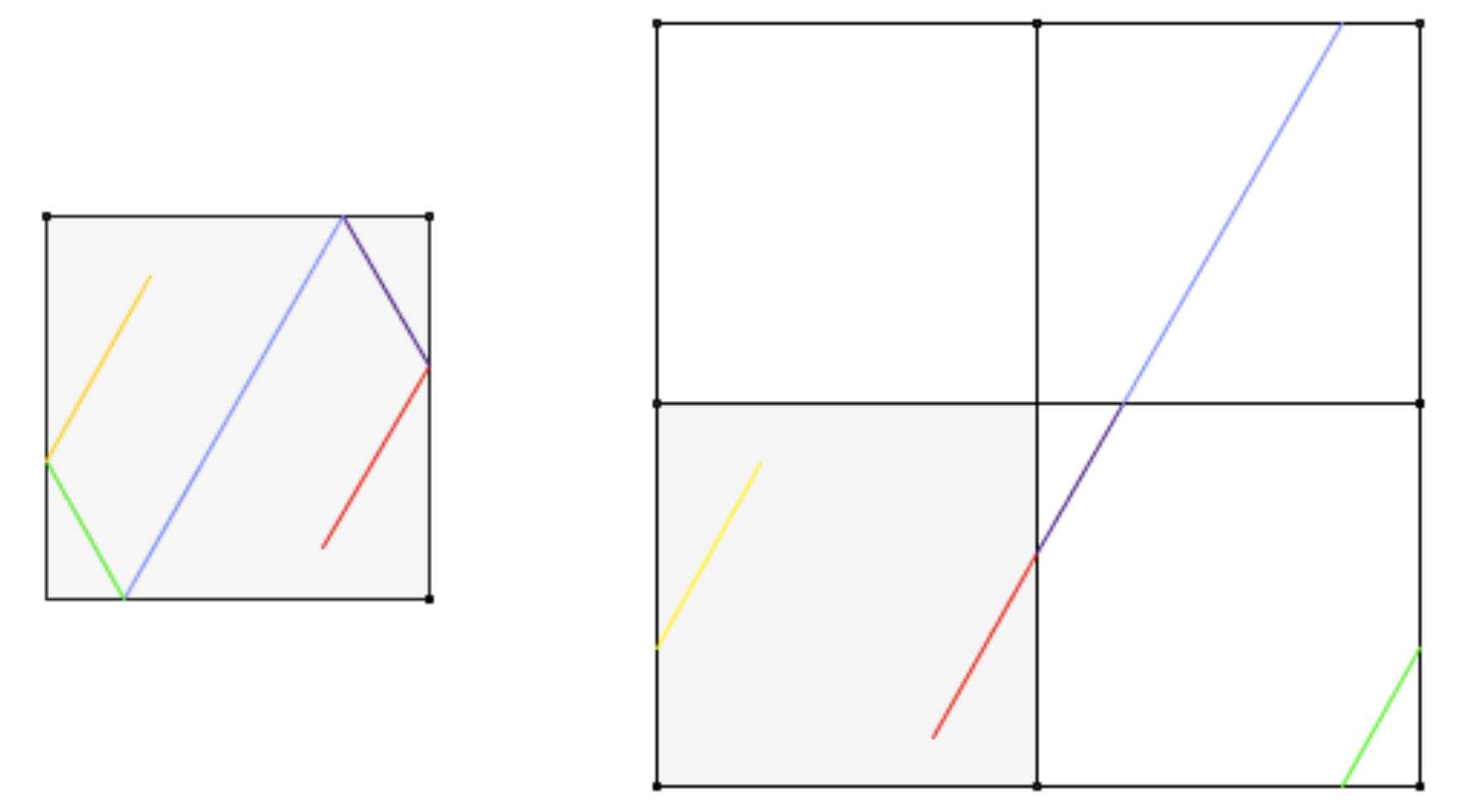}
\caption{A billiard trajectory in the square (left) ``unfolds" to a straight line flow trajectory in the associated translation surface (right).}
\end{figure}

The most basic question about straight line flow is: can the surface be cut into pieces invariant under the straight line flow? The easiest thing to do is to cut out all saddle connections in direction $v$. Possibly this disconnects the surface, possibly not. 

\begin{defn}
A straight line flow is called \emph{minimal} if every orbit that is defined for all time is dense. 
\end{defn}

Let $v$ have irrational slope, and let $(X,\omega)=(\bC/\bZ[i], dz)$. The straight line flow in direction $v$ on $(X,\omega)$ is minimal. The proof of this will follow from a more general result below, but the reader is invited to try to convince themself now that this flow is minimal. 

\begin{figure}[h!]
\centering
\includegraphics[scale=0.42]{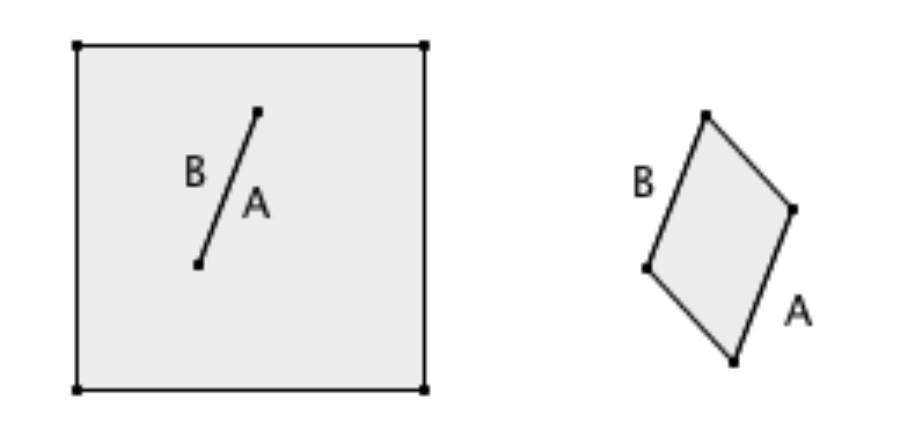}
\caption{Opposite sides are identified, and segment $A$ on the left is identified to segment $A$ on the right, etc., to give a translation surface in $\cH(2)$ that we think of as a cylinder glued onto a slit torus. Then removing the saddle connections $A$ and B disconnects the surface, giving a slit torus (left) and a cylinder (right). If the segment $A$ has irrational slope and the torus is 1 by 1, then the flow in direction $A$ will be minimal in the slit torus, and periodic on the cylinder. }
\end{figure}

\begin{prop}[Katok-Zemljakov \cite{KZ}]\label{P:minimal}
After removing all saddle connections in a given direction on a translation surface $(X,\omega)$, each connected component is either a cylinder or has minimal straight line flow in the given direction. 
\end{prop}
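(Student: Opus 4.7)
The plan is to fix the direction (rotating by $SO(2)$ we may assume it is vertical), let $S \subset (X,\omega)$ denote the finite union of all vertical saddle connections, and analyze a connected component $C$ of $(X,\omega)\setminus S$. The argument splits into two cases based on whether or not $C$ contains a closed vertical trajectory.

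\emph{Cylinder case.} Suppose some vertical trajectory in $C$ closes up to give a closed geodesic $\gamma$. A standard flat-geometry argument (widen the open annular neighborhood of $\gamma$ horizontally in both directions until one meets a singularity) shows that $\gamma$ lies in a maximal open cylinder $\mathrm{Cyl}$ of parallel vertical closed geodesics, whose two boundary circles each consist of a finite union of vertical saddle connections joining consecutive singularities on that circle. Hence $\partial\mathrm{Cyl}\subset S$, so $\mathrm{Cyl}\subset C$ and $\mathrm{Cyl}$ is both open and closed in $C$; connectedness gives $C=\mathrm{Cyl}$.

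\emph{Minimal case.} Suppose now that $C$ contains no closed vertical trajectory. I would show that every vertical orbit in $C$ defined for all $t\in\bR$ is dense in $C$. Pick a short open horizontal segment $I\subset C$. Since $(X,\omega)$ has finite area and the vertical flow is area-preserving, Poincar\'e recurrence guarantees that the first-return map $T\colon I\to I$ is defined off a finite subset, and one checks it is an interval exchange transformation: the discontinuities of $T$ are the points whose upward vertical ray first meets $S$, and on each complementary subinterval $T$ acts by translation. The no-closed-orbit hypothesis on $C$ forces $T$ to have no periodic orbit (a periodic $T$-orbit would produce a closed vertical trajectory in $C$), and the fact that $C\cap S=\emptyset$ forces $T$ to have no ``connection'' between endpoints of its subintervals (such a connection would build a vertical saddle connection entirely inside $C$). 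Keane's theorem on IETs then yields that every $T$-orbit is dense in $I$. A bi-infinite vertical orbit in $C$ meets $I$ infinitely often and its successive intersections with $I$ form a $T$-orbit, which is dense in $I$; sweeping vertically from these intersection points through the finitely many first-return strips fills $C$, so density of the $T$-orbit upgrades to density of the flow orbit.

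The main obstacle is the careful setup of the first-return IET and the translation of the geometric hypotheses on $C$ into the hypotheses of Keane's theorem. One must choose $I$ short enough that its first-return partition is cleanly labeled by the components of $S$ that trajectories from $I$ first encounter, and must verify the dictionary that periodic $T$-orbits correspond to closed vertical trajectories in $C$ and $T$-connections correspond to vertical saddle connections in $C$. Once this dictionary is set up, the proof reduces to the classical IET minimality criterion; the cylinder case is then a clean topological consequence of removing $\partial\mathrm{Cyl}$ from $X$.
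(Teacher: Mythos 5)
The paper offers no proof of this proposition --- it is stated with a citation to Katok--Zemljakov --- so there is no in-paper argument to compare against. Your outline is the standard modern route (first-return interval exchange on a horizontal transversal plus Keane's minimality criterion), the cylinder case is correct, and the overall strategy is sound. However, two of the steps you assert in the minimal case are genuine gaps rather than routine verifications.

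First, the dictionary ``$T$-connections correspond to vertical saddle connections in $C$'' is not correct for an arbitrary transversal $I$. The discontinuities of the first-return map arise from the backward intersections with $I$ of the incoming separatrices of the singularities \emph{and} of the orbits of the two endpoints of $I$ (not from $S$ itself, which orbits in $C$ never meet). Consequently a connection of the IET is a vertical segment joining two points of $\{\text{singularities}\}\cup\partial I$. A vertical segment from a singularity down to an endpoint of $I$, or an orbit that returns to an endpoint of $I$, violates Keane's condition without being a saddle connection, and nothing in the hypothesis on $C$ excludes it. You must either place the endpoints of $I$ on vertical separatrices so that the endpoint discontinuities are absorbed into the singularity ones, or perturb $I$ to avoid the countably many positions that create such accidental connections, or dispose of these degenerate connections directly (a periodic endpoint orbit gives a closed vertical geodesic, which is excluded; the singularity-to-endpoint case genuinely requires moving $I$). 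Relatedly, Poincar\'e recurrence only gives that $T$ is defined almost everywhere; that the exceptional set is finite is a separate geometric fact coming from the finiteness of the separatrices and of $\partial I$.

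Second, ``sweeping vertically through the finitely many first-return strips fills $C$'' is precisely the claim that the union of the return strips over $I$ is dense in $C$, and it is as hard as anything else in the proof. Without it you have only shown that orbits through $I$ are dense in the union of the strips, and you have not shown that an arbitrary bi-infinite orbit in $C$ meets $I$ at all. The standard repair: let $U$ be the union of the open return strips; $U$ is flow-invariant, and its frontier in $C$ is a flow-invariant set contained in finitely many vertical segments of finite total length (here one uses that every point of $\overline{I}$ with infinite forward orbit returns to $\overline{I}$ in finite time, by the injective-area-sweeping argument). A flow-invariant set of finite total vertical length is a union of closed vertical geodesics and vertical saddle connections, both of which are excluded in $C$ in this case; hence the frontier is empty, and connectedness of $C$ forces $U=C$. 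With these two points supplied, your argument is complete.
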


This is an unusual dichotomy: on each component, either every single orbit is periodic (the cylinder case), or every single orbit is dense.

\begin{prop}
On every $(X,\omega)$, the straight line flow is minimal in all but countably many directions. 
\end{prop}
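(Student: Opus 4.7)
The plan is to reduce to Proposition \ref{P:minimal}. I want to show that any direction in which the straight line flow fails to be minimal must be contained in the (countable) set of directions realized by saddle connections of $(X,\omega)$ — together with, in the degenerate torus case, the countable set of rational directions.

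First, I would argue that any direction $v$ containing no saddle connection of $(X,\omega)$ is a direction of minimal flow. Applying Proposition \ref{P:minimal} to such a $v$, the (empty) collection of saddle connections in direction $v$ does not disconnect the surface, so $(X,\omega)$ remains a single component. Provided $\omega$ has at least one zero — equivalently $g\geq 2$ — this component is a closed surface, not homeomorphic to the open annulus that underlies a cylinder in the sense of the paper. The Katok--Zemljakov dichotomy therefore forces the flow in direction $v$ to be minimal. In the remaining case $g=1$, where $\omega$ has no zeros and no saddle connections exist, the surface is a flat torus; there, the non-minimal directions are precisely the rational ones, which form a countable set by a direct lattice-theoretic argument.

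Next, I would show that the set of saddle connections on $(X,\omega)$, and hence the set of directions they realize in $\bR P^1$, is countable. The essential fact is that the holonomy vectors of saddle connections form a discrete subset of $\bC$: for every $R>0$ there are only finitely many saddle connections of holonomy length at most $R$. To see this, note that such a saddle connection is determined by its initial singularity (finitely many choices), its initial unit direction, and its length in $[0,R]$. If there were infinitely many, one could extract a subsequence whose initial data converge; by continuous dependence of straight line trajectories on initial conditions away from the singular set, the resulting saddle connections would eventually coincide, contradicting distinctness. Enumerating saddle connections by increasing holonomy length then exhibits them as a countable collection, so their directions form a countable subset of $\bR P^1$.

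The main (and fairly mild) obstacle is precisely this discreteness of saddle connection holonomies — a standard compactness fact about translation surfaces. Everything else follows formally from Proposition \ref{P:minimal}, and combining the two steps yields the conclusion that the non-minimal directions form a countable subset of $\bR P^1$.
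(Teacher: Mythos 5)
Your proof is correct and follows essentially the same route as the paper, which simply observes that there are only countably many saddle connections and invokes Proposition \ref{P:minimal} in every direction without one. Your additional care — justifying countability via discreteness of holonomy vectors, ruling out the cylinder alternative for a closed surface with singularities, and treating the torus (which has no saddle connections at all) separately — fills in details the paper leaves implicit.
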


\begin{proof}
There are only countably many saddle connections. In any direction that does not have saddle connections, the previous result says the straight line flow must be minimal. 
\end{proof}

%
%
%

\subsection{Ergodicity} A measure preserving flow on a space $Y$ with a probability measure $m$ is a homomorphism $\phi$ from $\bR$ to the group of invertible measure preserving transformations from $Y$ to itself. (These transformations are typically only considered to be defined on set of full measure.) 

\begin{defn}
A measure preserving flow on a space $Y$ with probability measure $m$ is said to be \emph{ergodic} if $Y$ cannot be written as the disjoint union of two subsets that are invariant under the flow, and each of positive measure.  

The flow is \emph{uniquely ergodic} if $m$ is the unique invariant measure for the flow. (It follows that $m$ is ergodic.) 
\end{defn}

Thus ergodicity is a basic indecomposability condition saying that the dynamics cannot be split into two smaller pieces (each of which could be studied separately). It is somewhat surprising that it implies the following strong restriction on the dynamics, whose proof is nontrivial but can be found in any book on ergodic theory. 

\begin{thm}[Birkhoff Ergodic Theorem]
Let $\phi$ be a measure preserving ergodic flow on a space $Y$ with probability measure $m$, and let $f\in L^1(Y,m)$. Then, for $m$-almost every $y\in Y$, 
$$\lim_{T\to\infty}\frac1{T}\int_0^T \phi_t(y) dt = \int_Y f dm.$$
\end{thm}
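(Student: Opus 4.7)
The plan is to reduce the flow case to the discrete-time case and then prove Birkhoff's theorem for a single measure-preserving transformation via the Maximal Ergodic Theorem. Setting $T = \phi_1$ and $F(y) = \int_0^1 f(\phi_s y)\,ds$, Fubini gives $F \in L^1(Y,m)$ with $\int F\,dm = \int f\,dm$, and $\sum_{k=0}^{n-1} F(T^k y) = \int_0^n f(\phi_s y)\,ds$. So it suffices to prove the discrete theorem for $(T,F)$ and then handle the interpolation between integer times, which follows from the fact that $\sup_{0 \leq r < 1}|\int_n^{n+r} f(\phi_s y)\,ds|/n \to 0$ almost everywhere (a consequence of the discrete theorem applied to $|f|$, which bounds this quantity by a telescoping difference that vanishes in the Cesàro average).

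The key technical input is the Maximal Ergodic Theorem: for $f \in L^1$ and $f^*(y) = \sup_n \frac{1}{n}\sum_{k=0}^{n-1} f(T^k y)$, one has $m\{f^* > \lambda\} \leq \|f\|_1/\lambda$. I would prove this by Garsia's trick: replacing $f$ with $f-\lambda$, setting $S_n = \sum_{k=0}^{n-1} f\circ T^k$ and $M_n = \max(0, S_1, \ldots, S_n)$, observe that on $\{M_n > 0\}$ we have $M_n \geq M_{n+1} - f \circ T^{-1}\cdot\mathbf{1}$ (equivalently $f + M_n\circ T \geq M_{n+1}$), and integrate using $T$-invariance of $m$ to conclude $\int_{\{M_n > 0\}} f\,dm \geq 0$. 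Taking $n \to \infty$ gives the inequality.

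Given the maximal inequality, I would prove pointwise convergence by the standard density argument. Let $\Omega f(y) = \limsup_n A_n f(y) - \liminf_n A_n f(y)$, where $A_n f = \frac{1}{n}\sum_{k=0}^{n-1} f \circ T^k$. For any $g$ in the $L^1$-dense subspace of $L^\infty$ functions of the form $g = g_{\mathrm{inv}} + (h - h\circ T)$ with $g_{\mathrm{inv}}$ invariant and $h$ bounded, the averages $A_n g$ converge pointwise (the coboundary term telescopes to $(h - h\circ T^n)/n \to 0$ in $L^\infty$). For general $f \in L^1$, writing $f = g + (f-g)$ with $\|f-g\|_1$ small, the maximal inequality gives $m\{\Omega f > \varepsilon\} \leq m\{\Omega(f-g) > \varepsilon\} \leq 2\|f-g\|_1/\varepsilon$, forcing $\Omega f = 0$ almost everywhere. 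The limit $\tilde{f}$ is $T$-invariant by construction; by dominated convergence (for bounded $f$) followed by the maximal inequality (for $L^1$ approximation), $\int \tilde{f}\,dm = \int f\,dm$.

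Finally, ergodicity enters only at the very end: the flow-invariant function $\tilde{f}$, obtained as the almost-sure limit of the continuous-time averages, must be $m$-almost everywhere constant, since otherwise a sublevel set $\{\tilde{f} \leq c\}$ of intermediate measure would yield a nontrivial $\phi$-invariant decomposition of $Y$. Combined with $\int \tilde{f}\,dm = \int f\,dm$, the constant must equal $\int f\,dm$. The hard part is the Maximal Ergodic Theorem: everything else is bookkeeping (Fubini, telescoping, density arguments, and a one-line use of ergodicity at the end), but the maximal inequality genuinely requires the combinatorial rising-sun argument sketched above.
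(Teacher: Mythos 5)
The paper does not prove this theorem at all: it is stated as classical background, with the proof explicitly deferred to ``any book on ergodic theory.'' Your argument is exactly the standard textbook proof (reduction of the flow to the time-one map, Garsia's proof of the maximal ergodic theorem, and the density argument using invariant functions plus bounded coboundaries), and it is essentially correct, so there is no competing approach in the paper to compare it against.

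A few small points are worth tightening if you were to write this out in full. In the Garsia step, the clean inequality is $f + M_n\circ T \ge \max(S_1,\dots,S_{n+1})$ everywhere, which on $\{M_n>0\}$ dominates $M_n$; the version $f+M_n\circ T\ge M_{n+1}$ fails off the set where $M_{n+1}>0$ because $M_{n+1}$ includes $0$ in the maximum, so the restriction to the correct positivity set matters. Second, the final appeal to ergodicity needs $\tilde f$ to be invariant under the whole flow, not just under $T=\phi_1$; this follows by comparing $\frac1T\int_0^T f(\phi_{s+u}y)\,ds$ with $\frac1T\int_0^T f(\phi_s y)\,ds$ (the discrepancy is controlled by the same telescoping estimate you use for interpolation), after which one adjusts $\tilde f$ on a null set to get genuine invariance of the sublevel sets. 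Finally, the whole continuous-time argument implicitly assumes joint measurability of $(t,y)\mapsto \phi_t(y)$ so that Fubini applies and $\int_0^T f(\phi_t y)\,dt$ makes sense; this is a standing hypothesis on measurable flows that the paper's informal definition does not spell out (note also that the displayed formula in the statement has a typo: the integrand should be $f(\phi_t(y))$, not $\phi_t(y)$). None of these affects the correctness of your outline; the maximal inequality is, as you say, the only genuinely nontrivial ingredient.
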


You should think of the case where $f=\chi_A$ is the characteristic function of a set $A$. Then the theorem says that long orbit segments $\{\phi_t(y): t\in [0,T]\}$ spend about $m(A)$ of their time in $A$. Thus the Birkhoff Ergodic Theorem says that almost every orbit is \emph{equidistributed}, in that 
$$\lim_{T\to\infty} \frac1T Leb(\{t: t\in [0,T], \phi_t(y)\in A\}) \to m(A),$$
where $Leb$ denotes Lebesgue measure on $\bR$ and $A\subset Y$ is any measurable set. (Technically, to be true as stated, some additional restriction must be place on $A$, for example, $A$ is open. Otherwise any given orbit can be removed from any measurable set $A$, giving a set $A'$ often of the same measure of $A$, but which the given orbit does not intersect at all.) 

%
%
%
%

\bold{Renormalization of straight line flow.} Say that the $g_t$-orbit $$\{g_t(X,\omega), t\geq 0\}$$ is \emph{recurrent} if there is some compact set $K$ of the stratum, such that $g_t(X,\omega)\in K$ for arbitrarily large $t$. This exactly says that $g_t(X,\omega)$ does not diverge to infinity. 

Given an orbit segment of vertical straight line flow on $(X,\omega)$ of length $L$, it yields an orbit segment of vertical straight line flow on $g_t(X,\omega)$ of length $e^{-t}L$. In this way long orbit segments of vertical straight line flow become small under $g_t$, and we say that $g_t$ \emph{renormalizes} the vertical straight line flow. This idea of replacing a long orbit segment of a dynamical segment for a shorter orbit segment of a different but related dynamical system is called renormalization, and is fundamental in dynamics. For the straight line flow, it was used to prove the following. 

\begin{thm}[Masur's criterion \cite{Ma}]
Suppose that $g_t(X,\omega)$ is recurrent. Then the vertical straight line flow on $(X,\omega)$ is uniquely ergodic. 
\end{thm}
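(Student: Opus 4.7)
The plan is to argue by contradiction, combining the renormalization property of $g_t$ with Masur's compactness criterion already stated in the paper. Suppose the vertical straight line flow on $(X,\omega)$ is not uniquely ergodic, so there exist two distinct ergodic invariant probability measures $\mu_1 \ne \mu_2$. The goal is to produce, along a sequence $t_n \to \infty$, saddle connections on $g_{t_n}(X,\omega)$ of length tending to zero. By Masur's compactness criterion this forces $g_{t_n}(X,\omega)$ to leave every compact set of the stratum, contradicting recurrence.

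First I would encode the invariant measures cohomologically. Choose a horizontal transverse interval $I \subset X$; the first-return map of the vertical flow to $I$ is an interval exchange transformation, and invariant probability measures for the flow correspond to invariant probability measures for this IET. These in turn assign values to relative cycles transverse to the vertical foliation, producing real cohomology classes $[\mu_i] \in H^1(X,\Sigma,\bR)$. The Lebesgue/area measure corresponds to the class $[\operatorname{Re}\omega]$, and a second ergodic measure $\mu_2$ produces a class $\eta$ that is linearly independent from $[\operatorname{Re}\omega]$, since mutually singular ergodic measures must give linearly independent cocycles.

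Next I would extract geometry from this algebraic information. Under $g_t$, the class $[\operatorname{Re}\omega]$ scales by $e^t$ while $\eta$ is unaffected, so the ``shape'' of the translation surface $g_t(X,\omega)$ relative to the auxiliary class $\eta$ degenerates exponentially. Concretely, since $\mu_1 \perp \mu_2$, for any $\varepsilon>0$ one can build finitely many flow-box towers over subintervals of $I$ whose total $\mu_1$-mass exceeds $1-\varepsilon$ while their total $\mu_2$-mass is below $\varepsilon$. Pushing this tower decomposition through a Rauzy--Veech renormalization matched to the flow along $g_{t_n}$, and using a pigeonhole on the finite combinatorial type of admissible towers, one forces the horizontal widths of some tower bases to collapse to zero along the sequence $t_n$. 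The boundary of a collapsing flow box is built from vertical saddle connections on $g_{t_n}(X,\omega)$ together with horizontal transversals whose widths go to zero; a short-cut / triangle-inequality argument then produces genuine saddle connections on $g_{t_n}(X,\omega)$ of length $\to 0$.

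The main obstacle is the last step: rigorously converting the failure of unique ergodicity (an abstract ergodic-theoretic statement about two singular measures) into short saddle connections on the renormalized surfaces. The cohomological bookkeeping above is conceptually clean, but the actual production of degenerating flow boxes, and the verification that their collapse yields short saddle connections rather than some weaker geometric degeneration, is delicate; it requires matching the combinatorics of Rauzy--Veech induction with the geometry of $g_t$ on the stratum. This is the heart of Masur's original argument, and all the genuinely new work lies there; everything else (setting up the correspondence with IETs, verifying recurrence contradicts the compactness criterion) is comparatively formal.
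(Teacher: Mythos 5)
There is a genuine structural flaw in your plan, independent of the admitted hole in the middle. The paper defines recurrence as: there exists a compact set $K$ with $g_t(X,\omega)\in K$ for arbitrarily large $t$ (equivalently, the orbit does not diverge to infinity). To contradict this you must prove \emph{divergence}: for every compact $K$ the orbit eventually leaves $K$ for good, i.e.\ the length of the shortest saddle connection on $g_t(X,\omega)$ tends to $0$ as $t\to\infty$ through \emph{all} values. Producing short saddle connections only along one sequence $t_n\to\infty$, as you propose, is compatible with the orbit returning to a fixed compact set along a different sequence $s_n\to\infty$, so it does not contradict recurrence. Either you must upgrade your construction to control every large $t$, or you must run the argument the other way around: use recurrence to extract a convergent subsequence $g_{s_n}(X,\omega)\to(Y,\eta)$ inside $K$ and derive unique ergodicity from the limit. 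The latter is the architecture of Masur's actual proof: one shows that the whole simplex of transverse invariant probability measures for the vertical foliation, pushed forward and renormalized along the recurrent times, converges to the single projective class determined by $(Y,\eta)$, forcing the simplex to be a point. The contradiction lives on the limit surface, not in a degeneration of the orbit.

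Beyond that, the step you flag as ``delicate'' --- converting two mutually singular ergodic measures into collapsing flow boxes and then into short saddle connections --- is not a technical detail to be deferred; it is the entire content of the theorem, and your sketch of it (pigeonholing on tower combinatorics matched to Rauzy--Veech induction) is not developed enough to assess. Note also that the statement ``$[\operatorname{Re}\omega]$ scales by $e^t$ while $\eta$ is unaffected'' conflates two different identifications: under the Gauss--Manin identification of cohomology along the orbit, \emph{both} classes are unchanged as topological objects; what changes is that the class represented by the real part of the new differential is $e^t[\operatorname{Re}\omega]$. The meaningful comparison is between the geometric sizes (e.g.\ Hodge or stable norms) of the two classes on $g_t(X,\omega)$, and making that precise is again essentially Forni's or Masur's argument. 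Finally, be aware that the survey you are working from states this result as a cited theorem of Masur and offers no proof, so there is no in-paper argument for your sketch to shadow; as it stands, your proposal is an outline of the known strategy with its hardest step omitted and its concluding contradiction misstated.
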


The converse is not true, however the result is extremely powerful. 

\begin{thm}[Kerkhoff-Masur-Smillie \cite{KMS}]
For every $(X,\omega)$ and almost every $\theta\in [0,2\pi)$, $g_t(r_\theta(X,\omega))$ is recurrent (as $\theta$ is fixed and $t\to\infty$). Thus, for every $(X,\omega)$, the straight line flow is uniquely ergodic in almost every direction. 
\end{thm}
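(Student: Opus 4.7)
The plan is to derive the unique ergodicity conclusion from the recurrence conclusion: once we know that $g_t r_\theta(X,\omega)$ is recurrent for almost every $\theta$, the preceding Masur criterion (applied to the surface $r_\theta(X,\omega)$, whose vertical direction corresponds to direction $\theta$ on $(X,\omega)$) immediately gives unique ergodicity of the straight line flow in direction $\theta$ for almost every $\theta$. So the whole task is to prove that the set of ``escape'' angles has Lebesgue measure zero.

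First I would use Masur's compactness criterion to translate ``non-recurrent'' into the concrete statement that the length of the shortest saddle connection on $g_t r_\theta(X,\omega)$ tends to zero as $t\to\infty$. For a saddle connection of $(X,\omega)$ with holonomy vector $v\in\bC$, a direct computation of $g_t r_\theta v$ shows that its length squared equals $e^{2t}(\Re(r_\theta v))^2 + e^{-2t}(\Im(r_\theta v))^2$. Thus $|g_t r_\theta v|\le \epsilon$ forces $|\Re(r_\theta v)| \le \epsilon e^{-t}$, which traps $\theta$ in two arcs of length at most $2\epsilon e^{-t}/|v|$ near the two angles that make $r_\theta v$ purely imaginary.

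Then I would discretize time: let $E_n(\epsilon)$ be the set of $\theta$ such that for some $t\in[n,n+1]$ the surface $g_t r_\theta(X,\omega)$ has some saddle connection of length at most $\epsilon$. The strategy is a Borel--Cantelli argument: by choosing $\epsilon_n\to 0$ carefully and summing the arc-length estimate above over saddle connections of $(X,\omega)$, I would aim to prove $\sum_n |E_n(\epsilon_n)| < \infty$, so that almost every $\theta$ lies in only finitely many $E_n(\epsilon_n)$; this exactly says that $g_t r_\theta(X,\omega)$ stays in a compact part of the stratum along some infinite sequence of integer times, which is recurrence. The essential input for the sum is the polynomial growth bound, namely that the number of saddle connections of $(X,\omega)$ of length at most $R$ is $O(R^2)$.

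The main obstacle is that the naive union bound over all saddle connections, roughly $\sum_{|v|\le e^{n+1}\epsilon_n} \epsilon_n e^{-n}/|v|$, diverges because of the quadratic counting. The crucial geometric input needed to overcome this is the fact that on a translation surface, two very short saddle connections with very different directions cannot coexist: once one saddle connection is extremely short, all other saddle connections of comparable length must be almost parallel to it and form a tightly controlled configuration. This rigidity lets one count each ``near-miss'' essentially once per logarithmic time window rather than once per saddle connection, replacing the divergent sum by an effectively finite one and yielding convergence. Packaging this geometric rigidity of short saddle connections into a usable measure estimate on the circle of directions is the heart of the Kerkhoff--Masur--Smillie argument.
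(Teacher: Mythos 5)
The paper states this theorem without proof, citing \cite{KMS}, so your sketch has to stand on its own, and as written it has a structural flaw that no amount of refinement of the measure estimate can repair. The Borel--Cantelli framing does not deliver recurrence. If you take $\epsilon_n\to 0$ and show almost every $\theta$ lies in only finitely many $E_n(\epsilon_n)$, you have only shown that the systole of $g_tr_\theta(X,\omega)$ exceeds $\epsilon_n$ for $t\in[n,n+1]$ and $n$ large; since $\epsilon_n\to 0$ this confines the orbit to no fixed compact set, so by Masur's compactness criterion it is not recurrence. If instead you fix $\epsilon$, the conclusion of first Borel--Cantelli would be that for a.e.\ $\theta$ the orbit \emph{eventually stays} in the compact set where the systole is at least $\epsilon$ --- and that is simply false: already for $(\bC/\bZ[i],dz)$, almost every direction has unbounded partial quotients, so $g_tr_\theta$ leaves every compact subset of the modular surface infinitely often. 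Hence $\sum_n|E_n(\epsilon)|$ must diverge for fixed $\epsilon$, and the divergence is not an artifact of a lossy union bound. Recurrence is a $\limsup$ statement and has to be proved as one.

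The repair is also simpler than the ``rigidity of short saddle connections'' you defer to. Your own arc estimate, summed dyadically over the saddle connections with $|v|\le 2\epsilon e^{n+1}$ using the quadratic counting bound $\#\{v:|v|\le R\}=O(R^2)$, gives $\sum_{|v|\le 2\epsilon e^{n+1}}\epsilon e^{-n}/|v| = O\bigl(\epsilon e^{-n}\cdot \epsilon e^{n+1}\bigr)=O(\epsilon^2)$: the sum over $v$ converges, and $|E_n(\epsilon)|\le C\epsilon^2$ \emph{uniformly in $n$}. Only the subsequent sum over $n$ diverges, and that sum is not needed. Instead, observe that every non-recurrent $\theta$ has systole tending to $0$, hence lies in $\liminf_n E_n(\epsilon)$ for \emph{every} $\epsilon>0$; by Fatou, $|\liminf_n E_n(\epsilon)|\le\liminf_n|E_n(\epsilon)|\le C\epsilon^2$, and letting $\epsilon\to 0$ shows the non-recurrent set is null. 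With this change your computation closes the proof, at the cost of quoting Masur's quadratic upper bound (a genuine theorem, historically later than \cite{KMS}). If you are not willing to assume that bound, then the geometric analysis you wave at in your last paragraph really is the entire content of the theorem, and the sketch has not engaged with it.
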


This implies the same result for billiard flows in rational polygons. 

\begin{rem}
There exist $(X,\omega)$ such that the vertical flow is minimal but not uniquely ergodic. This is a bit strange; every orbit is dense, but most orbits are not equidistributed, and hence somehow favor (spend more time than expected in) some parts of $(X,\omega)$.
\end{rem} 

\subsection{Complete periodicity} The dynamics are much more restricted for $(X,\omega)$ that lie in a 2-dimensional affine invariant submanifold. Such $(X,\omega)$ are called \emph{lattice surfaces}, since their stabilizer is a lattice in $SL(2,\bR)$. 

\begin{thm}[Veech dichotomy \cite{V}]
For any lattice surface, in the direction of any saddle connection the surface is periodic. In all other directions the straight line flow is uniquely ergodic. 
\end{thm}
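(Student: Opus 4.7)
The plan is to translate the dichotomy into the hyperbolic geometry of the finite-area modular curve $\bH/\Gamma$, where $\Gamma\subset SL(2,\bR)$ is the (lattice) stabilizer of $(X,\omega)$, and to exploit the correspondence between cusps of $\Gamma$ and parabolic elements.

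By Smillie's theorem the $\G$-orbit of $(X,\omega)$ is homeomorphic to $\G/\Gamma$, and the $g_t$-flow on this orbit projects to a geodesic flow on $\bH/\Gamma$. Since $\Gamma$ is a lattice, $\bH/\Gamma$ has finite area, and a standard fact about cofinite Fuchsian groups yields the following dichotomy for geodesic rays: any forward ray in $\bH/\Gamma$ is either recurrent or else eventually enters, and remains in, a horoball neighborhood of some cusp, which is stabilized by a conjugate of a maximal parabolic subgroup $\langle P\rangle\leq\Gamma$. Under the standard $SL(2,\bR)$-equivariant identification of boundary points of $\bH$ with directions in $\bR^2$, the parabolic fixed point at which the ray terminates corresponds to the direction on the surface shrunk by $g_t$, namely the vertical direction. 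Consequently the invariant direction of $P$ in $\bR^2$ is vertical, and Proposition~\ref{P:para} then forces $(X,\omega)$ to be vertically periodic.

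Now fix any direction $v$, and let $r_\theta$ rotate $v$ to vertical. If $v$ is the direction of a saddle connection $\sigma$ of flat length $\ell$, then on $g_t r_\theta(X,\omega)$ the image of $\sigma$ has length $e^{-t}\ell\to 0$, so by Masur's compactness criterion the forward orbit leaves every compact set. The preceding paragraph then gives vertical periodicity of $r_\theta(X,\omega)$, hence periodicity of $(X,\omega)$ in direction $v$. Conversely, suppose $v$ contains no saddle connection. If the orbit $\{g_t r_\theta(X,\omega):t\geq 0\}$ diverged, the same argument would produce vertical periodicity of $r_\theta(X,\omega)$, and the boundaries of the resulting cylinders would be saddle connections in direction $v$, contradicting the choice of $v$. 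Hence the orbit is recurrent, and Masur's criterion yields unique ergodicity of the vertical flow on $r_\theta(X,\omega)$, equivalent to unique ergodicity of the $v$-direction flow on $(X,\omega)$.

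The main obstacle is the cusp/parabolic bookkeeping in the middle of the argument: one must verify precisely that divergence of the orbit in the stratum (detected by Masur's compactness criterion) corresponds to the projected geodesic in $\bH/\Gamma$ escaping into a cusp of $\Gamma$, and then that the invariant direction in $\bR^2$ of the associated parabolic element agrees with the vertical direction on the surface. The latter is a careful check of the Teichm\"uller versus M\"obius conventions for the $SL(2,\bR)$-action, after which Proposition~\ref{P:para} and Masur's criterion finish the argument in a few lines.
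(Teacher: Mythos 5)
Your proof is correct and follows essentially the same route as the paper: the dichotomy ``$g_t$-orbit recurrent versus stabilizer contains a parabolic fixing the vertical direction'' (which the paper states as a lemma with proof omitted, citing exactly the geodesic-flow/cusp picture you spell out), then Masur's criterion in the recurrent case and Proposition~\ref{P:para} in the parabolic case. The only difference is that you supply the cusp/parabolic bookkeeping that the paper leaves to the reader, which is a reasonable thing to flag as the one point requiring care.
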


This is the same dichotomy that holds for $(\bC/\bZ[i], dz)$, where the flow is periodic in the rational directions, and uniquely ergodic in the irrational directions. 

 The proof of the following lemma is easy, but it will be omitted, because it requires familiarity with the geodesic flow on a finite volume complete hyperbolic surface (every geodesic is either recurrent, or it eventually goes straight out a cusp, and the action of $g_t$ on $SL(2,\bR)/\Gamma$ is geodesic flow on the unit tangent bundle to a hyperbolic surface).  

\begin{lem}
For lattice surface $(X,\omega)$, either $g_t (X,\omega)$ is recurrent, or $(X,\omega)$ is stabilized by some matrix 
$$\left(\begin{array}{cc}1&0\\s&1\end{array}\right)$$
with $s\neq 0$. 
\end{lem}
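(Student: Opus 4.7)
The idea is to transport the recurrence question to the geometry of the finite-volume hyperbolic surface $\Gamma\backslash\bH$ via Smillie's theorem, and then invoke the classical dichotomy that every geodesic on a cusped finite-volume hyperbolic surface is either recurrent or else eventually escapes out a cusp.

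First, since $(X,\omega)$ is a lattice surface, Smillie's theorem gives that $\Gamma=\mathrm{Stab}(X,\omega)$ is a lattice in $SL(2,\bR)$ and that the orbit map $SL(2,\bR)/\Gamma\to\cH$, $[h]\mapsto h(X,\omega)$, is a homeomorphism onto the $SL(2,\bR)$-orbit of $(X,\omega)$. Under the inversion $[h]\mapsto \Gamma h^{-1}$ I would identify this orbit with $\Gamma\backslash SL(2,\bR)$, which is the unit tangent bundle of $\Gamma\backslash\bH$ (using $\bH=SL(2,\bR)/SO(2)$ based at $i$ with reference unit tangent vector $v_0$, the upward vertical, so that $h\in SL(2,\bR)$ corresponds to the unit tangent vector $dh(v_0)$ at $h(i)$). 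Under this inversion, left multiplication by $g_t$ becomes right multiplication by $g_{-t}$, and right multiplication by $g_t$ on $\Gamma\backslash SL(2,\bR)$ is, up to a harmless reparameterization, the geodesic flow on $T^1(\Gamma\backslash\bH)$. Consequently, forward recurrence of $\{g_t(X,\omega):t\geq 0\}$ is equivalent to forward recurrence of the geodesic on $\Gamma\backslash\bH$ emanating from $(i,-v_0)$, which heads toward $0\in\partial\bH$.

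Now assume this geodesic is not recurrent. Then it escapes out a cusp of $\Gamma\backslash\bH$, and so its forward endpoint $0\in\partial\bH$ must be a parabolic fixed point of $\Gamma$. A direct computation shows that the parabolic elements of $SL(2,\bR)$ fixing $0$ are exactly $\pm\begin{pmatrix}1&0\\s&1\end{pmatrix}$ with $s\neq 0$ (fixing $0$ forces the upper-right entry to vanish, and being parabolic with determinant one then forces the diagonal entries to be $\pm 1$); squaring if necessary to remove the possible overall sign, we obtain the desired element $\begin{pmatrix}1&0\\s'&1\end{pmatrix}\in\Gamma$ with $s'\neq 0$. The main obstacle is simply bookkeeping the conventions: it is the inversion step that reverses the time direction, and this time reversal is precisely what causes the relevant cusp to sit at $0\in\partial\bH$ (producing the lower-triangular parabolic asserted in the lemma) rather than at $\infty$ (which would have produced an upper-triangular one, corresponding instead to the setting of Proposition \ref{P:para}).
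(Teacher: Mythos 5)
Your proof is correct and follows exactly the route the paper indicates when it omits this proof: use Smillie's theorem to identify the $g_t$-action on the orbit with the geodesic flow on $T^1(\Gamma\backslash\bH)$, then apply the classical dichotomy that a geodesic ray on a finite-volume hyperbolic surface is either recurrent or escapes out a cusp, whose endpoint is then a parabolic fixed point. Your bookkeeping of the inversion and time-reversal, which places the relevant endpoint at $0\in\partial\bH$ and hence produces the lower-triangular parabolic, correctly supplies the details the paper leaves to the reader.
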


\begin{proof}[Proof of Veech dichotomy]
It suffices to prove the statement for the vertical direction (since the surface can be rotated to make any direction vertical).

By the lemma, either $g_t (X,\omega)$ is recurrent, or $(X,\omega)$ is stabilized by the matrix above. In the first case, Masur's criterion gives that the flow is uniquely ergodic (and it is easy to see there can be no vertical saddle connections, or else $g_t(X,\omega)$ would diverge). In the second case, Proposition \ref{P:para} gives that the surface is the union of vertical cylinders (and so there are vertical saddle connections, on the boundary of the cylinders).
\end{proof}

We will also give a more modern proof of just the first statement, together with a generalization. For this we will need 

\begin{thm}[Minsky-Weiss, Smillie-Weiss \cite{MinW, SW2}]
The $u_t$-orbit closure of any $(X,\omega)$ contains a horizontally periodic surface. 
\end{thm}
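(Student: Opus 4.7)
The plan is to combine a non-divergence estimate for the unipotent flow with a maximality argument for an upper-semicontinuous $u_t$-invariant. Let $Z := \overline{\{u_t(X,\omega) : t \in \mathbb{R}\}}$. The goal is to produce a surface $Y^{\ast} \in Z$ on which the horizontal direction is periodic.

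First I would invoke the Minsky-Weiss non-divergence theorem for the horocycle flow $u_t$: there is some $\delta > 0$ and a set of times of positive lower density on which $u_t(X,\omega)$ has no saddle connection of length less than $\delta$. By Masur's compactness criterion, this forces $Z$ to intersect a compact subset of the stratum, so $Z$ contains honest limit points and is not a purely divergent ray. Next, define $f : Z \to [0, \Area(X,\omega)]$ by letting $f(Y)$ be the total area of the horizontal cylinders of $Y$. This function is constant along $u_t$-orbits, because $u_t$ fixes every horizontal holonomy vector $(c,0)$ and preserves vertical distances $|y-y'|$ between horizontal lines, so horizontal cylinders are carried to horizontal cylinders of the same circumference and height. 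A persistence argument (horizontal cylinders of height $h$ and circumference $c$ in $Y_n \to Y$ yield a horizontal cylinder of at least those dimensions in $Y$, provided $Y$ stays in a fixed compact part of the stratum) shows $f$ is upper semicontinuous on the compact part of $Z$, so it attains its supremum at some $Y^{\ast} \in Z$.

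The core of the argument is then to show $f(Y^{\ast}) = \Area(Y^{\ast})$. If not, removing the horizontal cylinders of $Y^{\ast}$ together with all horizontal saddle connections leaves a positive-area subsurface with a connected component $M$ on which the horizontal flow is minimal, by Proposition \ref{P:minimal}. Since $Y^{\ast} \in Z$ and $Z$ is $u_t$-invariant and closed, the entire $u_t$-orbit closure of $Y^{\ast}$ is contained in $Z$. I would apply Minsky-Weiss non-divergence a second time, now to analyze the very long almost-horizontal saddle connections living inside $M$ as they are sheared by $u_t$. Extracting an appropriate subsequential limit $Y^{\ast\ast} \in Z$, I would argue that a positive-area portion of $M$ has reorganized into at least one additional horizontal cylinder in $Y^{\ast\ast}$, giving $f(Y^{\ast\ast}) > f(Y^{\ast})$ and contradicting maximality. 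Hence $Y^{\ast}$ must be horizontally periodic.

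The main obstacle is precisely this last step. The trouble is that $u_t$ does not literally turn a non-horizontal saddle connection of holonomy $(a,b)$ with $b \neq 0$ into a horizontal one, since it preserves the vertical component $b$; no finite shear can create a new horizontal cylinder. What must be exploited is that in a minimal component $M$, there exist saddle connections with arbitrarily large horizontal component but bounded vertical component, and under suitable rescalings along the $u_t$-orbit these produce accumulation points in the stratum on which new horizontal closed trajectories appear. Making this precise, while simultaneously keeping the limit surface inside the same stratum (no coalescing zeros, no escape to infinity), is exactly the Minsky-Weiss and Smillie-Weiss analysis, and would occupy the bulk of a full proof.
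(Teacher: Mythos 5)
First, note that the paper does not prove this theorem at all --- it is quoted from \cite{MinW, SW2} --- so there is no in-paper argument to compare against. Your outline does follow the strategy of the cited Smillie--Weiss proof: use Minsky--Weiss quantitative non-divergence to keep the $u_t$-orbit from escaping to infinity, maximize the area covered by horizontal cylinders over the orbit closure (this quantity is $u_t$-invariant and behaves semicontinuously under limits in a compact part of the stratum), and derive a contradiction from a minimal component of the horizontal flow via Proposition \ref{P:minimal}. That scaffolding is sound, modulo a small bookkeeping point: your maximizer $Y^{\ast}$ is only guaranteed to maximize $f$ over $Z\cap K$ for a fixed compact $K$, whereas the contradiction requires comparing against the supremum over all of $Z$ (or showing the competitor $Y^{\ast\ast}$ can be taken back inside $K$); this is fixable with another application of non-divergence.

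The genuine gap is the step you yourself flag: showing that a minimal component $M$ of positive area forces the existence of some $Y^{\ast\ast}\in Z$ with $f(Y^{\ast\ast})>f(Y^{\ast})$. This is not a technical footnote --- it is the theorem. As you correctly observe, $u_t$ preserves the vertical component of every holonomy vector, so no fixed homology class with nonzero vertical holonomy ever becomes horizontal along the orbit, and no finite shear creates a cylinder. The mechanism must therefore run through a \emph{sequence} of distinct classes: minimality of the horizontal flow on $M$ produces closed curves (long horizontal orbit segments closed up by short transversal arcs) whose vertical holonomy tends to $0$ while the horizontal holonomy is controlled, and one must extract, via non-divergence, a limit surface in $Z$ on which a class with exactly zero vertical holonomy is realized by a closed horizontal geodesic bounding a cylinder of definite area. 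None of this is carried out in your proposal; it is only named as ``the main obstacle.'' Since the decisive implication is asserted rather than proved, the argument as written does not establish the theorem --- it is an accurate roadmap to the Smillie--Weiss proof rather than a proof.
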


Recall $u_t=\left(\begin{array}{cc}1&t\\0&1\end{array}\right)$.

\begin{lem}
For any horizontal cylinder or saddle connection on $(X,\omega)$, and each $t$, there is a corresponding horizontal cylinder or saddle connection on $u_t(X,\omega)$.

 Furthermore, there is a corresponding horizontal cylinder or union of horizontal saddle connections on each translation surface $(X', \omega')$ in the $u_t$-orbit closure of $(X,\omega)$.
\end{lem}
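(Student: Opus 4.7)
The plan has two parts, matching the two sentences of the lemma. For the first, I would use that $u_t$ fixes every horizontal vector $(x,0)\in\bR^2$, so the complex holonomy of any horizontal saddle connection or horizontal cylinder core curve is unchanged under $u_t$. Thus a horizontal saddle connection of length $L$ maps to a horizontal saddle connection of length $L$ on $u_t(X,\omega)$. A horizontal cylinder of circumference $c$ and height $h$ becomes, in the polygon picture, a parallelogram with horizontal top and bottom edges of length $c$; cutting along a vertical segment and re-gluing (the cut-and-paste equivalence from the third definition of translation surface) presents it as a right cylinder of width $c$, and area preservation ($\det u_t = 1$) forces the height to remain $h$, so circumference, height, and modulus are all preserved.

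For the second sentence, choose a sequence $t_n$ with $u_{t_n}(X,\omega)\to(X',\omega')$ in period coordinates. A horizontal saddle connection on $(X,\omega)$ has a relative homology class $\gamma$ with $\int_\gamma\omega=L\in\bR_{>0}$; by the first part this class persists on each $u_{t_n}(X,\omega)$ with the same real period, and continuity of period coordinates gives $\int_\gamma\omega'=L$ on $(X',\omega')$ as well. The horizontal straight-line trajectory of length $L$ on $(X',\omega')$ starting from the corresponding zero realizes $\gamma$, and is either a single horizontal saddle connection or, if zeros now lie on its interior, a concatenation of horizontal saddle connections. The same reasoning applied to the core-curve class $\alpha$ of a horizontal cylinder represents $\alpha$ by a closed horizontal curve on $(X',\omega')$ of length $c$; if that curve avoids zeros it is a regular closed horizontal geodesic and hence bounds a horizontal cylinder, while if it passes through zeros it decomposes into a union of horizontal saddle connections. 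Either outcome is covered by the conclusion.

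The main subtlety I anticipate is making rigorous sense of how the homology class $\gamma$ is canonically transported from $(X,\omega)$ through the sequence $u_{t_n}(X,\omega)$ to the limit $(X',\omega')$. This is handled by the setup of period coordinates: on each coordinate chart the relative homology lattice $H_1(X,\Sigma,\bZ)$ is locally constant, and the transition functions lie in $GL(n,\bZ)$ and preserve this lattice, so $\gamma$ passes canonically to the limit. A secondary point is that a horizontal cylinder carries more data than its core-curve class---also its boundary saddle connections and its height---but applying the argument to the boundary classes as well, and using that a regular closed horizontal leaf of a translation surface always lies in a maximal horizontal cylinder, shows that the open cylindrical region either survives (possibly subdivided into several cylinders by new horizontal saddle connections that have appeared in the limit) or degenerates entirely to a union of horizontal saddle connections, which is exactly what the conclusion allows.
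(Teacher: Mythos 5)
Your first part is correct and is essentially the paper's argument: $u_t$ fixes horizontal vectors, so a horizontal saddle connection keeps its holonomy, and a horizontal cylinder keeps its circumference and (since $u_t$ also preserves the vertical component of every vector, or by your area argument) its height.

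The second part has a genuine gap. You transport only the \emph{relative homology class} $\gamma$ of the saddle connection to the limit, observe $\int_\gamma\omega'=L\in\bR_{>0}$, and then assert that the horizontal trajectory of length $L$ from the corresponding zero ``realizes $\gamma$.'' That implication is false in general: a relative class can have real period without being represented by any horizontal geodesic path. (Take a surface with two zeros and no horizontal saddle connections at all, and rotate it so that the period of some fixed relative class joining the zeros becomes real; the countable set of saddle-connection directions need not contain that one direction.) Moreover a zero of order $k$ has $k+1$ outgoing horizontal separatrices, and nothing in your argument selects one or guarantees that it terminates at a zero at time $L$. The same objection applies to your treatment of the core-curve class of a cylinder. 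What is actually needed, and what the paper's terse proof is implicitly using, is geometric persistence of the saddle connections and cylinders themselves, not just of their homology classes: convergence $u_{t_n}(X,\omega)\to(X',\omega')$ in period coordinates yields comparison maps close to isometries for large $n$ (e.g. from a fixed triangulation of the limit surface), under which the horizontal saddle connections of length $L$ converge to a geodesic path of holonomy $(L,0)$ from zero to zero, i.e. a concatenation of horizontal saddle connections, and the cylinders, whose heights $h$ and circumferences $c$ are constant along the sequence, converge to an embedded horizontal cylinder of circumference $c$ and height at least $h$. In particular a cylinder cannot ``degenerate entirely to a union of horizontal saddle connections'' as your last sentence allows: its area is bounded below along the sequence, and the later applications (e.g. Proposition \ref{P:CP}) use precisely that the cylinder survives as a cylinder on $(X',\omega')$.
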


\begin{proof}
The first statement follows because the matrix $u_t$ fixes the horizontal direction. Now consider $(X', \omega')=\lim_{n\to\infty} u_{t_n} (X,\omega)$. For each horizontal cylinder on $(X,\omega)$, there is a corresponding horizontal cylinder on each $u_{t_n} (X,\omega)$, and hence there is also a horizontal cylinder in the limit. 

The same argument applies equally well to horizontal saddle connections, except that possibly in the limit a zero could ``land" on the interior of the saddle connection, subdividing it into several horizontal saddle connections. 
\end{proof}

\begin{prop}[One part of the Veech dichotomy]
Suppose $(X,\omega)$ is a lattice surface, and has a horizontal saddle connection. Then $(X,\omega)$ is horizontally periodic. 
\end{prop}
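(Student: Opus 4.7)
The plan is to combine the Minsky--Weiss/Smillie--Weiss theorem with the preceding lemma and the closedness of the $\G$-orbit of a lattice surface, and to conclude that the $u_t$-orbit of $(X,\omega)$ must itself be closed. Once this is known, $u_s$ fixes $(X,\omega)$ for some $s\neq 0$, and Proposition \ref{P:para} yields horizontal periodicity.

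Concretely, let $\Gamma\subset SL(2,\bR)$ be the (lattice) stabilizer of $(X,\omega)$, so that the $SL(2,\bR)$-orbit through $(X,\omega)$ identifies with $SL(2,\bR)/\Gamma$ and is closed (it sits inside the closed $\G$-orbit, which is closed by Smillie's theorem). The $u_t$-orbit lives inside this copy of $SL(2,\bR)/\Gamma$, and by Minsky--Weiss/Smillie--Weiss its closure $\overline{\cO}$ contains a horizontally periodic surface. Moreover, since $u_t$ fixes the horizontal direction, the given horizontal saddle connection of length $L$ on $(X,\omega)$ remains a horizontal saddle connection of length $L$ on every $u_t(X,\omega)$, and the preceding lemma extends this to all of $\overline{\cO}$ (possibly as a union of horizontal saddle connections of total horizontal length $L$). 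Hence $\overline{\cO}$ sits inside the closed set $\cS_L\subseteq SL(2,\bR)/\Gamma$ of surfaces carrying a horizontal saddle connection of length at most $L$.

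The crucial observation is that $\cS_L$ is a \emph{proper} subset of $SL(2,\bR)/\Gamma$. Indeed, $(X,\omega)$ has only countably many saddle connections of length $\le L$, and a horizontal saddle connection of length $\le L$ on $r_\theta(X,\omega)$ corresponds to a saddle connection of length $\le L$ on $(X,\omega)$ in direction $-\theta$; so for all but countably many $\theta$ the rotated surface $r_\theta(X,\omega)$ lies outside $\cS_L$. Thus $\overline{\cO}$ is a proper, closed, $u_t$-invariant subset of $SL(2,\bR)/\Gamma$. Invoking the classification of horocycle orbit closures on $SL(2,\bR)/\Gamma$ for a lattice $\Gamma$ --- each such orbit is either closed or equal to all of $SL(2,\bR)/\Gamma$, equivalent to the standard hyperbolic-geometry fact that a horocycle on a finite-volume hyperbolic surface is either closed or equidistributes --- forces the $u_t$-orbit of $(X,\omega)$ to be closed, completing the proof via Proposition \ref{P:para}.

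The main obstacle is the last appeal to the classification of horocycle orbit closures; this is the deepest external input, and Minsky--Weiss/Smillie--Weiss together with the preceding lemma handle everything else neatly. A route that avoids the horocycle classification would be to argue directly that the horizontally periodic $(X',\omega')\in \overline{\cO}$ produced by Minsky--Weiss must be a $u_t$-translate of $(X,\omega)$ --- equivalently, that in $(X',\omega')=g(X,\omega)$ the element $g\in \G$ preserves the horizontal direction (using that the lemma supplies an honest horizontal saddle connection on $(X',\omega')$, while $g\sigma$ has direction $g(1,0)^T$). Making that identification rigorous when an approximating sequence $u_{t_n}\gamma_n\to g$ has $\gamma_n$ escaping to infinity in $\Gamma$ is the delicate technical point that the horocycle-classification shortcut sidesteps.
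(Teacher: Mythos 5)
Your argument is correct, but it takes a genuinely different route from the paper's. The paper also starts from the Minsky--Weiss/Smillie--Weiss theorem and the persistence lemma, but then finishes directly: if $u_T(X,\omega)$ is very close to the horizontally periodic $(X',\omega')$ in the orbit closure, then by Smillie's theorem (the orbit map $\G/\Gamma\to\cH$ is a homeomorphism onto the closed orbit) there is $g$ close to the identity with $gu_T(X,\omega)=(X',\omega')$; since $g$ must carry the horizontal saddle connection(s) of $u_T(X,\omega)$ to the horizontal ones on $(X',\omega')$, it preserves the horizontal direction, so $g=u_S$ and hence $(X,\omega)$, being a $u$-translate of $(X',\omega')$, is horizontally periodic. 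This is exactly the ``alternative route'' you sketch in your closing paragraph, and the delicate point you flag there --- controlling approximating sequences $u_{t_n}\gamma_n$ with $\gamma_n$ escaping in $\Gamma$ --- is precisely what the homeomorphism clause of Smillie's theorem disposes of: closeness in the stratum upgrades to closeness in $\G/\Gamma$. Your main argument instead shows the $u_t$-orbit closure is a \emph{proper} subset of $SL(2,\bR)/\Gamma$ (a clean observation: it lies in the locus of surfaces carrying a horizontal saddle connection of length at most $L$, which $r_\theta(X,\omega)$ avoids for all but countably many $\theta$; note you do not actually need this locus to be closed, only that it is proper), and then invokes the Hedlund--Dani dichotomy for horocycle orbits on $SL(2,\bR)/\Gamma$ to conclude the orbit is periodic, finishing with Proposition \ref{P:para}. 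This is valid and even yields the stronger conclusion that the $u_t$-orbit is periodic; with this route Minsky--Weiss is not logically needed at all. The trade-offs: you import the Hedlund--Dani classification, which is heavier than anything in the paper's short argument, and you rely on the direction of Proposition \ref{P:para} (parabolic stabilizer implies horizontally periodic) that the paper states but defers to \cite{MT}, whereas the paper leans only on the already-stated homeomorphism part of Smillie's theorem.
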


\begin{proof}
Let $(X', \omega')$ be horizontally periodic and in the $u_t(X,\omega)$-orbit closure of $(X,\omega)$. Let $T$ be large, so $u_T(X,\omega)$ is very close to $(X', \omega')$. The horizontal saddle connection is present on $(X',\omega')$ as a union of horizontal saddle connections (it will turn out to be only one). There must be some matrix $g\in SL(2,\bR)$ close to the identity so $gu_T(X,\omega)=(X', \omega')$, because they are both in the same orbit. Also, $g$ must preserve the horizontal direction, since it must preserve the horizontal saddle connections. However, that means $g$ is a unipotent upper triangular matrix $g=u_S$, so  $u_{T+S}(X,\omega)$ is horizontally periodic, so $(X,\omega)$ is horizontally periodic. 
\end{proof}

\begin{defn}
A \emph{rel deformation} of translation surface is a path $(X_t,\omega_t), t\in [a,b]$, in a stratum, such that for any absolute homology class $\gamma$, 
$\int_\gamma \omega_t$ is constant. 
\end{defn}

Thus along rel deformations absolute periods are constant, but relative periods (i.e., the complex distance between zeros of $\omega$) can change.

\begin{figure}[h!]
\centering
\includegraphics[scale=0.30]{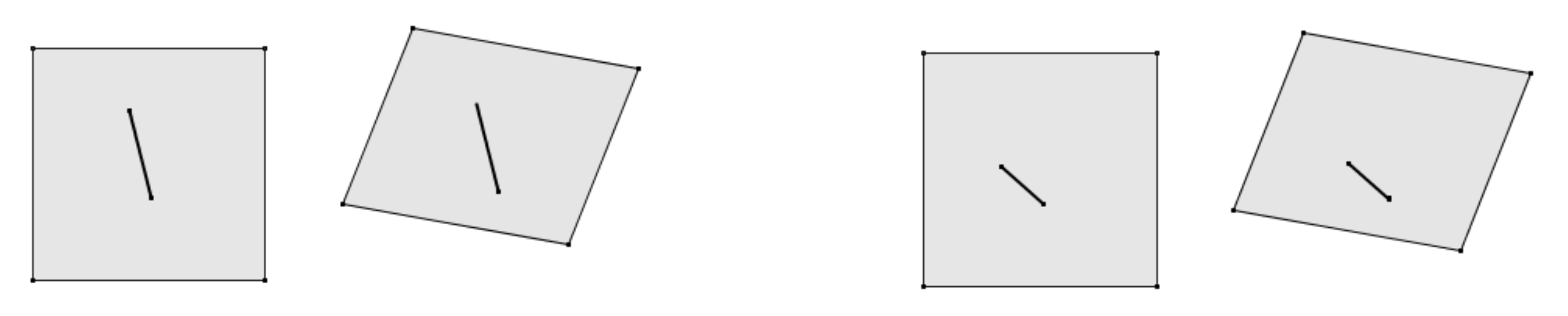}
\caption{The translation surface on the right and the one on the left are rel deformations of each other. More generally, changing the length of the slit in the slit torus construction gives a rel deformation. It is possible to write down a basis for absolute homology consisting of cycles disjoint from the slit, which shows that the integral of any absolute homology class does not change as the complex length of the slit is changed. 
}
\end{figure}

\begin{figure}[h!]
\centering
\includegraphics[scale=0.30]{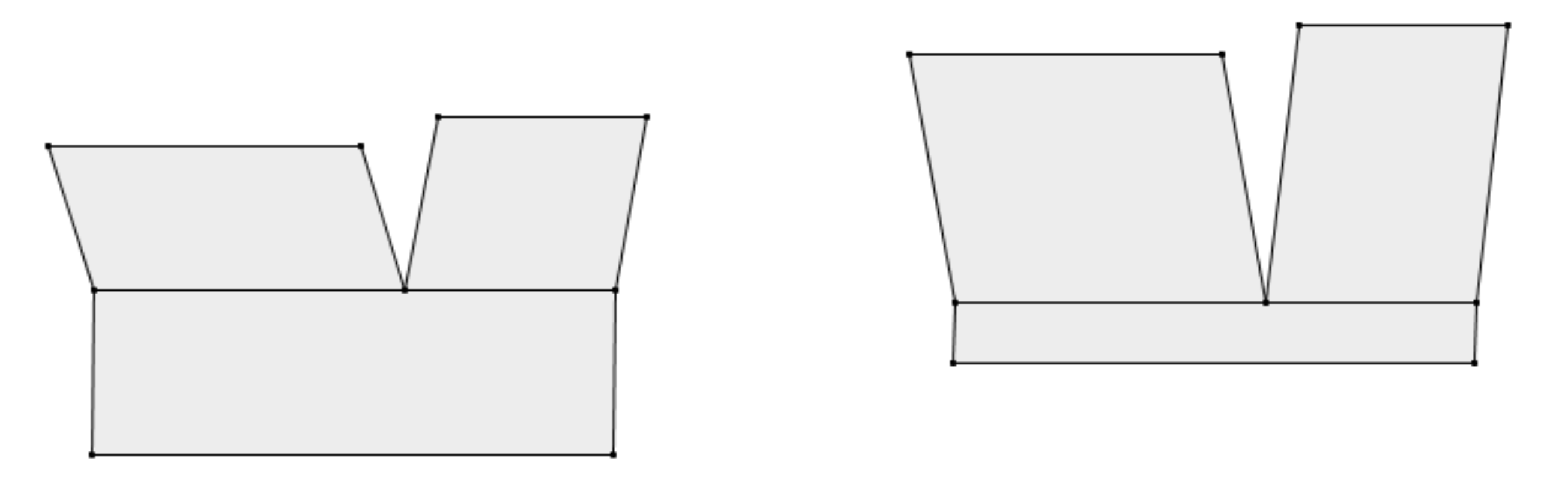}
\caption{Opposite edges are identified (the bottom edges each consist of two saddle connections), giving two surfaces in $\cH(1,1)$. The translation surface on the right and the surface on the left are rel deformations of each other. The surface on the right was obtained from the one on the left by subtracting 1 from the height of the bottom cylinder, and adding 1 to the height of the two top cylinders.}
\label{F:CylsRel}
\end{figure}

\begin{ex}
If $(X',\omega')$ is a translation cover of $(X,\omega)$, then moving the branch points gives a rel deformation of $(X',\omega')$, consisting entirely of surfaces that cover $(X,\omega)$. (Compare to the computations in example \ref{E:cover}.)
\end{ex}

\begin{lem}\label{L:rk1}
Suppose that $(X,\omega)$ and $(X', \omega')$ are nearby surfaces in some stratum, and as subspaces of absolute cohomology $H^1(X,\bC)$ we have
$$\span_\bR(\Re(\omega), \Im(\omega))=\span_\bR(\Re(\omega'), \Im(\omega')).$$
Then there is some $g\in \G$ close to the identity such that $g(X',\omega')$ is a rel deformation of $(X,\omega)$. 
\end{lem}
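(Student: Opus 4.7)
The plan is to produce $g$ as the linear map that identifies the two oriented frames $(\Re(\omega'),\Im(\omega'))$ and $(\Re(\omega),\Im(\omega))$ of the common real $2$-plane in absolute cohomology, and then to use the observation, already invoked in the proof that affine invariant submanifolds are $\G$-invariant, that the $\G$-action in period coordinates is pointwise linear on the $\bR^2$-valued entries.

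First I would verify that $V := \span_\bR(\Re(\omega), \Im(\omega)) \subset H^1(X,\bR)$ is genuinely two-dimensional and carries a preferred orientation: both facts follow from the area identity
$$\int_X \Re(\omega) \wedge \Im(\omega) = \Area(X,\omega) > 0.$$
For $(X',\omega')$ sufficiently close to $(X,\omega)$ the area of $(X',\omega')$ is also positive, so $(\Re(\omega'),\Im(\omega'))$ is an ordered basis of $V$ (by hypothesis on the spans) defining the same orientation. There is therefore a unique element $g \in \G$ sending the ordered basis $(\Re(\omega'),\Im(\omega'))$ to $(\Re(\omega),\Im(\omega))$; since the two frames are close in $V$, $g$ is close to the identity in $\G$.

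Second, I would apply this $g$ to $(X',\omega')$. Recall that in period coordinates the $\G$-action is pointwise by linear action of $g$ on the $\bR^2$-valued period of each basis cycle, so at the level of absolute cohomology the class $g \cdot \omega' \in H^1(X,\bC)$ has real and imaginary parts obtained by applying $g$ componentwise to $(\Re(\omega'),\Im(\omega'))$. By the defining property of $g$, this yields exactly $(\Re(\omega),\Im(\omega))$. Hence $g(X',\omega')$ and $(X,\omega)$ determine the same class in absolute cohomology $H^1(X,\bC)$; in other words, all their absolute periods agree.

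Finally, provided we shrank the initial neighborhood so that $g(X',\omega')$ remains in the period coordinate chart centered at $(X,\omega)$, I would connect the two surfaces by the straight line segment in that chart. Absolute periods form a linear functional on period coordinates (namely the projection $H^1(X,\Sigma,\bC) \to H^1(X,\bC)$), so since they coincide at both endpoints they are constant along the straight line. This path is therefore a rel deformation, as required. The only step I find subtle is the verification that the $\G$-action on relative cohomology restricts as claimed to the componentwise linear action on absolute cohomology, but this is essentially tautological given the definition of period coordinates and the $\G$-equivariance of the map $H^1(X,\Sigma,\bC) \to H^1(X,\bC)$.
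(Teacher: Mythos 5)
Your proposal is correct and follows essentially the same route as the paper: both define $g$ by the change of basis between the frames $(\Re(\omega),\Im(\omega))$ and $(\Re(\omega'),\Im(\omega'))$ of the common $2$-plane, observe that the resulting surfaces have equal absolute periods, and take the straight line in period coordinates as the rel deformation. You are in fact a bit more careful than the paper in checking, via $\int_X \Re(\omega)\wedge\Im(\omega)>0$, that the frames are genuine bases inducing the same orientation, so that $\det g>0$ and $g\in\G$.
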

\begin{proof}
By assumption, there are constants $a,b,c,d\in \bR$ such that, in absolute cohomology,
$$\Re(\omega') = a\Re(\omega)+b\Im(\omega)\quad\text{and}\quad \Im(\omega') = c\Re(\omega)+d\Im(\omega).$$
Thus if $$g=\left(\begin{array}{cc} a&b\\c&d\end{array}\right),$$
then $g(X,\omega)$ and $(X',\omega')$ have the same absolute periods, and hence are rel deformations of each other. In particular, the linear path in local periods joining $g(X,\omega)$ and $(X',\omega')$ is a rel deformation. 
\end{proof}

\begin{defn}
An affine invariant submanifold $\cM$ is \emph{rank 1} if, for every $(X,\omega)\in \cM$, there is an open neighbourhood $U$ containing $(X,\omega)$, such that for every $(X', \omega')\in U$ there is a $g\in \G$ close to the identity such that there is a rel deformation in $U$ from $(X',\omega')$ to $g(X,\omega)$.   
\end{defn}

A definition of rank will be given in the next lecture, so this can be considered a provisional definition of rank 1. It can be rephrased as saying that the $\G$ directions and the directions of rel deformations in $\cM$ span the tangent space to $\cM$ at every point. By the previous lemma, it can also be rephrased as saying that  $\span(\Re(\omega), \Im(\omega))$ is locally constant on $\cM$.

\begin{prop}\label{P:rk1ex}
Let $\cM$ be a 2-dimensional affine invariant submanifold. Then $\cM$ is rank 1. Furthermore, let $\cM'$ be a connected component of the space of degree $d$ translation covers of surfaces in $\cM$ branched over $k$ points. Then $\cM'$ is rank 1 also. 
\end{prop}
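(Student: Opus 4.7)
The plan is to use the reformulation of rank 1 noted just before the proposition: an affine invariant submanifold $\cN$ is rank 1 if and only if $\span_\bR(\Re(\omega), \Im(\omega))$, viewed as a subspace of absolute cohomology $H^1(X, \bR)$ via the Gauss--Manin identification in a local period chart, is locally constant on $\cN$. Equivalently, every tangent vector to $\cN$ at $(X,\omega)$ should have its image in absolute cohomology lying in $\span_\bC(\Re(\omega), \Im(\omega))$. Once this is established, Lemma \ref{L:rk1} produces the desired $g \in \G$ close to the identity.

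For the first assertion, suppose $\dim_\bC \cM = 2$. Since $\cM$ is $\G$-invariant, $T_{(X,\omega)} \cM$ contains the image of the infinitesimal $\G$-action at $(X,\omega)$. A direct computation using $\Re(g\omega) = a\Re(\omega) + b\Im(\omega)$ and $\Im(g\omega) = c\Re(\omega) + d\Im(\omega)$ for $g = \bigl(\begin{smallmatrix} a & b \\ c & d \end{smallmatrix}\bigr) \in \G$ identifies this infinitesimal orbit with $\span_\bC(\Re(\omega), \Im(\omega))$, which has complex dimension two since $\Re(\omega)$ and $\Im(\omega)$ are $\bR$-linearly independent by the area pairing recalled earlier. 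As $\cM$ also has complex dimension two, equality holds: $T_{(X,\omega)} \cM = \span_\bC(\Re(\omega), \Im(\omega))$ in $H^1(X, \Sigma, \bC)$. Projecting to absolute cohomology (where the projection remains injective on this span, again by the area pairing) shows that every tangent vector to $\cM$ has absolute image in $\span_\bC(\Re(\omega), \Im(\omega)) \subset H^1(X,\bC)$, verifying the rank 1 criterion.

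For the second assertion, let $(X',\omega') \in \cM'$ come from a translation cover $f:(X',\omega') \to (Y,\eta)$ of some $(Y,\eta) \in \cM$. In absolute cohomology, $[\omega'] = f^*[\eta]$, so
$$\span_\bR(\Re(\omega'), \Im(\omega')) = f^*\bigl( \span_\bR(\Re(\eta), \Im(\eta)) \bigr) \subset H^1(X', \bR).$$
The tangent space to $\cM'$ at $(X',\omega')$ is spanned by deformations of two kinds: (a) those lifting a tangent vector to $\cM$ at $(Y,\eta)$ with the combinatorial type of the cover held fixed, and (b) those moving the $k$ branch points of $f$ on $Y$ while keeping $(Y,\eta)$ fixed. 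Deformations of type (b) fix $[\eta]$, hence fix $[\omega'] = f^*[\eta]$ in absolute cohomology, so their absolute image lies trivially in $\span_\bC(\Re(\omega'), \Im(\omega'))$. For deformations of type (a), the monodromy data of the cover is discrete and hence locally constant within a connected component of $\cM'$, so the integer-valued map $f^* : H^1(Y,\bZ) \to H^1(X',\bZ)$ is locally constant; the first assertion applied to $\cM$ tells us the variation of $[\eta]$ lies in $\span_\bC(\Re(\eta), \Im(\eta))$, and pulling back by the fixed $f^*$ shows the variation of $[\omega']$ lies in $f^*\span_\bC(\Re(\eta), \Im(\eta)) = \span_\bC(\Re(\omega'), \Im(\omega'))$. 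Either way the rank 1 criterion is satisfied.

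The main obstacle is really just bookkeeping: one must carefully separate motion in the base $\cM$ from motion of the branch points, note that the cover's monodromy is discrete and hence locally constant within $\cM'$, and check that branch point motion is invisible in absolute cohomology downstairs (and hence upstairs, after pullback). Once these observations are in place, rank 1 on $\cM'$ reduces immediately to rank 1 on $\cM$, and the latter is handled by the elementary dimension count that forces $T_{(X,\omega)}\cM$ to coincide with the infinitesimal $\G$-orbit.
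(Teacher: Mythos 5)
Your proof is correct and follows essentially the same route as the paper: the paper likewise uses that a 2-dimensional $\cM$ is a single $\G$-orbit (your tangent-space dimension count is the infinitesimal version of this), and that a neighbourhood in $\cM'$ is obtained by deforming the base by a small element of $\G$ and moving the branch points, the latter being a rel deformation invisible in absolute cohomology. The only cosmetic difference is that you verify the ``locally constant span'' reformulation of rank 1 stated just before the proposition, whereas the paper appeals directly to the provisional definition.
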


\begin{proof}
A 2-dimensional affine invariant submanifold $\cM$ is a single $\G$-orbit, and so is in particular rank 1 (the rel deformations aren't even required). 

If $(X', \omega)\in \cM'$ is a cover of $(X,\omega)\in \cM$, then a neighborhood in $\cM'$ of $(X', \omega')$ is obtained by changing $(X,\omega)$ by a small matrix in $\G$, and changing the location of the branch points. 
\end{proof}

\begin{prop}
The eigenform loci in genus two constructed in the second lecture are rank 1. 
\end{prop}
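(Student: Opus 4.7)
The plan is to use the characterization, given just after Definition of rank 1, that an affine invariant submanifold $\cM$ is rank 1 if and only if $\span_\bR(\Re(\omega),\Im(\omega))\subset H^1(X,\bC)$ is locally constant on $\cM$ (this is Lemma \ref{L:rk1}).

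First I would dispose of the $\cH(2)$ case: the eigenform locus there is 2-dimensional by Proposition \ref{P:RMwoRM}, and every 2-dimensional affine invariant submanifold is rank 1 by Proposition \ref{P:rk1ex}. So the only real content is the $\cH(1,1)$ case, where the locus is 3-dimensional.

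For $(X,\omega)$ in the $\cH(1,1)$ eigenform locus with associated self-adjoint endomorphism $M$, the key observation (already made in the proof of Proposition \ref{P:RMwoRM}) is that $M$ is locally constant: the endomorphisms $M_n$ of $H^1(X,\bZ)$ coming from a convergent sequence $(X_n,\omega_n)$ must eventually agree, because $\End(H^1(X,\bZ))$ is discrete. Since $M$ is a real integer matrix, the $\sqrt{D}$-eigenspace $E\subset H^1(X,\bR)$ is a well-defined real subspace, and it is locally constant as a subspace of absolute cohomology (using the Gauss--Manin connection to identify nearby $H^1$'s). Because $M$ acts on the complexification of $E$ by the scalar $\sqrt{D}$ and $M\omega=\sqrt{D}\omega$, the vectors $\Re(\omega)$ and $\Im(\omega)$ both lie in $E$. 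They are $\bR$-linearly independent since $\int_X \Re(\omega)\wedge\Im(\omega)=\Area(X,\omega)>0$. Hence, since $\dim_\bR E = 2$,
\[
\span_\bR(\Re(\omega),\Im(\omega)) \;=\; E,
\]
and the right-hand side is locally constant. Lemma \ref{L:rk1} then furnishes, for any nearby $(X',\omega')$ in the eigenform locus, a matrix $g\in\G$ close to the identity with $g(X',\omega')$ a rel deformation of $(X,\omega)$, and this rel deformation stays in the locus because rel deformations do not change absolute periods, hence do not change the relation $M\omega=\sqrt{D}\omega$.

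The only thing to be careful about is verifying that the rel deformation constructed by Lemma \ref{L:rk1} actually lies in $\cM$, rather than merely in the ambient stratum; but this is immediate since the defining equations $\int_{M^*\gamma_i}\omega = \sqrt{D}\int_{\gamma_i}\omega$ involve only absolute periods, which are preserved along any rel deformation. I do not anticipate a serious obstacle: the whole argument is essentially a repackaging of the fact, already established in Proposition \ref{P:RMwoRM}, that the $\sqrt{D}$-eigenspace of $M$ is the subspace in which the periods of $\omega$ are allowed to vary.
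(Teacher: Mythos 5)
Your proof is correct and follows essentially the same route as the paper, which simply observes in one line that these loci are defined by requiring $\Re(\omega)$ and $\Im(\omega)$ to span the fixed $2$-dimensional $\sqrt{D}$-eigenspace of $M$ in absolute cohomology. Your write-up merely fills in the details (local constancy of $M$, linear independence of $\Re(\omega)$ and $\Im(\omega)$, and the rel-deformation bookkeeping) that the paper leaves implicit.
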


\begin{proof}
These are defined by saying that the real and imaginary parts of $\omega$ should span a fixed 2-dimensional subspace of absolute homology (the $\sqrt{D}$-eigenspace of $M$).   
\end{proof}

\begin{prop}[Wright]\label{P:CP}
If $(X,\omega)\in \cM$ and $\cM$ is rank 1, then $(X,\omega)$ is periodic in any direction that has a cylinder.
\end{prop}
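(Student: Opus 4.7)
The plan is to use the Minsky-Weiss/Smillie-Weiss theorem to locate a horizontally periodic surface inside the $u_t$-orbit closure of $(X,\omega)$ in $\cM$, and then use the rank 1 structure via Lemma \ref{L:rk1} to transfer horizontal periodicity back to $(X,\omega)$ itself.

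First I would apply an element of $\G$ to arrange that the given cylinder $C$ is horizontal; this is harmless since $\cM$ is $\G$-invariant. By Minsky-Weiss/Smillie-Weiss the $u_t$-orbit closure of $(X,\omega)$ contains a horizontally periodic $(X',\omega')\in\cM$, and I pick a sequence $t_n\to\infty$ with $u_{t_n}(X,\omega)\to(X',\omega')$. Since $\cM$ is rank 1, the $2$-plane $\span_\bR(\Re\omega,\Im\omega)$ in absolute cohomology is locally constant, so $u_{t_n}(X,\omega)$ and $(X',\omega')$ share the same $2$-plane once they lie in a common coordinate chart. Lemma \ref{L:rk1} then produces $g_n\in\G$ close to the identity such that $g_n(X',\omega')$ is a rel deformation of $u_{t_n}(X,\omega)$; in particular the two have identical absolute periods.

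The key computation is that the core curve $\alpha\in H_1(X,\bZ)$ of $C$ pins down $g_n$. Since $C$ is horizontal, $\int_\alpha\omega = c>0$ is real; the unipotent flow $u_t$ fixes real periods, so $\int_\alpha u_{t_n}\omega = c$, and by continuity of absolute periods in the limit $\int_\alpha\omega'=c$ as well. Equating the absolute periods of $\alpha$ on $g_n(X',\omega')$ and $u_{t_n}(X,\omega)$ yields
\[ g_n\cdot (c,0) \;=\; \int_\alpha g_n\omega' \;=\; \int_\alpha u_{t_n}\omega \;=\; (c,0), \]
which forces $a_n=1$ and $c_n=0$ when $g_n=\left(\begin{array}{cc} a_n & b_n\\ c_n & d_n \end{array}\right)$. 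Thus $g_n$ is upper triangular, hence preserves the horizontal direction, and so $g_n(X',\omega')$ is horizontally periodic.

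Finally, the observation that small rel deformations preserve horizontal periodicity completes the argument: the core curves of horizontal cylinders are absolute classes whose periods are unchanged along a rel deformation, so the cylinders remain horizontal with the same circumferences, and for sufficiently small deformations none can collapse or split. Hence $u_{t_n}(X,\omega)$ is horizontally periodic, and applying $u_{-t_n}$ (which also preserves the horizontal direction) shows the same for $(X,\omega)$. The step I expect to be the main obstacle is precisely this last justification: the combinatorial stability of horizontal cylinder decompositions under perturbations that fix the absolute periods, which requires a careful neighborhood-level analysis at $(X',\omega')$ to rule out new horizontal saddle connections appearing or old cylinders degenerating along the rel path.
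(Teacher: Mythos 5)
Your overall strategy coincides with the paper's: rotate the cylinder to the horizontal, use Minsky--Weiss/Smillie--Weiss to find a horizontally periodic $(X',\omega')$ in the $u_t$-orbit closure, invoke Lemma~\ref{L:rk1} (via local constancy of $\span(\Re\omega,\Im\omega)$, which is the rank $1$ hypothesis) to produce $g_n$ close to the identity relating $u_{t_n}(X,\omega)$ and $(X',\omega')$ up to a rel deformation, and then pin $g_n$ down to be upper triangular. Your way of pinning down $g_n$ --- equating the absolute period of the core curve $\alpha$, so that $g_n(c,0)=(c,0)$ forces $a_n=1$, $c_n=0$ --- is a clean, explicit version of the paper's statement that $g$ must preserve the horizontal cylinder present on both surfaces.

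The gap is exactly where you suspected, and it is genuine: the assertion that a small rel deformation of a horizontally periodic surface is horizontally periodic is strictly stronger than what your justification delivers. What is true, and what you correctly argue, is that each individual horizontal cylinder persists under a small rel deformation as a horizontal cylinder of the same circumference and nearly the same height (its core curve is an absolute class with unchanged, real period). But these persistent cylinders need \emph{not} cover the deformed surface, even for arbitrarily small rel deformations: for instance, tilting the slit in a horizontal two-torus slit construction is a rel deformation under which the two original cylinders shrink and a brand-new horizontal cylinder of twice the circumference appears in the thin complementary strip. So "none can collapse or split" does not settle the matter --- one must separately rule out that the thin complementary region (a neighborhood of the perturbed saddle-connection graph) becomes a minimal component, and with three or more zeros and incommensurable imaginary rel parameters this is a real danger, not a formality. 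The paper's proof avoids needing this claim altogether: it only transfers cylinders one at a time. Each horizontal cylinder of $(X',\omega')$ persists on the rel deformation $g_nu_{t_n}(X,\omega)$ with the same circumference and height within $o(1)$, hence (applying the upper triangular $g_n^{-1}$) gives a horizontal cylinder on $u_{t_n}(X,\omega)$; since $u_t$ preserves the horizontal cylinder decomposition and its areas, the total area of horizontal cylinders on $(X,\omega)$ is at least $(1-o(1))\cdot\Area(X,\omega)$ for every $n$, hence equals the full area, and $(X,\omega)$ is horizontally periodic. I would replace your last paragraph with this area (or cylinder-counting) argument rather than trying to prove that rel deformations preserve complete periodicity.
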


That is, the proposition says that every time you find a cylinder on $(X,\omega)$, then $(X,\omega)$ is the union of that cylinder and cylinders parallel to it. Before being established in general in \cite{Wcyl} (using a different argument than the one we give here), the proposition was known in several special cases \cite{V, Ca, LN}.

\begin{proof}
 Let $(X', \omega')$ be horizontally periodic and in the $u_t$-orbit closure of $(X,\omega)$.

Let $T$ be large, so $u_T(X,\omega)$ is very close to $(X', \omega')$. The horizontal cylinder is present on $(X',\omega')$. There must be some matrix $g\in SL(2,\bR)$ close to the identity so $gu_T(X,\omega)$ is a rel deformation of $(X', \omega')$.  Small rel deformations preserve cylinders, since the integral over their circumference curve $\gamma$ must remain constant along the rel deformation. Hence $g$ must preserve the horizontal direction, since it must preserve the horizontal cylinder which is present on both $u_T(X,\omega)$ and $gu_T(X,\omega)$.

Thus, every horizontal cylinder on $(X', \omega')$ is also horizontal on $u_T(X,\omega)$. If these cylinders do not cover $u_T(X,\omega)$, then it is possible to derive a contradiction, because then there would be more horizontal cylinders on $(X', \omega')$. (Because every horizontal cylinder on $u_T(X,\omega)$ must also be present on every translation surface in the $u_t$-orbit closure, and if they do not cover $u_T(X,\omega)$ then the corresponding cylinders on $(X', \omega')$ do not cover $(X', \omega')$, and hence there must be more cylinders on $(X', \omega')$ than on $(X,\omega)$.)
\end{proof}

\begin{rem}
In fact this proof shows that if $(X,\omega)$ has a loop $\gamma$ of saddle connections in a fixed direction, and $\int_\gamma \omega\neq 0$, then $(X,\omega)$ is periodic in that direction. 
\end{rem}

We conclude by remarking that there is also a close connection between rank 1 orbit closures and real multiplication. The proof requires dynamics. 

\begin{thm}[Filip]
If $(X,\omega)\in \cM$ and $\cM$ is rank 1, then $\Jac(X,\omega)$ has real multiplication. 
\end{thm}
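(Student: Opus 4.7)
The plan is to extract a sub-variation of Hodge structure from the rank-1 hypothesis and realize the real multiplication as the commutant of its algebraic monodromy. Set $V_\bR = \span_\bR(\Re(\omega), \Im(\omega)) \subset H^1(X,\bR)$ and let $W \subset H^1(X,\bQ)$ be the smallest $\bQ$-rational subspace whose complexification contains $V_\bR \otimes_\bR \bC$. By the rank-1 hypothesis, $V_\bR$ is locally constant on $\cM$ under the Gauss-Manin connection, so the rational envelope $W$ is locally constant as well and is preserved by the Kontsevich-Zorich cocycle along every loop in $\cM$. Since $V_\bR\otimes\bC$ contains $\omega\in H^{1,0}(X)$ and $\bar\omega\in H^{0,1}(X)$, the decomposition $W\otimes\bC = W^{1,0}\oplus W^{0,1}$ holds, so $W$ defines a polarizable sub-VHS of weight one, and $W$ is exactly the rational subspace used to define $\Jac(X,\omega)$.

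First I would invoke the semisimplicity of the Kontsevich-Zorich cocycle established by Filip (building on Forni, Avila-Viana, and Eskin-Kontsevich-Zorich), together with Deligne's theorem on reductivity of algebraic monodromy for polarizable variations of Hodge structure. This gives that the algebraic monodromy group $G \subset Sp(W_\bQ)$ acting on $W$ along loops in $\cM$ is a reductive $\bQ$-algebraic group and that $W$ decomposes into $G$-isotypic components. The commutant $\bk := \End_G(W)\otimes\bQ$ is then a semisimple $\bQ$-algebra that acts self-adjointly with respect to the symplectic pairing (self-adjointness follows from $G$-invariance of the polarization), and the standard Albert-type structure theorem for endomorphism algebras of weight-one polarizable VHS says that each simple factor of $\bk$ is a totally real field, a CM field, or a quaternion algebra over a totally real field.

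To upgrade this to ``$\bk$ is a totally real field of degree $\dim_\bC\Jac(X,\omega)$,'' I would exploit the distinguished line $\bC\omega\subset W^{1,0}$. In the totally real case, the $\bk$-isotypic decomposition $W\otimes_\bQ\bR=\bigoplus_{\sigma:\bk\hookrightarrow\bR} W_\sigma$ has each $W_\sigma$ of real dimension $2$, and the rank-1 condition identifies $V_\bR$ with precisely one such component $W_{\sigma_0}$; the equation $r\omega = \sigma_0(r)\omega$ then exhibits $\omega$ as a $\bk$-eigenform as a bonus. In the CM or quaternion cases, the analogous isotypic components would either have larger $\bR$-dimension or carry extra complex/quaternionic structure incompatible with the existence of the 2-dimensional real subspace $V_\bR$ containing both $\omega$ and $\bar\omega$, so those cases are ruled out by rank 1. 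Passing to any order $\cO\subset\bk$ that preserves the integral lattice $W\cap H^1(X,\bZ)$ (which exists by standard arguments since $\bk$ acts on $W_\bQ$) yields the self-adjoint action of $\cO$ on $\Jac(X,\omega)$ required by Definition \ref{D:RM}.

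The main obstacle is the reductivity input. The base $\cM$ is non-compact and is not a priori smooth projective (at least not before invoking Filip's other theorem on quasi-projectivity), so classical Hodge theory does not directly supply semisimplicity of the algebraic monodromy along loops in $\cM$. Establishing it requires the dynamical machinery of Filip: the $\G$-invariant ergodic measures produced by Eskin-Mirzakhani-Mohammadi, the Oseledets/Lyapunov decomposition of the Kontsevich-Zorich cocycle, and the compatibility of Lyapunov filtrations with the Hodge splitting. Once reductivity is granted, the algebraic steps (identifying the commutant, ruling out CM and quaternion factors using the rank-1 constraint on $V_\bR$, and descending to a lattice-preserving order) are comparatively routine.
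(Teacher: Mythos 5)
The paper itself gives no proof of this statement: it is quoted as a theorem of Filip with the remark that ``the proof requires dynamics,'' so your proposal is being measured against Filip's actual argument rather than anything in the text. Your outline does identify the right cast of characters --- the rational envelope of $\span_\bR(\Re(\omega),\Im(\omega))$, semisimplicity of the Kontsevich--Zorich cocycle, and the commutant of the monodromy --- and you are right that the dynamical input is irreducible. But two of the steps you describe as ``comparatively routine'' contain genuine gaps.

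The most serious is at the start: you assert $W\otimes\bC = W^{1,0}\oplus W^{0,1}$ ``since $V_\bR\otimes\bC$ contains $\omega$ and $\bar\omega$.'' That is a non sequitur --- containing $\omega$ and $\bar\omega$ supplies only a $2$-dimensional piece of $W^{1,0}\oplus W^{0,1}$ and forces nothing about the rest of $W\otimes\bC$. The statement that the flat decomposition of Theorem \ref{T:galois} is a splitting of variations of Hodge structure is exactly the hard M\"oller--Filip theorem quoted in the last section of the paper, and it is the crux of complex linearity of the field action (compare the genus-2 discussion, where complex linearity is automatic only because $g=2$); without it you cannot even identify your $W$ with the $V$ in the definition of $\Jac(X,\omega)$. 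Second, your handling of the Albert classification is off in both directions: self-adjointness of the commutant does not follow from $G$-invariance of the polarization (the commutant is merely stable under the Rosati involution, and one must pass to the Rosati-fixed subfield, which can cut the degree in half), while CM or quaternionic factors need not be ``ruled out'' at all, since they contain totally real subfields. The real issue is the degree count --- that the totally real field has degree exactly $\dim_\bC\Jac(X,\omega)$ --- and this comes from Theorem \ref{T:galois}: in rank 1 the summands $\bV_\rho$ are pairwise non-isomorphic, simple, and $2$-dimensional, indexed by the real embeddings of $\bk(\cM)$, so the commutant is a totally real field acting with $2$-dimensional real isotypic pieces. Your appeal to ``incompatibility with the existence of $V_\bR$'' does not substitute for this argument.
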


The conclusion is that rank 1 orbit closures are very close cousins to 2-dimensional orbit closures, which are a special case. 

\section{Revisiting genus two with new tools}

\subsection{Field of definition, VHS}

\begin{defn} The \emph{(affine) field of definition} $\bk(\cM)$ of an affine invariant submanifold $\cM$ is the smallest subfield of $\bR$ such that $\cM$ can be defined in local period coordinates by linear equations with coefficients in this field \cite{Wfield}. Warning: This is not the same thing as the field of definition of $\cM$ viewed as a variety (where polynomial equations are allowed, and the coordinates are different).
\end{defn}  

For example, $\cM$ arising from branched covers over tori, (or over all surfaces in some other stratum) are defined over $\bQ$. The eigenform loci are defined over $\bQ[\sqrt{D}]$. 

 Let $H^1$ denote the flat bundle over $\cM$ whose fiber over $(X, \omega)\in \cM$ is $H^1(X,\bC)$, and let $H^1_{rel}$ denote the flat bundle whose fiber over $(X,\omega)$ is $H^1(X,\Sigma, \bC)$, where $\Sigma$ is the set of singularities of $(X,\omega)$. Let $$p:H^1_{rel}\to H^1$$ denote the natural projection from relative to absolute cohomology. Viewing $$H^1(X,\Sigma, \bC) = H_1(X,\Sigma, \bC)^*\quad\text{and}\quad H^1(X, \bC)=H_1(X, \bC)^*,$$ the map $p$ is just restriction of a linear functional on $H_1(X,\Sigma, \bC)$ to $H_1(X, \bC)\subset H_1(X,\Sigma, \bC)$. The subspace $\ker(p)$ exactly corresponds to derivatives of rel deformations. 
 
 The flat connection on $H^1$ or $H^1_{rel}$ is often called the Gauss-Manin connection. From our point of view, it is an extremely simple thing. The cohomology groups $H^1(X,\bC)$ and $H^1(X, \Sigma, \bC)$ are both purely topological objects, and do not depend on the complex structure on $X$ or the Abelian differential $\omega$. (When the Abelian differential changes, the set $\Sigma$ might move a bit by an isotopy.) Thus, varying the complex structure on $X$ does not change these cohomology groups. In this way, if $(X', \omega')$ is nearby $(X,\omega)$, then $H^1(X, \bC)$ is identified with $H^1(X', \omega')$ (because it is the same exact object!), and similarly for relative cohomology. This identification of nearby fibers is exactly the structure of a flat connection. 
 
Recall that period coordinates can be considered as the map sending $(X,\omega)$ to the relative cohomology class of $\omega$ in $H^1(X,\Sigma, \bC)$. By definition, $\cM$ is defined in these periods by a linear subspace, which we think of as simultaneously giving $\cM$ in period coordinates, as well as being the tangent space to $\cM$ at $(X,\omega)$. (The tangent space to a vector space, at any point, is just the vector space itself.) Thus we can consider the tangent bundle $T(\cM)$  of $\cM$ as a flat subbundle of $H^1_{rel}$. It is flat because the subspace defining $\cM$ in period coordinates does not change as $(X,\omega)$ moves around in $\cM$. 

A \emph{flat subbundle} $E$ of $H^1$ or $H^1_{rel}$ is just a subbundle that is locally constant over $\cM$. Associated to such a subbundle is its \emph{monodromy representation}, which is a representation of $\pi_1(\cM)$ on a fiber of $E$ at the chosen base point of $\cM$. It is obtained by dragging cohomology classes around loops in $\pi_1(\cM)$. 

A flat subbundle is called \emph{simple} if it has no nontrivial flat subbundle, or equivalently if the monodromy representation has no nontrivial invariant subspaces. A flat subbundle is called \emph{semisimple} if it is the direct sum of simple subbundles. Two subbundles are called Galois conjugate if their fibers are Galois conjugate; in particular, this means their monodromy representations are Galois conjugate. 

\begin{defn} The \emph{field of definition} of a flat subbundle $E\subset H^1$ is the smallest subfield of $\bC$ such that locally the linear subspace $E$ of $H^1(X,\bC)$ can be defined by linear equations (with respect to an integer basis of $H_1(X, \bZ)$) with coefficients in this field.  The trace field of a flat bundle over $\cM$ is defined as the field generated by traces of the corresponding representation of $\pi_1(\cM)$.
\end{defn}

\begin{thm}[Wright \cite{Wfield}]\label{T:galois}
Let $\cM$ be an affine invariant submanifold. The field of definition of $p(T(\cM))$ and trace field of $p(T(\cM))$ are both equal to $\bk(\cM)$. 

Set $\bV_{\Id}=p(\cT(\cM))$. There is a semisimple flat bundle $\bW$, and for each field embedding $\rho:\bk(\cM)\to\bC$ there is a flat simple bundle $\bV_\rho$ that is Galois conjugate to $\bV_{\Id}$, such that
\[H^1 = \left(\bigoplus_\rho \bV_\rho\right) \oplus \bW.\]

The bundle $\bW$ does not contain any subbundles isomorphic to any $\bV_\rho$. Both $\bW$ and $\oplus \bV_\rho$ are defined over $\bQ$. All direct summands are symplectic and symplectically orthogonal. 

In particular, 
\[\dim_\bC p(T(\cM)) \cdot \deg_\bQ \bk(\cM) \leq 2g.\]
\end{thm}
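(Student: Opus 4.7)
The final ``In particular'' inequality drops out once the decomposition is in place, so the plan is to build the structural decomposition first and then count dimensions.

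First I would verify that $\bV_{\Id}:=p(T(\cM))$ is a flat subbundle of $H^1$. This is because $T(\cM)\subset H^1_{rel}$ is flat (by the definition of affine invariant submanifold, the defining linear equations in period coordinates have constant coefficients, so the subspace is locally constant in the Gauss--Manin connection) and $p$ is a flat morphism. Next I would show that the field of definition of $\bV_{\Id}$ equals $\bk(\cM)$. The inclusion ``$\subseteq$'' is immediate: pushing forward equations defining $T(\cM)$ over $\bk(\cM)$ gives equations defining $\bV_{\Id}$ over $\bk(\cM)$. The reverse inclusion reduces to showing that the relative part $\ker(p)\cap T(\cM)$ is itself defined over $\bQ$; one argues this using the $\G$-action, since $g_t$ has distinct eigenvalues on the absolute and purely-relative directions, forcing the rel summand to split off over $\bQ$. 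Once the field of definition of $\bV_{\Id}$ is identified with $\bk(\cM)$, equality with the trace field follows from the standard projector construction: the monodromy-invariant projection onto $\bV_{\Id}$ along its symplectic complement is a polynomial in monodromy matrices with coefficients in the trace field, giving defining equations over the trace field; the reverse containment is automatic from the fact that monodromy preserves $\bV_{\Id}$.

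Next I would construct the Galois conjugates. For each embedding $\rho:\bk(\cM)\to\bC$, apply $\rho$ to the linear equations defining $\bV_{\Id}$ to obtain a flat subbundle $\bV_\rho$ whose monodromy is the $\rho$-Galois twist of that of $\bV_{\Id}$. For $\rho\neq\rho'$ the trace fields $\rho(\bk(\cM))$ and $\rho'(\bk(\cM))$ differ as subfields of $\bC$, so some element of $\pi_1(\cM)$ has different trace on $\bV_\rho$ and $\bV_{\rho'}$; hence these monodromy representations are non-isomorphic and the bundles intersect trivially by Schur. Simplicity of each $\bV_\rho$ is the load-bearing structural input: I would argue that any nontrivial proper flat subbundle, after taking Galois translates, would produce a proper subspace of $\bV_{\Id}$ defined over $\bk(\cM)$ and hence (by the $\bk(\cM)$-minimality established in the first paragraph) contradict the field-of-definition characterization. \emph{I expect this simplicity step to be the main obstacle}, because it genuinely uses dynamical input beyond linear algebra, namely the Avila--Eskin--M\"oller semisimplicity of the Kontsevich--Zorich cocycle on $\cM$.

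I would then invoke the cup-product symplectic form on $H^1$, which is defined over $\bQ$ and monodromy-invariant. For $\rho\neq\rho'$ the pairing induces a flat homomorphism $\bV_\rho\to\bV_{\rho'}^*$; since each $\bV_{\rho'}$ is symplectically self-dual as a representation, $\bV_{\rho'}^*\cong\bV_{\rho'}$, and Schur forces this map to vanish between the simple non-isomorphic summands. Thus the $\bV_\rho$ are mutually symplectically orthogonal and the form restricts non-degenerately to each. Since $\operatorname{Gal}(\overline{\bQ}/\bQ)$ permutes the $\bV_\rho$, the sum $\bigoplus_\rho \bV_\rho$ is Galois-invariant, hence defined over $\bQ$. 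Define $\bW$ to be its symplectic complement, which is then flat and defined over $\bQ$; semisimplicity of $\bW$ follows from Deligne's theorem on semisimplicity of monodromy for polarized variations of Hodge structure, and the Schur argument above shows $\bW$ contains no subbundle isomorphic to any $\bV_\rho$.

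Finally the inequality is a dimension count. All $\bV_\rho$ are Galois conjugate, hence share the same complex fiber dimension $\dim_\bC \bV_{\Id}=\dim_\bC p(T(\cM))$. There are exactly $\deg_\bQ \bk(\cM)$ embeddings $\rho$, and the direct sum $\bigoplus_\rho \bV_\rho$ injects into $H^1$, whose fiber has dimension $2g$. Therefore
\[\dim_\bC p(T(\cM))\cdot \deg_\bQ \bk(\cM)=\sum_\rho \dim_\bC \bV_\rho \leq \dim_\bC H^1 = 2g,\]
which is the desired bound.
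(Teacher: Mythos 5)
A preliminary remark: this survey does not actually prove Theorem \ref{T:galois}; it is quoted from \cite{Wfield}, and the survey only indicates the inputs (Avila--Eskin--M\"oller, Eskin--Mirzakhani--Rafi, and the simplicity of the top eigenvalue of a pseudo-Anosov acting on $H^1$). Your skeleton --- Galois-conjugate bundles, Schur's lemma, symplectic orthogonality, dimension count --- matches the architecture of the real argument, and the final inequality is handled correctly. But there is a genuine gap at the central point. You claim ``the Schur argument above shows $\bW$ contains no subbundle isomorphic to any $\bV_\rho$.'' Schur plus symplectic orthogonality rules out nonzero flat maps between \emph{non-isomorphic} simples; it does nothing to prevent $H^1$ from containing two isomorphic copies of $\bV_{\Id}$ that happen to be symplectically orthogonal to each other (think of $V\oplus V$ with the product symplectic form: the two summands are isomorphic and orthogonal). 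Multiplicity one is precisely the hard content of the theorem, and you also need it \emph{earlier} than you realize: your trace-field step, writing the projection onto $\bV_{\Id}$ as a polynomial in monodromy matrices with coefficients in the trace field, only produces the projection onto the full isotypic component of $\bV_{\Id}$, so it proves nothing about $\bV_{\Id}$ itself unless multiplicity one is already known. The way this is actually established is dynamical: Eskin--Mirzakhani--Rafi give many closed $g_t$-orbits in $\cM$, and the classical fact that the monodromy over such an orbit has a \emph{simple} largest eigenvalue with eigenvector $\Re(\omega)\in\bV_{\Id}$ forbids a second copy of $\bV_{\Id}$ (it would force that eigenvalue to have multiplicity at least two). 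The same mechanism, not bare semisimplicity, is what drives the simplicity of $\bV_{\Id}$ that you flag as the main obstacle.

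Two further problems. First, invoking Deligne's semisimplicity theorem for $\bW$ is circular here: it requires $\cM$ to be a quasi-projective variety, which is Filip's theorem, which is itself proved \emph{using} Theorem \ref{T:galois}; the correct input is the Avila--Eskin--M\"oller semisimplicity of the Kontsevich--Zorich cocycle over $\cM$. Second, your reduction of $\bk(\cM)$ (defined by equations for $T(\cM)$ in \emph{relative} cohomology) to the field of definition of the absolute part $p(T(\cM))$ rests on the assertion that ``$g_t$ has distinct eigenvalues on the absolute and purely-relative directions,'' which is not a meaningful statement: $g_t$ acts on the stratum, and the relevant linear object is the Kontsevich--Zorich cocycle, which has no canonical eigenvalue decomposition separating $\ker(p)$ from the rest. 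This direction of the equality is a genuine theorem of \cite{Wfield} and needs an actual argument, not just the observation that $p$ is defined over $\bQ$ (which only gives the easy containment).
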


\begin{cor}[Wright]
In particular, the field of definition is a number field, and so any translation surface whose coordinates are linearly independent over $\overline{\bQ}$ cannot be contained in a nontrivial affine invariant submanifold, and hence must have  $\G$-orbit closure as large as possible. This provides an explicit full measure set of surfaces whose orbit closure is as large as possible. 
\end{cor}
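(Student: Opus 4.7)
The plan is to derive both halves of the corollary as direct consequences of Theorem~\ref{T:galois}, with a small measure-theoretic addendum for the full-measure assertion.

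First I would verify that $\bk(\cM)$ is a number field. The inequality
$$\dim_\bC p(T(\cM)) \cdot \deg_\bQ \bk(\cM) \leq 2g$$
from Theorem~\ref{T:galois} does the work, provided $\dim_\bC p(T(\cM)) \geq 1$. But $p(T(\cM))$ contains the absolute cohomology class $[\omega]$ of the defining differential at any $(X,\omega) \in \cM$, and this class is nonzero since an Abelian differential is closed but not exact. Hence $\deg_\bQ \bk(\cM) \leq 2g$, and since $\bk(\cM) \subset \bR$ is a finite extension of $\bQ$ it is a number field, which in particular embeds into $\overline{\bQ}$.

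Next I would deduce the statement about $\overline{\bQ}$-linear independence by contrapositive. By the definition of $\bk(\cM)$, the submanifold $\cM$ is cut out in local period coordinates by linear equations with coefficients in $\bk(\cM) \subset \overline{\bQ}$. If $\cM$ is nontrivial, i.e.\ a proper affine invariant submanifold of the ambient connected component of the stratum, then at least one such defining equation is nonzero, so the period coordinates of any $(X,\omega) \in \cM$ satisfy a nontrivial $\overline{\bQ}$-linear relation. Equivalently, if the period coordinates of $(X,\omega)$ are linearly independent over $\overline{\bQ}$, then $(X,\omega)$ lies in no proper affine invariant submanifold, and by the Eskin--Mirzakhani--Mohammadi theorem its $\G$-orbit closure must be the entire ambient connected component of the stratum, which is as large as possible.

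Finally, for the full-measure assertion, the set of $(X,\omega)$ whose period coordinates satisfy some nonzero $\overline{\bQ}$-linear relation is a countable union of complex hyperplanes in $\bC^n$, one for each nonzero $\overline{\bQ}$-linear form up to scaling (countably many since $\overline{\bQ}$ is countable). Each has Lebesgue measure zero, hence so does the union; the condition is coordinate-independent since period-coordinate transition maps lie in $GL(n,\bZ) \subset GL(n,\overline{\bQ})$. The hard part has already been handled inside Theorem~\ref{T:galois}; given that result, the corollary reduces to this linear-algebraic contrapositive together with a routine null-set count, and the only mild subtlety is the observation $\dim_\bC p(T(\cM)) \geq 1$ needed to activate the degree bound.
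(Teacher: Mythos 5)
Your proposal is correct and follows exactly the derivation the paper intends: the corollary is stated without proof as an immediate consequence of Theorem~\ref{T:galois}, namely that the degree bound $\dim_\bC p(T(\cM))\cdot\deg_\bQ\bk(\cM)\leq 2g$ (together with $p(T(\cM))\neq 0$) forces $\bk(\cM)\subset\overline{\bQ}$, so a proper $\cM$ imposes a nontrivial $\overline{\bQ}$-linear relation on period coordinates, and Eskin--Mirzakhani--Mohammadi converts ``contained in no proper affine invariant submanifold'' into ``orbit closure is the whole stratum component.'' Your countable-union-of-hyperplanes argument for the full-measure claim, including the remark that $GL(n,\bZ)$ transition maps make the condition coordinate-independent, is the right way to fill in the one detail the paper leaves implicit.
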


The direct sum decomposition of $H^1$ in Theorem \ref{T:galois} was previously established in the case of Teichm\"uller curves by Martin M\"oller \cite{M}, and is one of the main tools used in the study of closed $SL(2,\bR)$--orbits. 

\begin{thm}[M\"oller \cite{M}, Filip \cite{Fi1}]
The splitting above is a splitting of Variation of Hodge Structures. That is, each direct summand is equal to the direct sum of its intersection with $H^{1,0}$ and its intersection with $H^{0,1}$. 
\end{thm}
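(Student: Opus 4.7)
The strategy is to combine the algebraicity of $\cM$ (Filip's theorem just stated) with classical Hodge-theoretic rigidity results, principally Deligne's semisimplicity theorem and theorem of the fixed part for polarized variations of Hodge structure over quasi-projective bases.

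First I would use Filip's algebraicity theorem to realize $\cM$ as a quasi-projective variety, thereby upgrading $H^1 \to \cM$ from a flat bundle to a genuine polarized integral variation of Hodge structure over an algebraic base. The key Hodge-theoretic input is then that any global flat endomorphism of such a polarized VHS is automatically a morphism of Hodge structure; in particular, any flat subbundle that arises as the image of such an endomorphism is itself a sub-VHS.

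Next I would apply this to the $\bQ$-rational splitting $H^1 = (\bigoplus_\rho \bV_\rho) \oplus \bW$ of Theorem \ref{T:galois}. The projectors onto $\bigoplus_\rho \bV_\rho$ and $\bW$ are flat, being defined by $\bQ$-rational monodromy data, so by the previous step both summands are sub-VHS. To refine $\bigoplus_\rho \bV_\rho$ into its simple constituents, I would use the identification in Theorem \ref{T:galois} of $\bk(\cM)$ as the trace field of $p(\cT(\cM))$ to exhibit a natural flat action of $\bk(\cM)$ on $\bigoplus_\rho \bV_\rho$; invoking rigidity once more, this action is by morphisms of VHS, and the $\rho$-eigenspace decomposition for the action of $\bk(\cM)$ recovers the splitting into individual $\bV_\rho$, so each $\bV_\rho$ is a sub-VHS. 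As a reality check that these pieces have nontrivial Hodge content, note that $\cT(\cM)$ contains the relative class $[\omega]$, so $\bV_{\Id} = p(\cT(\cM))$ contains $\omega \in H^{1,0}$, and since $\cT(\cM)$ is defined over $\bR$ it also contains $\overline{\omega} \in H^{0,1}$.

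The main obstacle is Filip's algebraicity theorem itself: without it there is no polarized VHS over an algebraic base, and the standard rigidity toolkit is simply unavailable. A secondary subtlety is verifying that the isotypic projectors and the $\bk(\cM)$-action come from genuine global flat endomorphisms of $H^1$ rather than fiberwise data — this uses precisely the monodromy-theoretic content of Theorem \ref{T:galois} (identification of fields of definition with trace fields, and the semisimple Galois-conjugate decomposition), without which one could only recognize the structure pointwise and would have no way to apply Deligne's rigidity globally.
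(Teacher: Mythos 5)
Your proposal runs into a circularity that the paper itself flags explicitly. Immediately after stating this theorem, the survey notes: ``Given that affine invariant submanifolds are varieties, parts (but not all) of the two theorems above follow from a theorem of Deligne on semi-simplicity of VHS. However, in fact both the above theorems were established first, and used by Filip in his proof that affine invariant submanifolds are varieties.'' Your entire strategy takes Filip's algebraicity theorem as the first input, in order to put a polarized VHS over a quasi-projective base and then invoke Deligne's semisimplicity and rigidity. But Filip's proof of algebraicity \emph{depends on} the splitting of VHS you are trying to prove (together with Theorem \ref{T:galois} and Proposition \ref{P:Mconverse}-type characterizations via real multiplication and torsion). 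So as a proof of this theorem your argument is not available: you would be assuming a result whose proof requires the conclusion. The actual proofs --- M\"oller's in the Teichm\"uller-curve case and Filip's in general --- work over a base that is only known a priori to be an analytic affine invariant submanifold, and they substitute dynamics (the $g_t$-action, the Kontsevich--Zorich cocycle, Forni-type positivity, and inputs such as the abundance of closed $g_t$-orbits and the Avila--Eskin--M\"oller symplecticity of $p(T(\cM))$) for the algebraic-geometry toolkit you want to use. That is precisely what the paper means by ``again the proofs use dynamics.''

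A secondary issue: even granting algebraicity, the paper states that only \emph{parts} of Theorem \ref{T:galois} and of this theorem follow from Deligne's semisimplicity, so your claim to recover the full statement (including the identification of the $\bV_\rho$ as the Galois conjugates of $p(T(\cM))$, the non-occurrence of any $\bV_\rho$ inside $\bW$, and the symplectic orthogonality of the summands) from Hodge-theoretic rigidity alone overreaches. Your closing observation that $\bV_{\Id}$ contains both $\omega$ and $\overline{\omega}$ is correct and is indeed the easy, nontrivial-Hodge-content part of the statement for $\bV_{\Id}$; the hard content is that the Galois-conjugate summands $\bV_\rho$, which have no a priori relation to holomorphic forms, are nonetheless sub-VHS, and that is exactly where the dynamical arguments are needed.
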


Again the proofs use dynamics. For an elementary introduction of Variation of Hodge Structures in the context of orbit closures, see \cite{W1}. 

When the splitting of $H^1$ is nontrivial ($H^1\neq p(T(\cM))$), then $\cM$ parameterizes translation surfaces whose Jacobians admit nontrivial endomorphisms.

Given that affine invariant submanifolds are varieties, parts (but not all) of the two theorems above follow from a theorem of Deligne on semi-simplicity of VHS \cite{D1}. However, in fact both the above theorems were established first, and used by Filip in his proof that affine invariant submanifolds are varieties. The first theorem used work of Avila-Eskin-M\"oller \cite{AEM}, and the following.

\begin{thm}[Eskin-Mirzakhani-Rafi \cite{EMR}]
In every affine invariant submanifolds, there are lots of closed (i.e., periodic) $g_t$-orbits. 
\end{thm}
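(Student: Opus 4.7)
The plan is to produce closed $g_t$-orbits by applying an Anosov-type closing lemma to recurrent orbit segments in $\cM$, and then to use the Eskin--Mirzakhani $SL(2,\bR)$-invariant measure $\nu_\cM$ to guarantee that such recurrent segments exist in abundance. First I would recall the structure from the previous section: in period coordinates, $g_t$ expands the real parts and contracts the imaginary parts by $e^{\pm t}$, twisted by the Kontsevich--Zorich cocycle. On a compact subset of $\cM$ where this cocycle has bounded norm, $g_t$ is uniformly hyperbolic transverse to its own flow direction, so an orbit segment $\{g_s(X,\omega):0\le s\le T\}$ that starts and ends in a fixed chart and returns $\varepsilon$-close in period coordinates at time $T$ can be perturbed to a true $g_T$-fixed point within distance $O(\varepsilon)$ of the segment. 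This is the Anosov closing lemma adapted to the flat structure on strata.

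Next I would feed the closing lemma with recurrent orbits supplied by ergodic theory. By the work of Eskin--Mirzakhani each affine invariant submanifold $\cM$ carries a natural $SL(2,\bR)$-invariant, $g_t$-ergodic probability measure $\nu_\cM$ on area-one surfaces; by the Birkhoff ergodic theorem, $\nu_\cM$-almost every $(X,\omega)\in\cM$ has an equidistributed forward $g_t$-orbit, and in particular returns to any given open set $U\subset\cM$ with positive frequency. To convert frequent returns into $\varepsilon$-close returns one applies a quantitative non-divergence statement in the style of Eskin--Masur (and in the rel directions, Minsky--Weiss): the $\nu_\cM$-measure of the thin part of $\cM$, where some saddle connection has length less than $\delta$, is at most a power of $\delta$. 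This confines a positive-measure set of orbit segments to a compact piece where the closing lemma applies.

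The bulk of the argument then lies in making this construction both quantitative and uniform: one wants not just existence of closed orbits but many distinct ones, with controlled distribution. I would follow the Margulis paradigm: partition $\cM$ into small Bowen boxes adapted to the stable/unstable foliations (real/imaginary period directions), and use mixing of $g_t$ with respect to $\nu_\cM$ (which follows from the spectral gap/exponential mixing results for the Teichm\"uller flow on strata, and on affine invariant submanifolds via their ergodicity) to show that the number of boxes that $g_T$ maps approximately into themselves grows exponentially in $T$. Each such near-self-map yields, via the closing lemma, a genuine periodic orbit, and distinct boxes produce distinct orbits. Equidistribution of these periodic orbits to $\nu_\cM$ then follows from the same mixing input by a standard transfer operator computation.

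The main obstacle is the non-compactness of $\cM$ combined with the non-uniform hyperbolicity caused by the Kontsevich--Zorich cocycle: one needs uniform control on both the excursions of $g_t$-orbits into the cusps of $\cM$ and on the Lyapunov spectrum of the cocycle along recurrent orbits so that stable and unstable manifolds have uniform size where the closing lemma is invoked. The non-divergence estimates on $\cM$ (extending Eskin--Masur from strata to general affine invariant submanifolds) and the Forni-type positivity/simplicity results for Lyapunov exponents of the Hodge bundle combine to give precisely the uniform hyperbolicity needed on a compact set of nearly full $\nu_\cM$-measure, and it is this technical input, rather than the soft closing argument, that requires the deepest work.
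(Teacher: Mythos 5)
The paper does not prove this statement: it is quoted as a black-box theorem of Eskin--Mirzakhani--Rafi \cite{EMR}, and the survey explicitly flags it as one of the deep dynamical inputs (used to control the monodromy over periodic orbits, whose top and bottom eigenvectors are $\Re(\omega)$ and $\Im(\omega)$). So there is no in-paper proof to compare yours against, and your write-up should be judged as a reconstruction of the external argument.

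As such a reconstruction, your outline does identify the correct paradigm --- the Margulis scheme of (i) a closing lemma for $g_t$, (ii) recurrence/mixing of the affine $SL(2,\bR)$-invariant measure $\nu_\cM$ to produce nearly-closed orbit segments, and (iii) quantitative non-divergence to keep those segments in a compact part --- and you correctly locate the real difficulty in the non-compactness of $\cM$ and the non-uniform hyperbolicity coming from the Kontsevich--Zorich cocycle. But what you have written is a program, not a proof: every load-bearing step is itself a major theorem that you invoke rather than establish. The Anosov closing lemma does not apply off the shelf, because the Teichm\"uller flow is only non-uniformly hyperbolic and the ``stable/unstable manifolds'' in period coordinates have size controlled by the cocycle, which degenerates along excursions into the thin part; making the closing lemma work on a set of orbit segments of definite measure is precisely the content of the hard estimates in \cite{EMR} (and the earlier Eskin--Mirzakhani counting paper), not a corollary of Forni's spectral gap plus Eskin--Masur. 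Likewise, quantitative non-divergence with respect to $\nu_\cM$ on a general affine invariant submanifold is not a routine extension of Eskin--Masur from strata; it relies on the Eskin--Mirzakhani--Mohammadi machinery. Finally, exponential mixing on general affine invariant submanifolds is a separate deep result (Avila--Gou\"ezel), and for mere existence of many closed orbits you should instead use the soft fact that ergodicity of the $SL(2,\bR)$-action forces mixing of $g_t$ via Howe--Moore. In short: right skeleton, but all of the flesh is deferred to results at least as hard as the statement being proved, so this cannot be accepted as a proof.
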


This is useful because of a classical result in Teichm\"uller theory that says the monodromy matrix over such an orbit has simple largest and smallest eigenvalues, with eigenvectors $\Re(\omega)$ and $\Im(\omega)$. Among other things, this helps to show that there is not a second copy of $p(T(\cM))$ in the decomposition of $H^1$, since otherwise these eigenvalues would have multiplicity at least two. 

The work of Avila-Eskin-M\"oller used in the proof also  shows

\begin{thm}[Avila-Eskin-M\"oller]
$p(T(\cM))$ is symplectic. 
\end{thm}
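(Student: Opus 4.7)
The plan is to show that the symplectic form on $H^1$ restricts nondegenerately to $V := p(T(\cM))$, by proving that the kernel $K := V \cap V^\perp$ of the restricted form is the zero subbundle. First I would verify that $V$ is a flat subbundle of $H^1$ under the Gauss--Manin connection: the tangent bundle $T(\cM) \subset H^1_{rel}$ is flat because period coordinates are precisely the flat coordinates and $\cM$ is cut out locally by linear equations with constant real coefficients, while $p$ is a morphism of flat bundles. The symplectic form on $H^1$ is topological (the cup product), hence parallel under Gauss--Manin, so $V^\perp$ is flat and therefore so is $K$.

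Next, the tautological plane $\mathbf{T} := \span_\bR(\Re(\omega), \Im(\omega))$ lies inside $V$, because the infinitesimal $\G$-action on $\cM$ produces tangent directions whose images under $p$ span $\mathbf{T}$. Since $\langle \Re(\omega), \Im(\omega)\rangle$ is the area of $(X,\omega)$ and in particular positive, $\mathbf{T}$ is itself symplectic, forcing $K \cap \mathbf{T} = 0$ and $K \subset \mathbf{T}^\perp$. Thus a putative nonzero $K$ would be a flat subbundle of $V$ which avoids the tautological plane and on which the symplectic form vanishes identically.

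The hard part is ruling out $K \neq 0$, and for this I would combine two ingredients from the dynamics of the Kontsevich--Zorich cocycle. The Eskin--Mirzakhani--Rafi theorem produces an abundance of closed $g_t$-orbits in $\cM$; on each such periodic orbit, the classical Teichm\"uller-theoretic fact cited in the excerpt says the Kontsevich--Zorich monodromy has simple top eigenvalue realized by $\Re(\omega)$ and simple bottom eigenvalue realized by $\Im(\omega)$. On the other hand, Forni's analysis of the second fundamental form of the Hodge metric implies that a flat subbundle on which the intersection form is degenerate must have the Kontsevich--Zorich cocycle act isometrically on its null directions, so all Lyapunov exponents there are zero. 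A nonzero $K$ would therefore supply a flat zero-exponent piece sitting inside $V$, disjoint from $\mathbf{T}$; combined with the semisimplicity of the cocycle on $V$ (inherited from the polarized variation of Hodge structure on $H^1$) and the simple top/bottom eigenvalue statement, this would produce contradictory multiplicity constraints on the Lyapunov spectrum over the periodic orbits, yielding $K = 0$.

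The main obstacle is precisely this final step: making the Lyapunov and Hodge-theoretic contradiction quantitative enough to exclude any nonzero flat isotropic subbundle of $V$. It rests on Forni's deep identification between the second fundamental form of the Hodge norm and the intersection pairing on the Kontsevich--Zorich cocycle, which is not formal from the statements preceding the theorem in the excerpt and constitutes the technical core of the Avila--Eskin--M\"oller argument.
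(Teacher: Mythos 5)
The paper does not actually prove this statement: it is quoted as a black box from Avila--Eskin--M\"oller \cite{AEM}, with only the remark that the proof ``uses dynamics.'' So there is no in-paper argument to compare yours against, and the question is whether your sketch would stand on its own. Its first two paragraphs are fine and do match the opening moves of the real argument: $V=p(T(\cM))$ is flat, the cup-product form is parallel, so the null space $K=V\cap V^\perp$ is a flat subbundle; the tautological plane $\span(\Re\omega,\Im\omega)$ sits inside $V$ and is symplectic since $\int_X\Re(\omega)\Im(\omega)>0$, so $K$ meets it trivially. Correctly, everything reduces to showing $K=0$, and the Forni-type statement that the Kontsevich--Zorich cocycle acts isometrically (hence with zero Lyapunov exponents) on the degenerate part is indeed the right technical engine.

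The gap is the final contradiction, and as stated it does not work. A flat subbundle $K$ on which the cocycle acts isometrically has all exponents equal to $0$ (and, over a closed $g_t$-orbit, monodromy eigenvalues on the unit circle). This is entirely consistent with the top and bottom exponents $\pm 1$ on the tautological plane being simple: the Eskin--Mirzakhani--Rafi periodic orbits and the simplicity of the extreme eigenvalues constrain only the $\pm 1$ part of the spectrum and say nothing that excludes an extra zero-exponent block orthogonal to the tautological plane. Semisimplicity does not rescue this either. What is actually needed --- and what constitutes the core of \cite{AEM} --- is an argument that an isometric flat subbundle cannot intersect $p(T(\cM))$ nontrivially at all; there this is achieved by showing that the maximal isometric (``Forni'') subspace is symplectically and Hodge-orthogonal to $p(T(\cM))$, using that vectors in $p(T(\cM))$ are genuine deformation directions of the surface inside $\cM$ and hence genuinely move the Hodge structure, whereas the Forni subspace is rigid under such deformations. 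Your sketch names the right tools but, as you acknowledge yourself, the decisive step is missing, and the specific contradiction you propose in its place is not one.
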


\begin{defn}
The \emph{rank} of an affine invariant submanifold $\cM$ is $\frac12 \dim_\bC p(T(\cM))$. 
\end{defn}

When $p(T(\cM))$ is 2-dimensional, that means that it must be spanned by the real and imaginary parts of the absolute cohomology classes given by $\omega$. In particular, $\span(\Re(\omega), \Im(\omega))$ is locally constant, since $p(T(\cM))$ is always a flat subbundle. By Lemma \ref{L:rk1}, this means that a neighbourhood of any $(X,\omega)$ in $\cM$ can be generated using $\G$ and rel deformations, so rank 1 according to this definition agrees with our previous definition of rank 1. 

\subsection{Cylinder deformations} We begin with the observation that, for each cylinder on a translation surface $(X,\omega)$, there is a corresponding cylinder on sufficiently nearby surfaces $(X', \omega')$. This corresponding cylinder may not have the same direction, height, circumference, or modulus, however these all change continuously with $(X', \omega')$. The ``sufficiently nearby" assumption is required since, along a path a surfaces starting at $(X,\omega)$, the height of the cylinder might reach 0, at which point the cylinder ceases to be a cylinder. 

\begin{figure}[h!]
\centering
\includegraphics[scale=0.22]{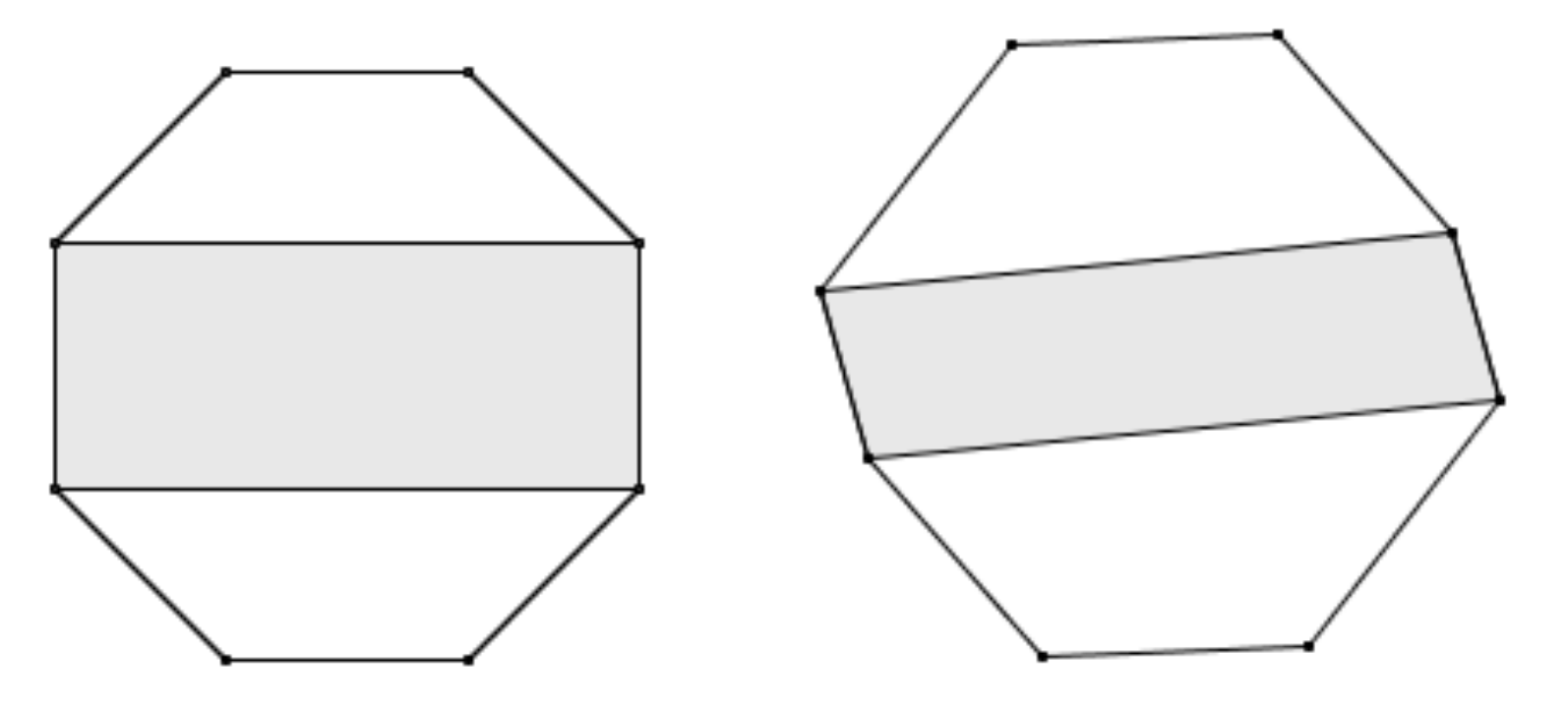}
\caption{Consider the shaded horizontal cylinder on the regular octagon surface in $\cH(2)$ (left). On any sufficiently small deformation of this surface, the cylinder persists (right).}
\label{F:Persist}
\end{figure}

\begin{defn}Two cylinders $C_1$ and $C_2$ on $(X,\omega) \in \cM$ are said to be \emph{$\cM$-parallel} if they are parallel, and remain parallel on all deformations of $(X,\omega)$ in $\cM$. (A \emph{deformation} is just a nearby surface, connected to $(X,\omega)$ via a path in $\cM$.) The deformations are assumed to be small, so that $C_1$ and $C_2$ persist. 
\end{defn}

\begin{ex}
If $\cM$ is 2-dimensional, it consists of a single $\G$-orbit. Parallel cylinders remain parallel under the action of $\G$, so two cylinders on a translation surface in $\cM$ are $\cM$-parallel if and only if they are parallel. 
\end{ex}

\begin{ex}
On the opposite extreme, if $\cM$ is a connected component of a stratum, two cylinders are $\cM$ parallel if and only if they are parallel and their circumference curves are homologous. Indeed, suppose the core curves are $\gamma_1$ and $\gamma_2$. Then since $\int_{\gamma_1} \omega'=\int_{\gamma_2} \omega'$ on all deformations $(X', \omega')\in \cM$, the cylinders will always be in the same direction, which is exactly the direction in $\bC$ given by this integral. 
\end{ex}

\begin{figure}[h!]
\centering
\includegraphics[scale=0.30]{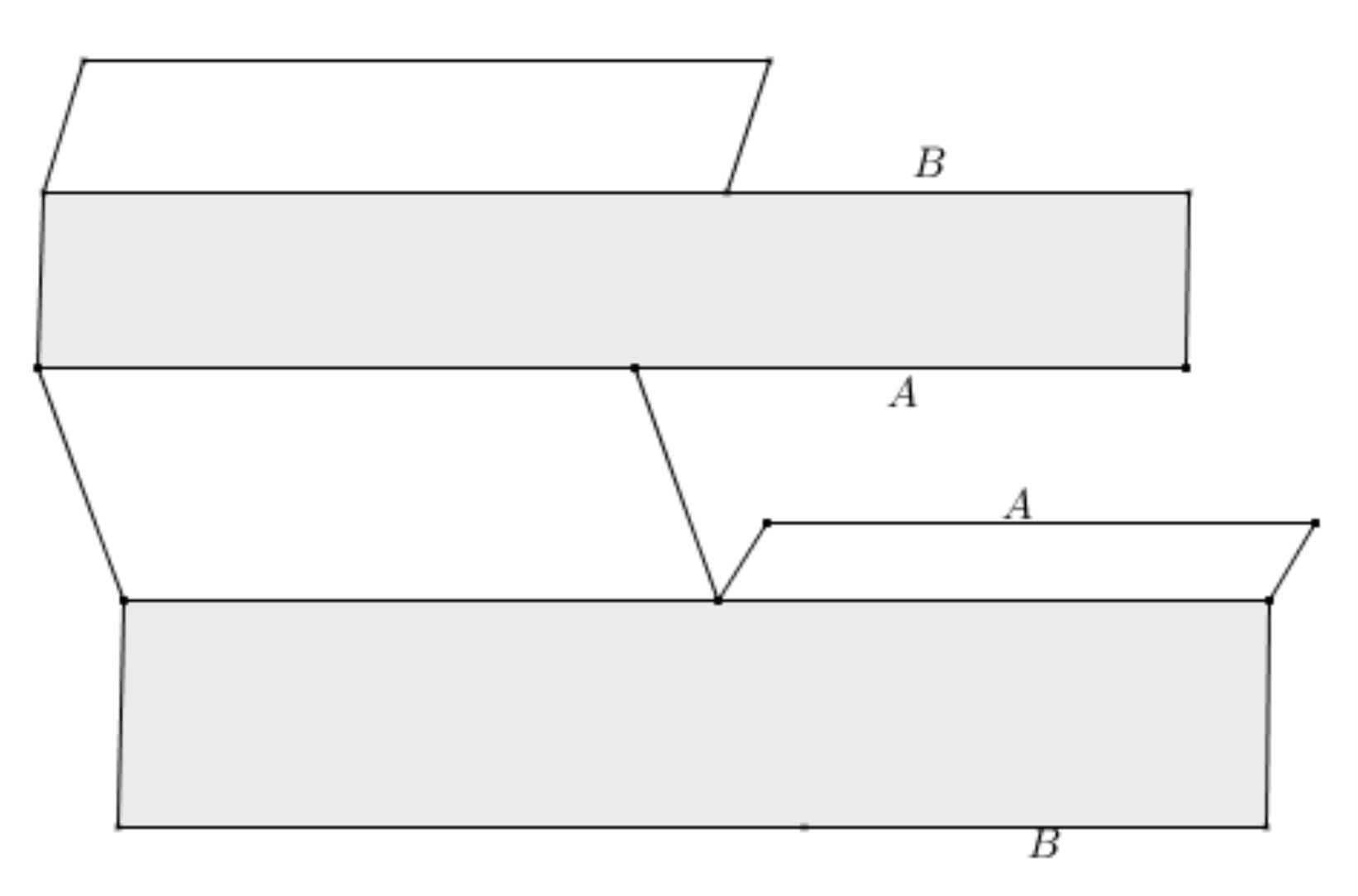}
\caption{The two shaded cylinders are homologous on this translation surface. There is no way to deform the surface to make these two cylinders not be parallel.}
\end{figure}

\begin{lem}\label{L:parallel}
Two cylinders $C_1$ and $C_2$ on $(X,\omega)\in \cM$ are $\cM$-parallel if and only if there is a constant $c\in \bR$ such that on all deformations $(X', \omega')\in\cM$, 
$$\int_{\gamma_1} \omega'=c\int_{\gamma_2} \omega',$$
where $\gamma_i$ is the circumference curve of $C_i$. 
\end{lem}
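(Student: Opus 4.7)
The plan is to exploit the fact that $\cM$ carries the structure of a complex submanifold of the ambient stratum, together with the principle that a holomorphic function taking only real values on a connected open set must be constant. Recall that $\cM$ is locally cut out in period coordinates by linear equations in $\bC^n = H^1(X,\Sigma,\bC)$ with real coefficients; as noted earlier in the paper, such a subspace of $\bC^n$ is the complexification of a real subspace of $\bR^n$, hence is itself a complex linear subspace of $\bC^n$. Consequently $\cM$ inherits the structure of a complex submanifold, and the $\bC$-linear functionals $\omega' \mapsto \int_{\gamma_i}\omega'$ on $H^1(X,\Sigma,\bC)$ restrict to holomorphic functions on $\cM$.

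The ($\Leftarrow$) direction is immediate: if $\int_{\gamma_1}\omega' = c\int_{\gamma_2}\omega'$ for a fixed $c \in \bR$ on every deformation in $\cM$, then the holonomy vectors of the two core curves always lie on a common real line through the origin in $\bC$, so $C_1$ and $C_2$ are parallel on every such surface and hence $\cM$-parallel.

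For ($\Rightarrow$), observe first that $\int_{\gamma_2}\omega \neq 0$ because $\gamma_2$ is the core curve of a genuine cylinder, so on a sufficiently small connected neighbourhood $U$ of $(X,\omega)$ in $\cM$ the ratio
$$f(X',\omega') = \frac{\int_{\gamma_1}\omega'}{\int_{\gamma_2}\omega'}$$
is well-defined and holomorphic. The $\cM$-parallel hypothesis says precisely that $f$ takes values in $\bR$ throughout $U$. Restricting $f$ to any complex-analytic disc inside $U$ yields a one-variable holomorphic function whose image lies in $\bR \subset \bC$; by the open mapping theorem such a function is constant. Since $U$ is connected and $f$ is constant along every complex disc through a point, $f$ is globally constant on $U$, and its constant value $c$ satisfies the desired identity $\int_{\gamma_1}\omega' = c\int_{\gamma_2}\omega'$.

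The only mildly subtle point is the recognition that the real-coefficient condition on the defining equations of $\cM$ forces $\cM$ to be a complex (and not merely real) submanifold in period coordinates; once this is in hand the holomorphicity of the ratio does all the work, and there is no serious obstacle in the argument.
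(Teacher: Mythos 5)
Your proof is correct and follows essentially the same route as the paper: both arguments rest on viewing $\omega'\mapsto\int_{\gamma_i}\omega'$ as $\bC$-linear functionals on the (complex-linear) local model of $\cM$ and concluding that a ratio which is everywhere real must be a fixed real constant. The only difference is that the paper simply asserts this last fact about complex linear functionals, whereas you justify it via holomorphicity and the open mapping theorem — a welcome bit of extra detail, not a different method.
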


In other words, two cylinders on $(X,\omega)\in \cM$ are $\cM$-parallel if and only if one of the linear equations defining $\cM$ in local period coordinates makes it so.

\begin{proof}
Both $(X', \omega')\mapsto \int_{\gamma_1}\omega'$ and $(X', \omega')\mapsto\int_{\gamma_2}\omega'$ are linear functionals on a neighbourhood of $(X,\omega)$ in $\cM$, viewed in period coordinates as locally being an open set in a complex vector space. The cylinders $C_1, C_2$ are $\cM$ parallel if and only if these two linear functions are always real multiples of each other. 

The only way for two linear functionals on a complex vector space always have real ratio is for one functional to be a fixed real constant $c$ times the other. 
\end{proof}

The relation of being $\cM$-parallel is an equivalence relation, and when we speak of an equivalence class of a cylinder, we mean the set of all cylinders $\cM$-parallel to it.

Define the matrices
$$u_t = \left(\begin{array}{cc} 1 & t\\0 & 1 \end{array}\right), \quad a_s = \left(\begin{array}{cc} 1 & 0\\0 & e^s \end{array}\right),\quad r_\theta = \left(\begin{array}{cc} \cos \theta  & -\sin \theta  \\\sin \theta  & \cos \theta  \end{array}\right).$$
Let $\cC$ be a collection of parallel cylinders on a translation surface $(X,\omega)$. Suppose they are all of angle $\theta \in[0,\pi)$. Define $a_s^\cC(u_t^\cC(X,\omega)))$ to be the translation surface obtained by applying $r_{-\theta}$ to $(X,\omega)$, then applying the matrix $a_s u_t$ to the images of the cylinders in $\cC$, and then applying $r_\theta$. 

The result of $a_s^\cC$ is to stretch the height of all cylinders in the collection $\cC$ by a factor of $e^s$. The result of $u_t^\cC$ is to shear all the cylinders in $\cC$. 

\begin{figure}[h!]
\centering
\includegraphics[scale=0.30]{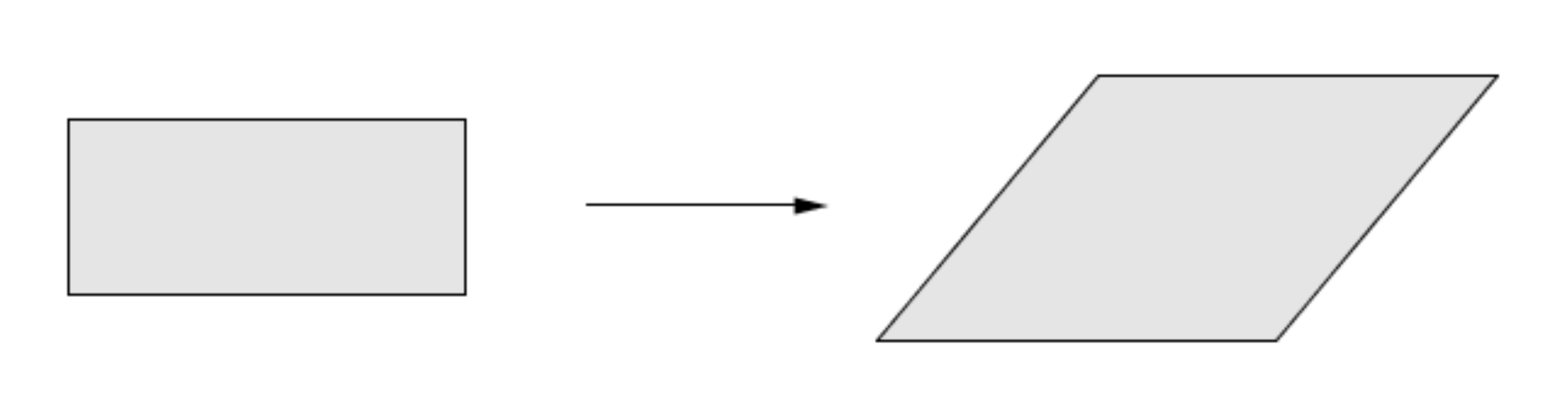}
\caption{A horizontal cylinder (left--vertical sides are identified, but the top and bottom horizontal edges are the boundary of the cylinder.) The result of stretching and shearing this cylinder (right). Note the boundary of the cylinder stays exactly the same.}
\end{figure}

There is more than one way of thinking about these cylinder deformations. You can think of cutting out the cylinders in $\cC$, and then stretching and shearing them, and then gluing them back in. Or, you can think of a polygon decomposition for $(X,\omega)$, with one parallelogram for each cylinder in $\cC$, and you can think of applying the matrix $a_su_t$ just to the parallelograms giving cylinders in $\cC$ and doing nothing to the remaining polygons. 

\begin{figure}[h!]
\centering
\includegraphics[scale=0.28]{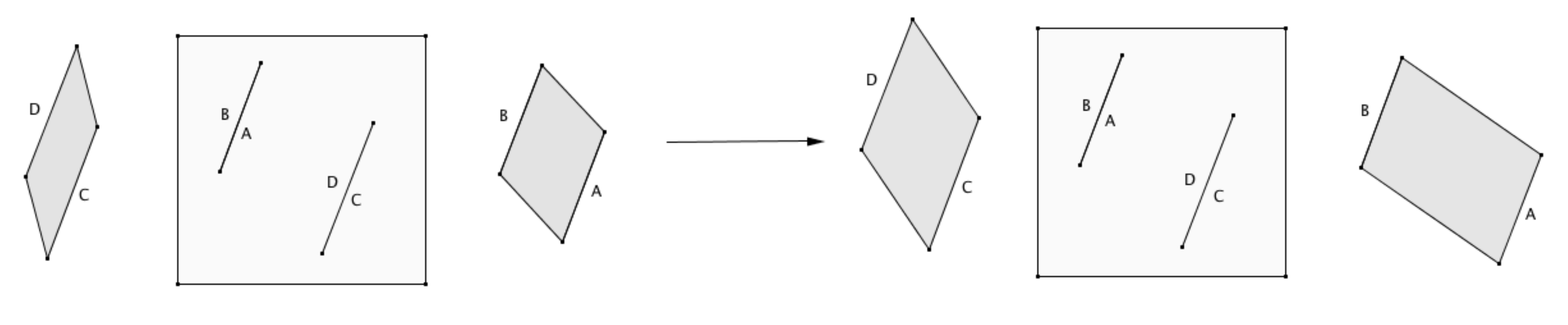}
\caption{On the left is a translation surface $(X,\omega)$. Let $\cC$ be the two shaded parallel cylinders on $(X,\omega)$. On the right is $a_{\log(2)}^\cC(X,\omega)$. }
\end{figure}

\begin{thm}[The Cylinder Deformation Theorem \cite{Wcyl}]\label{T:CDT}
Suppose that $\cC$ is an equivalence class of $\cM$-parallel cylinders on $(X,\omega)\in \cM$. Then for all $s, t\in \bR$, the translation surface $a_s^\cC(u_t^\cC(X,\omega))\in \cM$. 
\end{thm}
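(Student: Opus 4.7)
The plan is to reduce to the case where $\cC$ is horizontal, establish the infinitesimal statement that the velocity vectors of $u_t^\cC$ and $a_s^\cC$ at the origin lie in $T_{(X,\omega)}\cM$, and then integrate. Since $\cM$ is $\G$-invariant and $r_\theta\,u_t^\cC\,r_{-\theta}$ is the corresponding deformation for the rotated collection $r_\theta(\cC)$, it suffices to treat horizontal $\cC$.

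With $\cC$ horizontal, compute the derivative of $t \mapsto u_t^\cC(X,\omega)$ at $t=0$ in period coordinates: it is the real relative cohomology class $\sigma_\cC \in H^1(X,\Sigma,\bR)$ defined on a relative cycle $\alpha$ by
\[\sigma_\cC(\alpha) \;=\; \sum_{C \in \cC} h_C \cdot I(\alpha, \gamma_C),\]
where $h_C$ is the height of $C$, $\gamma_C$ its core curve, and $I$ is the intersection pairing of a relative with an absolute cycle; the derivative of $s \mapsto a_s^\cC(X,\omega)$ is then $i\sigma_\cC$. Granted $\sigma_\cC \in T_{(X,\omega)}\cM$, the deformations remain in $\cM$ for small $t$ and $s$ by the local linearity of $\cM$ in period coordinates. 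To extend to all $t, s \in \bR$, iterate: the cylinders in $\cC$ persist under $u_t^\cC$ and $a_s^\cC$ (their heights only scale by $e^s > 0$, never vanishing, and the $\cM$-parallel relations among them are preserved throughout), so the infinitesimal argument reapplies at each deformed surface, with closedness of $\cM$ handling any limits.

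The main obstacle is showing $\sigma_\cC \in T_{(X,\omega)}\cM$. Lemma \ref{L:parallel} supplies real constants $c_C$ with $\int_{\gamma_C}\omega' = c_C \int_{\gamma_{C_0}}\omega'$ on a neighborhood in $\cM$ (with $C_0 \in \cC$ a fixed reference), so $[\gamma_C] - c_C[\gamma_{C_0}]$ lies in the annihilator of $T\cM$ in $H_1(X,\bR)$. A direct check shows $\sigma_\cC$ annihilates every such parallel relation, since all the $\gamma_C$ are parallel and hence disjoint, giving $\sigma_\cC([\gamma_C]) = 0$ for every $C \in \cC$. The delicate task is to rule out additional elements in the annihilator of $T\cM$ that $\sigma_\cC$ might fail to annihilate. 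Here one invokes the Avila--Eskin--Möller theorem that $p(T\cM)$ is symplectic together with the decomposition of $H^1$ in Theorem \ref{T:galois}: the absolute part $p(\sigma_\cC) \in H^1(X,\bR)$ is Poincaré dual to $\eta := \sum_{C \in \cC} h_C[\gamma_C] \in H_1(X,\bR)$, and symplectic non-degeneracy on $p(T\cM)$ forces $\eta$ to lie in the image of $p(T\cM)$ under the symplectic identification $H^1(X,\bR) \cong H_1(X,\bR)$, ruling out hypothetical further constraints. The relative component of $\sigma_\cC$, which lies in $\ker(p)$, is compatible with $\cM$ because it corresponds to a rel deformation that preserves the defining parallel relations. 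Once this cohomological step is in hand, the integration completes the proof.
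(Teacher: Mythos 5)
There is a genuine gap at the decisive step, namely the claim that $\sigma_\cC\in T_{(X,\omega)}\cM$. Your reduction to the horizontal case, the formula $\sigma_\cC(\alpha)=\sum_{C\in\cC}h_C\,I(\alpha,\gamma_C)$ for the velocity of the shear, and the integration of the infinitesimal statement are all fine; the entire content of the theorem is concentrated in showing that this particular class is tangent to $\cM$, and your argument for that does not work. Knowing that $\sigma_\cC$ annihilates the relations $[\gamma_C]-c_C[\gamma_{C_0}]$ coming from Lemma \ref{L:parallel} only shows it vanishes on a few specific elements of the annihilator of $T\cM$; the subsequent appeal to symplecticity of $p(T(\cM))$ is a non sequitur, since symplectic non-degeneracy gives a decomposition $H^1=p(T(\cM))\oplus p(T(\cM))^{\perp}$ but provides no reason for the Poincar\'e dual of $\sum h_C[\gamma_C]$ to have vanishing component in $p(T(\cM))^{\perp}$. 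The clearest symptom that something is wrong: nothing in your argument uses that $\cC$ is a \emph{full} equivalence class. Applied verbatim to a proper sub-collection $\cC'\subsetneq\cC$ (even a single cylinder), the same disjointness computation shows $\sigma_{\cC'}$ annihilates all the relations $[\gamma_C]-c_C[\gamma_{C_0}]$, and the same symplectic appeal would ``force'' $\sum_{C\in\cC'}h_C[\gamma_C]$ into $p(T(\cM))$. But shearing a single cylinder out of an $\cM$-parallel pair generically leaves $\cM$ (e.g.\ shearing one of the two horizontal cylinders of the regular octagon leaves its closed orbit), so your argument proves a false statement. The treatment of the relative component is similarly unsupported: $T(\cM)\cap\ker(p)$ is in general a proper subspace of $\ker(p)$, and ``corresponds to a rel deformation preserving the parallel relations'' does not place the rel part of $\sigma_\cC$ inside it.

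This is not a defect you can patch with more linear algebra. As the text surrounding Theorem \ref{T:CDT} indicates, the proof in \cite{Wcyl} genuinely uses the dynamics of $u_t$: one perturbs $(X,\omega)$ inside $\cM$ so that the cylinders in the other equivalence classes tilt off the horizontal while those in $\cC$ remain horizontal (possible precisely because they are not $\cM$-parallel to $\cC$), applies the full horocycle flow $u_T$ for times $T$ along which the twisting in the complementary part recurs, and uses closedness of $\cM$ together with results of Smillie--Weiss type to extract the pure shear $u_t^{\cC}$ in the limit. The cohomological bookkeeping you set up is a correct framework for \emph{stating} what must be proved, but the proof of the tangency itself requires this dynamical input.
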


We call $a_s^\cC$ the \emph{cylinder stretch}, and $u_t^\cC$ the \emph{cylinder shear}.

The proof involves the dynamics of $u_t$, but is motivated by the then conjecture (now theorem) that affine invariant submanifolds are varieties. As $s\to\infty$, the translation surface $a_s^\cC(\cC(X,\omega))$ converges to the boundary at infinity of $\cM$. The Cylinder Deformation Theorem is closely related to the boundary structure of $\cM$.

More generally, the boundary of an affine invariant submanifold is related to configurations of parallel cylinders and saddle connections, since these can always be made vertical using $r_\theta$ and then shrunk using $g_t$. 

 \subsection{Orbit closures in genus 2} In this subsection, we will give a qualitative description of orbit closures in genus 2. However, we will not give a classification, in that we will not  discuss how many connected components the loci we discuss have, and we will not discuss how to tell if two translation surfaces have the same orbit closure. An almost complete classification in genus two was given by McMullen, the only remaining open problem being to classify orbits of square-tiled surfaces in $\cH(1,1)$. (Such a classification should give a finite list of invariants, such that if two square-tiled surfaces have the same invariants, then they are in the same orbit.) Thus all results in this section can be deduced as particular consequences of  finer results of McMullen. 

See  \cite{McM:spin, Mc4, Mc6, Ba, Mu:orb, Mc5, KM} for much finer information on orbit closures in genus 2. Note that McMullen's work \cite{Mc5} was done well before the work of Eskin-Mirzakhani-Mohammadi, and hence uses different techniques than what we present here. 

\begin{lem}
In $\cH(2)$ every $\G$-orbit is dense or closed. Every closed orbit is either the orbit of a square-tiled surface, or one of the eigenform loci in $\cH(2)$ constructed in lecture 2. 
\end{lem}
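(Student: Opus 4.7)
The plan is to combine Eskin--Mirzakhani--Mohammadi with the fact that $\cH(2)$ has a single zero (which forces orbit closures to be even-dimensional), and then to apply Filip's rank-one theorem to split the closed orbits into the two advertised families. Let $\cM$ be the $\G$-orbit closure of $(X,\omega)\in\cH(2)$; by Eskin--Mirzakhani--Mohammadi it is an affine invariant submanifold. Because $|\Sigma|=1$, the projection $p:H^1(X,\Sigma,\bC)\to H^1(X,\bC)$ is an isomorphism, so $T\cM$ has no rel directions and
\[\dim_\bC\cM=\dim_\bC p(T\cM)=2\cdot\mathrm{rank}(\cM).\]
Since $1\le\mathrm{rank}(\cM)\le g=2$, we have $\dim_\bC\cM\in\{2,4\}$. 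If $\dim_\bC\cM=4$, then $\cM$ is open and closed in the connected stratum $\cH(2)$, so $\cM=\cH(2)$ and the orbit is dense.

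The remaining case is $\dim_\bC\cM=2$, i.e.\ $\cM$ is a closed orbit of rank one. By Filip's theorem, $\Jac(X,\omega)$ has real multiplication with $\omega$ as an eigenform. Let $V\subset H^1(X,\bQ)$ be the rational Hodge sub-structure in the definition of $\Jac(X,\omega)$. Because $V$ carries a nondegenerate symplectic form, $\dim_\bQ V$ is even, so in genus two it equals $4$ or $2$. If $\dim_\bQ V=4$, then $\Jac(X,\omega)=\Jac(X)$ admits real multiplication by a real quadratic field $\bQ[\sqrt D]$ with $\omega$ as an eigenform, and $(X,\omega)$ lies in one of the loci of Proposition \ref{P:RMwoRM}. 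Since that locus is a finite union of closed $\G$-orbits and $\cM$ is a closed $\G$-orbit through the same point, $\cM$ equals the component through $(X,\omega)$. If $\dim_\bQ V=2$, then $\Jac(X,\omega)=E$ is an elliptic curve, the inclusion $V\hookrightarrow H^1(X,\bQ)$ dualises to a surjection $\Jac(X)\twoheadrightarrow E$, and composition with Abel--Jacobi gives a surjective holomorphic map $\pi:X\to E$ with $\pi^*\omega_E=\omega$ for some nonzero $\omega_E\in H^{1,0}(E)$. The unique zero of $\omega$ forces $\pi$ to have a single branch point on $E$, so by Example \ref{E:cover} the connected component of the space of such covers through $(X,\omega)$ is a $2$-dimensional affine invariant submanifold, and hence equals $\cM$. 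Finally, $\G$ acts transitively on the moduli of genus one translation surfaces, so specializing $(E,\omega_E)$ to $(\bC/\bZ[i],dz)$ inside its $\G$-orbit produces a square-tiled surface in $\cM$, and $\cM$ is its $\G$-orbit closure.

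The main obstacle is the final identification: showing that in the elliptic-factor case $\cM$ is literally the $\G$-orbit of some square-tiled surface. This requires keeping the topological covering type locally constant on $\cM$ and then deforming the base torus through its $\G$-orbit to $(\bC/\bZ[i],dz)$ while preserving the single-branch-point condition. Both points are implicit in Example \ref{E:cover}, but in a complete write-up they should be made precise. The dimension count and Filip's rank-one theorem do the bulk of the structural work, and the dichotomy $\dim_\bQ V\in\{2,4\}$ cleanly separates the two families of closed orbits.
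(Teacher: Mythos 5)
Your proposal is correct, but it takes a genuinely different route from the one in the text. Both arguments share the first half: Eskin--Mirzakhani--Mohammadi, the observation that $p$ is an isomorphism on a stratum with one zero so that $T(\cM)=p(T(\cM))$ is symplectic, and hence $\dim_\bC\cM\in\{2,4\}$ with the $4$-dimensional case giving all of the connected stratum $\cH(2)$. For the $2$-dimensional case the paper does not use real multiplication on $\Jac(X,\omega)$ at all; instead it applies Theorem \ref{T:galois} to get $\dim_\bC p(T(\cM))\cdot\deg_\bQ\bk(\cM)\le 4$, hence $\bk(\cM)$ is $\bQ$ or a real quadratic field $\bQ[\sqrt D]$. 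If $\bk(\cM)=\bQ$, the defining subspace is the complexification of a rational subspace, so points with periods in $\frac1n\bZ[i]$ are dense in $\cM$ and $\cM$ contains a square-tiled surface directly --- no elliptic factor of the Jacobian or Abel--Jacobi map is needed. If $\bk(\cM)=\bQ[\sqrt D]$, the Galois decomposition of Theorem \ref{T:galois} makes $\span(\Re\omega,\Im\omega)$ defined over $\bQ[\sqrt D]$ and symplectically orthogonal to its conjugate, which is exactly the input to the lecture-2 construction of the self-adjoint $M$. Your route instead invokes the M\"oller/Filip real multiplication theorem and splits on $\dim_\bQ V\in\{2,4\}$; this is valid, and it has the virtue of directly exhibiting the geometry (torus cover versus genuine real multiplication), but it is heavier: it requires the standard-but-unstated construction of a translation covering $X\to E$ from a rank-two rational Hodge substructure containing $\omega$, whereas the paper's $\bQ$-case is a one-line density argument. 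Two small points: since your $\cM$ has complex dimension $2$, you can cite M\"oller's theorem, which (as quoted in the paper) explicitly includes ``with $\omega$ as an eigenform,'' rather than Filip's rank-one statement, which does not; and your worry about the ``final identification'' is unnecessary --- once $(X,\omega)$ is a translation cover of $(E,\omega_E)$ branched over one point, $g(X,\omega)$ is the same topological cover of $g(E,\omega_E)$ for every $g\in\G$, so choosing $g$ with $g(E,\omega_E)=(\bC/\bZ[i],dz)$ immediately puts a square-tiled surface in the orbit.
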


Even the orbits of square-tiled surfaces can be fit into the framework of eigenforms by considering ``real multiplication by $\bQ\oplus \bQ$", but we do not pursue that perspective here. 

\begin{proof}
Every orbit closure is an affine invariant submanifold. In strata with only one zero, $p:H^1(X,\Sigma, \bC)\to H^1(X,\bC)$ is an isomorphism, and hence $T(\cM)=p(T(\cM)$ is symplectic. Hence because $\cM\subset \cH(2)$, we see that  $\cM$ is 2-dimensional or 4-dimensional. $\cH(2)$ is 4-dimensional and connected, so any $4$-dimensional affine invariant submanifold is all of $\cH(2)$. 

Thus we may assume that $\cM$ is 2-dimensional, i.e., a closed orbit. If $\cM$ is defined over $\bQ$, it contains surface $(X,\omega)$ whose periods $$\left\{\int_\gamma \omega: \gamma\in H_1(X,\bZ)\right\} \subset \frac1n \bZ[i]$$ for some $n$. This implies $(X,\omega)$ is square-tiled surface (a cover of $\bC/(\frac1n \bZ[i])$). 

If $\cM$ is defined over $\bQ[\sqrt{D}]$, then it follows from Theorem \ref{T:galois} that 
$$\span(\Re(\omega), \Im(\omega))$$ is defined over $\bQ[\sqrt{D}]$ and is symplectically orthogonal to the Galois conjugate subspace. Hence by lecture 2, $(X,\omega)$ is an eigenform for real multiplication by an order in $\bQ[\sqrt{D}]$. 
\end{proof}

Similarly we obtain

\begin{lem}
In $\cH(1,1)$, every 3 dimensional rank 1 orbit closure consists either of torus covers, or of eigenforms for real multiplication by $\bQ[\sqrt{D}]$. 
\end{lem}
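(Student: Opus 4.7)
The plan is to split according to the degree of the field of definition $\bk(\cM)$ and adapt the arguments of Proposition~\ref{P:RMwoRM} and Example~\ref{E:cover}. Because $\cM$ has rank~$1$, $\dim_\bC p(T(\cM))=2$, and Theorem~\ref{T:galois} forces $\dim_\bC p(T(\cM))\cdot\deg_\bQ\bk(\cM)\leq 2g=4$, so $\deg_\bQ\bk(\cM)\in\{1,2\}$.

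Suppose first $\bk(\cM)=\bQ[\sqrt D]$. Then the decomposition of Theorem~\ref{T:galois} forces $\bW=0$ by dimension, giving $H^1=\bV_\Id\oplus\bV_\rho$ as the direct sum of two symplectically orthogonal, Galois-conjugate, $2$-dimensional flat subbundles, with $p(T(\cM))=\bV_\Id$. Define $M_0\colon H^1\to H^1$ to act as $\sqrt D$ on $\bV_\Id$ and $-\sqrt D$ on $\bV_\rho$. Because the eigenspaces are Galois conjugates, $M_0$ is defined over $\bQ$; scaling by $m^2$ for some positive integer $m$, we obtain an integer endomorphism $M=m^2 M_0$ preserving $H^1(X,\bZ)$, self-adjoint by symplectic orthogonality, and with $M\omega=m^2\sqrt D\,\omega$. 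By the M\"oller--Filip refinement to variations of Hodge structure, the Galois splitting respects the Hodge decomposition; hence $\bV_\Id\cap H^{1,0}$ and $\bV_\rho\cap H^{1,0}$ are complex subspaces on which $M$ acts by complex scalars, so $M$ is complex-linear on $H^{1,0}(X)$. Thus $M$ descends to a self-adjoint endomorphism of $\Jac(X)$ with $\omega$ as an eigenform, placing every $(X,\omega)\in\cM$ in an eigenform locus of the type produced by Proposition~\ref{P:RMwoRM}.

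Now suppose $\bk(\cM)=\bQ$. Then $V:=p(T(\cM))$ is a $2$-dimensional $\bQ$-rational symplectic subspace of $H^1(X,\bQ)$, and M\"oller--Filip gives that $V$ carries a Hodge sub-structure with $V^{1,0}:=V_\bC\cap H^{1,0}$ a complex line containing $\omega$. The quotient $\Jac(X,\omega)=(V^{1,0})^*/V^*_\bZ$ is therefore an elliptic curve $E$, a factor of $\Jac(X)$. Composing the projection $\Jac(X)\to E$ with the Abel--Jacobi map $X\to\Jac(X)$ based at one zero of $\omega$ gives a holomorphic map $f\colon X\to E$ whose pullback of the generator $\omega_E$ of $H^{1,0}(E)=V^{1,0}$ equals $\omega$, i.e.\ a translation cover. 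By Filip's extension of M\"oller's torsion theorem to affine invariant submanifolds of arbitrary dimension, the zeros $p,q$ of $\omega$ satisfy that $p-q$ is torsion in $\Jac(X,\omega)=E$, so the branch points $f(p),f(q)$ are torsion-related on $E$. (If $f(p)=f(q)$, then the full $\cM$ would be contained in a $2$-dimensional space of degree-$d$ torus covers branched over one point, contradicting $\dim_\bC\cM=3$.) Hence every $(X,\omega)\in\cM$ is a torus cover as in Example~\ref{E:cover}.

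The main obstacle is the appeal to the deep M\"oller--Filip theorem in both cases: in the first, to ensure that the $\bQ$-rational endomorphism $M$ is complex-linear (equivalently, that the Galois splitting is compatible with the Hodge decomposition) so that it actually defines an endomorphism of $\Jac(X)$; and in the second, both to identify $V=p(T(\cM))$ as a Hodge sub-structure giving an elliptic factor of $\Jac(X)$ and to obtain the torsion condition on the zeros of $\omega$. Once these VHS-theoretic inputs are granted, the rest of the argument is a direct dimension and decomposition count, modeled on Proposition~\ref{P:RMwoRM} and Example~\ref{E:cover}.
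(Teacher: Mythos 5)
Your proof is correct and follows essentially the paper's route: split on $\deg_\bQ \bk(\cM)\in\{1,2\}$ via Theorem \ref{T:galois}, build the integral self-adjoint endomorphism from the Galois-conjugate splitting in the quadratic case, and use rationality of the $2$-plane $p(T(\cM))$ to produce a period lattice and hence a translation cover of a torus in the rational case. The only stylistic difference is that both of your appeals to the M\"oller--Filip VHS theorem are unnecessary in genus $2$: complex linearity of $M$ already follows from the paper's elementary lemma that a self-adjoint real-linear endomorphism of a compatible symplectic $\bC^2$ preserving a complex line is complex linear (this is exactly how McMullen's theorem is proved in Section 2), and the splitting $V\otimes\bC=\bC\omega\oplus\bC\overline{\omega}$ is automatic since $V\otimes\bC$ is $2$-dimensional and contains both $\omega$ and $\overline{\omega}$.
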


The only other dimension a rank 1 orbit closure could have in $\cH(1,1)$ is 2, and there are indeed closed orbits in $\cH(1,1)$. McMullen has shown that all but one contains a square-tiled surface \cite{Mc4}. This one exceptional closed orbit is a two dimensional submanifold of the locus of eigenforms for real multiplication for $\bQ[\sqrt{5}]$.

To complete the qualitative classification of orbit closures in $\cH(1,1)$, it remains only to show the following. 

\begin{prop}
Any affine invariant submanifold of $\cH(1,1)$ that is not rank 1 must be all of $\cH(1,1)$. 
\end{prop}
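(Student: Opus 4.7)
The plan is to combine the field-of-definition constraint from Theorem~\ref{T:galois} with cylinder deformations to force any rank-$2$ affine invariant submanifold of $\cH(1,1)$ to have the maximal complex dimension $5$, hence equal the full stratum.

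First I would handle the rank analysis. Since $p(T(\cM))$ is symplectic of even positive complex dimension inside $H^1(X, \bC)$ of dimension $2g = 4$, the hypothesis that $\cM$ is not of rank $1$ forces $p(T(\cM)) = H^1(X, \bC)$, i.e.~rank $2$. The inequality $\dim_\bC p(T(\cM)) \cdot \deg_\bQ \bk(\cM) \leq 2g$ from Theorem~\ref{T:galois} then forces $\bk(\cM) = \bQ$. Since $\ker p \subset H^1(X, \Sigma, \bC)$ is one-complex-dimensional and $T(\cM) \cap \ker p$ has complex dimension $0$ or $1$, the complex dimension of $\cM$ is $4$ or $5$; the latter case immediately gives $\cM = \cH(1,1)$ by connectedness of the stratum. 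It therefore remains to rule out $\dim_\bC \cM = 4$.

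Suppose $\dim_\bC \cM = 4$. Then $\cM$ is cut out locally in period coordinates by a single $\bR$-linear equation, whose coefficients lie in $\bk(\cM) = \bQ$. I would write this equation as $\int_\eta \omega = 0$ for a fixed $\eta \in H_1(X, \Sigma, \bQ)$ with boundary $\partial \eta = p_1 - p_2$; the boundary must be nonzero, since otherwise the equation would be purely absolute and $p(T(\cM))$ would be properly contained in $H^1(X,\bC)$, contradicting rank $2$. Next, using the Minsky--Weiss / Smillie--Weiss theorem, I would replace $(X, \omega)$ by a horizontally periodic element of $\cM$ and apply the Cylinder Deformation Theorem: for every equivalence class $\cC$ of $\cM$-parallel horizontal cylinders, the cylinder shear $\xi_\cC$ lies in $T(\cM)$. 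Since $T(\cM)$ is precisely the annihilator of $\eta$ in $H^1(X,\Sigma,\bC)$, each shear gives a geometric vanishing condition of the form $h_\cC \cdot (\gamma_\cC \cdot \eta) = 0$, where $h_\cC$ is the common height of cylinders in $\cC$ and $\gamma_\cC$ is the corresponding core curve.

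The main obstacle is to show that the full system of intersection-vanishing constraints $\gamma_\cC \cdot \eta = 0$ -- gathered from the horizontal cylinder diagrams of surfaces in $\cM$, and then from the rotated cylinder diagrams obtained via the rotations $r_\theta \in \G$ (which preserve $\cM$) -- cannot be satisfied by any $\eta$ with $\partial \eta = p_1 - p_2 \neq 0$. Using the classification of horizontal cylinder decompositions in $\cH(1,1)$ and the $\G$-invariance of $\cM$, which supplies cylinder configurations in a dense set of directions, one argues that the aggregate of core curves spans $H_1(X,\bZ)$; then $\gamma \cdot \eta = 0$ for every $\gamma \in H_1(X,\bZ)$ forces $\eta$ to pair trivially with all of absolute cohomology, which combined with $\partial \eta \neq 0$ is impossible. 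This yields $\dim_\bC \cM = 5$ and hence $\cM = \cH(1,1)$.
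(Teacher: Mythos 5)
Your reduction is fine and broadly in the spirit of the paper's argument: both proofs reduce to showing $\dim_\bC\cM=5$ once $p(T(\cM))=H^1$, and both get there via a horizontally periodic surface and the Cylinder Deformation Theorem. But your concluding deduction contains a genuine logical gap. You argue that the shear constraints force $\gamma_\cC\cdot\eta=0$ for a collection of core curves whose classes span $H_1(X,\bZ)$, and that this ``forces $\eta$ to pair trivially with all of absolute cohomology, which combined with $\partial\eta\neq 0$ is impossible.'' That last implication is false. The core curves live in $H_1(X\setminus\Sigma,\bQ)$, which is $5$-dimensional, and the intersection pairing $H_1(X\setminus\Sigma,\bQ)\times H_1(X,\Sigma,\bQ)\to\bQ$ is perfect; the annihilator of a subspace whose image merely spans the $4$-dimensional $H_1(X,\bQ)$ is (at least) $1$-dimensional, and generically consists of classes with \emph{nonzero} boundary. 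Concretely: put $p_1,p_2$ in a small disk, let $\eta$ be a short arc between them inside that disk, and let the spanning loops all avoid the disk --- then every intersection number vanishes and yet $\partial\eta\neq 0$. So spanning $H_1(X,\bZ)$ buys you nothing; you need the core curves to span all of $H_1(X\setminus\Sigma,\bQ)$, i.e.\ to see the puncture-loop direction, and that requires exhibiting a cylinder configuration whose core curves are dependent in $H_1(X)$ but independent in $H_1(X\setminus\Sigma)$. (A smaller issue: cylinders in an $\cM$-parallel class need not share a height, so the shear condition is $\sum_{C\in\cC}h_C(\gamma_C\cdot\eta)=0$, from which individual intersection numbers do not immediately vanish.)

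This missing configuration is exactly the content of the paper's key lemma: $\cM$ contains a horizontally periodic surface with three cylinders, two small ones glued above and below to one large one, so that $\gamma_{\mathrm{big}}=\gamma_{\mathrm{top}_1}+\gamma_{\mathrm{top}_2}$ in $H_1(X)$ but not in $H_1(X\setminus\Sigma)$. The paper then avoids your annihilator bookkeeping entirely: since $p(T(\cM))=H^1$ leaves absolute periods unconstrained, Lemma~\ref{L:parallel} shows no two of the three cylinders are $\cM$-parallel, so each can be stretched independently by Theorem~\ref{T:CDT}, and the combination that raises the two small cylinders and lowers the large one is a nonzero element of $T(\cM)\cap\ker p$, forcing $\dim_\bC\cM=5$ directly. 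If you want to keep your contrapositive formulation, you must still produce this (or an equivalent) configuration and verify that its core curves span the extra direction in $H_1(X\setminus\Sigma,\bQ)$; as written, the ``main obstacle'' paragraph both leaves that unproved and draws a conclusion that does not follow from what it assumes.
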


\begin{proof}
Since $p(T(\cM))$ is symplectic, and rank is $\frac12 \dim p(T(\cM))$, we see that $p(T(\cM))=H^1$. (In genus 2, $H^1$ is 4-dimensional.) We wish to show that in fact $\cM$ is 5-dimensional, in which case it must be all of $\cH(1,1)$. 

\begin{lem}\label{L:3cyls}
Such $\cM$ must contain a horizontally periodic surface with $3$ horizontal cylinders. Two of the cylinders each have a single saddle connection in their boundary, and both of these are glued to a third larger cylinder above and below, as in figure \ref{F:AllFree}. 
\end{lem}

\begin{figure}[h!]
\centering
\includegraphics[scale=0.28]{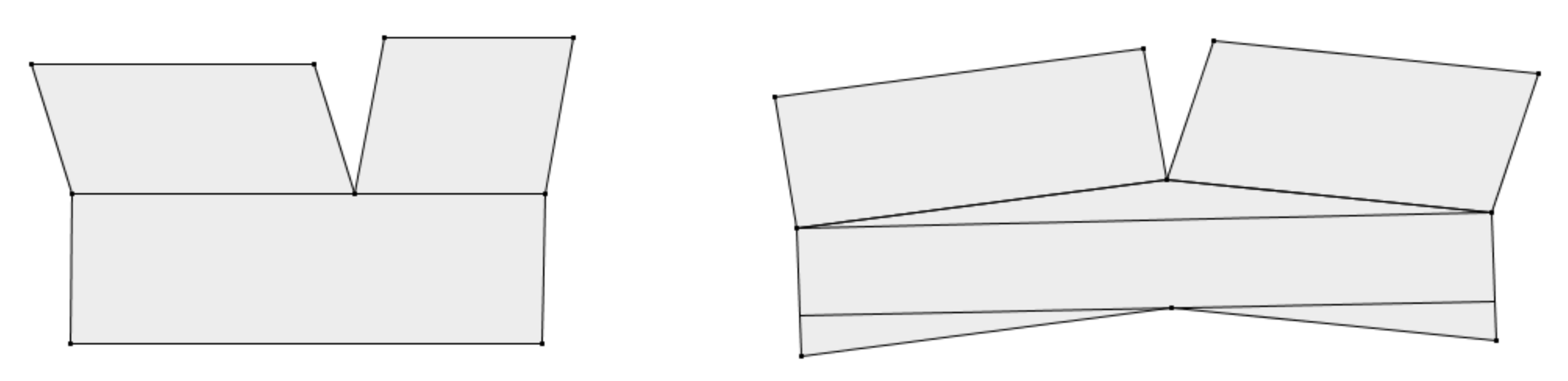}
\caption{On the left is a translation surface in $\cH(1,1)$. On the right is an illustration of a deformation of this surface where the three horizontal cylinders cease to be parallel. (This exact deformation is not guaranteed to be in $\cM$, however one like it is.)}
\label{F:AllFree}
\end{figure}

Given the lemma, the proposition is not so hard to prove. Indeed, it is easily verified that none of the three cylinders are $\cM$ parallel to each other: By Lemma \ref{L:parallel}, two cylinders are $\cM$ parallel if and only if the integrals of their circumference curves are always proportional. However, since $p(T(\cM))=H^1(X, \bC)$, no restriction is placed on the absolute periods of a translation surface in $\cM$, and we can change the circumferences in a arbitrary way. (As we do so, the relative period might be determined by the absolute ones--but this is precisely what we will show cannot happen, since $\cM$ will end up being $\cH(1,1)$.)

Since none of the three cylinders are $\cM$ parallel to each other, we can increase the height of top two, and decrease the height of the bottom one, so as to produce a rel deformation in $\cM$, as in figure \ref{F:CylsRel}. Since $p(T(\cM))=H^1(X,\cM)$, and $T(\cM)$ contains a vector in the 1-dimensional $\ker(p)$ (the rel deformation), we see that $T(\cM)=H^1(X,\Sigma, \bC)$. Hence $\cM$ is 5-dimensional, and $\cM=\cH(1,1)$. 
\end{proof}

The proof of Lemma \ref{L:3cyls} is easy but will be omitted. (One can find a horizontally periodic surface in every horocycle flow orbit closure, and work from there to get one with three horizontal cylinders. See figure \ref{F:Getting3}, and compare to arguments in \cite{NW, ANW}.) 

\begin{figure}[h!]
\centering
\includegraphics[scale=0.3]{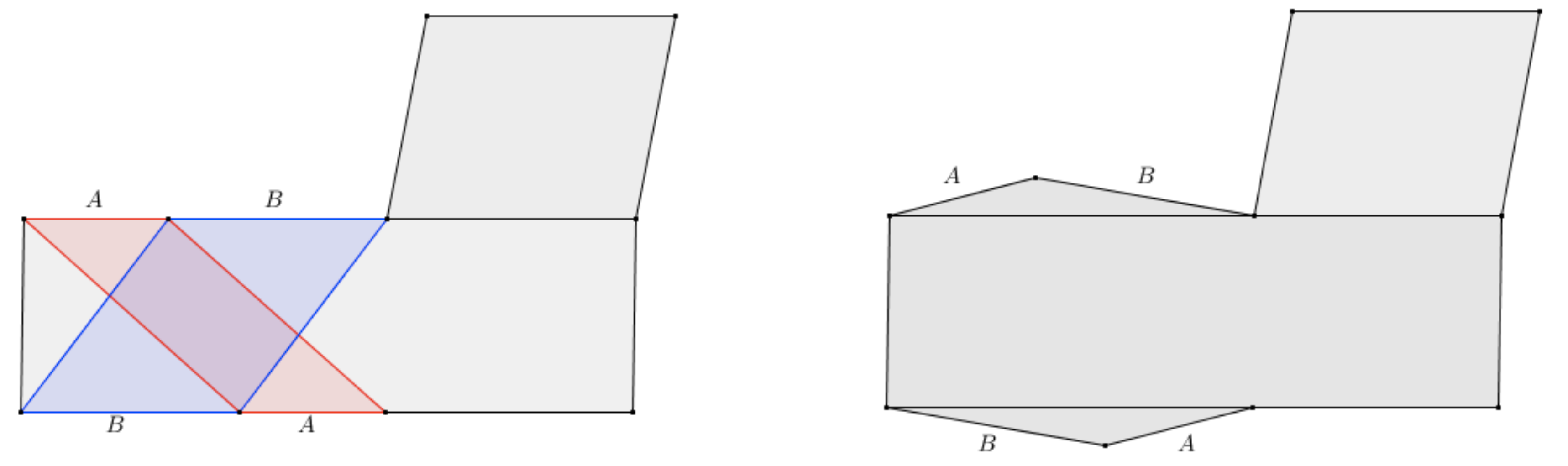}
\caption{The proof of Lemma \ref{L:3cyls} is omitted, but this figure gives the idea. On the left is a translation surface with two horizontal cylinders. Shearing the two shaded nonhorizontal cylinders appropriately ``opposite" amounts gives the surface on the right, which now has three horizontal cylinders as desired.}
\label{F:Getting3}
\end{figure}

\subsection{Census of known orbit closures} Here we give a list of the currently known orbit closures not arising from the elementary constructions discussed in Section 2.1. We give only primitive orbit closures, i.e., those not arising from covering constructions. 

\bold{Closed orbits.} McMullen and Calta independently discovered infinitely many closed orbits in $\cH(2)$ \cite{Ca, Mc}. McMullen generalized his approach using real multiplication to find infinitely many primitive closed orbits in genus 3 and 4, in the Prym loci in the strata where $\omega$ has only one zero \cite{Mc2}.   

There is a  bi-infinite sequence of Teichm\"uller curves $\cT(n,m)$ called the Veech-Ward-Bouw-M\"oller curves. For $n=2$ they were discovered by Veech \cite{V}; for $n=3$ by Ward \cite{W}; and in the general case by Bouw and M\"oller \cite{BM}. They have also been studied by the author and Hooper \cite{W2, H}.  

There are also two ``sporadic" closed orbits known, one due to Vorobets in $H(6)$, and another due to Kenyon-Smillie in $H(1,3)$ \cite{HS, KS}. These sporadic examples correspond to billiards in the  $(\pi/5, \pi/3, 7\pi/15)$ and $(2\pi/9, \pi/3, 4\pi/9)$ triangles respectively. 

\bold{Rank 1 affine invariant manifolds.} McMullen and Calta discovered infinitely many rank 1 affine invariant submanifolds in $\cH(1,1)$ \cite{Ca, Mc}. McMullen generalized his construction to give infinitely many additional examples in the Prym loci in genus 3, 4, and 5 \cite{Mc2}. 

\bold{A new orbit closure.} In joint work in progress with Mirzakhani, the author has found the first known example of a higher rank orbit closure whose affine field of definition is not $\bQ.$ 




\bibliography{mybib}{}
\bibliographystyle{amsalpha}

\end{document}